\documentclass[reqno,10pt]{amsart}
\usepackage{amsmath,mathrsfs}
\usepackage{amssymb, amsmath}
\usepackage{graphicx}
\usepackage{pdfsync}
\usepackage[colorlinks,
linkcolor=red,
anchorcolor=blue,
citecolor=green
]{hyperref}
\usepackage{colonequals}
\setlength{\oddsidemargin}{0mm}
\setlength{\evensidemargin}{0mm} \setlength{\topmargin}{-10mm}
\setlength{\textheight}{240mm} \setlength{\textwidth}{160mm}
%

\usepackage{multicol}
\usepackage{enumerate}
\usepackage{color}
\usepackage{mathtools}
\usepackage{ulem}
\usepackage[numbers,sort&compress]{natbib}
\usepackage{tikz}
\usetikzlibrary{matrix}
\usepackage{float}


\def\inte#1{
	\displaystyle\mathop{#1\kern0pt}^\circ }


\let\pa=\partial

\let\f=\frac


\def\pa{\partial}


\def\virgp{\raise 2pt\hbox{,}}
\def\cdotpv{\raise 2pt\hbox{;}}

\def\C{\mathop{\mathbb C\kern 0pt}\nolimits}
\def\DD{\mathop{\mathbb D\kern 0pt}\nolimits}
\def\EE{\mathop{{\mathbb E \kern 0pt}}\nolimits}
\def\K{\mathop{\mathbb K\kern 0pt}\nolimits}
\def\N{\mathop{\mathbb N\kern 0pt}\nolimits}
\def\Q{\mathop{\mathbb Q\kern 0pt}\nolimits}
\def\R{\mathop{\mathbb R\kern 0pt}\nolimits}
\def\SS{\mathop{\mathbb S\kern 0pt}\nolimits}
\def\ZZ{\mathop{\mathbb Z\kern 0pt}\nolimits}
\def\TT{\mathop{\mathbb T\kern 0pt}\nolimits}
\def\P{\mathop{\mathbb P\kern 0pt}\nolimits}



\def\na{\nabla}

\newcommand{\beq}{\begin{equation}}
	\newcommand{\eeq}{\end{equation}}
\newcommand{\ben}{\begin{eqnarray}}
	\newcommand{\een}{\end{eqnarray}}
\newcommand{\beno}{\begin{eqnarray*}}
	\newcommand{\eeno}{\end{eqnarray*}}

\newtheorem{thm}{Theorem}[section]
\newtheorem{lem}{Lemma}[section]
\newtheorem{col}{Corollary}[section]
\newtheorem{prop}{Proposition}[section]
\renewcommand{\theequation}{\thesection.\arabic{equation}}

\theoremstyle{definition}
\newtheorem{defi}{Definition}[section]

\newtheorem{rmk}{Remark}[section]

\numberwithin{equation}{section}

\begin{document}
	\title[Grazing limit]
	{A new understanding of grazing limit}
	
	\author[T. Yang]{Tong Yang}
	\address[T. Yang]{Department of Applied Mathematics, 
		The Hong Kong Polytechnic University,
		Hong Kong, P.R. China.}
	\email{t.yang@polyu.edu.hk}
	
	\author[Y.-L. Zhou]{Yu-long Zhou}
	\address[Y.-L. Zhou]{School of Mathematics, Sun Yat-Sen University, Guangzhou, 510275, P. R.  China.} \email{zhouyulong@mail.sysu.edu.cn}

	\begin{abstract} The grazing limit of the Boltzmann equation to Landau equation is well-known and has been justified by using cutoff near the grazing angle with some suitable scaling. In this paper, we will provide a new understanding by simply applying a natural scaling on the Boltzmann operator without angular cutoff. The proof is based on a new well-posedness theory on the Boltzmann equation without angular cutoff in the regime with optimal ranges of parameters so that the grazing limit can be justified directly for any $\gamma>-5$ that includes the Coulomb potential 
	corresponding to $\gamma=-3$. With this new understanding, the scaled Boltzmann operator in fact can be decomposed into two components. The first one converges to the Landau operator when the singular parameter $s$ of interaction angle tends to $1^{-}$ and the second one vanishes in this limit.
	\end{abstract}


	\maketitle
	
		\setcounter{tocdepth}{1}
	
	\tableofcontents


	
	
	\noindent {\sl AMS Subject Classification (2010):} {35Q20, 35R11, 75P05.}

	\renewcommand{\theequation}{\thesection.\arabic{equation}}
	\setcounter{equation}{0}

	\section{Introduction}
The Boltzmann and Landau equations are the two most classical kinetic equations. Regarding to
the Cauchy problem,
there has been extensive work in different frameworks, e.g.~\cite{ukai1974existence,ukai1982cauchy,caflisch1980boltzmann1,caflisch1980boltzmann2,diperna1989cauchy,lions1994boltzmann,villani1996cauchy,villani1998new,guo2002landau,guo2003classical,guo2004boltzmann,desvillettes2005trend,mouhot2006explicit,strain2006almost,strain2008exponential,gressman2011global,alexandre2012boltzmann,he2012well,he2018sharp,duan2021global}. In fact,  the Landau  equation was derived by Landau in 1936
 from the Boltzmann  equation with cutoff Rutherford
 cross section. Mathematical justification  of the grazing collision limit has proved to be successful
 since  1990s by adding a cutoff angle with suitable scaling parameter to the Boltzmann
 cross-section,
 cf.~\cite{alexandre2004landau,arsen1991connection,degond1992fokker,desvillettes1992asymptotics,goudon1997boltzmann,he2014asymptotic,he2014well,he2021boltzmann,duan2023solutions}.

 This paper aims to provide a new approach to justisfy this limit so that the relation between the 
 Boltzmann equation and Landau equation can be understood from a different angle. Precisely, we study the grazing limit directly starting from the Boltzmann equation with angular non-cutoff kernels $B^{s,\gamma}(v-v_{*}, \sigma) \sim \theta^{-2-2s} |v-v_{*}|^{\gamma}$
 originating from the 
 inverse power law potentials. Roughly speaking, 
 with a proper scaling to the Boltzmann cross-section, when parameter of 
  the angular singularity  $s \to 1^{-}$,
 we  naturally justify the limit to  the Landau equation for any $\gamma>-5$. We point out
  that this new approach is related to but very different from the existing arguments for grazing limit. 

\subsection{A natural scaling}	
	Consider the Cauchy problem of the Landau  equation
	\ben \label{Cauchy-Landau} \left\{ \begin{aligned}
		&\partial _t F +  v \cdot \nabla_{x} F=Q_{L}^{\gamma}(F,F), ~~t > 0, x \in \mathbb{T}^{3}, v \in\R^3 ,\\
		&F|_{t=0} = F_{0}.
	\end{aligned} \right.
	\een
Here the Landau operator $Q_{L}^{\gamma}(g,h)$ is defined by
	\ben \label{oroginal-definition-Laudau-oprator} Q_{L}^{\gamma}(g,h)(v) :=
	\nabla_{v}\cdot\bigg\{\int_{\R^3}a^{\gamma}(v-v_{*})[g(v_{*})\nabla_{v}h(v)-\nabla_{v_{*}}g(v_{*})h(v)]\mathrm{d}v_{*}\bigg\}.
	\een 
where the symmetric matrix $a^{\gamma}$ is given by
	\ben\label{matrix}
	a^{\gamma}(z)  = \Lambda |z|^{\gamma+2}\Pi(z), \quad \Pi(z) :=
	(I_{3} - \frac{z \otimes z}{|z|^{2}}).
	\een
	Here, $I_{3}$ is the $3 \times 3$ identity matrix and $\Lambda$ is a constant.

	We will show that the solution $F_{L}^{\gamma}$ of \eqref{Cauchy-Landau} can be derived from that of the Boltzmann equation with a natural scaling. 
	Let $F^{s,\gamma}_{B}$ be the solution to
	the Boltzmann equation 
	\ben \label{original-Boltzmann-with-ipl}
	\partial _t F +  v \cdot \nabla_{x} F=Q^{s,\gamma}_{B}(F,F), \quad F|_{t=0} = (1-s)F_{0},
	\een
	where $Q^{s,\gamma}_{B}$ is the Boltzmann operator defined by
	\ben \label{Boltzmann-operator-2}
	Q^{s,\gamma}_{B}(g,h)(v):=
	\int_{\R^3}\int_{\mathbb{S}^{2}}B^{s,\gamma}(v-v_*,\sigma)\left(g^{\prime}_{*} h^{\prime}-g_{*}h\right)\mathrm{d}\sigma \mathrm{d}v_{*}.
	\een
	Here, $B^{s,\gamma}$ is the angular non-cutoff kernel derived from inverse power law potentials, given by
	\ben \label{kernel-ipl-gamma-s}
	B^{s,\gamma}(v-v_*,\sigma) = C_{s,\gamma} (\sin\f{\theta}{2})^{-2-2s} |v-v_{*}|^{\gamma}.
	\een
	The main result in this paper is to rigorously prove for $\gamma>-5$, 
	\ben \label{limit-from-B-2-L}
	F^{s,\gamma}_{B}/(1-s) \to F_{L}^{\gamma}, \quad s \to 1^{-}. \een
	This can be stated  mathematically as that  the Boltzmann equation with a proper scaling tends to the Landau equation as $s \to 1^{-}$. 

	Recall that \eqref{kernel-ipl-gamma-s} is derived from the inverse power law potential $U(r) =r^{-p}$. For $p \geq 1$, one has $s = 1/p, \gamma = 1-4s$. The Coulomb potential 
	corresponds to  $p=s=1, \gamma=-3$. In order to treat the Coulomb potential physically, we can take $\gamma =1-4s$ and study the limit $s \to 1^{-}$.
	Our result directly  yield  limit to the Landau equation with $\gamma=-3$ in \eqref{limit-from-B-2-L}. 
	To be more general in mathematics, in the following discussion,  we will regard $\gamma$ as a fixed constant in the range $\gamma>-5$ for  Landau and $\gamma>-3-2s$ for Boltzmann.

In the existing literatures on the grazing limit, for instance in \cite{desvillettes1992asymptotics}, 
by introducing a 
cutoff $\theta \gtrsim \epsilon$
and by suitably adding the scaling factor $\epsilon^{2s-2}$, Desvillettes
considered the scaled Boltzmann kernel
$B^{\epsilon,s,\gamma} (v-v_*,\sigma)  = \epsilon^{2s-2} (\sin\f{\theta}{2})^{-2-2s} \mathrm{1}_{0 \leq \theta \leq \epsilon} |v-v_{*}|^{\gamma}$ and studied the limit $\epsilon \to 0$. See also \cite{he2014asymptotic} and \cite{duan2023solutions} for further development
in this direction.
However, this kind of revised artificial Boltzmann kernels do not correspond to any physical potential as shown  in Figure \ref{flow}. 

We now explain Figure \ref{flow}. Let $\gamma>-5$ be fixed.
In the middle column, the Landau equation 
 with initial datum $F_0$ yields a solution $F^{\gamma}_{L}$.
Our new approach corresponds to the right column  in Figure \ref{flow}.
That is,  the Boltzmann equation with the scaled initial datum $(1-s)F_0$ directly
gives a solution $F^{s,\gamma}_{B}$. Then the  scaling $F^{s,\gamma}_{B}/(1-s)$ with limit $s \to 1^{-}$ gives the  solution $F^{\gamma}_{L}$ of the  Landau equation.
Note that the Boltzmann equation is considered with  the physical cross-section \eqref{kernel-ipl-gamma-s} with parameter $s$. We also remark that in the previous
argument for  grazing limit, the parameter $s$ is fixed while the artificial  cutoff  parameter $\epsilon$ plays a role in the limit. However, in our approach,  the limit of $s$  to $1$ yields the
limit of the Boltzmann equation to Landau equation.

\begin{figure}[htbp]
	\large
	\centering
	\includegraphics[scale=1.2]{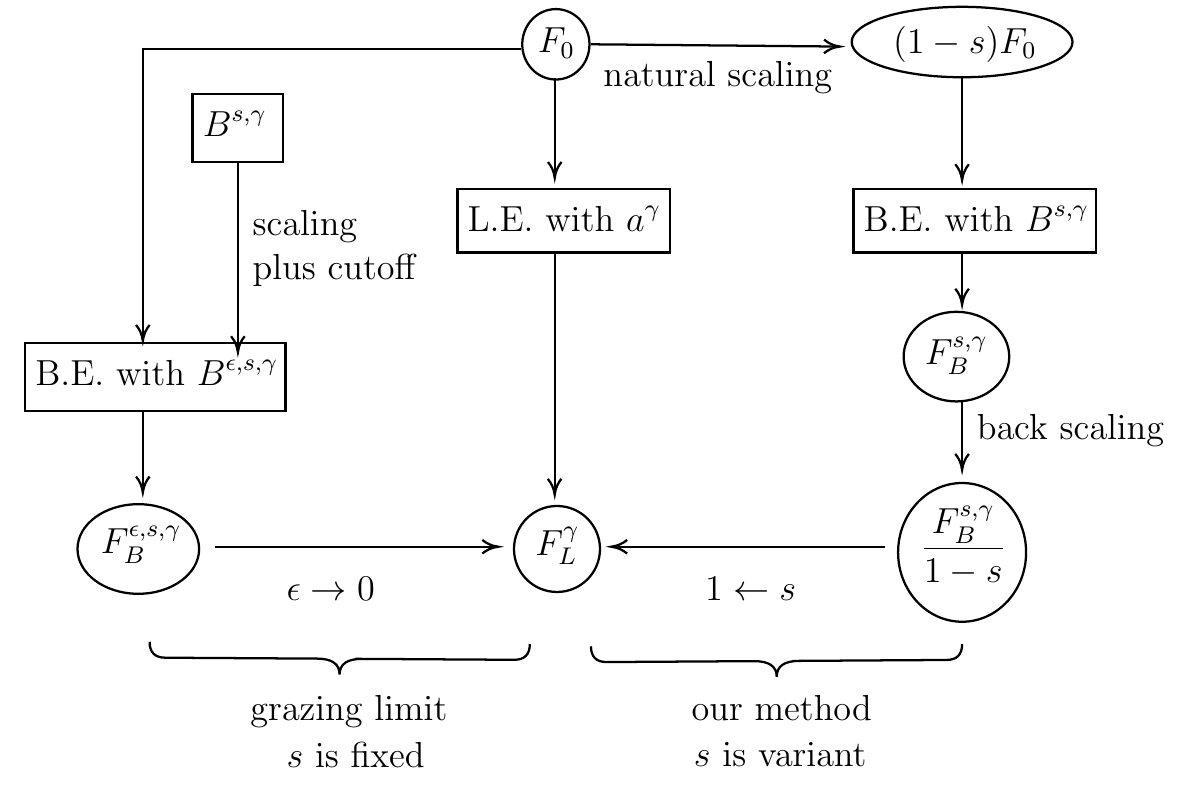}
	\caption{Grazing limit VS our method}
	\label{flow}
\end{figure}

In the following, we will explain in more details about the new appraoch.
Let $\tilde{F}^{s,\gamma}_{B} = F^{s,\gamma}_{B}/(1-s)$. Then  $\tilde{F}^{s,\gamma}_{B}$ is the solution to
\ben \label{scaled-B}
\partial _t F +  v \cdot \nabla_{x} F= \tilde{Q}^{s,\gamma}_{B}(F, F), \quad F|_{t=0} = F_{0}.
\een
Here $\tilde{Q}^{s,\gamma}_{B}$ is the Boltzmann operator defined with the kernel
\ben \label{kernel-ipl-gamma-s-with-factor}
\tilde{B}^{s,\gamma} (v-v_*,\sigma)  = (1-s)B^{s,\gamma} (v-v_*,\sigma) .
\een
In order to prove the limit \eqref{limit-from-B-2-L}, it is equivalent to 
prove that 
\ben \label{convergence}
\tilde{F}^{s,\gamma}_{B} \to F_{L}^{\gamma}, \quad s \to 1^{-}.
\een
The factor $(1-s)$ that naturally appears 
in \eqref{kernel-ipl-gamma-s-with-factor} corresponds to the grazing limit. This is because only small deviation  collisions contribute in the limit 
$s \to 1^{-}$.

In this paper, we will investigate
the above limit in the near-equilibrium framework where the 
unique global  classical solution can be constructed. We first recall some relevant results on the
well-posedness theories in this framework.
For the non-cutoff kernel \eqref{kernel-ipl-gamma-s},
Gressman-Strain in \cite{gressman2011global} established global well-posedness of the Boltzmann equation  
	 in the following parameter range 
	\ben \label{gamma-s-range-old-2}  
	\gamma > -3, \quad 0<s<1. 
	\een
	Independently,  Alexandre-Morimoto-Ukai-Xu-Yang  \cite{alexandre2012boltzmann} proved the same result in the range \eqref{gamma-s-range-old-2} with a constraint $\gamma+2s> -\f{3}{2}$ to obtain a better estimate on the nonlinear operators. In order to consider the grazing limit for  Coulomb potential $\gamma=-3$, we need  to obtain some uniform estimates for $\gamma <-3$.
		There are some discussions about this
		in the previous works.  For instance, the weak solutions were constructed for 
		$\gamma\geq-3$ for the two types of equation in the classical work \cite{villani1998new} 
		by Villani in which a remark on page 284 says that  'one could take $\gamma >-4$'.  
		As for Landau equation, Guo \cite{guo2002landau} firstly proved global well-posedness of
		 for $\gamma\geq-3$ and he  pointed out on page 394  in \cite{guo2002landau} that 
		  'Although our theorem is still valid for certain 
		  $\gamma$ 
		  even below $-3$'.
		  
Motivated by the above  works, we will consider the range 
$\gamma \le -3$ in this paper.
The new contribution of this paper to  the Boltzmann equation
is to contain the parameters range for $\gamma$ and $s$ in  the triangle $0<s<1, -3 -2s <\gamma \le -3$, see the region formed by the three red dash lines in Figure \ref{range}.
 On one hand, we justisfy the well-posedness of the Boltzmann equation in a region below
 $\gamma=-3$.
  On the other hand,  more importantly, the uniform estimates are obtained to the left of the vertical line $s=1$  so  that the limit $s \to 1^{-}$ can be considered for any $\gamma>-5$. This then
obvious includes  the Coulomb potential $\gamma=-3$ and the cases mentioned in
the previous literatures. Hence, as a byproduct, our new contribution for the Landau equation
is the well-posedness for $-5 <\gamma < -3$. 

\begin{figure}[htbp]
	\large
	\centering
	\includegraphics[scale=1.2]{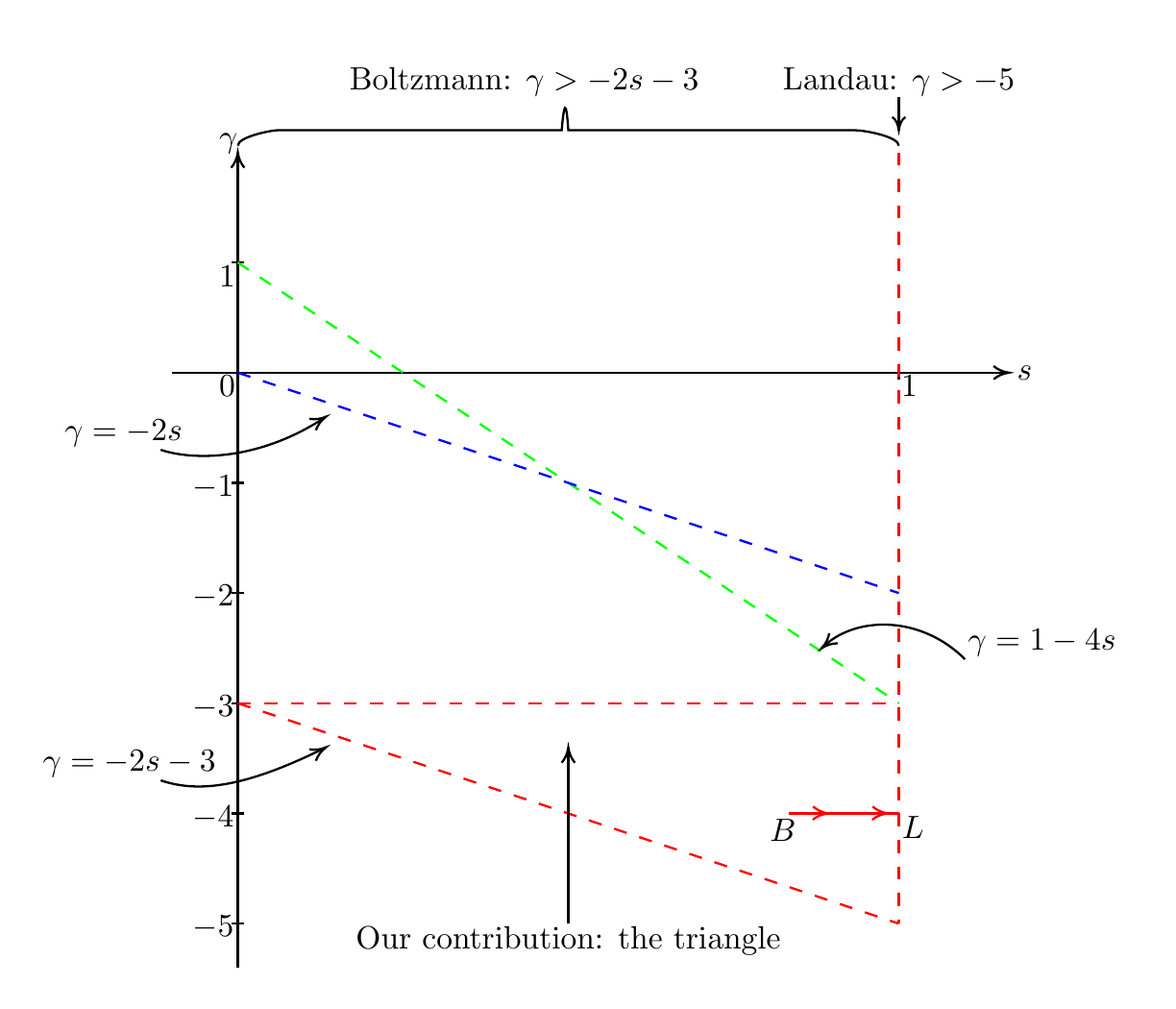}
	\caption{The parameters domain}
	\label{range}
\end{figure}


Now let us explain why $\gamma=-2s-3$ is a valid threshold in three dimensional space. It is now well known that
the Boltzmann operator $Q^{s,\gamma}_{B}(g, \cdot)$ in \eqref{Boltzmann-operator-2} behaves like 
a fractional Laplace operator $-C_{g}(-\Delta)^{s}$. Recall that $-(-\Delta)^{s}$ in three dimensions can be defined by a 
singular integral
\ben \label{fractional-Laplace}
-(-\Delta)^{s} f (v) := C_{s}
\lim_{r \to 0^{+}}  \int_{\mathbb{R}^3 \setminus B(v,r)} \f{f(v)-f(v_*)}{|v-v_{*}|^{3+2s}} \mathrm{d}v_*.
\een
This implies that $\gamma>-2s-3$ for suitably general function spaces. Moreover, 
there exists a  universal constant $c>0$ such that
\beno
\lim_{s \to 1^{-}}  \f{C_{s}}{1-s} = c.
\eeno
This implies the scaling factor $1-s$  naturally appears in \eqref{limit-from-B-2-L}. Then as $s \to 1^{-}$, 
$-(-\Delta)^{s} \to \Delta$ which is the main part of
the Landau operator \eqref{oroginal-definition-Laudau-oprator}. 


	\subsection{Main results} We will state the main results  in this subsection. Consider
	the following Cauchy problem of the Boltzmann equation
	\begin{equation}\label{Cauchy-Boltzmann} \left\{ \begin{aligned}
			&\partial _t F +  v \cdot \nabla_{x} F=Q^{s,\gamma}_{B}(F,F), ~~t > 0, x \in \mathbb{T}^{3}, v \in \R^3 ,\\
			&F|_{t=0} = F_{0}.
		\end{aligned} \right.
	\end{equation}
	Here $F(t,x,v)\geq 0$ is the density
	function of particles with velocity
	$v\in\R^3$ at time $t\geq 0$, position $x \in \mathbb{T}^{3} :=[-\pi,\pi]^{3}$. The Boltzmann operator is defined as
	\ben \label{Boltzmann-operator}
	Q^{s,\gamma}_{B}(g,h)(v):=
	\int_{\R^3}\int_{\mathbb{S}^{2}}B^{s,\gamma}(v-v_*,\sigma)\left(g^{\prime}_{*} h^{\prime}-g_{*}h\right)\mathrm{d}\sigma \mathrm{d}v_{*}.
	\een
	Here,  $h=h(v)$, $g_*=g(v_*)$,
	$h'=h(v')$, $g'_*=g(v'_*)$ where $v'$, $v_*'$ are given by
	\begin{eqnarray}\label{velocity-before-after}
		v'=\frac{v+v_{*}}{2}+\frac{|v-v_{*}|}{2}\sigma,
	\quad	v'_{*}=\frac{v+v_{*}}{2}-\frac{|v-v_{*}|}{2}\sigma,\quad \sigma\in\mathbb{S}^{2}.
	\end{eqnarray}
	Recalling \eqref{kernel-ipl-gamma-s} and \eqref{kernel-ipl-gamma-s-with-factor}, from now on
	we take
	\ben \label{kernel-studied}
	B^{s,\gamma}(v-v_{*},\sigma)= (1-s) (\sin\f{\theta}{2})^{-2-2s} \mathrm{1}_{0 \leq \theta \leq \pi/2} |v-v_{*}|^{\gamma}, 
	\een
	where $\cos\theta= \frac{v-v_{*}}{|v-v_{*}|}\cdot \sigma$. That is, 
	the angular function is 
	\beno
	b^{s}(\theta) = (1-s) (\sin\f{\theta}{2})^{-2-2s} \mathrm{1}_{0 \leq \theta \leq \pi/2}. 
	\eeno
	Note that the angle variable is restricted  to
	$0 \leq \theta \leq \pi/2$ by symmetry as other papers. 
	Thanks to the factor $1-s$, the mean moment transfer is finite 
	by computing
	\ben \label{mean-momentum-transfer}
	\int_{\mathbb{S}^{2}}  b^{s}(\theta) \sin^{2} \frac{\theta}{2}  \mathrm{d}\sigma = 4 \pi \times  2^{s-1}.
	\een
	In accordance with \eqref{mean-momentum-transfer}, we take $\Lambda = \pi$ in \eqref{matrix}.
	In fact, we will show that
	$$Q^{s,\gamma}_{B} =  2^{s-1} Q^{\gamma}_{L} + O(1-s),$$
	 and so
	$Q^{s,\gamma}_{B} \to Q^{\gamma}_{L}$ as $s \to 1^{-}$. See \eqref{Q-into-three-terms} for  details.

	To construct the global-in-time classical solution in the spatially inhomogeneous case, one usually consider the near equilibrium framework.
	Recall that the solution of \eqref{Cauchy-Landau}
	and \eqref{Cauchy-Boltzmann} conserves mass, momentum and energy. We assume $F_{0}$ is
	a small perturbation of the  equilibrium $\mu$  where $\mu(v) := (2\pi)^{-3/2}e^{-|v|^{2}/2}$.
	Let us recall the linearized versions of \eqref{Cauchy-Boltzmann} and  \eqref{Cauchy-Landau}.
	Set $F=\mu +\mu^{1/2}f$, then \eqref{Cauchy-Boltzmann} is reduced to	
	\begin{equation}\label{Cauchy-linearizedBE} \left\{ \begin{aligned}
			&\partial_{t}f + v\cdot \nabla_{x} f + \mathcal{L}^{s,\gamma}_{B}f= \Gamma^{s,\gamma}_{B}(f,f), ~~t > 0, x \in \mathbb{T}^{3}, v \in\R^3,\\
			&f|_{t=0} = f_{0}.
		\end{aligned} \right.
	\end{equation}
	Here the linearized Boltzmann operator $\mathcal{L}^{s,\gamma}_{B}$ and the nonlinear term $\Gamma^{s,\gamma}_{B}$ are defined by
	\ben\label{Def-nonlinear-linear} \Gamma^{s,\gamma}_{B}(g,h):= \mu^{-1/2} Q^{s,\gamma}_{B}(\mu^{1/2}g,\mu^{1/2}h), \quad
	\mathcal{L}^{s,\gamma}_{B}g:= -\Gamma^{s,\gamma}_{B}(\mu^{1/2},g) - \Gamma^{s,\gamma}_{B}(g, \mu^{1/2}).
	\een
	
	With the same decomposition, set $F=\mu +\mu^{1/2}f$. Then  \eqref{Cauchy-Landau} becomes
	\begin{equation}\label{Cauchy-linearizedLE} \left\{ \begin{aligned}
			&\partial_{t}f + v\cdot \nabla_{x} f + \mathcal{L}_{L}^{\gamma}f= \Gamma_{L}^{\gamma}(f,f), t > 0, x \in \mathbb{T}^{3}, v \in\R^3,\\
			&f|_{t=0} = f_{0}.
		\end{aligned} \right.
	\end{equation}
	The linearized Landau operator $\mathcal{L}_{L}^{\gamma}$ and the nonlinear term $\Gamma_{L}^{\gamma}$  are defined by
	\ben\label{DefL}\Gamma_{L}^{\gamma}(g,h) := \mu^{-1/2} Q_{L}^{\gamma}(\mu^{1/2}g,\mu^{1/2}h), \quad
	\mathcal{L}_{L}^{\gamma}g := -\Gamma_{L}^{\gamma}(\mu^{1/2},g) - \Gamma_{L}^{\gamma}(g, \mu^{1/2}). \een

Note that the conservation laws imply that for all $t\ge0$,
	\ben\label{conserveq}  \int_{\mathbb{T}^{3} \times \R^3} F(t,x,v)\phi(v)\mathrm{d}x\mathrm{d}v=\int_{\mathbb{T}^{3} \times \R^3} F(0,x,v)\phi(v)\mathrm{d}x\mathrm{d}v,\quad \phi(v)=1,v_{j},|v|^2, \quad j=1,2,3.\een
Up to suitable choice of the physical parameters in the equilibrium state, without loss of generality, we
assume the initial data satisfy
	\ben
	\label{initial-condition} \int_{\mathbb{T}^{3} \times \R^3} \sqrt{\mu}f_0\phi \mathrm{d}x\mathrm{d}v=0, \quad \phi(v)=1,v_{j},|v|^2, \quad j=1,2,3,\quad F_0 = \mu+\mu^{\f12}f_0\ge 0.
	\een
Then
	\ben \label{Nuspace}  \int_{\mathbb{T}^{3} \times \R^3} \sqrt{\mu}f(t)\phi \mathrm{d}x\mathrm{d}v=0, \quad \phi(v)=1,v_{j},|v|^2, \quad j=1,2,3.\een
%
	
The case for hard potential with  $\gamma +2s \geq 0$ is relatively easy because the linearized Boltzmann operator has a spectrum gap. This corresponds to the region
above the blue line in Figure \ref{range}.
Therefore, we only consider the soft potentials in this paper when 
\ben \label{gamma-s-range-new}  0<s<1, \quad -3 <  \gamma + 2s \leq 0. \een
Note that this corresponds to the region inside the parallelogram in Figure \ref{range}.
 To overcome the lack of spectrum gap, the following weighted energy space is introduced by Guo for the global well-posedness
	\ben \label{energy-functional}
\mathcal{E}_{N,l}^{s,\gamma}(f) := \sum_{j=0}^{N}\|f\|^{2}_{H^{N-j}_{x}
	\dot{H}^{j}_{l+j(\gamma+2s)}}.
\een
If $s=1$, we sometimes write $\mathcal{E}_{N,l}^{\gamma}(f) = \mathcal{E}_{N,l}^{1,\gamma}(f)$ which is the functional space for the Landau equation, cf. Subsection \ref{notation}. 

There are three main results given in the following theorem. The first one is the global well-posedness of the Boltzmann equation \eqref{Cauchy-linearizedBE} in the 
parameter range \eqref{gamma-s-range-new}. The second one is the global well-posedness of the Landau equation \eqref{Cauchy-linearizedLE} for $\gamma>-5$. The last one is about the grazing
limit of   the Boltzmann equation \eqref{Cauchy-linearizedBE} to the Landau equation \eqref{Cauchy-linearizedLE} by proving a 
 global-in-time asymptotic formula for the limit $s \to 1^{-}$.

\begin{thm}\label{asymptotic-result}
	{\textbf{[Well-posedness of the Boltzmann equation]}}
	Let $0<s<1, -3<\gamma+2s \leq 0$.	Let $N \geq 4, l\geq -N(\gamma+2s)$.
	There is a  constant $\delta_{s,\gamma,N,l}>0$ 
	such that, if
	\ben \label{initial-condition-smallness}
	\mathcal{E}^{s,\gamma}_{N,l}(f_{0}) \leq \delta_{s,\gamma,N,l}, \een
	then \eqref{Cauchy-linearizedBE} admits a unique global  solution $f^{s,\gamma}$ satisfying $\mu+\mu^{\f12}f^{s,\gamma}\ge0$ and
	\ben \label{uniform-controlled-by-initial} \sup_{t\ge 0} \mathcal{E}^{s,\gamma}_{N,l}(f^{s,\gamma}({t}))\leq Z_{s,\gamma,N,l} \mathcal{E}^{s,\gamma}_{N,l}(f_{0}),\een
	for some constant $Z_{s,\gamma,N,l}$.  Here $\delta_{s,\gamma,N,l} = \f12 \eta_{s,\gamma,N,l}^2$ where 
	$\eta_{s,\gamma,N,l}$ is given in Theorem
	\ref{a-priori-estimate-LBE}.
	See \eqref{c-relation-1} for the definition of $Z_{s,\gamma,N,l}$.
	For any fixed $N,l$, there are two functions
	$\delta_{N,l}, Z_{N,l} : (0,1] \times (0, 3] \to (0, \infty)$
 satisfying
	\ben \label{dependence-s-gamma}
	\delta_{s,\gamma,N,l} = \delta_{N,l}(s, \gamma+2s+3), \quad Z_{s,\gamma,N,l} = Z_{N,l}(s, \gamma+2s+3),
	\een
	and
	\ben \label{property-of-delta-N-l}
	\delta_{N,l}(x_1, x_2)  \text{ is non-decreasing w.r.t. each argument, and vanishes as } x_1 \to 0^{+} \text{ or } x_2 \to 0^{+},
	\\ \label{property-of-Z-N-l}
	Z_{N,l}(x_1, x_2)  \text{ is non-increasing w.r.t. each argument and tends to infinity  as } x_1 \to 0^{+} \text{ or } x_2 \to 0^{+}.
	\een

{\textbf{[Well-posedness of the Landau equation]}}	Taking $s=1$ in the above,  the global well-posedness of 
	\eqref{Cauchy-linearizedLE} holds true. We state this result in details for later discussion.
	Let $-3<\gamma+2 < 0$. Let $N \geq 4, l\geq -N(\gamma+2)$.
	Suppose $f_0$ verify \eqref{initial-condition}. Let $\delta_{\gamma,N,l} = \delta_{1,\gamma,N,l}, Z_{\gamma,N,l} = Z_{1,\gamma,N,l}$.
	If
	\ben \label{initial-condition-smallness-L}
	\mathcal{E}^{\gamma}_{N,l}(f_{0}) \leq \delta_{\gamma,N,l}, \een
	then \eqref{Cauchy-linearizedLE} admits a unique global  solution $f^{\gamma}$ satisfying $\mu+\mu^{\f12}f^{\gamma}\ge0$ and
	\ben \label{uniform-controlled-by-initial-L} \sup_{t\ge 0} \mathcal{E}^{\gamma}_{N,l}(f^{\gamma}({t}))\leq Z_{\gamma,N,l} \mathcal{E}^{\gamma}_{N,l}(f_{0}).\een
	
	{\textbf{[Asymptotic formula of the grazing limit]}}
	Fix $-5<\gamma < -2$. Let $N \geq 4,  l\geq -N(\gamma+2)$.
	Let $s_{*} : = \f{1}{2}(1 - \f{\gamma+3}{2})$.
	Assume
	\ben \label{initial-condition-smallness-for-asy}
	\mathcal{E}^{\gamma}_{N+3,l+2N-3\gamma+5}(f_{0}) \leq \delta_{s_*,\gamma,N+3,l+2N-3\gamma+5}. \een
	Since for any $s_*\leq s \leq 1$,
	\beno
	\mathcal{E}^{s,\gamma}_{N+3,l+2N-3\gamma+5}(f_{0}) \leq	\mathcal{E}^{\gamma}_{N+3,l+2N-3\gamma+5}(f_{0}) \leq \delta_{s_*,\gamma,N+3,l+2N-3\gamma+5} \leq \delta_{s,\gamma,N+3,l+2N-3\gamma+5},
	\eeno
	then
	by the above well-posedness result, \eqref{Cauchy-linearizedBE} has a unique solution $f^{s,\gamma}$ for 
	$s_* \leq s < 1$, and \eqref{Cauchy-linearizedLE} has a unique solution $f^{\gamma}$. 
	Moreover, 
	the family of solutions $\{f^{s,\gamma}\}_{s_* \leq s < 1}$ satisfy
	\ben \label{error-function-uniform-estimate} \sup_{t\ge 0} \mathcal{E}^{\gamma}_{N,l}(f^{s,\gamma}(t)-f^{\gamma}(t))\leq (1-s)^2 Z_{s_*,\gamma,N,l} \exp \left(C_{N,l} Z_{s_*,\gamma,N,l}^3 C_{\gamma}^2 \mathcal{E}_{N+3,l+2N-3\gamma+5}^{1,\gamma}(f_{0})\right).\een
\end{thm}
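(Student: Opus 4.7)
The plan is to derive an evolution equation for the error $g^{s,\gamma} := f^{s,\gamma} - f^{\gamma}$ and close a weighted energy estimate in the Landau space $\mathcal{E}^{\gamma}_{N,l}$. Subtracting \eqref{Cauchy-linearizedLE} from \eqref{Cauchy-linearizedBE} and exploiting the bilinearity of $\Gamma$, the difference satisfies
\beno
\partial_t g^{s,\gamma} + v \cdot \nabla_x g^{s,\gamma} + \mathcal{L}^{s,\gamma}_B g^{s,\gamma} = \Gamma^{s,\gamma}_B(f^{s,\gamma}, g^{s,\gamma}) + \Gamma^{s,\gamma}_B(g^{s,\gamma}, f^{\gamma}) + R^{s,\gamma}, \quad g^{s,\gamma}|_{t=0} = 0,
\eeno
where the source
\beno
R^{s,\gamma} := (\Gamma^{s,\gamma}_B - \Gamma^{\gamma}_L)(f^{\gamma}, f^{\gamma}) - (\mathcal{L}^{s,\gamma}_B - \mathcal{L}^{\gamma}_L) f^{\gamma}
\eeno
encodes the discrepancy between the scaled Boltzmann and the Landau collision operators and vanishes as $s \to 1^{-}$.

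The key analytical input will be a quantitative version of the decomposition announced after \eqref{mean-momentum-transfer}, namely a pointwise identity of the form $Q^{s,\gamma}_B = 2^{s-1} Q^{\gamma}_L + O(1-s)$, obtained by a second-order Taylor expansion of $h' - h$ and $g'_* - g_*$ at $\theta = 0$ inside \eqref{Boltzmann-operator}. The first-order contributions vanish by the symmetry $\sigma \mapsto -\sigma$, the second-order terms reproduce the Landau matrix $a^{\gamma}$ through \eqref{mean-momentum-transfer} with prefactor $2^{s-1} = 1 + O(1-s)$, and the third-order remainder is integrable against $(1-s)(\sin(\theta/2))^{-2-2s}$ with size $O(1-s)$ uniformly in $s \in [s_*, 1)$. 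Transferring this to the linearized and quadratic operators should yield
\beno
\| R^{s,\gamma} \|_{H^{N-j}_x \dot H^j_{l + j(\gamma+2)}} \lesssim (1-s) \, \Phi\bigl( \mathcal{E}^{\gamma}_{N+3, l+2N-3\gamma+5}(f^{\gamma}) \bigr), \qquad 0 \le j \le N,
\eeno
for some polynomial $\Phi$; the loss of three $x$-derivatives and the augmented velocity weight correspond exactly to \eqref{initial-condition-smallness-for-asy} and stem from dominating second derivatives of $f^{\gamma}$ integrated against the singular Landau kernel $|v-v_*|^{\gamma+2}$ for $\gamma$ as low as $-5$.

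Once $R^{s,\gamma}$ is controlled, we replay the energy estimate behind \eqref{uniform-controlled-by-initial-L} on the equation for $g^{s,\gamma}$, now measured in the Landau norm. For $s \in [s_*, 1)$ the coercivity of $\mathcal{L}^{s,\gamma}_B$ relative to the Landau dissipation norm (again a byproduct of the expansion above, uniform in $s$) together with the smallness of $\mathcal{E}^{s,\gamma}_{N,l}(f^{s,\gamma})$ and $\mathcal{E}^{\gamma}_{N,l}(f^{\gamma})$ granted by \eqref{uniform-controlled-by-initial} and \eqref{uniform-controlled-by-initial-L} permit absorbing the bilinear terms $\Gamma^{s,\gamma}_B(f^{s,\gamma}, g^{s,\gamma}) + \Gamma^{s,\gamma}_B(g^{s,\gamma}, f^{\gamma})$ into the dissipation. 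Pairing $R^{s,\gamma}$ with $g^{s,\gamma}$ by Cauchy--Schwarz and inserting the bound above produces the squared factor $(1-s)^2$ in front of $\Phi^2\bigl(\mathcal{E}^{\gamma}_{N+3,l+2N-3\gamma+5}(f_0)\bigr)$; since $g^{s,\gamma}|_{t=0} = 0$, Gronwall's inequality then delivers \eqref{error-function-uniform-estimate}.

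The hard part will be the quantitative operator expansion of the second paragraph. Obtaining an $O(1-s)$ bound for $R^{s,\gamma}$ with constants uniform in $s \in [s_*, 1)$ requires coping simultaneously with the singular Landau kernel $|z|^{\gamma+2}$ at low $\gamma$ (the regime $\gamma \in (-5, -3]$ is new in this work) and with the angular singularity $(1-s)(\sin(\theta/2))^{-2-2s}$ of the Boltzmann kernel, and it is precisely this balance that fixes the regularity shift $+3$ and weight shift $+2N - 3\gamma + 5$ prescribed in \eqref{initial-condition-smallness-for-asy}. Once the operator-level expansion is established, the remaining energy estimate is a direct adaptation of the well-posedness machinery developed earlier in the paper.
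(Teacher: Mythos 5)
Your proposal addresses only the third part of the theorem (the asymptotic formula) and implicitly assumes the first two (well-posedness of Boltzmann and Landau), which the paper proves in Subsections~7.1--7.2 via the a~priori estimate (Theorem~\ref{a-priori-estimate-LBE}) and a continuity argument. Within the asymptotic part, your overall plan---derive an error equation, expand $Q^{s,\gamma}_B = 2^{s-1}Q^{\gamma}_L + O(1-s)$ to control the source, then close a weighted Gronwall estimate---does match the paper's strategy (Subsection~7.3 together with Proposition~\ref{limit-Botltzmann-to-Landau} in the Appendix). The regularity loss of three derivatives and the weight shift $2N - 3\gamma + 5$ you anticipate are exactly what the paper finds.

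There is, however, a genuine gap in your choice of which linearized operator to retain on the left-hand side. Subtracting the two Cauchy problems, you keep $\mathcal{L}^{s,\gamma}_B g^{s,\gamma}$ on the left and push $(\mathcal{L}^{s,\gamma}_B - \mathcal{L}^\gamma_L)f^\gamma$ and $(\Gamma^{s,\gamma}_B - \Gamma^\gamma_L)(f^\gamma,f^\gamma)$ into the source, while aiming to propagate the \emph{Landau} energy $\mathcal{E}^{\gamma}_{N,l}(g^{s,\gamma})$. The coercivity of $\mathcal{L}^{s,\gamma}_B$ established in Theorem~\ref{micro-dissipation} is with respect to the $s$-dependent dissipation norm $|\cdot|_{s,\gamma/2}$ from \eqref{norm-definition}, which involves $W_s(D)$, $W_s$ and $W_s((-\Delta_{\mathbb{S}^2})^{1/2})$, and is \emph{strictly weaker} than the Landau dissipation $|\cdot|_{1,\gamma/2}$ for every $s<1$; likewise the Boltzmann energy functional $\mathcal{E}^{s,\gamma}_{N,l}$ uses the lighter weights $l+j(\gamma+2s)\le l+j(\gamma+2)$. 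Your parenthetical claim that ``the coercivity of $\mathcal{L}^{s,\gamma}_B$ relative to the Landau dissipation norm\dots is a byproduct of the expansion'' is not substantiated: the operator expansion controls the \emph{difference} $\mathcal{L}^{s,\gamma}_B-\mathcal{L}^{\gamma}_L$ in a weighted $L^2$ sense, but it does not upgrade the Boltzmann dissipation norm to the Landau one, and without that upgrade the a~priori estimate closes only for $\mathcal{E}^{s,\gamma}_{N,l}(g^{s,\gamma})$, a weaker bound than the target \eqref{error-function-uniform-estimate}. The paper sidesteps this by keeping $\mathcal{L}^{\gamma}_L F^{s,\gamma}_R$ on the left (see \eqref{error-equation}), so that the dissipation is the full Landau one; the operator-difference terms $(\mathcal{L}^{\gamma}_L - \mathcal{L}^{s,\gamma}_B)f^{s,\gamma}$ and $(\Gamma^{s,\gamma}_B - \Gamma^{\gamma}_L)(f^{s,\gamma},f^{\gamma})$ are then treated purely as $O(1-s)$ sources, which is exactly what Proposition~\ref{limit-Botltzmann-to-Landau} is designed for. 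If you rewrite your error equation with $\mathcal{L}^{\gamma}_L$ on the left, the rest of your argument lines up with the paper's.
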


In the following we will give some remarks on the above result. First of all, 
we will  keep track of the dependence on the parameters $s,\gamma$. This kind
of dependence gives the precise condition on the parameters for  the global well-posedness theory of the Boltzmann and Landau equation. In particular, we have the explicit relation between $\delta, Z$ and $s,\gamma$
in \eqref{dependence-s-gamma}. On one hand, the region of parameters
for the well-posedness is non-empty as long as $0 <s<1, \gamma>-2s-3$. On the other hand, according to \eqref{property-of-delta-N-l},
the region indicated by
the "radius" $\delta_{s,\gamma,N,l}>0$ 
may shrink
as $s \to 0^{+}$ or $\gamma \to (-2s-3)^{+}$.


The estimate \eqref{error-function-uniform-estimate} implies that 
\ben \label{asy-f-to-f}
F^{s,\gamma}_{B} = F^{\gamma}_{L} + (1-s) F_{R}^{s,\gamma},
\een
where $F^{s,\gamma}_{B}$ and $F^{\gamma}_{L}$ are solutions to \eqref{Cauchy-Boltzmann} and  \eqref{Cauchy-Landau} respectively. Here the error term $F_{R}^{s,\gamma}$ is uniformly bounded in some function space. The formula validates our approach shown in Figure \ref{flow}.


A few more remarks are given as follows.

\begin{rmk}  If $s$ and $\gamma$ are regarded as two
	independent parameters, the well-posedness of the Boltzmann equation holds when $\gamma>-3-2s$ and $0<s<1$. In fact, the angular singularity $s \to 1^{-}$  is the essential reason
	for being possible ill-posedness.
\end{rmk}


\begin{rmk} The  global well-posedness in the lower regularity function spaces, such as   the one introduced in \cite{duan2021global} and time decay rates as  
	 in \cite{strain2006almost,strain2008exponential,duan2021global} for different
	 non-optimal ranges of the parameters $\gamma$ and $s$ can be obtained for the parameters in the
	 optimal range as in Theorem \ref{asymptotic-result}. For brevity and to focus on the
	 key points in this paper, we will not pursue these analysis here.
\end{rmk}

	\begin{rmk}
		In the previous
		studies, the reason that the condition
			 $\gamma>-3$ is needed is  because  the following estimate is used, 
	\ben \label{ingegral-bounded-by-C-gamma}
	\int_{\mathbb{R}^{3}} \mu(v_{*}) |v-v_{*}|^{\gamma} \mathrm{d}v_{*} \leq C_{\gamma} \langle v \rangle^{\gamma},
	\een
	where $C_{\gamma}$ is a constant for any $v \in \mathbb{R}^{3}$. 
	To understand why $\gamma>-3-2s$ is sufficient for well-posedness, intuitively, one can
	note  the
	angular singularity in the cross-section leads to a fractional derivative of order $2s$.
	Thus, we
	can expect to recover of an extra $2s$ in the range for $\gamma$ below $\gamma=-3$ by
	sacrificing some regularity of the solution under consideration.
\end{rmk}

	\subsection{Notations} \label{notation}
	We list some notations that are used in the paper.
	
	{\it {Common notations.}}
	We denote the multi-index $\beta =(\beta_1,\beta_2,\beta_3)$ with
	$|\beta |=\beta _1+\beta _2+\beta _3$.  $a\lesssim b$ means that  there is a
	uniform constant $C,$ which may be different on different lines,
	such that $a\leq Cb$.  We use the notation $a\sim b$ when $a\lesssim b$ and $b\lesssim
	a$. The bracket $\langle \cdot \rangle$ 
	is defined by $\langle \cdot \rangle := (1+|\cdot|^2)^{1/2}$.
	The weight function $W_l$ is defined by $W_l(v) := \langle v\rangle^l$.
	We denote $C(\lambda_1,\lambda_2,\cdots, \lambda_n)$ or $C_{\lambda_1,\lambda_2,\cdots, \lambda_n}$  by a constant depending on   parameters $\lambda_1,\lambda_2,\cdots, \lambda_n$.
	The notations  $\langle f,g\rangle:= \int_{\R^3}f(v)g(v)\mathrm{d}v$ and $(f,g):= \int_{\R^3\times\TT^3} fg\mathrm{d}x\mathrm{d}v$
	are used to denote the inner products for the $v$ variable and for the $x,v$ variables respectively.
	As usual, $\mathrm{1}_A$ is the characteristic function of the set $A$. If $A,B$ are two operators, then $[A,B]:= AB-BA$.

	{\it {Function spaces.}}	
		 For simplicity,  set $\partial^{\alpha}:=\partial^{\alpha}_x, \partial_{\beta}:= \pa^{\beta}_v, \partial^{\alpha}_{\beta}:=\partial^{\alpha}_x\pa^{\beta}_v$. We will use the following 
		 function spaces.
	\begin{itemize}
		\item  For   real number $n, l $,
		\begin{equation*}
			H^{n}_l :=\bigg\{f(v)\big| |f|^2_{H^n_l}:= |\langle D\rangle^n W_{l} f|^{2}_{L^{2}} = \int_{\R^3} |(\langle D\rangle^n W_{l} f)(v)|^2 \mathrm{d}v
			<+\infty\bigg\}.
		\end{equation*} Here $a(D)$ is a   differential operator with the symbol
		$a(\xi)$ defined by
		\beno  \big(a(D)f\big)(v):=\f1{(2\pi)^3}\int_{\R^3}\int_{\R^3} e^{i(v-y)\xi}a(\xi)f(y)\mathrm{d}y\mathrm{d}\xi.\eeno
		\item
		For $n \in \N, l \in \R$, 
		\beno H^{n}_{l} := \bigg\{f(v) \big| |f|^{2}_{H^{n}_{l}} :=
		\sum_{|\beta| \leq n}  |\pa_{\beta}f|^{2}_{L^{2}_l}< \infty  \bigg\},\eeno
		where $|f|_{L^{2}_{l}} := |W_l f|_{L^{2}}$ is the usual $L^{2}$ norm with weight $W_l$.

		\item For $n \in \N, l \in \R$, 
		\ben \label{pure-order} \dot{H}^{n}_l := \bigg\{f(v) \big| |f|^{2}_{\dot{H}^{n}_{l}} :=
		\sum_{|\beta| = n}  |\pa_{\beta} f|^{2}_{L^{2}_l}< \infty  \bigg\}.\een

		\item For $m\in\N$, 
		\begin{equation*} H^{m}_{x} :=\bigg\{f(x)\big| |f|^{2}_{H^{m}_{x}}:= \sum_{|\alpha | \leq m}|\partial^{\alpha} f|^{2}_{L^{2}_{x}}<\infty\bigg\}.
		\end{equation*}

		\item For $m,n \in \N, l \in \R$, 
		\beno H^{m}_xH^{n}_{l} := \bigg\{f(x,v) \big| \|f\|^{2}_{ H^{m}_xH^{n}_{l}} :=
		\sum_{|\alpha| \leq m, |\beta| \leq n}  || \partial^{\alpha}_{\beta} f|_{L^{2}_l} |^{2}_{L^{2}_{x}} < \infty\bigg\}.\eeno
		We write $\|f\|_{H^{m}_{x}L^{2}_{l}} := \|f\|_{ H^{m}_xL^{2}_{l} }$ if $n=0$ and   $\|f\|_{L^{2}_{x}L^{2}_{l}} := \|f\|_{ H^{0}_xH^{0}_{l}}$ if $m=n=0$. The space $H^{m}_x\dot{H}^{n}_{l}$ can be defined similarly.
	\end{itemize}
	
	Finally in the introduction, let us recall the dissipation norm of the linearized operators $\mathcal{L}^{s,\gamma}_{B}$ and 
	$\mathcal{L}^{\gamma}_{L}$. More precisely, for $l \in \mathbb{R}$, set
	\ben \label{norm-definition}
	|f|^{2}_{s,l} :=|W_{s}((-\Delta_{\mathbb{S}^{2}})^{1/2}) W_{l}f|^{2}_{L^{2}} + |W_{s}(D)W_{l}f|^2_{L^{2}} + |W_{s} W_{l}f|^{2}_{L^{2}}. \een
	Here $W_{s}(D)$ is the pseudo-differential operator with symbol $W_{s}$.
	The operator $W_{s}((-\Delta_{\mathbb{S}^{2}})^{1/2})$ is defined as follows. If $v = r \sigma, r \geq 0, \sigma \in \mathbb{S}^{2}$, then
	\ben\label{DeltaWe} (W_{s}((-\Delta_{\mathbb{S}^{2}})^{1/2})f)(v) := \sum_{l=0}^\infty\sum_{m=-l}^{l} (1+l(l+1))^{\frac{s}{2}}
	Y^{m}_{l}(\sigma)f^{m}_{l}(r),
	\een
	where
	$ f^{m}_{l}(r) = \int_{\mathbb{S}^{2}} Y^{m}_{l}(\sigma) f(r \sigma) \mathrm{d}\sigma$,
	and $Y_l^m, -l\le m\le l$ are the real spherical harmonics satisfying 
	$ (-\Delta_{\mathbb{S}^2})Y_l^m=l(l+1)Y_l^m.$
	Note that the function $W_{s}$ is the common weight gain in the three individual norms. 
The dissipation norm $|\cdot|_{s,\gamma/2}$ characterizes $\mathcal{L}^{s,\gamma}_{B}$, see Proposition \ref{part-l} and
Theorem
\ref{micro-dissipation} for details. Similarly, the dissipation norm $|\cdot|_{1,\gamma/2}$ characterizes the $\mathcal{L}^{\gamma}_{L}$.

	From time to time,  we also write $|f|_{L^{2}_{s,l}} = |f|_{s,l}$. For functions defined on $\mathbb{T}^{3}\times\R^{3}$, the space $H^{m}_xH^{n}_{s,l}$   with $m,n\in\N$ is defined  by
		\beno  H^{m}_xH^{n}_{s,l}:=\bigg\{f(x,v)\big|
		\|f\|^{2}_{H^{m}_xH^{n}_{s,l}} := \sum_{|\alpha| \leq m, |\beta| \leq n} || \partial^{\alpha}_{\beta} f|_{L^{2}_{s,l}} |^{2}_{L^{2}_{x}} <\infty\bigg\}.
		\eeno
	Set $\|f\|_{H_x^{m}L^2_{s,l}}:= \|f\|_{H^{m}_xH^{0}_{s,l}}$ if $n=0$ and $\|f\|_{L^{2}_{x}L^2_{s,l}}:= \|f\|_{H^{0}_xH^{0}_{s,l}}$ if $m=n=0$. Again, the space $H^{m}_x\dot{H}^{n}_{s,l}$ can be defined accordingly.
	
	We sometimes omit the range of some frequently used variables in the integrals.
	Usually, $\sigma \in \mathbb{S}^{2}, v, v_{*}, u, \xi \in \mathbb{R}^{3}$.
	For example,  $\int (\cdots) \mathrm{d}\sigma := \int_{\mathbb{S}^{2}} (\cdots) \mathrm{d}\sigma,  \int (\cdots) \mathrm{d}\sigma \mathrm{d}v \mathrm{d}v_{*}
	:= \int_{\mathbb{S}^{2} \times \mathbb{R}^{3} \times \mathbb{R}^{3}} (\cdots) \mathrm{d}\sigma \mathrm{d}v \mathrm{d}v_{*}$.
	Integration w.r.t. other variables is understood in a similar way.
	Whenever a new variable appears, we will specify its range. 
	
When there is no confusion, we drop the subscripts $B$ and $L$ in the Boltzmann and Landau operators for brevity.
	
	\subsection{Organization of the paper}
	In Section \ref{Upper-Bound-Estimate-near}, we derive  the 
	upper bound estimates of operators in the singular region. Section \ref{Upper-Bound-Estimate-away} contains the upper bound estimates in the regular region. We 
	will show
the	coercivity estimate in Section \ref{coercivity-spectral}. 
 The commutator estimates and weighted upper bound estimates are given in Section \ref{Commutator-Estimate}. The proof Theorem
 \ref{asymptotic-result} is given in Section \ref{proof-main-theorem}. In the Appendix \ref{Operator-difference}, for  completeness,
 we prove  the operator convergence stated in Proposition \ref{limit-Botltzmann-to-Landau}.


	\section{Upper bound estimate in the singular region} \label{Upper-Bound-Estimate-near}
	
	In this section, we will derive the upper bound estimates on the collision operators in the singular region
	$|v-v_*| \lesssim \eta$. For this, we first recall the dyadic decomposition.

	\subsection{Dyadic decomposition} 
	Let  $\varphi$ be a smooth function on $\mathbb{R}_{+}$ satisfying 
	\beno
	\varphi=0 \text{ on } [0, 3/4], \quad \varphi \text{ is strictly increasing on } [3/4, 4/3], \quad \varphi = 1 \text{ on } [4/3, 3/2],
	\\
	\varphi \text{ is strictly decreasing on } [3/2, 8/3], \quad \varphi = 0 \text{ on } [8/3, \infty), \quad
	|\varphi^{\prime}| \leq 4 .
	\eeno
	Moreover, $\varphi$ is chosen such that
	the functions $\{\varphi_{k}(\cdot)\colonequals\varphi(\cdot/2^{k})\}_{ k \in \mathbb{Z}}$ is a partition of unit on $(0, \infty)$. That is,
	\ben \label{partition-of-unity}
	\sum_{k= -\infty}^{\infty} \varphi_{k} = 1 \text{ on } (0, \infty).
	\een
	Set  $\psi(r) = \sum_{k= -\infty}^{-1} \varphi_{k}(r) $ for $r >0$ and $\psi(0) =1$.  Then $\psi$ is a non-increasing smooth function on $\mathbb{R}_{+}$  and satisfies
	\ben \label{smooth-psi}
	\psi=1 \text{ on } [0, 3/4], \quad \psi \text{ is strictly decreasing on } [3/4, 4/3], \quad \psi = 0 \text{ on } [4/3, \infty), \quad |\psi^{\prime}| \leq 4 .
	\een
	Then we have the identity 
	\ben \label{function-identity}
	\psi + \sum_{k=0}^{\infty} \varphi_{k} \equiv 1 \text{ on } [0, \infty).
	\een
	With a little abuse of notation, we define radial functions $\varphi(v)\colonequals\varphi(|v|), \psi(v)\colonequals\psi(|v|), \varphi_{k}(v)\colonequals\varphi_{k}(|v|)$ for $ v \in \R^{3}$.

	Given a general Boltzmann kernel $B = B(v-v_{*},\sigma) = B(|v-v_{*}|,\cos\theta)$ where $\cos\theta = \f{v-v_{*}}{|v-v_{*}|} \cdot \sigma$, let $Q$ be the  Boltzmann operator with kernel $B$. That is,
	\ben \label{def-Boltzmann-operator-general}
	Q(g,h) \colonequals  \int_{\mathbb{S}^{2}} \int_{\mathbb{R}^{3}} B(|v-v_{*}|, \f{v-v_{*}}{|v-v_{*}|} \cdot \sigma) (g^{\prime}_{*}h^{\prime} - g_{*}h)
	\mathrm{d}\sigma \mathrm{d}v_{*}.
	\een

	
		Let $\mathcal{U}_{k}  \colonequals  \sum_{j \leq k} \varphi_{j}, \tilde{\varphi}_{k} \colonequals \sum_{|j-k| \leq N_{0}-1} \varphi_{j}$ for $k \in \mathbb{Z}$ for some fixed integer $N_{0} \geq 4$.
	Suppose the relative velocity satisfies $|v-v_{*}| \leq \frac{4}{3}$. Let $v_{*} \in \{\frac{3}{4} \times  2^{j} \leq |v_{*}| \leq \frac{8}{3} \times  2^{j}\}$. Then the fact that 
	\beno
	\frac{\sqrt{2}}{2} |v-v_{*}| \leq |v^{\prime}-v_{*}| \leq |v-v_{*}|,
	\eeno
	yields
	\begin{itemize}
		\item If $j \geq N_{0}-1$, then $|v|,|v^{\prime}| \in [(\frac{3}{4} - \frac{8}{3} \times 2^{-N_{0}})2^{j}, \frac{8}{3} (1 + 2^{-N_{0}})2^{j}] \subset \mathrm{Supp} \tilde{\varphi}_{j}$.
		\item If $j \leq N_{0}-2$, then $|v|,|v^{\prime}| \leq \frac{3}{2} \times 2^{N_{0}-1} \subset \mathrm{Supp} \mathcal{U}_{N_{0}-1}$.
	\end{itemize}
	Hence, if $Q$ is localized in $|v-v_{*}| \leq \frac{4}{3}$, then
	\ben 	\label{decomposition-phase-near-0}
	\langle Q(g,h), f\rangle =  \sum_{j \geq N_{0}-1} \langle Q(\varphi_{j} g, \tilde{\varphi}_{j}h), \tilde{\varphi}_{j}f\rangle + \langle Q(\mathcal{U}_{N_{0}-2} g, \mathcal{U}_{N_{0}-1}h), \mathcal{U}_{N_{0}-1}f\rangle.
	\een

	Let us recall the Bobylev formula which is about the Fourier transform of the Boltzmann operator. By \eqref{def-Boltzmann-operator-general}, the Bobylev formula reads
	\beno
	\mathcal{F}(Q(g,h))(\xi) = \int_{\mathbb{S}^{2}} \int_{\mathbb{R}^{3}} \big(\hat{B}(|\eta_{*}-\xi^{-}|, \f{\xi}{|\xi|}\cdot\sigma)
	-\hat{B}(|\eta_{*}|, \f{\xi}{|\xi|}\cdot\sigma)\big) \mathcal{F} g(\eta_{*}) \mathcal{F}h(\xi - \eta_{*}) \mathrm{d}\sigma \mathrm{d}\eta_{*},
	\eeno
	where
	\beno
	\hat{B}(|\xi|, \cos\theta)  \colonequals   \int_{\mathbb{R}^{3}}   B(|q|, \cos\theta) e^{- i q \cdot \xi}      \mathrm{d}q.
	\eeno
	Here $\mathcal{F}$ is the Fourier transform operator. As usual, denote $\hat{f} = \mathcal{F}f$, then
	\ben
	\langle Q(g,h), f \rangle &=& \langle \widehat{Q(g,h)}, \hat{f} \rangle
	\nonumber \\&=& \int_{\mathbb{S}^{2}} \int_{\mathbb{R}^{3}} \int_{\mathbb{R}^{3}} \big(\hat{B}(|\eta_{*}-\xi^{-}|, \f{\xi}{|\xi|}\cdot\sigma)
	-\hat{B}(|\eta_{*}|, \f{\xi}{|\xi|}\cdot\sigma)\big)  \hat{g}(\eta_{*}) \hat{h}(\xi - \eta_{*}) \bar{\hat{f}}(\xi) \mathrm{d}\sigma \mathrm{d}\eta_{*} \mathrm{d}\xi. \label{inner-product-into-frequency}
	\een
	Note that 
	\beno
	||\xi| - |\eta_{*}|| \leq |\xi - \eta_{*}| \leq |\xi| + |\eta_{*}|.
	\eeno
	
	Fix $j, p \in \mathbb{Z}$, suppose $\frac{3}{4} \times 2^{p}\leq |\eta_{*}| \leq \frac{8}{3} \times 2^{p}$ and $\frac{3}{4} \times 2^{j}\leq |\xi - \eta_{*}| \leq \frac{8}{3} \times 2^{j}$. Since $N_{0} \geq 4$, then
	\begin{itemize}
		\item If $p \leq j-N_{0}$, then $|\xi| \in [(\frac{3}{4} - \frac{8}{3} \times 2^{-N_{0}})2^{j}, \frac{8}{3} (1 + 2^{-N_{0}})2^{j}] \subset \mathrm{Supp} \tilde{\varphi}_{j}$.
		\item If $p \geq j+N_{0}$, then $|\xi| \in [(\frac{3}{4} - \frac{8}{3} \times 2^{-N_{0}})2^{p}, \frac{8}{3} (1 + 2^{-N_{0}})2^{p}] \subset \mathrm{Supp} \tilde{\varphi}_{p} $.
		\item If $|p-j| < N_{0}$, then $|\xi| \in [0,\frac{3}{2} \times 2^{p+N_{0}}]  \cap [0,\frac{3}{2} \times 2^{j+N_{0}}] \subset \mathrm{Supp} \mathcal{U}_{p+N_{0}} \cap \mathrm{Supp} \mathcal{U}_{j+N_{0}}$.
	\end{itemize}
	
	Define
	\ben \label{DefFj} \mathfrak{F}_{-1}f(x) \colonequals \psi(D) f, \quad
	\mathfrak{F}_{j}f(x) \colonequals \varphi_{j}(D) f, j \geq 0.
	\een
	 $\mathfrak{F}_{-1}$ and $\mathfrak{F}_{j}$ localize the frequency of function $f$ in the region $|\xi| \lesssim 1$ and  $|\xi| \sim 2^{j}$ respectively. By \eqref{function-identity}, 
	the dyadic decomposition in frequency space reads
	\beno f = \sum_{j \geq -1} \mathfrak{F}_j f.
	\eeno
	Note that $\mathfrak{F}_{-1}$ has symbol $\psi$ instead of $\varphi_{-1}$. Set
	$\tilde{\mathfrak{F}}_{k} := \tilde{\varphi}_{k}(D)$.
	Then we have
	\ben \label{decomposition-in-frequency-space}
	\langle Q(g,h), f\rangle
	&=& \sum_{j \geq N_{0} -1} \sum_{-1 \leq p \leq j-N_{0}} \langle Q(\mathfrak{F}_{p} g, \mathfrak{F}_{j} h), \tilde{\mathfrak{F}}_{j} f\rangle 	+ \sum_{p \geq N_{0} -1} \sum_{-1 \leq j \leq p-N_{0}}  \langle Q(\mathfrak{F}_{p} g, \mathfrak{F}_{j}h), \tilde{\mathfrak{F}}_{p}f\rangle
	\\ \nonumber && 	+\sum_{p,j \geq -1, |p-j| < N_{0}}  \sum_{q \leq j+N_{0}} \langle Q(\mathfrak{F}_{p} g, \mathfrak{F}_{j}h), \mathfrak{F}_{q}f\rangle.
	\een
	
	For completeness, we  recall the definition of symbol class $S^{m}_{1, 0}$ as follows.
	\begin{defi}\label{psuopde} A smooth function $W(v,\xi)$ is said to be a symbol of type $S^{m}_{1, 0}$ if for any multi-indices $\alpha$ and $\beta$,
		\beno |(\pa^\alpha_\xi\pa^\beta_v W)(v,\xi)|\le C_{\alpha,\beta} \langle \xi\rangle^{m-|\alpha|}, \eeno
		where $C_{\alpha,\beta}$ is a constant depending only on   $\alpha$ and $\beta$.
	\end{defi}

	\begin{lem} \label{operatorcommutator1}
		Let $l, m, r \in \R, M(v,\xi) = M(\xi) \in S^{r}_{1,0}$ and $\Phi(v,\xi) = \Phi(\xi) \in S^{l}_{1,0}$. Then there exists a constant $C$ such that
		\beno
		|[M(D), \Phi]f|_{H^{m}} \leq C|f|_{H^{r+s-1}_{l-1}}.
		\eeno
	\end{lem}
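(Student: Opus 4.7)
The plan is to establish this as a standard pseudo-differential commutator estimate via the symbolic calculus. The first remark is one of interpretation: for the bracket $[M(D),\Phi]$ to be nontrivial at all, $\Phi$ must act as a multiplication operator, so I read $\Phi\in S^l_{1,0}$ as a $v$-dependent symbol $\Phi(v)$ with $|\partial_v^\alpha\Phi(v)|\lesssim \langle v\rangle^{l-|\alpha|}$ (two Fourier multipliers $M(D)$ and $\Phi(D)$ commute exactly). Starting from the oscillatory integral representation
\begin{equation*}
[M(D),\Phi]f(v) = \frac{1}{(2\pi)^3}\iint e^{i(v-y)\cdot\xi}\,M(\xi)\bigl(\Phi(y)-\Phi(v)\bigr)f(y)\,dy\,d\xi,
\end{equation*}
the whole point is that the subtraction $\Phi(y)-\Phi(v)$ supplies the one-order gain that the statement claims.

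First I would Taylor expand $\Phi(y)-\Phi(v)=\int_0^1 (y-v)\cdot\nabla\Phi(v+t(y-v))\,dt$ and integrate by parts in $\xi$ using $(y-v)e^{i(v-y)\cdot\xi}=i\,\nabla_\xi e^{i(v-y)\cdot\xi}$. This converts the factor $(y-v)$ into a derivative on $M$ and turns the commutator into an amplitude operator with amplitude
\begin{equation*}
a(v,y,\xi) \;=\; \tfrac{1}{i}\,\nabla_\xi M(\xi)\cdot\int_0^1 \nabla\Phi\bigl(v+t(y-v)\bigr)\,dt .
\end{equation*}
Because $\nabla_\xi M\in S^{r-1}_{1,0}$ and $\nabla\Phi\in S^{l-1}_{1,0}$, this amplitude is of order $r-1$ in $\xi$ and of order $l-1$ in the spatial weight. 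Splitting off the leading part $\tfrac{1}{i}(\nabla_\xi M)(\xi)\cdot\nabla\Phi(v)$ and treating the remainder (which is smoother) by the same mechanism, I would then invoke the Coifman–Meyer/Kumano-go amplitude calculus to identify $[M(D),\Phi]$ with a pseudo-differential operator of order $r-1$ whose full symbol is controlled by $\langle v\rangle^{l-1}\langle\xi\rangle^{r-1}$.

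Once the amplitude has been repackaged this way, the Sobolev mapping property $\mathrm{Op}(S^{r-1}_{1,0})\colon H^{m+r-1}\to H^m$, together with the pointwise bound $|\nabla\Phi(v)|\lesssim \langle v\rangle^{l-1}$ that absorbs the weight into the right-hand side, yields the claimed estimate (with the right-hand norm interpreted as $H^{m+r-1}_{l-1}$, reflecting the standard one-derivative gain of the commutator). The main technical obstacle is not the idea but the bookkeeping: one must verify that the $v$-, $y$-, $\xi$-dependent amplitude $a(v,y,\xi)$, uniformly in $t\in[0,1]$, satisfies the hypotheses of the amplitude theorem. The key step is to transfer the symbol estimate $|\partial_v^\alpha\Phi(v+t(y-v))|\lesssim\langle v+t(y-v)\rangle^{l-|\alpha|}$ to $\langle v\rangle^{l-|\alpha|}$; this is done by exploiting the rapid decay in $|v-y|$ of the kernel obtained after integrating $M(\xi)$ against $e^{i(v-y)\cdot\xi}$, which allows one to trade factors of $\langle v+t(y-v)\rangle$ against $\langle v\rangle$ on the support where the kernel is essentially concentrated. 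With this bookkeeping done, the conclusion is the standard commutator bound, and the assumptions $M\in S^r_{1,0}$, $\Phi\in S^l_{1,0}$ enter only through the routine symbol estimates.
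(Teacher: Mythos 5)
The paper does not prove this lemma; it simply cites Lemma 5.3 of \cite{he2018sharp}, so there is no in-paper argument to compare against. Your proposal is the standard pseudo-differential commutator estimate and is essentially sound. You correctly resolved the two interpretive issues in the statement as printed: $\Phi(v,\xi)=\Phi(\xi)$ would give $[M(D),\Phi(D)]=0$, so $\Phi$ must act by multiplication (and indeed, in the one place the lemma is invoked with explicit symbols, $\Phi=\langle\cdot\rangle^{-\gamma}|\cdot|^{\gamma}(1-\psi(\cdot))$ is a function of $v$); and the exponent $r+s-1$ on the right-hand side is a typo for $m+r-1$, which is the only thing dimensional analysis allows and the only reading consistent with your conclusion $[M(D),\Phi]\colon H^{m+r-1}_{l-1}\to H^{m}$.

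On the substance: the Taylor expansion of $\Phi(y)-\Phi(v)$, the integration by parts in $\xi$ turning $(y-v)$ into $i\nabla_{\xi}$ on $M$, the resulting amplitude $\nabla_{\xi}M\cdot\int_{0}^{1}\nabla\Phi(v+t(y-v))\,dt$ of order $r-1$ in $\xi$ and weight $l-1$ in $v$, and the appeal to the amplitude calculus and weighted $H^{m}$-mapping are all correct steps. Two places where the bookkeeping deserves a real argument rather than a remark: first, "treating the remainder by the same mechanism" needs either a convergent asymptotic expansion in powers of $(y-v)\cdot\nabla_{\xi}$ or a one-shot amplitude theorem applicable to the full $a(v,y,\xi)$ without splitting off a leading term; second, the Peetre step $\langle v+t(y-v)\rangle^{l-|\alpha|}\lesssim\langle v\rangle^{l-|\alpha|}\langle y-v\rangle^{|l-|\alpha||}$ must be paired with enough $\xi$-integrations by parts to produce $\langle y-v\rangle^{-N}$ decay for arbitrarily large $N$, so that the extra polynomial is genuinely absorbed uniformly in $t\in[0,1]$. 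Both points are routine, and your sketch identifies them, so the proposal is correct as an outline; whether it matches the proof in \cite{he2018sharp} cannot be determined from the present paper.
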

	See Lemma 5.3 in \cite{he2018sharp} for the proof of Lemma \ref{operatorcommutator1}.
	Based on Lemma \ref{operatorcommutator1},  one directly has
	\ben \label{h-m-l-norm}
	|f|_{H^{m}_{l}}^2 \sim \sum_{j,k=-1}^{\infty} 2^{2mk} 2^{2lj}
	|\varphi_{k}(D) \varphi_{j}f|_{L^{2}}^2
	\sim \sum_{j,k=-1}^{\infty} 2^{2mk} 2^{2lj}
	|\varphi_{j} \varphi_{k}(D)f|_{L^{2}}^2.
	\een
	Here for simplicity, we take $\varphi_{-1} := \psi$. By \eqref{h-m-l-norm}, we use both $|W_{l}f|_{H^{m+s}}$ and $|W_{l} W_{s}(D)f|_{H^m}$ in the rest of the paper.

		\subsection{Operator splitting}	\label{ope-spl}
	We divide the relative velocity into two parts 
	\beno
	B(v-v_{*}, \sigma) = B_{\eta}^{s,\gamma}(v-v_{*}, \sigma)
	+ B^{s, \gamma, \eta}(v-v_{*}, \sigma),
	\eeno
	where
	\ben \label{kernel-split}
	B_{\eta}^{s,\gamma}(v-v_{*}, \sigma) := \psi_{\eta}(|v-v_{*}|) B(v-v_{*}, \sigma), \quad 
	B^{s,\gamma, \eta}(v-v_{*}, \sigma) := (1- \psi_{\eta}(|v-v_{*}|)) B(v-v_{*}, \sigma).
	\een
	Note that $B^{s,\gamma}_{\eta}$ is supported in $|v-v_{*}| \leq 4\eta/3$ so that it is singular when $|v-v_{*}| \to 0$, while  $B^{s,\gamma,\eta}$ is supported in $|v-v_{*}| \geq 3\eta/4$ 
	without singularity.

	Let $Q^{s,\gamma}_{\eta}$ and $Q^{s,\gamma, \eta}$ be the Boltzmann operators defined with kernel 
	$B^{s,\gamma}_{\eta}$ and $B^{s,\gamma,\eta}$ respectively. 
	And then let $\Gamma^{s,\gamma}_{\eta}$ and $\Gamma^{s,\gamma,\eta}$ be the nonlinear terms defined with kernel 
	$B^{s,\gamma}_{\eta}$ and $B^{s,\gamma,\eta}$ respectively. 
	
	Recall that the nonlinear term $\Gamma$ for a general kernel $B$ is defined by
	\beno
	\Gamma(g,h) &=& \mu^{-1/2} Q(\mu^{1/2}g,\mu^{1/2}h)
	= \int B(v-v_*,\sigma) \mu^{1/2}_{*} \big(g^{\prime}_{*} h^{\prime}-g_{*} h\big) \mathrm{d}\sigma \mathrm{d}v_{*}
	\\&=& \int B(v-v_*,\sigma) \big((\mu^{1/2}g)^{\prime}_{*} h^{\prime}-(\mu^{1/2}g)_{*} h\big)\mathrm{d}\sigma \mathrm{d}v_{*}
	\\&&+ \int B(v-v_*,\sigma) \big(\mu^{1/2}_{*}-(\mu^{1/2})^{\prime}_{*}\big)g^{\prime}_{*} h^{\prime} \mathrm{d}\sigma \mathrm{d}v_{*}
	= Q(\mu^{1/2}g,h) + I(g,h),
	\eeno
	where for brevity,  we define
	\ben \label{I-ep-ga-geq-eta}
	I(g,h) :=\int B(v-v_*,\sigma) \big(\mu^{1/2}_{*}-(\mu^{1/2})^{\prime}_{*}\big)g^{\prime}_{*} h^{\prime} \mathrm{d}\sigma \mathrm{d}v_{*} .
	\een
	Let $I^{s,\gamma}, I^{s,\gamma}_{\eta}$ and $I^{s,\gamma,\eta}$ be the bi-linear operators defined according to \eqref{I-ep-ga-geq-eta} with kernel 
	$B^{s,\gamma}, B^{s,\gamma}_{\eta}$ and $B^{s,\gamma,\eta}$ respectively.  Other operators
	with such subscripts and  superscripts are understood in the same way.
	With these notations, we have
	\ben
	\label{Gamma-ep-ga-into-IQ}
	\Gamma^{s,\gamma}(g,h) &=& Q^{s,\gamma}(\mu^{1/2}g,h) + I^{s,\gamma}(g,h),
	\\
	\label{Gamma-ep-ga-geq-eta-into-IQ}
	\Gamma^{s,\gamma,\eta}(g,h) &=& Q^{s,\gamma,\eta}(\mu^{1/2}g,h) + I^{s,\gamma,\eta}(g,h),
	\\
	\label{Gamma-ep-ga-leq-eta-into-IQ}
	\Gamma^{s,\gamma}_{\eta}(g,h) &=& Q^{s,\gamma}_{\eta}(\mu^{1/2}g,h)+ I^{s,\gamma}_{\eta}(g,h),
	\\
	\label{Q-ep-ga-sep-eta}
	Q^{s,\gamma}(g,h) &=& Q^{s,\gamma,\eta}(g,h) + Q^{s,\gamma}_{\eta}(g,h),
	\\
	\label{Gamma-ep-ga-sep-eta}
	\Gamma^{s,\gamma}(g,h) &=& \Gamma^{s,\gamma,\eta}(g,h) + \Gamma^{s,\gamma}_{\eta}(g,h),
	\\
	\label{I-ep-ga-sep-eta}
	I^{s,\gamma}(g,h) &=& I^{s,\gamma,\eta}(g,h) + I^{s,\gamma}_{\eta}(g,h).
	\een

	To implement the energy estimates for the nonlinear equations, we need to take derivatives.
	By binomial expansion, we have
	\ben \label{alpha-beta-on-Gamma} \pa^{\alpha}_{\beta}\Gamma^{s,\gamma}(g,h) = \sum _{\beta_{0}+\beta_{1}+\beta_{2}= \beta,\alpha_{1}+\alpha_{2}=\alpha} C^{\beta_{0},\beta_{1},\beta_{2}}_{\beta} C^{\alpha_{1},\alpha_{2}}_{\alpha} \Gamma^{s,\gamma}(\pa^{\alpha_{1}}_{\beta_{1}}g,\pa^{\alpha_{2}}_{\beta_{2}}h;\beta_{0}),\een
	where
	\ben \label{Gamma-beta}
	\Gamma^{s,\gamma}(g,h;\beta)(v):=
	\int_{\SS^{2} \times \R^3} B^{s,\gamma}(v-v_*,\sigma)(\pa_{\beta}\mu^{1/2})_{*}(g'_*h'-g_*h)\mathrm{d}\sigma \mathrm{d}v_{*}.
	\een
	Note that
	\ben \label{Gamma-ep-ga-geq-eta-into-IQ-inner-beta}
	\Gamma^{s,\gamma}(g,h;\beta) =   Q^{s,\gamma}(g\partial_{\beta}\mu^{1/2},h) +
	I^{s,\gamma}(g,h;\beta),
	\een
	where
	\ben \label{I-ep-ga-geq-eta-beta}
	I^{s,\gamma}(g,h;\beta) :=\int B^{s,\gamma}(v-v_*,\sigma) \big((\pa_{\beta}\mu^{1/2})_{*}-(\pa_{\beta}\mu^{1/2})^{\prime}_{*}\big)g^{\prime}_{*} h^{\prime} \mathrm{d}\sigma \mathrm{d}v_{*}.
	\een
	Thus,  in general we need to consider $ I^{s,\gamma}(g,h;\beta)$. This is again divided into two parts:
	$I^{s,\gamma,\eta}(g,h;\beta)$ and $ I^{s,\gamma}_{\eta}(g,h;\beta)$. 
	
	Recall 
	\beno 
	\mathcal{L}^{s,\gamma} f
	= - \Gamma^{s,\gamma}(\mu^{\f12}, f) - \Gamma^{s,\gamma}(f, \mu^{\f12}).
	\eeno
	By binomial expansion, we have
	\ben \label{alpha-beta-on-L} \pa^{\alpha}_{\beta} \mathcal{L}^{s,\gamma} f = \sum _{\beta_{0}+\beta_{1}+\beta_{2}= \beta} C^{\beta_{0},\beta_{1},\beta_{2}}_{\beta}  \mathcal{L}^{s,\gamma}(\pa^{\alpha}_{\beta_2}f; \beta_0, \beta_1),\een
	where
	\ben \label{beta-version-L-s-gamma}
	\mathcal{L}^{s,\gamma}(f; \beta_0, \beta_1) := - \Gamma^{s,\gamma}(\pa_{\beta_{1}}\mu^{1/2}, f;\beta_{0})- \Gamma^{s,\gamma}(f, \pa_{\beta_{1}}\mu^{1/2};\beta_{0}).
	\een
	We also define
	\ben \label{beta-version-L-1-2-def}
	\mathcal{L}^{s,\gamma}_{1}(f; \beta_0, \beta_1) := - \Gamma^{s,\gamma}(\pa_{\beta_{1}}\mu^{1/2}, f;\beta_{0}), \quad
	\mathcal{L}^{s,\gamma}_{2}(f; \beta_0, \beta_1) := - \Gamma^{s,\gamma}(f, \pa_{\beta_{1}}\mu^{1/2};\beta_{0}).
	\een
	In the same way, we can define
	$\mathcal{L}^{s,\gamma,\eta}(\cdot; \beta_0, \beta_1),
	\mathcal{L}^{s,\gamma,\eta}_{1}(\cdot; \beta_0, \beta_1), \mathcal{L}^{s,\gamma,\eta}_{2}(\cdot; \beta_0, \beta_1)$ with kernel $B^{s,\gamma,\eta}$
	and $\mathcal{L}^{s,\gamma}_{\eta}(\cdot; \beta_0, \beta_1),
	\mathcal{L}^{s,\gamma}_{\eta, 1}(\cdot; \beta_0, \beta_1), \mathcal{L}^{s,\gamma}_{\eta, 2}(\cdot; \beta_0, \beta_1)$ with kernel $B^{s,\gamma}_{\eta}$.

	\subsection{Taylor expansion and symmetry} 
	
	When evaluating the difference $f'-f$ (or $f'_*-f_*$) before and after collision, Taylor expansion is applied. We first denote the 1st-order expansion by
	\ben
	\label{Taylor1-order-1}
	f'-f=\int_0^1 (\na f)(v(\kappa)) \cdot (v'-v)\mathrm{d}\kappa, \quad 
	f'_*-f_*= \int_0^1 (\na f)(v_{*}(\iota)) \cdot (v'_*-v_*)\mathrm{d}\iota.
	\een

	To cancel the angular singularity, the second order expansion is needed:
	\ben
	&&\label{Taylor1}
	f'-f=(\na f)(v)\cdot(v'-v)+\int_0^1(1-\kappa) (\na^2 f)(v(\kappa)):(v'-v)\otimes(v'-v)\mathrm{d}\kappa,
	\\
	&&\label{Taylor2} f'-f=(\na f)(v')\cdot(v'-v)-\int_0^1 \kappa(\na^2 f)(v(\kappa)):(v'-v)\otimes(v'-v)\mathrm{d}\kappa.
	\een 
	Thanks to the symmetry property of $\sigma$-integral,  we have
	\ben \label{cancell1} &&\int B(|v-v_*|, \f{v-v_*}{|v-v_*|}\cdot\sigma) (v'-v) \mathrm{d}\sigma = \int B(|v-v_*|, \f{v-v_*}{|v-v_*|}\cdot\sigma)  \sin^{2}\f{\theta}{2} (v_* - v)  \mathrm{d}\sigma, 
	\\
	\label{cancell2} &&\int  B(|v-v_*|,\f{v-v_*}{|v-v_*|} \cdot \sigma)  (v'-v) h(v') \mathrm{d}\sigma \mathrm{d}v =0. \een
	Here,  the formula \eqref{cancell1} holds for fixed $v, v_*$ and \eqref{cancell2} holds for fixed $v_*$. 

	
	We now recall a useful formula in the following lemma on the change of variables $v \to v(\kappa)$ and $v_{*} \to v_{*}(\iota)$
	where for $\kappa, \iota \in[0,1]$, 
	\ben\label{Defkappav} v(\kappa)=\kappa v'+(1-\kappa)v,  \quad 
	v_{*}(\iota)=\iota v'_*+(1-\iota)v_*. \een 
		\begin{lem}\label{usual-change} For $a \in [0, 2]$, let us define
		\ben \label{general-Jacobean}
		\psi_{a}(\theta) \colonequals  (\cos^{2}\frac{\theta}{2}+(1-a)^{2}\sin^{2}\frac{\theta}{2})^{-1/2}.
		\een
		For any $0 \leq \kappa, \iota \leq 1$, it holds that
		\ben \label{change-of-variable-2}
		&& \int_{\R^{3}} \int_{\R^{3}} \int_{\SS^{2}_{+}}  B(|v-v_{*}|, \cos\theta)
		g(\iota(v_{*})) f(v(\kappa))  \mathrm{d}v \mathrm{d}v_*  \mathrm{d}\sigma 
		\\ \nonumber &=& \int_{\R^{3}} \int_{\R^{3}} \int_{\SS^{2}_{+}} B(|v-v_{*}|\psi_{\kappa+\iota}(\theta), \cos\theta)
		g(v_{*}) f(v) \psi_{\kappa+\iota}^{3}(\theta) \mathrm{d}v \mathrm{d}v_*  \mathrm{d}\sigma .
		\een
		Here $\SS^{2}_{+}$ stands for $(v-v_{*}) \cdot \sigma \geq 0$.
	\end{lem}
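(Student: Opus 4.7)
The plan is to prove Lemma~\ref{usual-change} by a single change of variables $(v,v_*) \to (\tilde v, \tilde v_*) := (v(\kappa), v_*(\iota))$ with $\sigma$ held fixed, followed by a relabelling of the dummies. First I would pass to the center-of-mass/relative-velocity coordinates $w := (v+v_*)/2$, $u := v-v_*$, so that $\mathrm{d}v\, \mathrm{d}v_* = \mathrm{d}w\, \mathrm{d}u$. Using \eqref{velocity-before-after}, a direct computation gives
\begin{equation*}
\tilde w = w + \tfrac{\iota-\kappa}{4}\bigl(u - |u|\sigma\bigr), \qquad \tilde u = \bigl(1 - a/2\bigr)\, u + \bigl(a/2\bigr)\,|u|\,\sigma, \qquad a := \kappa+\iota.
\end{equation*}
Since $\tilde w$ is an $(u,\sigma)$-parametrized translation of $w$, the Jacobian of $(w, u) \mapsto (\tilde w, \tilde u)$ at fixed $\sigma$ reduces to the Jacobian of $u \mapsto \tilde u$.

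Next I would compute $|\tilde u|$ by expanding the square and converting to half-angles:
\begin{equation*}
|\tilde u|^2 = |u|^2\bigl[(1-a/2)^2 + a(1-a/2)\cos\theta + (a/2)^2\bigr] = |u|^2\bigl[\cos^2(\theta/2) + (1-a)^2\sin^2(\theta/2)\bigr] = |u|^2\,\psi_a^{-2}(\theta),
\end{equation*}
so that $|v-v_*| = |\tilde v - \tilde v_*|\,\psi_a(\theta)$, which is exactly the argument of $B$ on the right-hand side. One also checks that the hemisphere condition is preserved: $(\tilde v-\tilde v_*)\cdot\sigma = (1-a/2)(v-v_*)\cdot\sigma + (a/2)|v-v_*| \geq 0$ whenever $(v-v_*)\cdot\sigma \geq 0$.

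The main step is computing the Jacobian of $u \mapsto \tilde u$. Working in spherical coordinates $u = rn$ with $r > 0, n\in\SS^2$, the map commutes with rotations about $\sigma$, so the azimuth is preserved and it suffices to compute a $2\times 2$ determinant in $(r,\theta)$. Equivalently, the matrix-determinant lemma applied to $\partial\tilde u/\partial u = (1-a/2)\,I + (a/2)\,\sigma\otimes(u/|u|)$ yields a closed-form determinant. Combining this with the Jacobian of the standard spherical measure and invoking the same half-angle identity used above, one extracts
\begin{equation*}
\mathrm{d}u\, \mathrm{d}\sigma = \psi_a^3(\theta)\, \mathrm{d}\tilde u\, \mathrm{d}\sigma,
\end{equation*}
which, upon renaming $(\tilde v, \tilde v_*) \to (v, v_*)$, delivers the asserted identity.

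The main obstacle I anticipate is the algebraic reduction in the Jacobian step: the raw determinant naturally emerges as a polynomial in the \emph{old} angle $\theta$, and matching it cleanly to $\psi_a^3(\theta)$ in the \emph{new} variables (where, after renaming, $\theta$ denotes the angle between $v-v_*$ and $\sigma$) requires careful bookkeeping of how both $\theta$ and $|u|$ transform under the map. Once the half-angle identity $1 - \cos\theta = 2\sin^2(\theta/2)$ is used to reconcile the two expressions and the spherical-measure Jacobian is combined with the radial scaling, the rest is routine.
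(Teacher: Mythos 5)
Your overall plan is sound and essentially the same as the paper's: split off the center-of-mass variable, change velocity variables at fixed $\sigma$, compute the velocity-space Jacobian, then account for the angle. Your computation of $\widetilde{u}$, of $|\widetilde{u}|^2$ via the half-angle identity, and of the velocity Jacobian via the matrix-determinant lemma (giving $\alpha_a(\theta)=(1-a/2)^2[(1-a/2)+(a/2)\cos\theta]$) are all correct and in fact cleaner than the paper's, which merely asserts the determinant. The hemisphere-preservation check is also correct.

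However, the step you describe as "combining this with the Jacobian of the standard spherical measure and invoking the half-angle identity" hides the actual crux of the lemma, and the boxed conclusion $\mathrm{d}u\,\mathrm{d}\sigma = \psi_a^3(\theta)\,\mathrm{d}\widetilde u\,\mathrm{d}\sigma$ is not a true Jacobian identity as written. After the velocity change at fixed $\sigma$, the angle appearing in $B$ and in $\psi_a$ is the \emph{old} angle $\theta$ between $u$ and $\sigma$, which is now an implicit function of $\widetilde u$ and $\sigma$; it is not the polar angle $\beta$ of $\sigma$ measured from $\widetilde u$. Since $\alpha_a^{-1}(\theta)\neq\psi_a^3(\theta)$ (check at $\theta=0$: $\alpha_a^{-1}(0)=(1-a/2)^{-2}$ while $\psi_a^3(0)=1$), the velocity Jacobian alone does not produce the factor $\psi_a^3$. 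What is genuinely needed is an explicit reparametrization of the $\sigma$-integral: write $\mathrm{d}\sigma=\sin\beta\,\mathrm{d}\beta\,\mathrm{d}\mathbb{S}$ around $\widetilde u$, observe that $\theta\mapsto\beta$ is a bijection from $[0,\pi/2]$ onto $[0,\delta_a]$ (so the $\sigma$-domain after the velocity change is a shrunken cap, not all of $\SS^2_+$), change the integration variable from $\beta$ back to $\theta$, and verify the identity $\alpha_a^{-1}(\theta)\sin\beta\,\mathrm{d}\beta=\psi_a^3(\theta)\sin\theta\,\mathrm{d}\theta$. Only after this angular change of variables can one reinterpret $\sin\theta\,\mathrm{d}\theta\,\mathrm{d}\mathbb{S}$ as $\mathrm{d}\sigma$ with $\theta$ the new polar angle and recover the stated form. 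This is precisely the computation the paper carries out via $\cos\beta=\varphi_{\kappa+\iota}(\sin(\theta/2))$, and your writeup should make it explicit rather than label it routine.

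Finally, note that your map $u\mapsto\widetilde u$ is singular at $a=\kappa+\iota=2$ (the Jacobian $\alpha_2\equiv0$), so this endpoint cannot be treated by your change of variables; the paper disposes of it separately by invoking the standard pre-/post-collision transformation $(v,v_*,\sigma)\mapsto(v',v_*',\sigma')$, which also rotates $\sigma$ and has unit Jacobian. You should do the same.
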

	
	Before giving the proof of this lemma, we firstly note that the above formula is  general as it  simultaneously deals with the two changes $v \to v(\kappa)$ and $v_{*} \to v_{*}(\iota)$. This will be used  in the  proof of Proposition \ref{limit-Botltzmann-to-Landau}.
	If $\kappa = \iota =0$, then $\psi_{\kappa + \iota}(\theta) = \psi_{0}(\theta) =1$
	and it corresponds to the identity transformation. If $\kappa = \iota =1$, then $\psi_{\kappa + \iota}(\theta) = \psi_{2}(\theta) =1$
	and it corresponds to the change of velocities for pre-post collision: $(v, v_{*}, \sigma) \rightarrow (v^{\prime}, v_{*}^{\prime}, \sigma'=(v-v_{*})/|v-v_{*}|)$. If $\kappa =1, \iota = 0$ or $\kappa =0, \iota =1$, then $\psi_{\kappa + \iota}(\theta) = \psi_{1}(\theta) =\cos^{-1}\frac{\theta}{2}$
	and it corresponds to $v \to v'$ or $v_* \to v'_*$
	respectively. This is consistent with the cancellation lemma given in \cite{alexandre2000entropy}. If $\iota = 0$ or $\kappa =0$, then it corresponds to the individual change $v \to v(\kappa)$ or $v_* \to v_*(\iota)$ respectively.
	
	Note that $1 \leq \psi_{a}(\theta) \leq \sqrt{2}$
	for $a \in [0, 2], \theta \in [0, \pi/2]$.  Thanks to Lemma \ref{usual-change} and $1 \leq \psi_{a}(\theta) \leq \sqrt{2}$, considering the kernel \eqref{kernel-studied},
	we can skip the details regarding the above mentioned change of variables. As a result,
	in most part of this paper, 
	$v(\kappa)$ and $v_{*}(\iota)$ will be replaced by $v$ and $v_{*}$ respectively at the cost of a multiplicative constant.
	
	\begin{proof}[Proof of Lemma \ref{usual-change}] 
The case  $\kappa = \iota =1$ is obviously given by the standard change of variable $(v, v_*, \sigma) \to (v', v'_*, \sigma')$ where $\sigma' = (v - v_*)/|v - v_*|$. This change has unit Jacobian.

Now we deal with the case  $\kappa + \iota < 2$. Recalling  \eqref{Defkappav}, it is direct to check
\ben \label{relative-relation}
|v-v_{*}| =  |\kappa(v) - \iota(v_*)| \psi_{\kappa+\iota}(\theta).
 \een 
Let $\beta$ be the angle between $\kappa(v) - \iota(v_*)$ and $\sigma$, then $
\cos \beta = \varphi_{\kappa+\iota}(\sin\frac{\theta}{2})$ where 
\beno
\varphi_{a}(x) :=
\frac{ 1 - x^{2}+(a-1)x^2}
{\left( 1 - x^{2} +(1-a)^{2}x^{2} \right)^{1/2} }.
\eeno
Let $\delta_{a} \colonequals  
\arccos( \f{\sqrt{2}} {2} \frac{ a }
{\sqrt{1 + (1-a)^{2}  }  })$.
If  $\kappa + \iota < 2$, then $\delta_{\kappa+\iota}>0$ 
and
the function:
$
\theta \in [0, \f{\pi}{2}]  \to 
\beta_{\kappa+\iota} \in [0,\delta_{\kappa + \iota}] 
$
is a bijection.   It holds that
		\ben \label{Jacobean}
		\det (\frac{\partial (\kappa(v), \iota(v_*))}{\partial (v, v_*)}) =  \alpha_{\kappa+\iota}(\theta),
		\een
		where for $0 \leq a \leq 2$,
			\ben \label{def-alpha}
\alpha_{a}(\theta)	\colonequals  (1-\frac{a}{2})^2 \left( (1-\frac{a}{2})+\frac{a}{2} \cos\theta \right) .
		\een  
By \eqref{relative-relation} and \eqref{Jacobean}, with $\mathrm{d}\sigma = \sin \beta  
 \mathrm{d}\beta \mathrm{d}\mathbb{S}$, we have
			\beno 
		&& \int_{\R^{3}} \int_{\R^{3}} \int_{\SS^{2}_{+}}  B(|v-v_{*}|, \cos\theta)
		g(\iota(v_{*})) f(v(\kappa))  \mathrm{d}v \mathrm{d}v_*  \mathrm{d}\sigma 
		\\ \nonumber &=& 2 \pi \int_{\R^{3}} \int_{\R^{3}} \int_{0}^{\delta_{\kappa+\iota}} B(|v-v_{*}|\psi_{\kappa+\iota}(\theta), \cos\theta)
		g(v_{*}) f(v)  \alpha_{\kappa+\iota}^{-1}(\theta) \sin \beta  
		\mathrm{d}v \mathrm{d}v_* \mathrm{d}\beta.
		\eeno
It is directly to check that
\beno
\alpha_{\kappa+\iota}^{-1}(\theta) \sin \beta
\mathrm{d}\beta = - \alpha_{\kappa+\iota}^{-1}(\theta) \mathrm{d} \cos \beta
=  -\f{1}{4}  \varphi^{\prime}_{\kappa+\iota}(\sin\f{\theta}{2}) \sin^{-1}\f{\theta}{2} \alpha_{\kappa+\iota}^{-1}(\theta) \sin \theta
\mathrm{d}\theta =   \psi_{\kappa+\iota}^{3}(\theta)
\sin \theta
\mathrm{d}\theta.
\eeno
Then we go back from $\beta$ to $\theta$ and use $\mathrm{d}\sigma = \sin \theta  
\mathrm{d}\theta \mathrm{d}\mathbb{S}$
to get \eqref{change-of-variable-2}.
	\end{proof}

	\subsection{Upper bound of $Q^{s,\gamma}_{\eta}$}
	We give the  upper bound  of $ Q^{s,\gamma}_{\eta}$ in the following proposition.

	\begin{prop}\label{ubqepsilon-singular} 
		Let $0<s<1, -2s-3<\gamma \leq 0, 0<\eta \leq 1$. Let $l_1, l_2, l_3 \in \R$ satisfying $l_1 + l_2 + l_3 = 0$.
		For any fixed small $1/2 \geq \delta>0$, for any combination $a_1, a_2, a_3 \geq 0, a_1 + a_2 \geq s, a_1 + a_3 \geq 2s, a_2 + a_3 \geq 2s$ satisfying the constraint
		$a_1 + a_2 + a_3 = 2s+\f32 + \delta$, we have
		\beno
		|\langle Q^{s,\gamma}_{\eta}(g,h), f\rangle| \lesssim C_{\delta, s,\gamma,\eta}  |g|_{H^{a_{1}}_{l_1}} |h|_{H^{a_{2}}_{l_2}}
		|f|_{H^{a_3}_{l_3}},
		\eeno
		where 
		\ben \label{constant-C-delta-s-gamma-eta}
		C_{\delta,s,\gamma,\eta} := \delta^{-\f12} C_{s,\gamma,\eta}, \quad
		C_{s,\gamma,\eta} := \f{1}{s} \frac{\eta^{\gamma+2s+3}}{\gamma+2s+3}.
		\een
		We point out that the constant associated to $\lesssim$ in the above inequality
		depends only on the upper bound of  $|l_1|, |l_2|, |l_3|$.
	\end{prop}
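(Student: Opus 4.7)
The plan is to work in Fourier variables via the Bobylev formula \eqref{inner-product-into-frequency} applied to the kernel $B^{s,\gamma}_{\eta}$, and then to apply dyadic decomposition in frequency space as in \eqref{decomposition-in-frequency-space}. Since $B^{s,\gamma}_{\eta}$ is supported in $|v-v_{*}|\le 4\eta/3$, the Fourier kernel
\[
K(\xi,\eta_{*},\sigma) := \hat{B}^{s,\gamma}_{\eta}\bigl(|\eta_{*}-\xi^{-}|,\tfrac{\xi}{|\xi|}\cdot\sigma\bigr) - \hat{B}^{s,\gamma}_{\eta}\bigl(|\eta_{*}|,\tfrac{\xi}{|\xi|}\cdot\sigma\bigr)
\]
will be small for small $\eta$, and all the work reduces to a sharp pointwise bound on $K$ followed by standard Littlewood--Paley bookkeeping.

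First I would establish the pointwise estimate
\[
\int_{\mathbb{S}^{2}} |K(\xi,\eta_{*},\sigma)| \, \mathrm{d}\sigma \lesssim C_{s,\gamma,\eta}\,\min(|\xi|,|\eta_{*}|)^{2s},
\]
by Taylor-expanding $e^{-iq\cdot(\eta_{*}-\xi^{-})}-e^{-iq\cdot\eta_{*}}$ to second order in the Bobylev formula and using the symmetry identities \eqref{cancell1}--\eqref{cancell2} to cancel the linear contribution, so that the angular factor $(\sin\tfrac{\theta}{2})^{-2-2s}$ is absorbed against the quadratic factor $\sin^{2}\tfrac{\theta}{2}$. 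The radial contribution is $\int_{|q|\le\eta}|q|^{\gamma+2s+2}\mathrm{d}q \sim \eta^{\gamma+2s+3}/(\gamma+2s+3)$, giving the $\eta$-dependence in $C_{s,\gamma,\eta}$. The factor $1-s$ carried by $B^{s,\gamma}_{\eta}$ cancels with the angular integration $\int_{0}^{\pi/2}\theta^{1-2s}\mathrm{d}\theta$ in the regime $s\to 1^{-}$, while the $1/s$ in $C_{s,\gamma,\eta}$ arises from tracking the remaining frequency integral uniformly in the degenerate limit $s\to 0^{+}$.

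Second, insert this bound into \eqref{inner-product-into-frequency} and apply the decomposition \eqref{decomposition-in-frequency-space}, splitting into the three regimes: $|\eta_{*}|\ll|\xi-\eta_{*}|$, $|\eta_{*}|\gg|\xi-\eta_{*}|$, and $|\eta_{*}|\sim|\xi-\eta_{*}|$. On each dyadic block $\{\frac{3}{4}2^{p}\le|\eta_{*}|\le\frac{8}{3}2^{p},\ \frac{3}{4}2^{j}\le|\xi-\eta_{*}|\le\frac{8}{3}2^{j}\}$, the factor $\min(|\xi|,|\eta_{*}|)^{2s}\le 2^{2s\min(p,j)}$ is distributed among $\hat{g}(\eta_{*})$, $\hat{h}(\xi-\eta_{*})$ and $\overline{\hat{f}}(\xi)$ using Cauchy--Schwarz and the constraints $a_{1}+a_{2}\ge s$, $a_{1}+a_{3}\ge 2s$, $a_{2}+a_{3}\ge 2s$, while the additional $3/2+\delta$ of Sobolev regularity distributed across $a_{1}+a_{2}+a_{3}$ pays for $\ell^{1}$-summation of three-dimensional dyadic series; the Plancherel-type identity \eqref{h-m-l-norm} then reassembles each factor into the claimed $H^{a_{i}}$ norm.

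Third, the borderline exponent $3/2+\delta$ produces a geometric series with ratio $2^{-\delta}$ that sums to $O(\delta^{-1})$; extracting one factor into a Schwarz pairing with a bounded series yields the sharp factor $\delta^{-1/2}$ in $C_{\delta,s,\gamma,\eta}$. Finally, the weights $W_{l_{1}},W_{l_{2}},W_{l_{3}}$ with $l_{1}+l_{2}+l_{3}=0$ are restored by noting that on $\mathrm{supp}\,B^{s,\gamma}_{\eta}$ we have $|v-v_{*}|\le 4/3$, hence $\langle v\rangle\sim\langle v_{*}\rangle\sim\langle v'\rangle\sim\langle v'_{*}\rangle$ up to constants depending only on $\max(|l_{1}|,|l_{2}|,|l_{3}|)$, so the weights can be freely redistributed. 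The main obstacle is the first step: producing the pointwise Fourier-kernel bound with the sharp dependence on both $s$ (through $1/s$) and $\eta$ (through $\eta^{\gamma+2s+3}/(\gamma+2s+3)$), and with the leading $(1-s)$ prefactor absorbed uniformly in $s\in(0,1)$; once this is in hand, the remaining dyadic and Sobolev interpolation steps are routine.
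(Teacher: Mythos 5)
Your strategy is genuinely different from the paper's. The paper uses the Bobylev formula and the dyadic decomposition \eqref{decomposition-in-frequency-space} only to organize the pairing into Littlewood--Paley blocks; the core estimate on each block $\langle Q^{s,\gamma}_{\eta}(\mathfrak{F}_{j}g,\mathfrak{F}_{k}h),\mathfrak{F}_{l}f\rangle$ is then carried out entirely in physical space, writing it as $\int B^{s,\gamma}_{\eta}(\mathfrak{F}_{j}g)_{*}\mathfrak{F}_{k}h\,((\mathfrak{F}_{l}f)'-\mathfrak{F}_{l}f)\,\mathrm{d}V$, Taylor-expanding in $v$, splitting the $\theta$-integral at a block-dependent threshold, and invoking Bernstein-type bounds like $|\nabla^{2}\mathfrak{F}_{l}f|_{L^{\infty}}\lesssim 2^{7l/2}|\mathfrak{F}_{l}f|_{L^{2}}$. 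You instead propose to remain purely on the Fourier side and bound the angular-integrated Bobylev symbol pointwise.

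There is a genuine gap in your Step 1 as written: the estimate
\[
\int_{\mathbb{S}^{2}}|K(\xi,\eta_{*},\sigma)|\,\mathrm{d}\sigma \lesssim C_{s,\gamma,\eta}\,\min(|\xi|,|\eta_{*}|)^{2s}
\]
is false once $s\geq 1/2$, because placing the absolute value inside the $\sigma$-integral destroys precisely the cancellation you invoke. The first-order Taylor remainder of $\hat{\Phi}(|\eta_{*}-\xi^{-}|)-\hat{\Phi}(|\eta_{*}|)$ gives $|K|\sim b^{s}(\theta)\,|\nabla\hat{\Phi}|\,|\xi^{-}|$ with $|\xi^{-}|=|\xi|\sin\tfrac{\theta}{2}$, and
\[
\int_{\mathbb{S}^{2}}b^{s}(\theta)\sin\tfrac{\theta}{2}\,\mathrm{d}\sigma\sim (1-s)\int_{0}^{\pi/2}\theta^{-2s}\,\mathrm{d}\theta
\]
diverges for $s\geq 1/2$. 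The cancellation furnished by \eqref{cancell1}--\eqref{cancell2} acts on $\int_{\mathbb{S}^{2}}K\,\mathrm{d}\sigma$ without absolute values: the azimuthal average of $\xi^{-}$ at fixed $\theta$ is parallel to $\xi$ and proportional to $\sin^{2}\tfrac{\theta}{2}$, which is exactly the extra power you need. To follow your route you must therefore define the integrated symbol $\tilde K(\xi,\eta_{*}):=\int_{\mathbb{S}^{2}}K\,\mathrm{d}\sigma$, subtract the linear Taylor contribution explicitly, and only then estimate the remainder; the absolute-value version cannot close. Two further mismatches suggest your target symbol bound itself needs revision: the paper's block estimates \eqref{term-type-1}--\eqref{term-type-3} carry regime-dependent factors $2^{sl+sk}$, $2^{2sk}$, $2^{2sj}$ (not a single $\min(|\xi|,|\eta_{*}|)^{2s}$), and it is precisely these three different behaviors that produce the three constraints $a_{1}+a_{2}\geq s$, $a_{1}+a_{3}\geq 2s$, $a_{2}+a_{3}\geq 2s$ in the statement; and the radial integral $\int_{|q|\leq\eta}|q|^{\gamma+2s+2}\,\mathrm{d}q$ scales as $\eta^{\gamma+2s+5}$ in three dimensions, not $\eta^{\gamma+2s+3}$ --- compare the paper's key fact \eqref{key-reason-near-origin}, whose exponent on $|\cdot|$ is $\gamma+2s$. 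Your final weight-transfer argument via $\langle v\rangle\sim\langle v_{*}\rangle\sim\langle v'\rangle$ on $\mathrm{supp}\,B^{s,\gamma}_{\eta}$ is correct and matches \eqref{decomposition-phase-near-0}, and the $\delta^{-1/2}$ bookkeeping is the same as the paper's; the problem is entirely in the symbol bound of Step 1.
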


	\begin{proof}  Recalling the decomposition in frequency space, we have
		\beno
		\langle Q^{s,\gamma}_{\eta}(g,h), f\rangle
		&=& \sum_{j \leq k-N_{0}} \langle Q^{s,\gamma}_{\eta}(\mathfrak{F}_{j} g, \mathfrak{F}_{k} h), \tilde{\mathfrak{F}}_{k} f\rangle
		+\sum_{|j-k| < N_{0}}\sum_{l \leq k+N_{0}} \langle Q^{s,\gamma}_{\eta}(\mathfrak{F}_{j} g, \mathfrak{F}_{k}h), \mathfrak{F}_{l}f\rangle
		\\&&+ \sum_{j \geq k+N_{0}} \langle Q^{s,\gamma}_{\eta}(\mathfrak{F}_{j} g, \mathfrak{F}_{k}h), \tilde{\mathfrak{F}}_{j}f\rangle.
		\eeno
		We estimate the second quantity in details for illustration. That is, when $|j-k| < N_{0}, l \leq k+N_{0}$. Note that
		\beno
		\langle Q^{s,\gamma}_{\eta}(\mathfrak{F}_{j} g, \mathfrak{F}_{k}h), \mathfrak{F}_{l}f\rangle = \int B^{s,\gamma}_{\eta} (\mathfrak{F}_{j} g)_{*} \mathfrak{F}_{k}h ((\mathfrak{F}_{l}f)^{\prime}-\mathfrak{F}_{l}f) \mathrm{d}V,
		\eeno
		where for brevity of notiation, $\mathrm{d}V =  \mathrm{d}v \mathrm{d}v_{*} \mathrm{d}\sigma$.
		Motivated by \eqref{cancell2}, we write
		\beno
		\langle Q^{s,\gamma}_{\eta}(\mathfrak{F}_{j} g, \mathfrak{F}_{k}h), \mathfrak{F}_{l}f\rangle &=& 
		\int B^{s,\gamma}_{\eta} (\mathfrak{F}_{j} g)_{*} (\mathfrak{F}_{k}h)' ((\mathfrak{F}_{l}f)^{\prime}-\mathfrak{F}_{l}f) \mathrm{d}V
		\\&&+  \int B^{s,\gamma}_{\eta} (\mathfrak{F}_{j} g)_{*} (\mathfrak{F}_{k}h - (\mathfrak{F}_{k}h)') ((\mathfrak{F}_{l}f)^{\prime}-\mathfrak{F}_{l}f) \mathrm{d}V := \mathcal{I}_1 + \mathcal{I}_2.
		\eeno
		
		For $\mathcal{I}_1$,
		let $E:=\sin\frac{\theta}{2} \leq 2^{-l}|v-v_{*}|^{-1}\wedge \sqrt{2}/2$ and
		write
		\beno
		\mathcal{I}_1 = \mathcal{I}_{1, \leq}(\mathfrak{F}_{j} g,\mathfrak{F}_{k}h,\mathfrak{F}_{l}f) + \mathcal{I}_{1, \geq}(\mathfrak{F}_{j} g,\mathfrak{F}_{k}h,\mathfrak{F}_{l}f),
		\\
		\mathcal{I}_{1, \leq}(\mathfrak{F}_{j} g,\mathfrak{F}_{k}h,\mathfrak{F}_{l}f) := \int B^{s,\gamma}_{\eta}\mathrm{1}_{E} (\mathfrak{F}_{j} g)_{*} (\mathfrak{F}_{k}h)' ((\mathfrak{F}_{l}f)^{\prime}-\mathfrak{F}_{l}f) \mathrm{d}V,
		\\
		\mathcal{I}_{1, \geq}(\mathfrak{F}_{j} g,\mathfrak{F}_{k}h,\mathfrak{F}_{l}f) := \int B^{s,\gamma}_{\eta}\mathrm{1}_{E^{c}} (\mathfrak{F}_{j} g)_{*} (\mathfrak{F}_{k}h)'
		((\mathfrak{F}_{l}f)^{\prime}-\mathfrak{F}_{l}f)
		\mathrm{d}V.
		\eeno
		For term $\mathcal{I}_{1, \leq}(\mathfrak{F}_{j} g,\mathfrak{F}_{k}h,\mathfrak{F}_{l}f)$, we apply
		\eqref{Taylor2} and \eqref{cancell2} to get 
		\beno
		\mathcal{I}_{1, \leq}(\mathfrak{F}_{j} g,\mathfrak{F}_{k}h,\mathfrak{F}_{l}f) = \int B^{s,\gamma}_{\eta}\mathrm{1}_{E} (\mathfrak{F}_{j} g)_{*} (\mathfrak{F}_{k}h)' \left(\int_0^1 \kappa (\na^2 \mathfrak{F}_{l}f )(v(\kappa)):(v'-v)\otimes(v'-v)\mathrm{d}\kappa \right) \mathrm{d}V.
		\eeno
		By using $|\nabla^{2} \mathfrak{F}_{l}f|_{L^{\infty}} \lesssim 2^{\frac{7}{2}l}|\mathfrak{F}_{l}f|_{L^{2}}$,  the change of variable $v \to v(\kappa)$, 
		and $\int \sin^{2}\frac{\theta}{2} b^{s}(\theta)\mathrm{1}_{E} \mathrm{d}\sigma \lesssim 2^{2sl-2l}|v-v_{*}|^{2s-2}$, we have 
		\beno
		|\mathcal{I}_{1, \leq}(\mathfrak{F}_{j} g,\mathfrak{F}_{k}h,\mathfrak{F}_{l}f)| \lesssim
		2^{2sl} 2^{\frac{3}{2}l}|\mathfrak{F}_{l}f|_{L^{2}}  \int 
		|v-v_{*}|^{\gamma+2s} \mathrm{1}_{|v-v_{*}|\leq 4\eta/3}
		(\mathfrak{F}_{j} g)_{*} (\mathfrak{F}_{k}h)  \mathrm{d}v \mathrm{d}v_{*}.
		\eeno
		By using the fact that
		\ben \label{key-reason-near-origin}
		| |\cdot|^{\gamma+2s} \mathrm{1}_{|\cdot| \leq 4\eta/3}|_{L^{1}} \lesssim \frac{\eta^{\gamma+2s+3}}{\gamma+2s+3},
		\een
		we get
		\beno
		|\mathcal{I}_{1, \leq}(\mathfrak{F}_{j} g,\mathfrak{F}_{k}h,\mathfrak{F}_{l}f)| \lesssim
		\frac{\eta^{\gamma+2s+3}}{\gamma+2s+3}2^{2sl+\frac{3}{2}l}|\mathfrak{F}_{j} g|_{L^{2}}|\mathfrak{F}_{k}h|_{L^{2}}|\mathfrak{F}_{l}f|_{L^{2}}.
		\eeno

		Since $\int b^{s}(\theta)\mathrm{1}_{E^{c}} \mathrm{d}\sigma \lesssim s^{-1}2^{2sl}|v-v_{*}|^{2s}$, then
		\beno
		|\mathcal{I}_{1, \geq}(\mathfrak{F}_{j} g,\mathfrak{F}_{k}h,\mathfrak{F}_{l}f)| \lesssim
		C_{s,\gamma,\eta} 
		2^{2sl}|\mathfrak{F}_{j} g|_{L^{2}}|\mathfrak{F}_{k}h|_{L^{2}}|\mathfrak{F}_{l}f|_{L^{\infty}} \lesssim C_{s,\gamma,\eta}2^{2sl+\frac{3}{2}l}|\mathfrak{F}_{j} g|_{L^{2}}|\mathfrak{F}_{k}h|_{L^{2}}|\mathfrak{F}_{l}f|_{L^{2}}.
		\eeno
		Combining these estimates on $\mathcal{I}_{1}$, we get
		\beno
		|\mathcal{I}_{1}| \lesssim C_{s,\gamma,\eta}2^{2sl+\frac{3}{2}l}|\mathfrak{F}_{j} g|_{L^{2}}|\mathfrak{F}_{k}h|_{L^{2}}|\mathfrak{F}_{l}f|_{L^{2}}.
		\eeno
		
		Now we estimate
		$\mathcal{I}_{2}$. Let $F:=\sin\frac{\theta}{2} \leq 2^{-l/2-k/2}|v-v_{*}|^{-1}\wedge \sqrt{2}/2$.
		We write
		\beno
		\mathcal{I}_2 = \mathcal{I}_{2, \leq}(\mathfrak{F}_{j} g,\mathfrak{F}_{k}h,\mathfrak{F}_{l}f) + \mathcal{I}_{1, \geq}(\mathfrak{F}_{j} g,\mathfrak{F}_{k}h,\mathfrak{F}_{l}f),
		\\
		\mathcal{I}_{2, \leq}(\mathfrak{F}_{j} g,\mathfrak{F}_{k}h,\mathfrak{F}_{l}f) := - \int B^{s,\gamma}_{\eta}\mathrm{1}_{F} (\mathfrak{F}_{j} g)_{*} ((\mathfrak{F}_{k}h)' -\mathfrak{F}_{k}h) ((\mathfrak{F}_{l}f)^{\prime}-\mathfrak{F}_{l}f) \mathrm{d}V,
		\\
		\mathcal{I}_{2, \geq}(\mathfrak{F}_{j} g,\mathfrak{F}_{k}h,\mathfrak{F}_{l}f) := - \int B^{s,\gamma}_{\eta}\mathrm{1}_{F^{c}} (\mathfrak{F}_{j} g)_{*} ((\mathfrak{F}_{k}h)' -\mathfrak{F}_{k}h)
		((\mathfrak{F}_{l}f)^{\prime}-\mathfrak{F}_{l}f)
		\mathrm{d}V.
		\eeno
		By the 1st-order Taylor expansion \eqref{Taylor1-order-1}, using $|\nabla \mathfrak{F}_{l}f|_{L^{\infty}} \lesssim 2^{\frac{5}{2}l}|\mathfrak{F}_{l}f|_{L^{2}}$, the change of variable in Lemma \ref{general-Jacobean},
		and  $\int \sin^{2}\frac{\theta}{2} b^{s}(\theta)\mathrm{1}_{F} \mathrm{d}\sigma \lesssim 2^{sl+sk-l-k}|v-v_{*}|^{2s-2}$, we get
		\beno
		|\mathcal{I}_{2, \leq}(\mathfrak{F}_{j} g,\mathfrak{F}_{k}h,\mathfrak{F}_{l}f)| \lesssim
		2^{sl +sk -k} 2^{\frac{3}{2}l}|\mathfrak{F}_{l}f|_{L^{2}}  \int 
		|v-v_{*}|^{\gamma+2s} \mathrm{1}_{|v-v_{*}|\leq 4\eta/3}
		(\mathfrak{F}_{j} g)_{*} (\na \mathfrak{F}_{k}h)  \mathrm{d}v \mathrm{d}v_{*}.
		\eeno
		Then \eqref{key-reason-near-origin} implies
		\beno
		|\mathcal{I}_{2, \leq}(\mathfrak{F}_{j} g,\mathfrak{F}_{k}h,\mathfrak{F}_{l}f)| \lesssim
		\frac{\eta^{\gamma+2s+3}}{\gamma+2s+3}2^{sl+sk-k+\frac{3}{2}l}|\mathfrak{F}_{j} g|_{L^{2}}|\na \mathfrak{F}_{k}h|_{L^{2}}|\mathfrak{F}_{l}f|_{L^{2}}
		\\ \lesssim
		\frac{\eta^{\gamma+2s+3}}{\gamma+2s+3}2^{sl+sk+\frac{3}{2}l}|\mathfrak{F}_{j} g|_{L^{2}}|\mathfrak{F}_{k}h|_{L^{2}}|\mathfrak{F}_{l}f|_{L^{2}}.
		\eeno
		Since $\int b^{s}(\theta)\mathrm{1}_{F^{c}} \mathrm{d}\sigma \lesssim s^{-1} 2^{sl+sk}|v-v_{*}|^{2s}$, then
		\beno
		|\mathcal{I}_{2, \geq}(\mathfrak{F}_{j} g,\mathfrak{F}_{k}h,\mathfrak{F}_{l}f)| \lesssim
		C_{s,\gamma,\eta}2^{sl+sk}|\mathfrak{F}_{j} g|_{L^{2}}|\mathfrak{F}_{k}h|_{L^{2}}|\mathfrak{F}_{l}f|_{L^{\infty}} \lesssim C_{s,\gamma,\eta}2^{sl+sk+\frac{3}{2}l}|\mathfrak{F}_{j} g|_{L^{2}}|\mathfrak{F}_{k}h|_{L^{2}}|\mathfrak{F}_{l}f|_{L^{2}}.
		\eeno
		Combining these estimates on $\mathcal{I}_{2}$, we have
		\beno
		|\mathcal{I}_{2}| \lesssim C_{s,\gamma,\eta}2^{sl+sk+\frac{3}{2}l}|\mathfrak{F}_{j} g|_{L^{2}}|\mathfrak{F}_{k}h|_{L^{2}}|\mathfrak{F}_{l}f|_{L^{2}}.
		\eeno
		
		Therefore, when  $l \leq k+ N_0$, we obtain
		\ben \label{term-type-1}
		|\langle Q^{s,\gamma}_{\eta}(\mathfrak{F}_{j} g, \mathfrak{F}_{k}h), \mathfrak{F}_{l}f\rangle| \lesssim C_{s,\gamma,\eta}2^{sl+sk+\frac{3}{2}l}|\mathfrak{F}_{j} g|_{L^{2}}|\mathfrak{F}_{k}h|_{L^{2}}|\mathfrak{F}_{l}f|_{L^{2}}.
		\een
		Finally, to
		\beno
		\sum_{|j-k| < N_{0}}\sum_{l \leq k+N_{0}} \langle Q^{s,\gamma}_{\eta}(\mathfrak{F}_{j} g, \mathfrak{F}_{k}h), \mathfrak{F}_{l}f \rangle,
		\eeno
		let $a+b=s+\f32$ for $a \geq 0$.
		For any fixed $k$, the sum over $-1 \leq l \leq k+N_{0}$ can be estimated by using Cauchy-Schwarz inequality as
		\ben \label{Cauchy-Schwarz-directly}
		\sum_{l \leq k+N_{0}} 2^{sl+\frac{3}{2}l} |\mathfrak{F}_{l}f|_{L^{2}}
		\leq \left(\sum_{l \leq k+N_{0}} 2^{2al} |\mathfrak{F}_{l}f|_{L^{2}}^2\right)^{\f12} \left(\sum_{l \leq k+N_{0}} 2^{2bl} \right)^{\f12}
		\lesssim |f|_{H^{a}} C_{b, k},
		\een
		where 
		for $b \neq 0$,
		\beno
		C_{b, k}^2 = \f{2^{2b(k+N_0+1)} - 2^{-2b}}{2^{2b} -1}.
		\eeno
	For $b$ close to $0$,  by allowing an extra $\delta$-order regularity,
		we conclude that
		\ben \label{sum-2-final-result}
		\sum_{|j-k| < N_{0}} \sum_{l \leq k+N_{0}} \langle Q^{s,\gamma}_{\eta}(\mathfrak{F}_{j} g, \mathfrak{F}_{k}h), \mathfrak{F}_{l}f \rangle 
		\lesssim C_{\delta, s,\gamma,\eta} |g|_{H^{a_1}} |h|_{H^{a_2}} |f|_{H^{a_3}},
		\een
		where $a_1, a_2, a_3 \geq 0, a_1 + a_2 \geq s$ satisfying the constraint
		$a_1 + a_2 + a_3 = 2s+\f32 + \delta$ for any fixed small $\delta>0$. Indeed, recalling \eqref{Cauchy-Schwarz-directly}, in which we can take $a+b = s + \f32   + \delta$ and $b \geq \f{\delta}{2}$, then $	C_{b, k} \lesssim \delta^{-1/2} 2^{k \delta/2}$. Since $|j-k|<N_0$,
		 we get \eqref{sum-2-final-result} for $a_1 + a_2 \geq s + \f{\delta}{2}$. 
		In \eqref{Cauchy-Schwarz-directly}
		we can also take $ = s + \f32 + \f{\delta}{2}, b = - \f{\delta}{2}$ and so  $	C_{b, k} \lesssim \delta^{-1/2}$, then we get \eqref{sum-2-final-result} for $a_1 + a_2 = s,
		a_3 = s + \f32 + \f{\delta}{2}$.  This obviously implies that \eqref{sum-2-final-result} holds for $s \le a_1 + a_2 
		\leq  s + \f{\delta}{2}$.

		Similar argument can be  applied to $\langle Q^{s,\gamma}_{\eta}(\mathfrak{F}_{j} g, \mathfrak{F}_{k} h), \tilde{\mathfrak{F}}_{k} f\rangle$ for $j \leq k-N_{0}$
		to obtain
		\ben \label{term-type-2}
		|\langle Q^{s,\gamma}_{\eta}(\mathfrak{F}_{j} g, \mathfrak{F}_{k} h), \tilde{\mathfrak{F}}_{k} f\rangle| \lesssim C_{s,\gamma,\eta}2^{2sk+\frac{3}{2}j}|\mathfrak{F}_{j} g|_{L^{2}}|\mathfrak{F}_{k}h|_{L^{2}}|\tilde{\mathfrak{F}}_{k} f|_{L^{2}}.
		\een
		Here we take $L^{\infty}$ on $\mathfrak{F}_{j} g$ so that there is a factor $2^{\frac{3}{2}j}$. We also apply Taylor expansions to $\mathfrak{F}_{k} h$ and $\tilde{\mathfrak{F}}_{k} f$ to obtain the factor $2^{2sk}$.
		Let $0<\delta \leq \f12$, for any combination $a_1, a_2, a_3 \geq 0, a_2 + a_3 \geq 2s$ satisfying the constraint
		$a_1 + a_2 + a_3 = 2s+\f32 + \delta$, it holds that
		\beno
		|\sum_{j \leq k-N_{0}} \langle Q^{s,\gamma}_{\eta}(\mathfrak{F}_{j} g, \mathfrak{F}_{k} h), \tilde{\mathfrak{F}}_{k} f\rangle|
		\lesssim \delta^{-1/2}  C_{s,\gamma,\eta}|g|_{H^{a_{1}}}|h|_{H^{a_{2}}}|f|_{H^{a_3}}.
		\eeno
		
		Again similar argument can be  applied to $\langle Q^{s,\gamma}_{\eta}(\mathfrak{F}_{j} g, \mathfrak{F}_{k}h), \tilde{\mathfrak{F}}_{j}f\rangle$ for $j \geq k+N_{0}$
		to get
		\ben \label{term-type-3}
		|\langle Q^{s,\gamma}_{\eta}(\mathfrak{F}_{j} g, \mathfrak{F}_{k}h), \tilde{\mathfrak{F}}_{j}f\rangle| \lesssim C_{s,\gamma,\eta}2^{2sj+\frac{3}{2}k}|\mathfrak{F}_{j} g|_{L^{2}}|\mathfrak{F}_{k}h|_{L^{2}}|\tilde{\mathfrak{F}}_{j} f|_{L^{2}}.
		\een
		Here we also apply Taylor expansions to $\mathfrak{F}_{k}h$ and $\tilde{\mathfrak{F}}_{j}f$ to get the factor $2^{2sj}$ or $2^{sk + sj} \leq 2^{2sj}$.
		Here we take $L^{\infty}$ on $\mathfrak{F}_{k}h$ or $\na \mathfrak{F}_{k}h$ so that
		 at the end there is a factor $2^{\frac{3}{2}k}$.  Let $0<\delta \leq \f12$, for any combination $a_1, a_2, a_3 \geq 0, a_1 + a_3 \geq 2s$ satisfying the constraint
		$a_1 + a_2 + a_3 = 2s+\f32 + \delta$, it holds that
		\beno
		|\sum_{j \geq k+N_{0}} \langle Q^{s,\gamma}_{\eta}(\mathfrak{F}_{j} g, \mathfrak{F}_{k}h), \tilde{\mathfrak{F}}_{j}f\rangle|
		\lesssim \delta^{-1/2}  C_{s,\gamma,\eta}|g|_{H^{a_{1}}}|h|_{H^{a_{2}}}|f|_{H^{a_3}}.
		\eeno
		In summary, we prove the desired estimate with $l_1 = l_2 =l_3 =0$.

		By \eqref{decomposition-phase-near-0} and \eqref{h-m-l-norm}, we can freely transfer weight among $g, h, f$ so that the estimate in the proposition holds for  $l_1 + l_2 +l_3 =0$.
	\end{proof}
	
	Based on the proof of the above proposition, we will derive another
	 version of cancellation lemma introduced in \cite{alexandre2000entropy}.
	The idea is to 
	 gain $|v-v_{*}|^{2s}$  at the price of $2s$-order derivatives on the functions. 
	
	\begin{prop} \label{new-version-cancellation}
		Let $0<s<1, \gamma+2s+3>0$.
		Let $a_1, a_2, l_1, l_2 \in \R$ satisfying
		$a_1 + a_2 =2s, l_1 + l_2 =0$, then
		\beno
		|\int B^{s,\gamma}_{\eta} g_{*}(f^{\prime}-f)\mathrm{d}V| \lesssim  C_{s,\gamma,\eta} |g|_{H^{a_1}_{l_1}}|f|_{H^{a_2}_{l_2}}.
		\eeno
	\end{prop}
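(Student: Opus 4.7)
The plan is to reduce the bound to one of the same form as the $\mathcal{I}_2$ piece in the proof of Proposition \ref{ubqepsilon-singular} by exploiting the pre-post collision involution $(v,v_*,\sigma)\mapsto(v',v'_*,(v-v_*)/|v-v_*|)$, which has unit Jacobian and leaves $B^{s,\gamma}_{\eta}$ invariant since $|v-v_*|=|v'-v'_*|$. This yields the symmetrization
\beno
\int B^{s,\gamma}_{\eta}\,g_*(f'-f)\,\mathrm{d}V = \tfrac12 \int B^{s,\gamma}_{\eta}\,(g_*-g'_*)(f'-f)\,\mathrm{d}V,
\eeno
so the integrand is now a product of two first-order differences, exactly the structure of the $\mathcal{I}_2$ term after dyadic decomposition.

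I then dyadic-decompose $g=\sum_j \mathfrak{F}_j g$ and $f=\sum_k \mathfrak{F}_k f$ and, for each pair $(j,k)$, split the angular integration by the cutoff $F \colon \sin(\theta/2)\leq 2^{-(j+k)/2}|v-v_*|^{-1}\wedge \sqrt{2}/2$. On $F$, first-order Taylor expansion \eqref{Taylor1-order-1} is applied to both differences; Lemma \ref{usual-change} is used to undo the $v(\kappa), v_*(\iota)$ arguments; the small-angle integral $\int b^s(\theta)\sin^2(\theta/2)\mathrm{1}_F\,\mathrm{d}\sigma \lesssim 2^{(j+k)(s-1)}|v-v_*|^{2(s-1)}$ combines with the $L^1$ bound $\||v-v_*|^{\gamma+2s}\mathrm{1}_{|v-v_*|\leq 4\eta/3}\|_{L^1}\lesssim \eta^{\gamma+2s+3}/(\gamma+2s+3)$ (available thanks to $\gamma+2s+3>0$), Young's convolution inequality, and Bernstein's inequality $|\nabla \mathfrak{F}_n h|_{L^2}\lesssim 2^n |\mathfrak{F}_n h|_{L^2}$. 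On $F^c$, I bound each difference by the sum of its two terms, use $\int b^s(\theta)\mathrm{1}_{F^c}\,\mathrm{d}\sigma \lesssim s^{-1}(1-s)\,2^{s(j+k)}|v-v_*|^{2s}$, and put one factor in $L^\infty$ via Bernstein. Both regions yield
\beno
|\mathcal{I}_{j,k}| \lesssim C_{s,\gamma,\eta}\,2^{s(j+k)}\,|\mathfrak{F}_j g|_{L^2}\,|\mathfrak{F}_k f|_{L^2},
\eeno
and Cauchy--Schwarz summation in $(j,k)$ under $a_1+a_2=2s$ gives the estimate with $l_1=l_2=0$. The general weight distribution $l_1+l_2=0$ is then handled verbatim as at the end of Proposition \ref{ubqepsilon-singular} via Lemma \ref{operatorcommutator1} and identity \eqref{h-m-l-norm}.

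The main obstacle I expect is not the overall strategy but extracting the precise constant $C_{s,\gamma,\eta}=s^{-1}\eta^{\gamma+2s+3}/(\gamma+2s+3)$: the $s^{-1}$ must come out of the large-angle integral via the antiderivative of $\sin^{-1-2s}(\theta/2)$ on $[\theta_0,\pi/2]$, while the spatial factor $\eta^{\gamma+2s+3}/(\gamma+2s+3)$ must be produced uniformly as $\gamma+2s+3\to 0^+$. Tracking these two blow-ups while keeping the dependence on $s,\gamma,\eta$ clean is the delicate bookkeeping step; everything else is a direct transcription of the $\mathcal{I}_2$ argument with one fewer function in play.
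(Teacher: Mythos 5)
Your symmetrization $\int B^{s,\gamma}_{\eta}g_*(f'-f)\,\mathrm{d}V=\tfrac12\int B^{s,\gamma}_{\eta}(g_*-g'_*)(f'-f)\,\mathrm{d}V$ is correct, and the per-block estimate
\beno
|\mathcal{I}_{j,k}|\ \lesssim\ C_{s,\gamma,\eta}\,2^{s(j+k)}\,|\mathfrak{F}_j g|_{L^2}\,|\mathfrak{F}_k f|_{L^2}
\eeno
can indeed be obtained along the lines you sketch. (A small technical remark: on $F^c$, with only two functions present, one closes directly with Young's convolution inequality in the $L^1\times L^2\times L^2$ triple; the $L^\infty$-Bernstein step is a relic of the three-function proof of Proposition \ref{ubqepsilon-singular} and, if used here, would insert an extraneous $2^{\frac32 k}$ that spoils the exponent $2^{s(j+k)}$.) This two-first-order-Taylor route is genuinely different from the paper's, which keeps the quantity as $\langle Q^{s,\gamma}_\eta(g,1),f\rangle$ and runs the $\mathcal{I}_1$-type argument: a second-order Taylor expansion of $f'-f$ combined with the cancellation \eqref{cancell2}, yielding the single factor $2^{2sl}$.

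The genuine gap is the summation step. The unrestricted double sum
\beno
\sum_{j,k}2^{s(j+k)}|\mathfrak{F}_j g|_{L^2}|\mathfrak{F}_k f|_{L^2}
=\Big(\sum_j 2^{sj}|\mathfrak{F}_j g|_{L^2}\Big)\Big(\sum_k 2^{sk}|\mathfrak{F}_k f|_{L^2}\Big)
\eeno
is a product of $\ell^1$ Besov-type quantities; each factor is controlled by $|\cdot|_{H^{s+\delta}}$ but not by $|\cdot|_{H^{s}}$, and no form of Cauchy--Schwarz in $(j,k)$ recovers $|g|_{H^{a_1}}|f|_{H^{a_2}}$ with $a_1+a_2=2s$ exactly --- you would incur the same $\delta$-loss that appears in Proposition \ref{ubqepsilon-singular}, where the constraint is $a_1+a_2+a_3=2s+\tfrac32+\delta$. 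The missing ingredient --- and the entire point of the paper's one-line proof --- is the frequency localization: writing the quantity as $\langle Q^{s,\gamma}_\eta(g,1),f\rangle$ and applying Bobylev's formula \eqref{inner-product-into-frequency} with $h=1$ forces $\eta_*=\xi$, so $\langle Q^{s,\gamma}_\eta(\mathfrak{F}_j g,1),\mathfrak{F}_k f\rangle=0$ unless $|j-k|<N_0$. Restricting your sum to this near-diagonal band makes $2^{s(j+k)}\sim 2^{a_1 j+a_2 k}$ for every decomposition $a_1+a_2=2s$, and Cauchy--Schwarz along the diagonal then gives $|g|_{H^{a_1}}|f|_{H^{a_2}}$ sharply. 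You must state and use this near-diagonal vanishing before summing; as written, the Cauchy--Schwarz step is not valid.
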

	\begin{proof} 
		Recalling the decomposition in frequency space, we have
		\beno
\int B^{s,\gamma}_{\eta} g_{*}(f^{\prime}-f)\mathrm{d}V =		\langle Q^{s,\gamma}_{\eta}(g, 1), f\rangle
		=
		\sum_{|j-l| < N_{0}} \langle Q^{s,\gamma}_{\eta}(\mathfrak{F}_{j} g, 1), \mathfrak{F}_{l}f\rangle.
		\eeno
		This is because frequency of $g$ and $f$ lies in the same region when $h=1$.
		Following the estimate on $\mathcal{I}_1$ in the  proof of Proposition \ref{ubqepsilon-singular}, we have
		\beno
		|\langle Q^{s,\gamma}_{\eta}(\mathfrak{F}_{j} g, 1), \mathfrak{F}_{l}f\rangle| \lesssim C_{s,\gamma,\eta}2^{2sl}|\mathfrak{F}_{j} g|_{L^{2}}|\mathfrak{F}_{l}f|_{L^{2}}.
		\eeno
		Since $|j-l| < N_{0}$, by the Cauchy-Schwarz inequality,
		we can estimate the sum as
		\beno
		\sum_{|j-l| < N_{0}} 2^{2sl}|\mathfrak{F}_{j} g|_{L^{2}}|\mathfrak{F}_{l}f|_{L^{2}} \lesssim 
		|g|_{H^{a_1}}|f|_{H^{a_2}}.
		\eeno
		By \eqref{decomposition-phase-near-0}, we can freely transfer weight among $g, f$ so that the proof of the proposition is completed.
	\end{proof}

	\subsection{Upper bound of $\langle I^{s,\gamma}_{\eta}(g,h;\beta), f\rangle$} 
		In this subsection, we will estimate the upper bound of $\langle I^{s,\gamma}_{\eta}(g,h;\beta), f\rangle$ where $I^{s,\gamma}_{\eta}(g,h;\beta)$ is defined by 
	\eqref{I-ep-ga-geq-eta-beta} with 	$B^{s,\gamma}_{\eta}$.

	\begin{prop} \label{I-less-eta-upper-bound} 
		Let $l\geq 0$. Let  $(a_{1},a_{2})=(\f{3}{2} + \delta, s)$ or $(0, \f{3}{2} + \delta)$. Then it holds 
		\beno 
		\langle I^{s,\gamma}_{\eta}(g,h;\beta), f\rangle  \lesssim_{l} C_{\delta, s,\gamma,\eta} |g|_{H^{a_{1}}_{-l}}|h|_{H^{a_{2}}_{-l}}
		|f|_{H^{s}_{-l}}. 
		\eeno
	\end{prop}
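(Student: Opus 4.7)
The plan is to mimic the dyadic/Taylor expansion strategy used in the proof of Proposition~\ref{ubqepsilon-singular}, exploiting the fact that the extra factor $(\partial_\beta\mu^{1/2})_* - (\partial_\beta\mu^{1/2})_*'$ is a \emph{difference} of a Schwartz function between $v_*$ and $v'_*$, which furnishes additional smallness in $\sin\tfrac{\theta}{2}$. First I would reduce to the weight-free case: since every derivative of $\mu^{1/2}$ is Schwartz, the weight $\langle v\rangle^{-l}$ on $g$, $h$, $f$ can be absorbed into $\partial_\beta\mu^{1/2}$ (after replacing it by $\langle v_*\rangle^{2l}\partial_\beta\mu^{1/2}$, still Schwartz), at the cost of a multiplicative constant depending only on $l$.

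I would then Taylor expand to second order at $v_*$,
\[
(\partial_\beta\mu^{1/2})_* - (\partial_\beta\mu^{1/2})_*' = -(v'_*-v_*)\cdot \nabla(\partial_\beta\mu^{1/2})(v_*) + R_2,
\]
with $|R_2|\lesssim |v-v_*|^2\sin^2\tfrac{\theta}{2}$ times a Schwartz function of $(v_*,v'_*)$. The quadratic remainder already carries the $\sin^2\tfrac{\theta}{2}$ factor, so that since $\int b^s(\theta)\sin^2\tfrac{\theta}{2}\,\mathrm{d}\sigma$ is uniformly bounded in $s$ by~\eqref{mean-momentum-transfer}, the angular singularity is cancelled; Cauchy--Schwarz combined with the Sobolev embedding $H^{3/2+\delta}\hookrightarrow L^\infty$ (applied to $g$ when $a_1=3/2+\delta$, or to $h$ when $a_2=3/2+\delta$) closes this contribution. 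For the linear term I would further split $g'_*h' = g_*h + (g'_*-g_*)h' + g_*(h'-h)$. On the two cross pieces, a first-order Taylor expansion of $g$ or $h$ supplies an extra $|v'_*-v_*|=|v-v_*|\sin\tfrac{\theta}{2}$, so combined with the linear factor they again carry the full $\sin^2\tfrac{\theta}{2}$ smallness; on the leading $g_*h$ piece, all non-kernel factors are $\sigma$-independent and the symmetry identity $\int b^s(\theta)(v'_*-v_*)\,\mathrm{d}\sigma = 2(v-v_*)\int b^s(\theta)\sin^2\tfrac{\theta}{2}\,\mathrm{d}\sigma$ produces the missing $\sin^2\tfrac{\theta}{2}$.

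To reach the sharp $|v-v_*|^{2s}$ gain in $v_*$ (rather than only $|v-v_*|$ coming from pure symmetry, which would demand the overly restrictive $\gamma>-4$), I would overlay on the above decomposition the $\sin\tfrac{\theta}{2}\le 2^{-k}|v-v_*|^{-1}$ vs.\ $\ge$ splitting from the proof of Proposition~\ref{ubqepsilon-singular}: on the large-$\theta$ region the estimate $\int b^s(\theta)\mathbf{1}_{E^c}\,\mathrm{d}\sigma\lesssim s^{-1}2^{2sk}|v-v_*|^{2s}$ delivers both the $|v-v_*|^{2s}$ and a fractional derivative of order $2s$ on $f$ after dyadic localisation, which is precisely the origin of the $|f|_{H^s}$ hypothesis; Cauchy--Schwarz summation over $k$ then produces the $\delta^{-1/2}$ factor exactly as in~\eqref{sum-2-final-result}. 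The $v_*$-integration is finally closed by $\int|v-v_*|^{\gamma+2s}\mathbf{1}_{|v-v_*|\le 4\eta/3}\,\mathrm{d}v_*\lesssim \eta^{\gamma+2s+3}/(\gamma+2s+3)$, cf.~\eqref{key-reason-near-origin}. The main obstacle I anticipate is organising the multi-term Taylor/symmetry decomposition so that the dyadic split on $f$ is compatible with each piece and so that the derivative budget is properly distributed into the three norms $|g|_{H^{a_1}}$, $|h|_{H^{a_2}}$, $|f|_{H^s}$ without double-counting, especially in the endpoint case $a_1=0$ where $g$ carries no regularity and all needed smoothness in $v_*$ must be traded against the Schwartz decay of $\partial_\beta\mu^{1/2}$ and its higher derivatives.
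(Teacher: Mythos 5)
Your proposal takes a genuinely different route from the paper. You attack $I^{s,\gamma}_\eta$ by Taylor-expanding the $\mu^{1/2}$-difference to second order and then further decomposing $g'_*h'=g_*h+(g'_*-g_*)h'+g_*(h'-h)$, extracting the needed $\sin^2\frac{\theta}{2}$ smallness from symmetry on the leading piece and from additional Taylor expansions on the cross pieces. The paper instead splits $f'=(f'-f)+f$ at the outset. For the $(f'-f)$ part it applies Cauchy--Schwarz so that the full factor $((\mu^{1/4})'_*-\mu_*^{1/4})^2$ sits on the $g_*^2h^2$ side; the Taylor expansion then falls only on $\mu^{1/4}$, and the regularity on $g,h$ is used purely for the Sobolev embedding that closes the $(v,v_*)$-integral. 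The $f$ part is rewritten as $\langle Q^{s,\gamma}_\eta(hf,g),\mu^{1/2}\rangle$ and handed off to Proposition~\ref{ubqepsilon-singular} and Lemma~\ref{a-special-term}.

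This difference matters because your approach has a real gap in the endpoint case $(a_1,a_2)=(0,\frac32+\delta)$. The cross piece $(g'_*-g_*)h'$ in your decomposition demands a velocity derivative on $g$, but in that case $g$ is only in $L^2$ (no derivatives). You flag this concern at the end, but the proposed remedy --- trading the missing regularity for the Schwartz decay of $\partial_\beta\mu^{1/2}$ --- does not work: decay of the fixed profile cannot substitute for differentiability of the test function $g$. The problem is not cosmetic, because for $s\geq\frac12$ the linear Taylor term of $\mu^{1/2}$ (one factor of $\sin\frac{\theta}{2}$) is not enough on its own to tame $b^s(\theta)\sim(1-s)\sin^{-2-2s}\frac{\theta}{2}$ even on the small-angle region, so the cross pieces genuinely need either their own Taylor smallness or a different structural device. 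The paper's Cauchy--Schwarz step is precisely that device: it lets the whole quadratic smallness be harvested from $\mu^{1/4}$ alone, never touching $g$. Without an analogous mechanism your route cannot reach the $a_1=0$ endpoint.
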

	\begin{proof} We only need to consider the case when $\beta=0$ because
		 the following argument  also holds when we replace $\mu^{1/2}$  by  $P_{\beta}\mu^{1/2}$.
		Recall
		\ben \label{I-s-ga-eta-ghf}
		\langle I^{s,\gamma}_{\eta}(g,h), f\rangle =  \int B^{s,\gamma}_{\eta}((\mu^{1/2})_{*}^{\prime} - \mu_{*}^{1/2})g_{*} h f^{\prime} \mathrm{d}V  =  \mathcal{I}_{1} + \mathcal{I}_{2},
		\een
		where
		\beno
		\mathcal{I}_{1}:= \int B^{s,\gamma}_{\eta}((\mu^{1/2})_{*}^{\prime} - \mu_{*}^{1/2})g_{*} h (f^{\prime} - f)\mathrm{d}V, \quad
		\mathcal{I}_{2}:= \int B^{s,\gamma}_{\eta}((\mu^{1/2})_{*}^{\prime} - \mu_{*}^{1/2})g_{*} h f \mathrm{d}V.
		\eeno
		
		Firstly, for  $\mathcal{I}_{1}$,
		by the Cauchy-Schwarz inequality, we have
	$
		|\mathcal{I}_{1}| \leq \mathcal{I}_{1,1}^{1/2}\mathcal{I}_{1,2}^{1/2}
	$,
		where
		\beno
		\mathcal{I}_{1,1}:= \int B^{s,\gamma}_{\eta}((\mu^{1/4})_{*}^{\prime} + \mu_{*}^{1/4})^{2} (f^{\prime} - f)^{2}\mathrm{d}V, \quad 
		\mathcal{I}_{1,2}:=\int B^{s,\gamma}_{\eta}((\mu^{1/4})_{*}^{\prime} - \mu_{*}^{1/4})^{2} g_{*}^{2} h^{2} \mathrm{d}V.
		\eeno
		Using $((\mu^{1/4})_{*}^{\prime} + \mu_{*}^{1/4})^{2} \leq 2 ((\mu^{1/2})_{*}^{\prime} + \mu_{*}^{1/2})$, by the   change of variable $(v, v_{*}, \sigma) \rightarrow (v^{\prime}, v_{*}^{\prime}, \sigma')$,
		we have
		\beno
		\mathcal{I}_{1,1} \leq 4 \int B^{s,\gamma}_{\eta} \mu_{*}^{1/2} (f^{\prime} - f)^{2}\mathrm{d}V = 4 \mathcal{N}^{s,\gamma}_{\eta}(\mu^{1/4},f),
		\eeno
		where
		\ben  \label{definition-of-N-s-ga-eta-leq}
		\mathcal{N}^{s, \gamma}_{\eta}(g,h) := \int B^{s, \gamma}_{\eta}
		g^{2}_{*} (h^{\prime}-h)^{2} \mathrm{d}V.\een
	Using $(f^{\prime} - f)^{2} = (f^{2})^{\prime} - f^{2} - 2f(f^{\prime}-f)$, we get
		\ben \label{functional-N-decomposition}
		\mathcal{N}^{s,\gamma}_{\eta}(\mu^{1/4},f) = \int B^{s,\gamma}_{\eta} \mu_{*}^{1/2} ((f^{2})^{\prime} - f^{2})\mathrm{d}V - 2 \langle Q^{s,\gamma}_{\eta}(\mu^{1/2},f),f \rangle.
		\een
		By Taylor expansion of $(\mu^{1/2})'_* - (\mu^{1/2})_*$, we have
		\beno
		|\int B^{s,\gamma}_{\eta} \mu_{*}^{1/2} ((f^{2})^{\prime} - f^{2})\mathrm{d}V| &=& |\int B^{s,\gamma}_{\eta} ((\mu^{1/2})'_* - (\mu^{1/2})_*)  f^{2} \mathrm{d}V|
		\\ &\lesssim& \int \mathrm{1}_{|v-v_{*}| \leq 4\eta/3} |v-v_{*}|^{\gamma+2} \mu^{\f14}f^{2} \mathrm{d}v_{*} \mathrm{d}v
		\lesssim \frac{\eta^{\gamma+5}}{\gamma+5} |\mu^{\f18}f|_{L^{2}}^{2}.
		\eeno
		By Proposition \ref{ubqepsilon-singular}, we have
		\beno
		|\langle Q^{s,\gamma}_{\eta}(\mu^{1/2},f),f \rangle|
		\lesssim_l C_{s,\gamma,\eta} |f|_{H^{s}_{-l}}^{2}.
		\eeno
		Since $\gamma+5 \geq \gamma+2s+3$,
		\ben \label{functional-N-lower-eta}
		\mathcal{I}_{1,1} \lesssim \mathcal{N}^{s,\gamma}_{\eta}(\mu^{1/4},f) \lesssim_l C_{s,\gamma,\eta} |f|_{H^{s}_{-l}}^{2}.
		\een
		It is straightforward to have
		\beno
		\mathcal{I}_{1,2} \lesssim \int \mathrm{1}_{|v-v_{*}| \leq  4 \eta/3}|v-v_{*}|^{\gamma+2} \mu^{1/8} \mu^{1/8}_{*} g_{*}^{2} h^{2} \mathrm{d}v_{*} \mathrm{d}v \lesssim \delta^{-1} \frac{\eta^{\gamma+5}}{\gamma+5} |\mu^{1/16}g|_{H^{a_{1}}}^{2}|\mu^{1/16}h|_{H^{a_{2}}}^{2},
		\eeno
		where $a_1 + a_2 = \f{3}{2} + \delta$.
		Combining the estimates on  $\mathcal{I}_{1,1}$ and $\mathcal{I}_{1,2}$ gives
		\beno
		\mathcal{I}_{1} \lesssim_{l} C_{\delta, s,\gamma,\eta} |\mu^{1/16}g|_{H^{a_{1}}}|\mu^{1/16}h|_{H^{a_{2}}}|f|_{H^{s}_{-l}}.
		\eeno

		We now turn to estimate $\mathcal{I}_{2}$. 
		By  Prop. \ref{ubqepsilon-singular}, we directly have 
		\beno
		|\mathcal{I}_{2}| = |\int B^{s,\gamma}_{\eta}((\mu^{1/2})^{\prime} - \mu^{1/2}) g (h f)_{*} \mathrm{d}V| 
		=
		|\langle Q^{s,\gamma}_{\eta}(hf, g), \mu^{1/2}\rangle|
		\\ \lesssim C_{s,\gamma,\eta} |hf|_{H^{s}_{-2l}} |g|_{L^{2}_{-l}} |\mu^{\frac{1}{2}}|_{H^{s+2}_{3l}}
		\lesssim_{l} C_{\delta, s,\gamma,\eta}  |g|_{L^{2}_{-l}}
		|h|_{H^{\f{3}{2}+\delta}_{-l}} |f|_{H^{s}_{-l}}.
		\eeno
		Similarly, by  Lemma \ref{a-special-term} to be proved later, we have 
		\beno
		|\mathcal{I}_{2}|
		=
		|\langle Q^{s,\gamma}_{\eta}(hf, g), \mu^{1/2}\rangle|
		\lesssim_{l} C_{\delta, s,\gamma,\eta}  |g|_{H^{\f{3}{2}+\delta}_{-l}}
		|h|_{H^{s}_{-l}} |f|_{H^{s}_{-l}}.
		\eeno
		Combining the estimates on $\mathcal{I}_{1}$ and $\mathcal{I}_{2}$
		completes the proof of the proposition.
	\end{proof}

	\begin{lem}\label{a-special-term} It holds that
		\beno 
		|\langle Q^{s,\gamma}_{\eta}(f_2 f_3, f_1), \mu^{1/2}\rangle|
		\lesssim_{l} C_{\delta, s,\gamma,\eta}  |f_1|_{H^{\f{3}{2}+\delta}_{-l}}
		|f_2|_{H^{s}_{-l}} |f_3|_{H^{s}_{-l}}.
		\eeno
	\end{lem}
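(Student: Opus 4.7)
The plan is to apply Proposition \ref{ubqepsilon-singular} to the trilinear form $\langle Q^{s,\gamma}_\eta(f_2 f_3, f_1), \mu^{1/2}\rangle$ with a judicious choice of Sobolev exponents among the three slots, and then reduce to a standard Sobolev product estimate for $f_2 f_3$. The key observation is that the test function $\mu^{1/2}$ is Schwartz and so may absorb arbitrarily much regularity at no cost, which gives us the freedom to shift the regularity budget onto the product slot in whatever way the Sobolev multiplication rule demands.

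To place $H^{\frac{3}{2}+\delta}$ regularity on $f_1$, I would assign $a_2 = \frac{3}{2}+\delta$ on the $f_1$ slot. Since $\mu^{1/2}$ is Schwartz, its weighted norm $|\mu^{1/2}|_{H^{a_3}_{3l}}$ is finite for any $a_3,l$ and can be absorbed into an $l$-dependent constant. Under the constraint $a_1 + a_2 + a_3 = 2s + \frac{3}{2}+\delta$ of Proposition \ref{ubqepsilon-singular}, this fixes $a_1 + a_3 = 2s$, leaving the freedom to balance $a_1$ (on the product slot) against $a_3$ (on $\mu^{1/2}$). In three dimensions the Sobolev product rule $H^s \times H^s \hookrightarrow H^{2s-\frac{3}{2}}$ holds at the borderline, and a $\frac{\delta}{2}$-loss in regularity yields $H^s \times H^s \hookrightarrow H^{2s - \frac{3}{2}-\frac{\delta}{2}}$ with a $\delta^{-1/2}$ prefactor. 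I would therefore take $a_1 = 2s - \frac{3}{2} - \frac{\delta}{2}$ and $a_3 = \frac{3}{2} + \frac{\delta}{2}$; direct calculation verifies the three pairwise constraints $a_1 + a_2 \geq s$, $a_1 + a_3 \geq 2s$, $a_2 + a_3 \geq 2s$.

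The main obstacle is that for $s < \frac{3}{4}$, the chosen $a_1 = 2s - \frac{3}{2} - \frac{\delta}{2}$ is negative, outside the stated range $a_i \geq 0$ of Proposition \ref{ubqepsilon-singular}. I would handle this by revisiting the dyadic frequency argument in the proof of that proposition: in each of the three regimes $|j - k| < N_0$, $j \leq k - N_0$, $j \geq k + N_0$, the Cauchy--Schwarz pairing that extracts the $|g|_{H^{a_1}}$ norm weights $|\mathfrak{F}_j g|_{L^2}$ by $2^{a_1 j}$. For $a_1 < 0$ the weight is summable at large $j$, while the small-$j$ contribution is controlled by the finite lower bound $j \geq -1$ together with the finite-cardinality pairing $|j - k| < N_0$ in the high-high regime; in the other two regimes the $2^{2sj}$ and $2^{3j/2}$ factors already dominate any $2^{a_1 j}$ with $a_1 \leq 0$. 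Thus Proposition \ref{ubqepsilon-singular} extends to slightly negative $a_1$, and combining with the Sobolev product bound $|f_2 f_3|_{H^{a_1}_{-2l}} \lesssim_l \delta^{-1/2} |f_2|_{H^s_{-l}} |f_3|_{H^s_{-l}}$ yields the claimed estimate with prefactor $\delta^{-1/2} C_{s,\gamma,\eta}$, completing the proof.
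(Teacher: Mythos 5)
Your proposal is correct and amounts to a repackaging of the paper's own argument. The paper does not extend Proposition \ref{ubqepsilon-singular}; it re-enters that proposition's dyadic proof with $g = f_2 f_3$, $h = f_1$, $f = \mu^{1/2}$ and replaces the product law with the in-line bound $|\mathfrak{F}_j(f_2 f_3)|_{L^2} \lesssim 2^{(3/2-s)j}|f_2|_{H^s}|f_3|_{H^s}$, proved by H\"older and Young applied to $\varphi_j(\hat{f_2}*\hat{f_3})$ (formula \eqref{f-j-g-f2f3-trick}), then reruns the three regimes \eqref{term-type-1}--\eqref{term-type-3}. Your route invokes Proposition \ref{ubqepsilon-singular} with $a_1 = 2s - 3/2 - \delta/2$ together with $H^s\cdot H^s\hookrightarrow H^{2s - 3/2 - \delta/2}$; that is the same mathematical content, but it requires justifying the proposition for negative $a_1$, which is exactly what your three-regime discussion does and does correctly — in the high-high regime $|j-k|<N_0$ the factor $2^{a_1 j}$ passes to $k$ at bounded cost, and in the one-sided regimes the factors $2^{3j/2}$ and $2^{2sj}$ carried by the first slot absorb $2^{-a_1 j}$. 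Thus the two routes coincide up to whether the dyadic bookkeeping is stated as an extended proposition or reworked in-line; the paper's choice keeps the proposition's hypotheses clean, while yours would yield a more modular and reusable statement. One bookkeeping correction: the subcritical product law holds with a $\delta$-independent constant, so you should not attach a $\delta^{-1/2}$ to it — Proposition \ref{ubqepsilon-singular} already contributes the $\delta^{-1/2}$ in $C_{\delta,s,\gamma,\eta}=\delta^{-1/2}C_{s,\gamma,\eta}$ from the Cauchy--Schwarz summation over the lowest dyadic index, and a second $\delta^{-1/2}$ would give $\delta^{-1}$ and overshoot the stated constant.
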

	\begin{proof} We will follow the proof of Prop. \ref{ubqepsilon-singular} by taking $g = f_2 f_3, h = f_1, f = \mu^{1/2}$. 
		Recall \eqref{term-type-1}	for 
		$l \leq k+ N_0$ that
		\beno
		|\langle Q^{s,\gamma}_{\eta}(\mathfrak{F}_{j} g, \mathfrak{F}_{k}h), \mathfrak{F}_{l}f\rangle| \lesssim C_{s,\gamma,\eta}2^{sl+sk+\frac{3}{2}l}|\mathfrak{F}_{j} g|_{L^{2}}|\mathfrak{F}_{k}h|_{L^{2}}|\mathfrak{F}_{l}f|_{L^{2}}.
		\eeno	
		Note that
		\ben \label{f-j-g-f2f3-trick}
		|\mathfrak{F}_{j} g|_{L^{2}} = |\varphi_{j} (\hat{f_2} * \hat{f_3})|_{L^{2}} \leq 
		|\varphi_{j}|_{L^{r}}
		|\hat{f_2}|_{L^{q}}
		|\hat{f_3}|_{L^{q}}
		\lesssim 2^{\frac{3}{r}j} |\hat{f_2}|_{L^{q}} |\hat{f_3}|_{L^{q}}.
		\een	
		Here $r \geq 2, q \leq 2$ satisfy
		\beno
		\f{1}{r} + \f{2}{q} = 1 + \f{1}{2}, \quad \f{3}{r} = \f{3}{2} -s.
		\eeno
		For the  chosen $r$, when  $|j-k| < N_{0}$, we have
	$
		2^{\frac{3}{r}j} 2^{sk} \lesssim
		2^{\f{3}{2}k}
	$.
	For $1/q = 1/2 + 1/p$, we have
		\beno
		|\hat{f_2}|_{L^{q}} \leq |W_s \hat{f_2}|_{L^{2}}
		|W_{-s}|_{L^{p}} \lesssim |f_2|_{H^{s}},
		\eeno
		because $	s p = 6$. Then we obtain
		\beno
		\sum_{|j-k| < N_{0}} \sum_{l \leq k+N_{0}} \langle Q^{s,\gamma}_{\eta}(\mathfrak{F}_{j} g, \mathfrak{F}_{k}h), \mathfrak{F}_{l}f \rangle 	
		\lesssim 
		\sum_{|j-k| < N_{0}} \sum_{l \leq k+N_{0}} 
		C_{s,\gamma,\eta} 2^{sl+\frac{3}{2}l+\frac{3}{2}k}
		|f_2|_{H^{s}} |f_3|_{H^{s}} |\mathfrak{F}_{k}h|_{L^{2}}|\mathfrak{F}_{l}f|_{L^{2}}
		\\
		\lesssim C_{s,\gamma,\eta} 	|f_2|_{H^{s}} |f_3|_{H^{s}} |f|_{H^{s+2}}
		\sum_{|j-k| < N_{0}}   2^{\frac{3}{2}k}
		|\mathfrak{F}_{k}h|_{L^{2}}
		\\ 
		\lesssim C_{\delta, s,\gamma,\eta} 	|f_2|_{H^{s}} |f_3|_{H^{s}} |f|_{H^{s+2}} |h|_{H^{3/2+\delta}}
		\lesssim C_{\delta, s,\gamma,\eta} |f_1|_{H^{3/2+\delta}} 	|f_2|_{H^{s}} |f_3|_{H^{s}}.
		\eeno

		Recalling \eqref{term-type-2} for $j \leq k-N_{0}$,  and using \eqref{f-j-g-f2f3-trick} for $r=q=2$, we have
		\beno
		|\langle Q^{s,\gamma}_{\eta}(\mathfrak{F}_{j} g, \mathfrak{F}_{k} h), \tilde{\mathfrak{F}}_{k} f\rangle| \lesssim C_{s,\gamma,\eta}
		2^{2sk+3j} |f_2|_{L^{2}} |f_3|_{L^{2}}|\mathfrak{F}_{k}h|_{L^{2}}|\tilde{\mathfrak{F}}_{k} f|_{L^{2}},
		\eeno
	which yields
		\beno
		|\sum_{j \leq k-N_{0}} \langle Q^{s,\gamma}_{\eta}(\mathfrak{F}_{j} g, \mathfrak{F}_{k} h), \tilde{\mathfrak{F}}_{k} f\rangle|
		\lesssim  C_{s,\gamma,\eta} |f_2|_{L^{2}} |f_3|_{L^{2}} \sum_{j \leq k-N_{0}} 2^{2sk+3j} |\mathfrak{F}_{k}h|_{L^{2}}|\tilde{\mathfrak{F}}_{k} f|_{L^{2}}
		\\ \lesssim 
		C_{s,\gamma,\eta} |f_2|_{L^{2}} |f_3|_{L^{2}} \sum_{k} 2^{2sk+3k} |\mathfrak{F}_{k}h|_{L^{2}}|\tilde{\mathfrak{F}}_{k} f|_{L^{2}}
		\lesssim 
		C_{s,\gamma,\eta} |f_2|_{L^{2}} |f_3|_{L^{2}} |h|_{L^{2}}|f|_{H^{2s+3}}
		\\ \lesssim 
		C_{s,\gamma,\eta} |f_1|_{L^{2}} |f_2|_{L^{2}} |f_3|_{L^{2}}.
		\eeno
		
		Recalling \eqref{term-type-3}  
		for $j \geq k+N_{0}$, and using \eqref{f-j-g-f2f3-trick} for $r=q=2$, 
		we have
		\beno
		|\langle Q^{s,\gamma}_{\eta}(\mathfrak{F}_{j} g, \mathfrak{F}_{k}h), \tilde{\mathfrak{F}}_{j}f\rangle| \lesssim C_{s,\gamma,\eta}
		2^{2sj+\frac{3}{2}j+\frac{3}{2}k} |f_2|_{L^{2}} |f_3|_{L^{2}} |\mathfrak{F}_{k}h|_{L^{2}}|\tilde{\mathfrak{F}}_{j} f|_{L^{2}},
		\eeno
	which yields
		\beno
		\sum_{j \geq k+N_{0}} |\langle Q^{s,\gamma}_{\eta}(\mathfrak{F}_{j} g, \mathfrak{F}_{k}h), \tilde{\mathfrak{F}}_{j}f\rangle|
		\lesssim  C_{s,\gamma,\eta} |f_2|_{L^{2}} |f_3|_{L^{2}} \sum_{j \geq k+N_{0}} 2^{2sj+\frac{3}{2}j+\frac{3}{2}k} |\mathfrak{F}_{k}h|_{L^{2}}|\tilde{\mathfrak{F}}_{k} f|_{L^{2}}
		\\ \lesssim 
		C_{s,\gamma,\eta} |f_2|_{L^{2}} |f_3|_{L^{2}} |h|_{L^{2}} \sum_{j} 2^{2sj+3j} |\tilde{\mathfrak{F}}_{j} f|_{L^{2}}
		\\ \lesssim 
		C_{s,\gamma,\eta} |f_2|_{L^{2}} |f_3|_{L^{2}} |h|_{L^{2}} |f|_{H^{2s+4}} \lesssim 
		C_{s,\gamma,\eta} |f_1|_{L^{2}} |f_2|_{L^{2}} |f_3|_{L^{2}}.
		\eeno
		
		Combining the above estimates gives
		\beno 
		|\langle Q^{s,\gamma}_{\eta}(f_2 f_3, f_1), \mu^{1/2}\rangle|
		\lesssim C_{\delta, s,\gamma,\eta}  |f_1|_{H^{\f{3}{2}+\delta}}
		|f_2|_{H^{s}} |f_3|_{H^{s}}.
		\eeno
		By \eqref{decomposition-phase-near-0},  since we can freely transfer
		weight among $f_1, f_2, f_3, \mu^{\f12}$, then we put the negative order $-l$ weight on each of $f_1 f_2, f_3$ and the positive order $3l$ weight on $\mu^{\f12}$ to have the final estimate.
	\end{proof}

	\subsection{Upper bound of $\langle \Gamma^{s,\gamma}_{\eta}(g,h;\beta), f\rangle$} 	
	
	Note that
	\ben \label{Gamma-into-Q-I-leq-eta}
	\Gamma^{s,\gamma}_{\eta}(g,h;\beta) =   Q^{s,\gamma}_{\eta}(g\partial_{\beta}\mu^{1/2},h) +
	I^{s,\gamma}_{\eta}(g,h;\beta).
	\een
	By Propositions  \ref{ubqepsilon-singular} and \ref{I-less-eta-upper-bound}, we have the following
	proposition.

	\begin{prop} \label{Gamma-less-eta-upper-bound}  Let  $(a_{1},a_{2})=(\f{3}{2} + \delta, s)$ or $(s, \f{3}{2} + \delta)$. Then it holds that
		\beno 
		\langle \Gamma^{s,\gamma}_{\eta}(g,h;\beta), f\rangle  \lesssim_{l} C_{\delta, s,\gamma,\eta} |g|_{H^{a_{1}}_{-l}}|h|_{H^{a_{2}}_{-l}}
		|f|_{H^{s}_{-l}}. 
		\eeno
	\end{prop}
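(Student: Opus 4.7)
The plan is to invoke the splitting
\begin{equation*}
\Gamma^{s,\gamma}_{\eta}(g,h;\beta) = Q^{s,\gamma}_{\eta}(g\partial_{\beta}\mu^{1/2},h) + I^{s,\gamma}_{\eta}(g,h;\beta)
\end{equation*}
recorded in \eqref{Gamma-into-Q-I-leq-eta} and to estimate the two resulting inner products using the estimates already proved: Proposition \ref{ubqepsilon-singular} for the $Q^{s,\gamma}_{\eta}$ piece and Proposition \ref{I-less-eta-upper-bound} for the $I^{s,\gamma}_{\eta}$ piece. Both target pairs $(a_1,a_2) = (\tfrac32+\delta, s)$ and $(s, \tfrac32+\delta)$ will be handled in parallel.

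For the first piece, I would apply Proposition \ref{ubqepsilon-singular} with the triple $(a_1,a_2,a_3) = (a_1, a_2, s)$. The total-order identity $a_1+a_2+s = 2s+\tfrac32+\delta$ holds for both admissible pairs, and the pairwise constraints $a_1+a_2\ge s$ and $a_i + s \ge 2s$ reduce to $a_i \ge s$, which is satisfied since each of $a_1, a_2$ equals either $s$ or $\tfrac32+\delta$. For the weights I would pick $(l_1, l_2, l_3) = (2l, -l, -l)$, so that $l_1+l_2+l_3 = 0$, and absorb the Schwartz factor $\partial_\beta \mu^{1/2}$ into the weight via
\begin{equation*}
|g\,\partial_\beta \mu^{1/2}|_{H^{a_1}_{2l}} \lesssim_l |g|_{H^{a_1}_{-l}},
\end{equation*}
which is legitimate because $\langle v\rangle^{3l}\partial_\beta\mu^{1/2}$ remains rapidly decreasing. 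This yields the desired bound $C_{\delta,s,\gamma,\eta}|g|_{H^{a_1}_{-l}}|h|_{H^{a_2}_{-l}}|f|_{H^s_{-l}}$ for the $Q^{s,\gamma}_\eta$ piece in both target cases.

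For the second piece, the case $(a_1,a_2) = (\tfrac32+\delta, s)$ is precisely the first option in Proposition \ref{I-less-eta-upper-bound}, so nothing new is needed. For the case $(a_1,a_2) = (s, \tfrac32+\delta)$, I would begin from the second option of Proposition \ref{I-less-eta-upper-bound}, namely $(0, \tfrac32+\delta)$, and then use the elementary Plancherel inequality $|g|_{L^2_{-l}} = |W_{-l}g|_{L^2} \le |\langle D\rangle^s W_{-l} g|_{L^2} = |g|_{H^s_{-l}}$, valid since $s\ge 0$, to upgrade the $L^2_{-l}$ norm on $g$ to the required $H^s_{-l}$ norm at no cost.

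There is essentially no substantive obstacle here; the only point requiring care is verifying that the three-parameter constraints of Proposition \ref{ubqepsilon-singular} (in particular $a_1+a_3 \ge 2s$ and $a_2+a_3 \ge 2s$) are compatible with both target index pairs, which they are because $s$ is the minimum of $s$ and $\tfrac32+\delta$. Summing the two bounds completes the proof.
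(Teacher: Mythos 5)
Your proposal is correct and follows precisely the route the paper takes: decompose $\Gamma^{s,\gamma}_{\eta}$ via \eqref{Gamma-into-Q-I-leq-eta} into the $Q^{s,\gamma}_{\eta}$ and $I^{s,\gamma}_{\eta}$ pieces and invoke Propositions \ref{ubqepsilon-singular} and \ref{I-less-eta-upper-bound}. Your verification of the admissible index triples for Proposition \ref{ubqepsilon-singular}, the weight-transfer through the Schwartz factor $\partial_\beta\mu^{1/2}$, and the cheap upgrade $|g|_{L^2_{-l}}\le|g|_{H^s_{-l}}$ for the second index pair are exactly the routine checks the paper leaves implicit.
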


	\subsection{ Upper bound of  $\langle \mathcal{L}^{s,\gamma}_{\eta}f , h\rangle$ } 
	Recall that
	\beno
\mathcal{L}^{s,\gamma}_{\eta}(f; \beta_0, \beta_1) = \mathcal{L}^{s,\gamma}_{\eta,1}(f; \beta_0, \beta_1) + \mathcal{L}^{s,\gamma}_{\eta,2}(f; \beta_0, \beta_1),
	\\
	\mathcal{L}^{s,\gamma}_{\eta,1}(f; \beta_0, \beta_1) := - \Gamma^{s,\gamma}_{\eta}(\pa_{\beta_{1}}\mu^{1/2}, f;\beta_{0}), \quad
	\mathcal{L}^{s,\gamma}_{\eta,2}(f; \beta_0, \beta_1) := - \Gamma^{s,\gamma}_{\eta}(f, \pa_{\beta_{1}}\mu^{1/2};\beta_{0}).
	\eeno
	If $|\beta_0| = |\beta_1| = 0$, the operators are reduced to 
	\beno
	\mathcal{L}^{s,\gamma}_{\eta}f = \mathcal{L}^{s,\gamma}_{\eta}(f; 0, 0), \quad	\mathcal{L}^{s,\gamma}_{\eta,1}f = \mathcal{L}^{s,\gamma}_{\eta,1}(f; 0, 0), \quad
	\mathcal{L}^{s,\gamma}_{\eta,2}f = \mathcal{L}^{s,\gamma}_{\eta,2}(f; 0, 0).
	\eeno
	
	By  Proposition \ref{Gamma-less-eta-upper-bound}, we have the following proposition.
	\begin{prop}\label{less-eta-part-l1} It holds that
		\beno |\langle \mathcal{L}^{s,\gamma}_{\eta,1}(f; \beta_0, \beta_1), h\rangle| \lesssim_{l}  C_{ s,\gamma,\eta} |f|_{H^{s}_{-l}}
		|h|_{H^{s}_{-l}}.  \eeno
	\end{prop}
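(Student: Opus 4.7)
The plan is to recognize this estimate as a direct corollary of Proposition \ref{Gamma-less-eta-upper-bound}. First, unpacking the definition of $\mathcal{L}^{s,\gamma}_{\eta,1}$ (the analog of \eqref{beta-version-L-1-2-def} with the singular kernel $B^{s,\gamma}_{\eta}$), we rewrite
\beno
\langle \mathcal{L}^{s,\gamma}_{\eta,1}(f; \beta_0, \beta_1), h\rangle = -\langle \Gamma^{s,\gamma}_{\eta}(\pa_{\beta_1}\mu^{1/2}, f; \beta_0), h\rangle.
\eeno
The crucial structural observation is that the first slot of $\Gamma^{s,\gamma}_{\eta}$ is occupied by the fixed Schwartz function $\pa_{\beta_1}\mu^{1/2}$, so we are free to dump the high-regularity cost onto it rather than onto $f$ or $h$.

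Accordingly, I would apply Proposition \ref{Gamma-less-eta-upper-bound} with the choice $(a_1, a_2) = (\tfrac{3}{2}+\delta,\, s)$, identifying $g \leftrightarrow \pa_{\beta_1}\mu^{1/2}$, the middle argument with $f$, and the test function with $h$. This yields immediately
\beno
|\langle \mathcal{L}^{s,\gamma}_{\eta,1}(f; \beta_0, \beta_1), h\rangle| \lesssim_{l} C_{\delta, s,\gamma,\eta}\, |\pa_{\beta_1}\mu^{1/2}|_{H^{3/2+\delta}_{-l}}\, |f|_{H^{s}_{-l}}\, |h|_{H^{s}_{-l}}.
\eeno
Since $\pa_{\beta_1}\mu^{1/2}$ has Gaussian decay, the prefactor $|\pa_{\beta_1}\mu^{1/2}|_{H^{3/2+\delta}_{-l}}$ is a harmless constant depending only on $\beta_1$, $l$, and $\delta$, and can be absorbed into the implicit constant indicated by $\lesssim_{l}$.

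Finally, to replace $C_{\delta,s,\gamma,\eta}$ by $C_{s,\gamma,\eta}$ as stated in the proposition, I would simply fix $\delta$ once and for all (for instance $\delta = 1/2$); by \eqref{constant-C-delta-s-gamma-eta} this replaces $C_{\delta,s,\gamma,\eta}$ by a fixed multiple of $C_{s,\gamma,\eta}$, completing the argument. There is no genuine obstacle here: the statement is essentially a specialization of the trilinear estimate in Proposition \ref{Gamma-less-eta-upper-bound} to the case where the first function is Gaussian, and the only bookkeeping point is to verify that the asymmetric distribution of regularity $(3/2+\delta, s, s)$ lands on the correct slot, which it does because of the flexibility in the choice $(a_1, a_2) = (\tfrac{3}{2}+\delta, s)$ allowed by the proposition.
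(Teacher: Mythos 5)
Your proof is correct and matches the paper's approach exactly: the paper states Proposition \ref{less-eta-part-l1} as an immediate consequence of Proposition \ref{Gamma-less-eta-upper-bound}, and you have correctly filled in the bookkeeping (choosing $(a_1,a_2)=(\tfrac32+\delta,s)$, placing the heavy Sobolev cost on the Gaussian $\pa_{\beta_1}\mu^{1/2}$, absorbing its norm into the $\lesssim_l$ constant, and fixing $\delta$ to pass from $C_{\delta,s,\gamma,\eta}$ to $C_{s,\gamma,\eta}$ via \eqref{constant-C-delta-s-gamma-eta}).
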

	
Note that 
	for any $0 \leq \kappa, \iota \leq 1$
	\beno
	(1-\frac{\sqrt{2}}{2})(|v|^{2}+|v_{*}|^{2}) \leq |v(\kappa)|^{2}+|v_{*}(\iota)|^{2} \leq (1+\frac{\sqrt{2}}{2})(|v|^{2}+|v_{*}|^{2}),
	\eeno
	which yields
	\ben \label{mu-weight-result}
	\mu^{2}(v) \mu^{2}(v_{*}) \leq \mu(v(\kappa)) \mu(v_{*}(\iota)) \leq \mu^{\frac{1}{4}}(v) \mu^{\frac{1}{4}}(v_{*}).
	\een
	Thus, this  estimate keeps a $\mu$-type weight in the upper bound of  $\mathcal{L}^{s,\gamma}_{\eta,2}$.
	\begin{prop}\label{less-eta-part-l2} It holds that
		\beno |\langle \mathcal{L}^{s,\gamma}_{\eta,2}(f; \beta_0, \beta_1), h\rangle| \lesssim  C_{s,\gamma,\eta} |\mu^{1/32}f|_{H^{s}}|\mu^{1/32}h|_{H^{s}}.  \eeno
	\end{prop}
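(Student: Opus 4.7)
My starting point is the decomposition \eqref{Gamma-into-Q-I-leq-eta} applied with second argument $\partial_{\beta_1}\mu^{1/2}$, which gives
\[
\langle \mathcal{L}^{s,\gamma}_{\eta,2}(f;\beta_0,\beta_1), h\rangle = -\langle Q^{s,\gamma}_\eta(f\partial_{\beta_0}\mu^{1/2}, \partial_{\beta_1}\mu^{1/2}), h\rangle - \langle I^{s,\gamma}_\eta(f, \partial_{\beta_1}\mu^{1/2}; \beta_0), h\rangle,
\]
so it suffices to bound each of the two pieces by $C_{s,\gamma,\eta}|\mu^{1/32}f|_{H^s}|\mu^{1/32}h|_{H^s}$. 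I will reduce each piece to Proposition \ref{ubqepsilon-singular} and Proposition \ref{I-less-eta-upper-bound} respectively, with the free parameter $\delta$ pinned at a fixed value (say $\delta=1/2$) so that the $\delta^{-1/2}$ factor in the definition \eqref{constant-C-delta-s-gamma-eta} of $C_{\delta,s,\gamma,\eta}$ collapses to a harmless constant and leaves only $C_{s,\gamma,\eta}$.

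The substantive step is to convert the Gaussian decay carried by $\partial_{\beta_0}\mu^{1/2}$ and $\partial_{\beta_1}\mu^{1/2}$ into $\mu^{1/32}$-weights on $f$ and $h$ (in place of the polynomial weights $\langle v\rangle^{-l}$ produced by the two propositions). On the support of $B^{s,\gamma}_\eta$ we have $|v-v_*|\le 4\eta/3\le 4/3$, and the identity $|v'|^2+|v'_*|^2=|v|^2+|v_*|^2$ together with the elementary expansion $|v'|^2=\tfrac12(|v|^2+|v_*|^2)+\tfrac12(v+v_*)\cdot\sigma|v-v_*|$ shows that each of $|v|^2,|v_*|^2,|v'|^2,|v'_*|^2$ differs from $(|v|^2+|v_*|^2)/4$ by at most a bounded constant. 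Combined with \eqref{mu-weight-result}, this yields the pointwise bound
\[
\bigl|(\partial_{\beta_0}\mu^{1/2})(v_*)\bigr|\,\bigl|(\partial_{\beta_1}\mu^{1/2})(v)\bigr|\lesssim \mu^{1/32}(v)\mu^{1/32}(v_*)\cdot R(v,v_*),
\]
on the support of $B^{s,\gamma}_\eta$, with $R$ polynomial, and the same upper bound (with the same right-hand side) for the primed-velocity analogue $|(\partial_{\beta_0}\mu^{1/2})'_*|\,|(\partial_{\beta_1}\mu^{1/2})'|$ through the Maxwellian conservation identity.

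Using this factorization inside $\langle Q^{s,\gamma}_\eta(f\partial_{\beta_0}\mu^{1/2}, \partial_{\beta_1}\mu^{1/2}), h\rangle$, I absorb the $\mu^{1/32}(v_*)$-factor into $f_*$ (forming $(\mu^{1/32}f)_*$) and the $\mu^{1/32}(v)$-factor into $h$ (forming $\mu^{1/32}h$), leaving an operator with the exact structure of $Q^{s,\gamma}_\eta(\mu^{1/32}f\cdot\widetilde g_0,\widetilde g_1)$ tested against $\mu^{1/32}h$, where $\widetilde g_0,\widetilde g_1$ are fixed Schwartz functions. Proposition \ref{ubqepsilon-singular} with $(a_1,a_2,a_3)=(s,2,s)$, $l=0$, and $\delta=1/2$ then delivers the desired bound, the fixed Schwartz norms of $\widetilde g_0,\widetilde g_1$ being absorbed into the constant. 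The $I^{s,\gamma}_\eta$-term is handled identically via Proposition \ref{I-less-eta-upper-bound}.

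The main obstacle will be the careful bookkeeping of the Gaussian-weight transfer at the primed velocities, since the integrand genuinely contains $(\partial_{\beta_1}\mu^{1/2})'$ (and, in the $I$-term, $(\mu^{1/2})'_*$). I plan to rely on the identity $\mu(v)\mu(v_*)=\mu(v')\mu(v'_*)$ together with Lemma \ref{usual-change} (using the uniformly bounded Jacobian $1\le\psi^3_a(\theta)\le 2^{3/2}$) to move any primed-velocity $\mu^{1/32}$-weight back to the unprimed side without disturbing the bilinear structure, which is what makes the application of Propositions \ref{ubqepsilon-singular}--\ref{I-less-eta-upper-bound} legitimate after the weight redistribution.
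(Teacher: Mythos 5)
Your opening decomposition via \eqref{Gamma-into-Q-I-leq-eta} is correct and is a legitimate alternative starting point (the paper's own proof instead expands $\langle\mu^{-1/2}Q^{s,\gamma}_\eta(\mu^{1/2}f,\mu),h\rangle$ by hand into pieces $Y_1,Y_2$). However, the central step of your argument contains a genuine gap. After the trilinear rewriting $\langle Q^{s,\gamma}_\eta(G,H),h\rangle=\int B^{s,\gamma}_\eta G_* H\,(h'-h)\,\mathrm{d}V$ with $G=\mu^{1/32}f\widetilde g_0$, $H=\mu^{1/32}\widetilde g_1$, the factor $\mu^{1/32}(v)$ multiplies the \emph{difference} $h(v')-h(v)$, and
\[
\mu^{1/32}(v)\bigl(h(v')-h(v)\bigr)=\bigl[(\mu^{1/32}h)(v')-(\mu^{1/32}h)(v)\bigr]+\bigl(\mu^{1/32}(v)-\mu^{1/32}(v')\bigr)h(v').
\]
Only the first bracket gives you the structure $\langle Q^{s,\gamma}_\eta(\mu^{1/32}f\widetilde g_0,\widetilde g_1),\mu^{1/32}h\rangle$ that you want to feed into Proposition \ref{ubqepsilon-singular}. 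The second bracket is a non-negligible remainder, and the identity $\mu(v)\mu(v_*)=\mu(v')\mu(v'_*)$ together with Lemma \ref{usual-change} does not make it disappear: the factor $\mu^{1/32}(v_*)$ you peel off $\partial_{\beta_0}\mu^{1/2}$ sits at $v_*$ while $f$ in the gain term is evaluated at $v'_*$, so there is no way to rebundle them into $(\mu^{1/32}f\widetilde g_0)(v'_*)$ exactly. Your middle paragraph's claim of an ``exact structure'' is therefore false, and the bookkeeping you anticipate in the final paragraph does not in fact reduce to a clean re-application of Propositions \ref{ubqepsilon-singular}--\ref{I-less-eta-upper-bound}.

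This matters, because Propositions \ref{ubqepsilon-singular} and \ref{I-less-eta-upper-bound} can only deliver polynomial weights $\langle v\rangle^{\pm l}$, never Gaussian ones, so the $\mu^{1/32}$ weights must be pinned onto $f$ and $h$ \emph{before} invoking them; the remainder term is precisely what obstructs this. The paper's proof confronts the same obstacle explicitly: it writes $Y_1=\int B^{s,\gamma}_\eta(\mu^{1/2}f)_*\mu^{1/2}(h'-h)\,\mathrm{d}V$, then peels off $Y_{1,1}=\int B^{s,\gamma}_\eta(\mu^{1/2}f)_*\bigl((\mu^{1/2}h)'-\mu^{1/2}h\bigr)\,\mathrm{d}V$ (estimated by Proposition \ref{new-version-cancellation}, whose constant is already $C_{s,\gamma,\eta}$ with no $\delta^{-1/2}$ factor) and separately estimates the leftover $Y_{1,2}=\int B^{s,\gamma}_\eta(\mu^{1/2}f)_*\bigl(\mu^{1/2}-(\mu^{1/2})'\bigr)h'\,\mathrm{d}V$ by the Taylor expansion \eqref{Taylor2}, the symmetry \eqref{cancell2}, the bound \eqref{mu-weight-result} and the change of variable $v\to v'$, producing the factor $\eta^{\gamma+5}/(\gamma+5)\lesssim C_{s,\gamma,\eta}$. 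The analogous pair $Y_{2,1},Y_{2,2}$ is handled with Cauchy--Schwarz plus Taylor expansion of $(\mu^{1/4})'_*-\mu^{1/4}_*$. To repair your argument you would need to carry out essentially this second half: after your main piece, write out the remainder $\int B^{s,\gamma}_\eta(\mu^{1/32}f\widetilde g_0)_*\widetilde g_1\,(\mu^{1/32}(v)-\mu^{1/32}(v'))\,h'\,\mathrm{d}V$, Taylor-expand the Maxwellian difference to gain a factor $|v-v_*|\sin\tfrac{\theta}{2}$, pull out the Gaussian weight for $h'$ via \eqref{mu-weight-result}, and change variables $v\to v'$; and likewise for the $I^{s,\gamma}_\eta$-piece, where $(\partial_{\beta_0}\mu^{1/2})_*-(\partial_{\beta_0}\mu^{1/2})'_*$ is a first-order small difference but $f'_*$ and $(\partial_{\beta_1}\mu^{1/2})'$ sit at primed velocities and the weight transfer again produces a remainder.
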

	\begin{proof} For simplicity, we only consider $|\beta_0| = |\beta_1| = 0$.
		Note that
		\beno \langle -\mathcal{L}^{s,\gamma}_{\eta, 2}f , h\rangle &=&
		\langle \mu^{-1/2} Q^{s,\gamma}_{\eta}(\mu^{1/2}f,\mu), h\rangle
		= \int B^{s,\gamma}_{\eta} (\mu^{1/2}f)_{*}\mu((\mu^{-1/2}h)^{\prime}-\mu^{-1/2}h) \mathrm{d}V
		\\&=& \int B^{s,\gamma}_{\eta} (\mu^{1/2}f)_{*} \mu^{1/2} (h^{\prime}-h) \mathrm{d}V +\int B^{s,\gamma}_{\eta} f_{*}\mu^{1/2}h^{\prime}((\mu^{1/2})^{\prime}_{*}-\mu^{1/2}_{*}) \mathrm{d}V
		:= Y_{1}+Y_{2}.
		\eeno

		We first estimate $Y_{1}$. Observe that
		\beno Y_{1}&=& \int B^{s,\gamma}_{\eta} (\mu^{1/2}f)_{*} \mu^{1/2} (h^{\prime}-h) \mathrm{d}V
		\\&=&\int B^{s,\gamma}_{\eta} (\mu^{1/2}f)_{*}  ((\mu^{1/2}h)^{\prime}-\mu^{1/2}h) \mathrm{d}V+\int B^{s,\gamma}_{\eta} (\mu^{1/2}f)_{*} (\mu^{1/2}- (\mu^{1/2})^{\prime})h^{\prime} \mathrm{d}V
		:= Y_{1,1} +Y_{1,2}.
		\eeno
		Proposition \ref{new-version-cancellation} implies
		\beno |Y_{1,1}|\lesssim C_{s,\gamma,\eta} |\mu^{1/2}f|_{H^{s}}|\mu^{1/2}h|_{H^{s}}.
		\eeno
		By \eqref{Taylor2} and \eqref{cancell2}, using
		\eqref{mu-weight-result}, and  the change of variable $v \to v'$, we have
		\beno
		|Y_{1,2}| \lesssim \int \mathrm{1}_{|v-v_{*}| \leq 4\eta/3}|v-v_{*}|^{\gamma+2} \mu^{1/12}_{*} f_{*}  \mu^{1/12} h  \mathrm{d}v_{*} \mathrm{d}v \lesssim \frac{\eta^{\gamma+5}}{\gamma+5} |\mu^{1/16}f|_{L^{2}}|\mu^{1/16}h|_{L^{2}}.
		\eeno
		We now turn to $Y_2$. Note that
		\beno Y_{2} &=& \int B^{s,\gamma}_{\eta} f_{*}\mu^{1/2}h^{\prime}((\mu^{1/2})^{\prime}_{*}-\mu^{1/2}_{*}) \mathrm{d}V
		\\&=& \int B^{s,\gamma}_{\eta} f_{*}(\mu^{1/2}-(\mu^{1/2})^{\prime})h^{\prime}((\mu^{1/2})^{\prime}_{*}-\mu^{1/2}_{*}) \mathrm{d}V  +\int B^{s,\gamma}_{\eta} f_{*}(\mu^{1/2})^{\prime}h^{\prime}((\mu^{1/2})^{\prime}_{*}-\mu^{1/2}_{*}) \mathrm{d}V
		\\&:=& Y_{2,1} +Y_{2,2}.
		\eeno
		Similar to the estimate on $Y_{1,2}$, we get
		\beno |Y_{2,2}| \lesssim  \frac{\eta^{\gamma+5}}{\gamma+5}|\mu^{1/16}f|_{L^{2}}|\mu^{1/16}h|_{L^{2}}.
		\eeno
		By the Cauchy-Schwarz inequality, we get
		\beno |Y_{2,1}| &\leq&  \big(\int B^{s,\gamma}_{\eta} f^{2}_{*}(\mu^{1/4}-(\mu^{1/4})^{\prime})^{2}
		((\mu^{1/4})^{\prime}_{*}+\mu^{1/4}_{*})^{2}
		\mathrm{d}V \big)^{1/2}
		\\&& \times \big(\int B^{s,\gamma}_{\eta}(h^{\prime})^{2}
		((\mu^{1/4})^{\prime}_{*}-\mu^{1/4}_{*})^{2} (\mu^{1/4}+(\mu^{1/4})^{\prime})^{2}\mathrm{d}V \big)^{1/2}
		\\&=&\big(\int B^{s,\gamma}_{\eta} f^{2}_{*}(\mu^{1/4}-(\mu^{1/4})^{\prime})^{2}
		((\mu^{1/4})^{\prime}_{*}+\mu^{1/4}_{*})^{2}
		\mathrm{d}V \big)^{1/2}
		\\&& \times \big(\int B^{s,\gamma}_{\eta} h^{2}_{*}(\mu^{1/4}-(\mu^{1/4})^{\prime})^{2}
		((\mu^{1/4})^{\prime}_{*}+\mu^{1/4}_{*})^{2}
		\mathrm{d}V \big)^{1/2}.
		\eeno
		By Taylor expansion, 
		using
		\eqref{mu-weight-result} to obtain the $\mu$-type weight,  we get
		\beno \int B^{s,\gamma}_{\eta} f^{2}_{*}(\mu^{1/4}-(\mu^{1/4})^{\prime})^{2}
		((\mu^{1/4})^{\prime}_{*}+\mu^{1/4}_{*})^{2}
		\mathrm{d}V \lesssim
		\frac{\eta^{\gamma+5}}{\gamma+5}|\mu^{1/32}f|^{2}_{L^{2}},\eeno
		which gives
		$|Y_{2,1}| \lesssim (\gamma+5)^{-1} \eta^{\gamma+5} |\mu^{1/32}f|_{L^{2}} |\mu^{1/32}h|_{L^{2}}. $
		Combining  the above estimates completes the proof of the proposition.
	\end{proof}
	
	By Propositions \ref{less-eta-part-l1} and \ref{less-eta-part-l2}, we also have the following
	proposition.
	\begin{prop}\label{less-eta-part-l} It holds that
		\beno |\langle \mathcal{L}^{s,\gamma}_{\eta}(f; \beta_0, \beta_1), h\rangle| \lesssim  C_{s,\gamma,\eta} |f|_{H^{s}_{-l}}
		|h|_{H^{s}_{-l}}.  \eeno
	\end{prop}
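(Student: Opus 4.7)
The plan is to derive the estimate as an immediate consequence of the decomposition $\mathcal{L}^{s,\gamma}_{\eta}(f;\beta_0,\beta_1) = \mathcal{L}^{s,\gamma}_{\eta,1}(f;\beta_0,\beta_1) + \mathcal{L}^{s,\gamma}_{\eta,2}(f;\beta_0,\beta_1)$ built into the definition \eqref{beta-version-L-1-2-def}, together with the two preceding Propositions \ref{less-eta-part-l1} and \ref{less-eta-part-l2}. By the triangle inequality, it suffices to bound $\langle \mathcal{L}^{s,\gamma}_{\eta,1}(f;\beta_0,\beta_1),h\rangle$ and $\langle \mathcal{L}^{s,\gamma}_{\eta,2}(f;\beta_0,\beta_1),h\rangle$ separately.

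First I apply Proposition \ref{less-eta-part-l1} to the piece $\mathcal{L}^{s,\gamma}_{\eta,1}(f;\beta_0,\beta_1) = -\Gamma^{s,\gamma}_{\eta}(\partial_{\beta_1}\mu^{1/2},f;\beta_0)$, which already yields $|\langle \mathcal{L}^{s,\gamma}_{\eta,1}(f;\beta_0,\beta_1),h\rangle|\lesssim_{l} C_{s,\gamma,\eta}|f|_{H^s_{-l}}|h|_{H^s_{-l}}$, exactly the target bound. Next, Proposition \ref{less-eta-part-l2} supplies $|\langle \mathcal{L}^{s,\gamma}_{\eta,2}(f;\beta_0,\beta_1),h\rangle|\lesssim C_{s,\gamma,\eta}|\mu^{1/32}f|_{H^s}|\mu^{1/32}h|_{H^s}$, so to finish the proof I only need to pass from the exponentially weighted $H^s$-norm on the right-hand side to the polynomially weighted $|\cdot|_{H^s_{-l}}$ norm appearing in the statement.

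The key elementary estimate I will use is that for any fixed $l$, the function $\mu^{1/32}\langle v\rangle^{l}$ lies in the Schwartz class, so it is a bounded multiplier on $H^s$; this gives
\[
|\mu^{1/32}g|_{H^s} = \bigl|\bigl(\mu^{1/32}\langle v\rangle^{l}\bigr)\,\bigl(\langle v\rangle^{-l}g\bigr)\bigr|_{H^s} \lesssim_{l} |g|_{H^s_{-l}}.
\]
Applying this with $g=f$ and $g=h$ converts the bound from Proposition \ref{less-eta-part-l2} into $C_{s,\gamma,\eta}|f|_{H^s_{-l}}|h|_{H^s_{-l}}$, and summing the two contributions completes the proof.

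There is no real obstacle here, as the proposition is essentially a bookkeeping corollary of the two previous results; the only minor point is justifying the multiplier step, but this is standard for $0<s<1$ since a Schwartz function acts boundedly on $H^s$ by the Kato--Ponce type product inequality (or directly from the fact that multiplication by a Schwartz function commutes with $\langle D\rangle^{s}$ up to a bounded operator on $L^2$). The absence of any new singular integral manipulation reflects that all of the analytical content has already been absorbed into Propositions \ref{less-eta-part-l1} and \ref{less-eta-part-l2}.
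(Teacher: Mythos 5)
Your proof is correct and follows essentially the same route as the paper, which simply states that the proposition follows from combining Propositions~\ref{less-eta-part-l1} and~\ref{less-eta-part-l2} through the decomposition $\mathcal{L}^{s,\gamma}_{\eta}=\mathcal{L}^{s,\gamma}_{\eta,1}+\mathcal{L}^{s,\gamma}_{\eta,2}$. The only additional detail you supply, passing from $|\mu^{1/32}\cdot|_{H^s}$ to $|\cdot|_{H^s_{-l}}$ by writing $\mu^{1/32}g=(\mu^{1/32}W_{l})(W_{-l}g)$ and using that the Gaussian-weighted factor is a Schwartz multiplier on $H^s$, is the standard bookkeeping step the paper leaves implicit.
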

	
	
	\section{Upper bound estimate in the regular region} \label{Upper-Bound-Estimate-away}
	
	In this section, we will  derive some upper bound
	estimates in the regular region $|v-v_{*}| \gtrsim \eta$. 
	Recall\eqref{Gamma-ep-ga-geq-eta-into-IQ} that 
	\ben \label{Gamma-ep-ga-geq-eta-into-IQ-inner}
	\langle \Gamma^{s,\gamma,\eta}(g,h), f\rangle =  \langle Q^{s,\gamma,\eta}(\mu^{1/2}g,h), f\rangle +
	\langle I^{s,\gamma,\eta}(g,h), f\rangle.
	\een
	We now estimate $\langle Q^{s,\gamma,\eta}(g,h), f\rangle$ and $\langle I^{s,\gamma,\eta}(g,h), f\rangle$ in the following two subsections. With these estimates, we 
	will combine them suitably according to the operator splitting in subsection \ref{ope-spl} to 
	obtain several operator estimates for later use.

	\subsection{Upper bound for $ Q^{s,\gamma,\eta}$}
	Set $u = v-v_{*}$, the 
	$v = v_* + u, v' = v_* + u^{+}$ where $u^{+} = \frac{|u|\sigma + u}{2}$. Define the translation operator $T_{v_{*}}$ by $(T_{v_{*}}f)(v) = f(v_{*}+v)$. We recall the geometric decomposition into radial and spherical parts
	\ben
	f(v') - f(v) &=&  \left(((T_{v_{*}}f)(u^{+})-(T_{v_{*}}f)(|u|\frac{u^{+}}{|u^{+}|}))\right)  + \left(
	(T_{v_{*}}f)(|u|\frac{u^{+}}{|u^{+}|})- (T_{v_{*}}f)(u)
	\right)
	\nonumber
	\\ \label{sphere-radius} &=&  \text{ radial part} + \text{ spherical part}.
	\een

	\subsubsection{Radial part} We first derive some estimates on the radial part. Note that $|u^{+} - |u|\frac{u^{+}}{|u^{+}|}| = |u|(1-\cos \f{\theta}{2}) = 2 |u|\sin^2 \f{\theta}{4}$, which yields an order-2 cancellation in the angular singularity.  The radial part in \eqref{sphere-radius} can be  controlled by gain of $W_{s}$   in the phase and frequency space, namely the two Sobolev norms $L^{2}_{s}$ and $H^{s}$.
	We show this by using some localization technique introduced in \cite{he2022asymptotic}.
	\begin{lem}\label{crosstermsimilar} Let $0<\eta \leq 1$.
		Set 
		$$
		\mathcal{Y}^{s,\gamma,\eta}(h,f) := \int b^{s}(\frac{u}{|u|}\cdot\sigma)|u|^\gamma 
		\psi^{\eta}(u) h(u)[f(u^{+}) - f(|u|\frac{u^{+}}{|u^{+}|})] \mathrm{d}\sigma \mathrm{d}u,
		$$
		then
		\beno
		|\mathcal{Y}^{s,\gamma,\eta}(h,f)| \lesssim s^{-1} \eta^{\gamma} (|W_{\gamma/2}h|_{L^{2}_{s}}+|W_{\gamma/2}h|_{H^{s}})
		(|W_{\gamma/2}f|_{L^{2}_{s}}+|W_{\gamma/2}f|_{H^{s}}).
		\eeno
	\end{lem}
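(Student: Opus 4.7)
The plan is to exploit the purely radial nature of the difference $f(u^{+}) - f(|u|\nu)$, where $\nu := u^{+}/|u^{+}|$. Both points lie on the ray from the origin through $\nu$, at respective distances $|u^{+}| = |u|\cos(\theta/2)$ and $|u|$, so their separation is $|u|(1-\cos(\theta/2)) = 2|u|\sin^{2}(\theta/4)$. Combined with the singular kernel $b^{s}(\theta) \sim (1-s)\sin^{-2-2s}(\theta/2)$, this quadratic cancellation in $\theta$ permits gaining $s$ orders of radial regularity on $f$ via a Littlewood--Paley argument. A second observation is that the sharp separation scales like $|u|\theta^{2}$, so for large $|u|$ the ``bad'' range of angles shrinks, yielding an additional $|u|^{s}$-weight gain.

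Concretely, first I would apply the dyadic decomposition $f = \sum_{j \geq -1} \mathfrak{F}_{j} f$. For each block $f_{j}$, split the $\sigma$-integration at a threshold $\theta_{*,j}$ calibrated so that $|u|^{1/2} 2^{j/2} \theta_{*,j} \sim 1$, balancing the two regimes below. On the small-angle region $\theta \leq \theta_{*,j}$, use the first-order radial Taylor expansion
\[ f_{j}(u^{+}) - f_{j}(|u|\nu) = -\int_{|u^{+}|}^{|u|} (\partial_{r} f_{j})(r\nu)\,\mathrm{d}r, \]
apply Cauchy--Schwarz against $h$, and exploit $\int_{0}^{\theta_{*}} b^{s}(\theta) \sin^{4}(\theta/4) \sin\theta\,\mathrm{d}\theta \lesssim \theta_{*}^{2-2s}$ together with Bernstein's inequality $|\partial_{r} f_{j}|_{L^{2}} \lesssim 2^{j} |f_{j}|_{L^{2}}$. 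On the large-angle complement, bound the difference directly by $|f_{j}(u^{+})| + |f_{j}(|u|\nu)|$, and use $\int_{\theta > \theta_{*}} b^{s}(\theta) \sin\theta\,\mathrm{d}\theta \lesssim s^{-1}\theta_{*}^{-2s}$; this is where the $s^{-1}$ factor arises.

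After Cauchy--Schwarz and a change of variables from $\sigma$ to $\nu$ (a two-to-one map onto a hemisphere with bounded Jacobian), summation in $j$ via orthogonality produces both an $H^{s}$-gain from the $2^{2sj}$ factors and an $L^{2}_{s}$-gain from the $|u|^{s}$ factors released by the choice of $\theta_{*,j}$. The prefactor $|u|^{\gamma}\psi^{\eta}(u)$ is controlled on the support $|u| \gtrsim \eta$ by $|u|^{\gamma} \lesssim \eta^{\gamma}\langle u \rangle^{\gamma}$ when $\gamma \leq 0$, and the residual $\langle u \rangle^{\gamma}$ is distributed as $W_{\gamma/2}$ on $h$ and $f$ using that both arguments of $f$ have norms comparable to $|u|$.

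Main obstacle: the principal difficulty is that $\sigma$ couples to $u$ through $\nu = u^{+}/|u^{+}|$, forbidding any direct separation of angular and spatial integrations, so the decomposition must be carefully matched between the dyadic frequency of $f$ and the angular threshold. In addition, one has to keep careful track of the $(1-s)$ prefactor in $b^{s}$ so that the large-angle integral contributes exactly $s^{-1}$ while the small-angle integral contributes $O(1)$ as $s \to 1^{-}$. Finally, transferring the $L^{2}_{s}$- and $H^{s}$-gain symmetrically onto $h$ (rather than solely $f$) requires a second, dual splitting — exchanging the roles of the two arguments via the pre-post collisional change of variables $(u,\sigma) \mapsto (u^{+},\sigma')$ — which is the technical heart of the lemma.
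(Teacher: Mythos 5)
Your high-level diagnosis is on target -- the quadratic cancellation $|u^+ - |u|\nu| = 2|u|\sin^2(\theta/4)$ is the source of the $2s$-order gain, a scale-matched angular threshold does balance a Taylor term against a tail term, and the weight $|u|^\gamma\psi^\eta$ is controlled by $\eta^\gamma\langle u\rangle^\gamma$. But the core technical step in the paper's proof is missing from your plan, and without it the argument does not close.

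The paper decomposes \emph{both} functions dyadically and, crucially, in \emph{both} phase and frequency space: it sets $\mathcal{X}_k = \int b^s(\tilde\varphi_k h)(u)\bigl[(\varphi_k f)(u^+) - (\varphi_k f)(|u|\nu)\bigr]\mathrm{d}\sigma\,\mathrm{d}u$, then invokes the Fourier transfer identity \eqref{fourier-transform-cross-term} (Proposition 5.2 of \cite{he2022asymptotic}) to rewrite $\mathcal{X}_k$ with the radial difference acting on $\widehat{\tilde\varphi_k h}$ in frequency space, and only then applies a second dyadic cut $\varphi_l$ in frequency. The two indices $k$ (phase) and $l$ (frequency) are independent, and the angular threshold is chosen as $\sin\frac{\theta}{2}\sim 2^{-(k+l)/2}$. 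This is precisely what lets the tail contribution $s^{-1}2^{s(k+l)}$ split cleanly as $2^{sl}$ onto $\hat h$ (giving $W_s(D)h$) and $2^{sk}$ onto $f$ (giving $W_s f$), after which Cauchy--Schwarz on the double sum converges because the phase-space localizations of $h$ and $f$ are nearly diagonal in $k$.

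Your plan localizes only $f$ in frequency and takes a threshold $\theta_{*,j}\sim |u|^{-1/2}2^{-j/2}$ depending on the integration variable $u$. Two problems arise. First, with no matching localization of $h$, the sum over $j$ of the block estimates does not converge merely with $|f|_{H^s}$: the Taylor term gives roughly $2^{js}|W_s h|_{L^2}|f_j|_{L^2}$ and $\sum_j 2^{js}|f_j|_{L^2}$ is not controlled by $\|f\|_{H^s}$ without a further orthogonality mechanism, which in the paper is supplied exactly by the $k$-diagonal structure. Second, the ``second dual splitting'' you defer to -- transferring the $L^2_s/H^s$ gains between $h$ and $f$ -- is not achievable by the pre-post collisional change of variables $(u,\sigma)\mapsto(u^+,\sigma')$, since that map does not respect the spherical--radial decomposition and scrambles $h(u)$ with the two evaluation points of $f$. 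The duality is actually encoded in the Fourier transfer formula, which is the genuine technical heart of the proof that your proposal names but does not supply. Finally, the weight $|u|^\gamma\psi^\eta(u)$ requires more than the pointwise bound $\lesssim\eta^\gamma\langle u\rangle^\gamma$: the paper splits $\psi^\eta = (1-\psi) + (\psi-\psi_\eta)$ and commutes the resulting smooth radial symbol past $W_s(D)$ via Lemma \ref{operatorcommutator1}, a step your plan omits.
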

	\begin{proof}We divide the proof into two steps.
		
		{\it Step 1: Without the term $|u|^\gamma  \psi^{\eta}(u)$.} We first denote $$\mathcal{X}(h,f):=\int b^{s}(\frac{u}{|u|}\cdot\sigma)  h(u)[f(u^{+}) - f(|u|\frac{u^{+}}{|u^{+}|})] \mathrm{d}\sigma \mathrm{d}u .$$
		 By applying dyadic decomposition in the phase space, we have
		\beno
		\mathcal{X}(h,f) = \sum_{k=-1}^{\infty}\int b^{s}(\frac{u}{|u|}\cdot\sigma) (\tilde{\varphi}_{k}h)(u) [(\varphi_{k}f)(u^{+})- (\varphi_{k}f)(|u|\frac{u^{+}}{|u^{+}|})] \mathrm{d}\sigma \mathrm{d}u
		:= \sum_{k=-1}^{\infty} \mathcal{X}_{k}.
		\eeno
		where $\tilde{\varphi}_{k} = \sum_{|l-k|\leq 3} \varphi_{l}$. By using Proposition 5.2 in \cite{he2022asymptotic}, we have
		\ben \label{fourier-transform-cross-term}
		\int_{\R^3\times\mathbb{S}^2} b(\f{u}{|u|}\cdot \sigma) h(u)(\bar{f}(u^+)-\bar{f}(\f{|u|}{|u^+|}u^+)) \mathrm{d}\sigma \mathrm{d}u
		=\int_{\R^3\times\mathbb{S}^2} b(\f{\xi}{|\xi|}\cdot \sigma)  (\widehat{h}(\xi^+)-\widehat{h}(\f{|\xi|}{|\xi^+|}\xi^+))\overline{\widehat{f}}(\xi) \mathrm{d}\sigma \mathrm{d}\xi.
		\een
		By \eqref{fourier-transform-cross-term} and  the dyadic decomposition in the frequency space, we have
		\beno
		\mathcal{X}_{k} &=& \int b^{s}(\frac{\xi}{|\xi|}\cdot\sigma)  [\widehat{\tilde{\varphi}_{k}h}(\xi^{+})- \widehat{\tilde{\varphi}_{k}h}(|\xi|\frac{\xi^{+}}{|\xi^{+}|})] \overline{\widehat{\varphi_{k}f}}(\xi)  
		\mathrm{d}\sigma \mathrm{d}\xi
		\\&=& \sum_{l=-1}^{\infty} \int b^{s}(\frac{\xi}{|\xi|}\cdot\sigma)[({\varphi}_{l}\widehat{\tilde{\varphi}_{k}h})(\xi^{+})- ({\varphi}_{l}\widehat{\tilde{\varphi}_{k}h})(|\xi|\frac{\xi^{+}}{|\xi^{+}|})] (\tilde{\varphi}_{l}\overline{\widehat{\varphi_{k}f}})(\xi)  \mathrm{d}\sigma \mathrm{d}\xi
		:= \sum_{l=-1}^{\infty} \mathcal{X}_{k,l}.
		\eeno
		Therefore,
		\beno
		\mathcal{X}(h,f) = \sum_{k=-1}^{\infty}\sum_{l=-1}^{\infty} \mathcal{X}_{k,l}.
		\eeno
		By dividing $\theta \in [0,\pi/2]$  according to $\sin \frac{\theta}{2} \geq 2^{-\frac{k+l}{2}}$ and $\sin \frac{\theta}{2} \leq 2^{-\frac{k+l}{2}}$, we get
		\beno
		\mathcal{X}_{k,l} &=& \int b^{s}(\frac{\xi}{|\xi|}\cdot\sigma)\mathrm{1}_{\sin \frac{\theta}{2} \geq 2^{-\frac{k+l}{2}}}[({\varphi}_{l}\widehat{\tilde{\varphi}_{k}h})(\xi^{+})- ({\varphi}_{l}\widehat{\tilde{\varphi}_{k}h})(|\xi|\frac{\xi^{+}}{|\xi^{+}|})] (\tilde{\varphi}_{l}\overline{\widehat{\varphi_{k}f}})(\xi) \mathrm{d}\sigma \mathrm{d}\xi
		\\&& + \int b^{s}(\frac{\xi}{|\xi|}\cdot\sigma)\mathrm{1}_{\sin \frac{\theta}{2} \leq 2^{-\frac{k+l}{2}}}[({\varphi}_{l}\widehat{\tilde{\varphi}_{k}h})(\xi^{+})- ({\varphi}_{l}\widehat{\tilde{\varphi}_{k}h})(|\xi|\frac{\xi^{+}}{|\xi^{+}|})] (\tilde{\varphi}_{l}\overline{\widehat{\varphi_{k}f}})(\xi) \mathrm{d}\sigma \mathrm{d}\xi
		:= \mathcal{X}_{k,l,1} + \mathcal{X}_{k,l,2}.
		\eeno
		Similarly, we have
		$$
		|\mathcal{X}_{k,l,1}| \lesssim s^{-1} 2^{sk+sl}|{\varphi}_{l}\widehat{\tilde{\varphi}_{k}h}|_{L^{2}}|\tilde{\varphi}_{l}\widehat{\varphi_{k}f}|_{L^{2}}.
		$$
		Therefore,  we obtain
		\beno
		\sum_{k,l} |\mathcal{X}_{k,l,1}| \lesssim s^{-1} \bigg(\sum_{k,l} 2^{2sl}|{\varphi}_{l}\widehat{\tilde{\varphi}_{k}h}|^{2}_{L^{2}}\bigg)^{1/2}
		\bigg(\sum_{k,l}2^{2sk}|\tilde{\varphi}_{l}\widehat{\varphi_{k}f}|^{2}_{L^{2}}\bigg)^{1/2}
		\lesssim s^{-1}|W_{s}(D)h|_{L^{2}}|W_{s}f|_{L^{2}}.
		\eeno
		By Taylor expansion,
		$$
		({\varphi}_{l}\widehat{\tilde{\varphi}_{k}h})(\xi^{+})- ({\varphi}_{l}\widehat{\tilde{\varphi}_{k}h})(|\xi|\frac{\xi^{+}}{|\xi^{+}|}) = (1-\frac{1}{\cos\theta})\int_{0}^{1}
		(\nabla (\varphi_{l}\widehat{\tilde{\varphi}_{k}h}))(\xi^{+}(\iota))\cdot \xi^{+} \mathrm{d}\iota,
		$$
		where $\xi^{+}(\iota) = (1-\iota)|\xi|\frac{\xi^{+}}{|\xi^{+}|} + \iota \xi^{+}$,
		 we have
		\beno
		|\mathcal{X}_{k,l,2}| &=& | \int_{[0,1]\times \R^{3} \times \mathbb{S}^{2}}  b^{s}(\frac{\xi}{|\xi|}\cdot\sigma)(1-\frac{1}{\cos\theta})\mathrm{1}_{\sin \frac{\theta}{2} \leq 2^{-\frac{k+l}{2}} }(\tilde{\varphi}_{l}\overline{\widehat{\varphi_{k}f}})(\xi)
		(\nabla (\varphi_{l}\widehat{\tilde{\varphi}_{k}h})(\xi^{+}(\iota)))\cdot \xi^{+} \mathrm{d}\iota \mathrm{d}\sigma \mathrm{d}\xi |
		\\&\lesssim&  (\int_{0}^{2^{-\frac{k+l}{2}}} \theta^{1-2s} |\tilde{\varphi}_{l}\widehat{\varphi_{k}f}(\xi)|^{2} \mathrm{d}\theta \mathrm{d}\xi)^{1/2}
		(\int_{0}^{2^{-\frac{k+l}{2}}} \theta^{1-2s} |u|^{2}|\nabla (\varphi_{l}\widehat{\tilde{\varphi}_{k}h})(u)|^{2} \mathrm{d}\theta \mathrm{d}u )^{1/2}
		\\&\lesssim& 2^{(s-1)(k+l)}
		|\tilde{\varphi}_{l}\widehat{\varphi_{k}f}|_{L^{2}}(\int |u|^{2}|\nabla (\varphi_{l}\widehat{\tilde{\varphi}_{k}h})(u)|^{2}\mathrm{d}u )^{1/2}
		\\&\lesssim& 2^{(s-1)(k+l)}
		|\tilde{\varphi}_{l}\widehat{\varphi_{k}f}|_{L^{2}}(|\tilde{\varphi}_{l}\widehat{\tilde{\varphi}_{k}h}|_{L^{2}} + 2^{l}|\varphi_{l} \widehat{v\tilde{\varphi}_{k}h}|_{L^{2}}),
		\eeno
		where we have used  the change of variable $\xi \rightarrow u = \xi^{+}(\iota)$ and the fact
		that
		\beno
		\int |u|^{2}|\nabla (\varphi_{l}\widehat{\tilde{\varphi}_{k}h})(u)|^{2}\mathrm{d}u\ \lesssim |\tilde{\varphi}_{l}\widehat{\tilde{\varphi}_{k}h}|^2_{L^{2}} + 2^{2l}|\varphi_{l} \widehat{v\tilde{\varphi}_{k}h}|^{2}_{L^{2}}.
		\eeno
		Since $2^{(s-1)(k+l)} \lesssim 1$, we have
		\beno \sum_{k,l}2^{(s-1)(k+l)}|\tilde{\varphi}_{l}\widehat{\varphi_{k}f}|_{L^{2}}|\tilde{\varphi}_{l}\widehat{\tilde{\varphi}_{k}h}|_{L^{2}} \lesssim |f|_{L^{2}}|h|_{L^{2}}.\eeno
		It is straightforward to check that
		\beno  &&\sum_{k,l}2^{(s-1)(k+l)} 2^{l}|\varphi_{l} \widehat{v\tilde{\varphi}_{k}h}|_{L^{2}}|\tilde{\varphi}_{l}\widehat{\varphi_{k}f}|_{L^{2}} \\&\lesssim& \bigg( \sum_{k,l} 2^{2sl}|\tilde{\varphi}_{l}\widehat{\varphi_{k}f}|^{2}_{L^{2}} \bigg)^{1/2}
		\bigg( \sum_{k,l} 2^{2sk-2k} |\varphi_{l} \widehat{v\tilde{\varphi}_{k}h}|^{2}_{L^{2}}\bigg)^{1/2}
		\\&\lesssim& \bigg( \sum_{k}|W_{s}(D)\varphi_{k}f|^{2}_{L^{2}} \bigg)^{1/2} \bigg( \sum_{k} 2^{2sk-2k} |v\tilde{\varphi}_{k}h|^{2}_{L^{2}}\bigg)^{1/2}
		\lesssim |W_{s}(D)f|_{L^{2}} |W_{s}h|_{L^{2}}.
		\eeno
		Thus, we have
		$
		\sum_{k,l}|\mathcal{X}_{k,l,2}| \lesssim |W_{s}(D)f|_{L^{2}} |W_{s}h|_{L^{2}}.
		$
		Therefore, 
		\ben \label{gamma=0-clear}
		|\mathcal{X}(h,f)| \lesssim s^{-1} (|W_{s}h|_{L^{2}}+|W_{s}(D)h|_{L^{2}})(|W_{s}f|_{L^{2}}+|W_{s}(D)f|_{L^{2}}).
		\een

		{\it Step 2: With the term $|u|^\gamma\psi^{\eta}(u)$.} We write
		\beno \mathcal{Y}^{s,\gamma}(h,f) =
		\mathcal{Y}_1^{s,\gamma}(h,f) + \mathcal{Y}_2^{s,\gamma}(h,f),
		\eeno
		according to $\psi^{\eta}(u)
		=(1-\psi(u))+(\psi(u)-\psi_{\eta}(u))$.

		{\it{Estimate of $\mathcal{Y}_1^{s,\gamma}(h,f)$ containing $|u|^\gamma(1-\psi(u))$.}}
		Set $H(u) :=h\langle u \rangle^{-\gamma} |u|^\gamma(1-\psi(u))$ and $w = |u|\frac{u^{+}}{|u^{+}|}$, then $W_{\gamma/2}(u) = W_{\gamma/2}(w)$ and
		\beno
		\langle u \rangle^{\gamma} H(u)[f(u^{+}) - f(w)] &=& (W_{\gamma/2}H)(u)[(W_{\gamma/2}f)(u^{+})-(W_{\gamma/2}f)(w)]
		\\&& +(W_{\gamma/2}H)(u)(W_{\gamma/2}f)(u^{+})(W_{\gamma/2}(w)W_{-\gamma/2}(u^{+}) - 1).
		\eeno
		From this, we have
		\beno
		\mathcal{Y}_1^{s,\gamma}(h,f) &=& \mathcal{X}(W_{\gamma/2}H, W_{\gamma/2}f)+ \int b^{s}(\frac{u}{|u|}\cdot\sigma)(W_{\gamma/2}H)(u) \\
		&&\times(W_{\gamma/2}f)(u^{+})(W_{\gamma/2}(w)W_{-\gamma/2}(u^{+}) - 1) \mathrm{d}\sigma \mathrm{d}u
		:= \mathcal{X}(W_{\gamma/2}H, W_{\gamma/2}f) + \mathcal{A}.
		\eeno
		Thanks to  \eqref{gamma=0-clear} in {\it Step 1} and Lemma \ref{operatorcommutator1} with $M=W^s \in S^{s}_{1,0}$ and $\Phi= \langle \cdot \rangle^{-\gamma}  |\cdot|^\gamma(1-\psi(\cdot)) \in S^{0}_{1,0}$, we have
		\beno |\mathcal{X}(W_{\gamma/2}H, W_{\gamma/2}f)|\lesssim s^{-1} (|W_{s}W_{\gamma/2}h|_{L^{2}}+|W_{s}(D)W_{\gamma/2}h|_{L^{2}})
		(|W_{s}W_{\gamma/2}f|_{L^{2}}+|W_{s}(D)W_{\gamma/2}f|_{L^{2}}).\eeno
		By noting 
		$
		|W_{\gamma/2}(u)W_{-\gamma/2}(u^{+}) - 1| \lesssim \theta^{2}$
		and using the change of variable $u \rightarrow u^{+}$,
		we have
		\beno
		|\mathcal{A}| &=&  \big(\int b^{s}(\frac{u}{|u|}\cdot\sigma)
		|(W_{\gamma/2}H)(u)|^{2}|W_{\gamma/2}(w)W_{-\gamma/2}(u^{+}) - 1|\mathrm{d}\sigma \mathrm{d}u\big)^{1/2}
		\\&&\times\big(\int b^{s}(\frac{u}{|u|}\cdot\sigma)
		|(W_{\gamma/2}f)(u^{+})|^{2}|W_{\gamma/2}(w)W_{-\gamma/2}(u^{+}) - 1| \mathrm{d}\sigma \mathrm{d}u\big)^{1/2}
		\\	&\lesssim& |W_{\gamma/2}H|_{L^{2}}|W_{\gamma/2}f|_{L^{2}}
		\lesssim |W_{\gamma/2}h|_{L^{2}}|W_{\gamma/2}f|_{L^{2}}.
		\eeno
		Combining  the estimates of $ \mathcal{X}(W_{\gamma/2}H, W_{\gamma/2}f)$ and $\mathcal{A}$ gives
		\beno |\mathcal{Y}_1^{s,\gamma}(h,f)|\lesssim s^{-1} (|W_{s}W_{\gamma/2}h|_{L^{2}}+|W_{s}(D)W_{\gamma/2}h|_{L^{2}})
		(|W_{s}W_{\gamma/2}f|_{L^{2}}+|W_{s}(D)W_{\gamma/2}f|_{L^{2}}).\eeno

		{\it{Estimate of $\mathcal{Y}_2^{s,\gamma}(h,f)$ containing 
			$|u|^\gamma(\psi(u)-\psi_{\eta}(u))$.}}
		Since the support of $|u|^\gamma(\psi(u)-\psi_{\eta}(u))$ belongs to $\f{3}{4}\eta \leq |u| \leq \f{4}{3}$, we write
		\ben\label{Y2epgaestimate}  \mathcal{Y}_2^{s,\gamma}(h,f)=\int b^{s}(\frac{u}{|u|}\cdot\sigma)  \tilde{W}(u) \tilde{H}(u)[\tilde{F}(u^{+}) - \tilde{F}(|u|\frac{u^{+}}{|u^{+}|})] \mathrm{d}\sigma \mathrm{d}u, \een
		where $\tilde{W}(u):=|u|^\gamma(\psi(u)-\psi_{\eta}(u)), \tilde{H}:=\psi_{2}h$, $\tilde{F}:=\psi_{2}f$. By \eqref{gamma=0-clear} in {\it Step 1}, we have
		\beno|\mathcal{Y}_2^{s,\gamma}(h,f)|\lesssim s^{-1} (|W_{s}\tilde{W}\tilde{H}|_{L^{2}}+|W_{s}(D)\tilde{W}\tilde{H}|_{L^{2}})
		(|W_{s}\tilde{F}|_{L^{2}}+|W_{s}(D)\tilde{F}|_{L^{2}}). \eeno
		By $|\tilde{W}(u)| \lesssim \psi(u)\eta^{\gamma}$, we have $|W_{s}\tilde{W}\tilde{H} |_{L^{2}} \lesssim \eta^{\gamma}|W_{s}W_{\gamma/2}h|_{L^{2}}$.
		Note that  $|\tilde{W}|_{L^{2}} \lesssim \eta^{\gamma+\frac{3}{2}}, |\tilde{W}|_{H^{1}} \lesssim \eta^{\gamma+\frac{1}{2}}, |\tilde{W}|_{H^{2}} \lesssim \eta^{\gamma-\frac{1}{2}}$. This gives
		\beno
		|\tilde{W}\tilde{H}|_{H^{s}} \lesssim |\tilde{W}|_{H^{1}}^{1/2}|\tilde{W}|_{H^{2}}^{1/2} |\tilde{H}|_{H^{s}} \lesssim \eta^{\gamma} |\tilde{H}|_{H^{s}}.
		\eeno
		It is straightforward to check that  $|\tilde{W}|_{L^{2}} \lesssim \eta^{\gamma+\frac{3}{2}}, |\tilde{W}|_{H^{1}} \lesssim \eta^{\gamma+\frac{1}{2}}, |\tilde{W}|_{H^{2}} \lesssim \eta^{\gamma-\frac{1}{2}}$.
		Note the supports of $\tilde{H}$ and $\tilde{F}$ belong
		to $|v| \leq 8/3$. By Lemma \ref{operatorcommutator1}, we finally have \beno
		|\mathcal{Y}_2^{s,\gamma}(h,f)|\lesssim s^{-1} \eta^{\gamma}(|W_{s}W_{\gamma/2}h|_{L^{2}}+|W_{s}(D)W_{\gamma/2}h|_{L^{2}})
		(|W_{s}W_{\gamma/2}f|_{L^{2}}+|W_{s}(D)W_{\gamma/2}f|_{L^{2}}).\eeno
		
Combining the above estimates completes the proof of the lemma.
	\end{proof}

	The following Lemma deals with the quadratic term
	in
	 the case when $\gamma=0$.
	\begin{lem}\label{gammanonzerotozero}
		Let $$
		\mathcal{Z}^{s}(f) := \int b^{s}(\frac{u}{|u|}\cdot\sigma)|f(|u|\frac{u^{+}}{|u^{+}|}) - f(u^{+})|^{2} \mathrm{d}\sigma \mathrm{d}u.$$ Then
		\beno
		\mathcal{Z}^{s}(f) \lesssim s^{-1} (|W_{s}(D)f|^{2}_{L^{2}} + |W_{s}f|^{2}_{L^{2}}).
		\eeno
	\end{lem}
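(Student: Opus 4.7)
The quantity $\mathcal{Z}^{s}(f)$ is the quadratic counterpart of the bilinear radial cross term $\mathcal{X}$ analyzed in Step 1 of Lemma \ref{crosstermsimilar}, so the plan is to adapt that argument rather than redo it from scratch. First I would apply the dyadic decomposition $f = \sum_{k \geq -1} \tilde{\varphi}_{k} f$ in phase space. Since $|w|=|u|$ and $|u^{+}|=|u|\cos(\theta/2) \in [|u|/\sqrt{2},|u|]$, both $u^{+}$ and $w$ sit in the same (slightly enlarged) dyadic shell, so for every $(u,\sigma)$ only $O(1)$ indices $k$ contribute non-trivially to $f(u^{+})-f(w) = \sum_{k}((\tilde{\varphi}_{k}f)(u^{+})-(\tilde{\varphi}_{k}f)(w))$. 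Cauchy--Schwarz in $k$ then yields
\beno
\mathcal{Z}^{s}(f) \lesssim \sum_{k \geq -1} \int b^{s}\bigl(\tfrac{u}{|u|}\cdot\sigma\bigr)\bigl|(\tilde{\varphi}_{k} f)(u^{+}) - (\tilde{\varphi}_{k} f)(w)\bigr|^{2} \mathrm{d}\sigma\, \mathrm{d}u.
\eeno

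For each fixed $k$, set $g = \tilde{\varphi}_{k}f$ and pass to the Fourier side via the identity \eqref{fourier-transform-cross-term}, which translates a radial-difference action on $g$ in phase space into the analogous action on $\hat{g}$ in frequency. I would then decompose $\hat g = \sum_{l \geq -1}\varphi_{l}\hat{g}$ as in the definition of $\mathcal{X}_{k,l}$, and use bounded overlap of the resulting indices in frequency (the same geometric reason as in phase) together with Cauchy--Schwarz to dominate $|(\varphi_{l}\hat{g})(\xi^{+}) - (\varphi_{l}\hat{g})(|\xi|\xi^{+}/|\xi^{+}|)|^{2}$ by a sum of diagonal pairs $(l,l)$. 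This reduces the quadratic form to a double dyadic sum in $(k,l)$ of bilinear terms each of the same shape as $\mathcal{X}_{k,l}$ in Step 1 of Lemma \ref{crosstermsimilar}.

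Splitting the $\sigma$-integral by $\sin(\theta/2)\gtrless 2^{-(k+l)/2}$, the large-angle piece contributes the expected factor $s^{-1}2^{s(k+l)}$, while the small-angle piece, treated by Taylor expansion of the radial difference as for $\mathcal{X}_{k,l,2}$, contributes $2^{(s-1)(k+l)}\cdot 2^{l}$ together with a $|v\tilde{\varphi}_{k}g|_{L^{2}}$-type factor. Summing in $(k,l)$ as in the original proof and transferring phase weight to weight and frequency weight to derivative through Lemma \ref{operatorcommutator1} yields
\beno
\mathcal{Z}^{s}(f) \lesssim s^{-1}\bigl(|W_{s}(D)f|_{L^{2}}^{2} + |W_{s}f|_{L^{2}}^{2}\bigr),
\eeno
which is exactly the $\gamma=0$ analogue of \eqref{gamma=0-clear}. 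The main obstacle is conceptual: squaring a $\sigma$-dependent difference does not factor through the Plancherel identity \eqref{fourier-transform-cross-term} in one step, so one must first reduce the quadratic form to bilinear diagonal pieces via the two bounded-overlap Cauchy--Schwarz steps (one in phase, one in frequency) before the bilinear machinery of Step 1 of Lemma \ref{crosstermsimilar} becomes applicable. The $s^{-1}$ loss is unavoidable and arises solely from the large-angle regime, where $\int b^{s}\mathrm{1}_{\sin(\theta/2)\geq 2^{-(k+l)/2}}\mathrm{d}\sigma \sim s^{-1}2^{s(k+l)}$.
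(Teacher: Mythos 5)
Your plan diverges from the paper's proof and, as written, has a genuine gap at the step where you "pass to the Fourier side via the identity \eqref{fourier-transform-cross-term}." That identity is a \emph{bilinear duality}: it rewrites $\int b^{s}\, h(u)\bigl(\bar f(u^{+})-\bar f(w)\bigr)\,\mathrm{d}\sigma\,\mathrm{d}u$ with the test function $h$ living at the undeformed point $u$, as $\int b^{s}\bigl(\widehat{h}(\xi^{+})-\widehat{h}(\zeta)\bigr)\overline{\widehat f}(\xi)\,\mathrm{d}\sigma\,\mathrm{d}\xi$ with the radial difference transferred to $\widehat h$. The quadratic form $\mathcal{Z}^{s}(f)$ has a different structure: both copies of $f$ sit inside the difference $f(u^{+})-f(w)$, and there is no spectator factor $h(u)$. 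Taking $h$ to be the conjugate of the radial difference is not legal since that object depends on $\sigma$, and expanding $|f(u^{+})-f(w)|^{2}$ into the four products is not permitted because $\int b^{s}\,\mathrm{d}\sigma$ diverges and each unbalanced term is infinite. Your two bounded-overlap Cauchy--Schwarz steps localize $f$ in phase and frequency, but after both localizations you still face a quadratic expression $\int b^{s}\,|g_{k,l}(u^{+})-g_{k,l}(w)|^{2}\,\mathrm{d}\sigma\,\mathrm{d}u$, not a term of the $\mathcal{X}_{k,l}$ shape $\int b^{s}\bigl(\varphi_{l}\widehat{\tilde\varphi_{k}h}(\xi^{+})-\varphi_{l}\widehat{\tilde\varphi_{k}h}(\zeta)\bigr)\bigl(\tilde\varphi_{l}\overline{\widehat{\varphi_{k}f}}\bigr)(\xi)\,\mathrm{d}\sigma\,\mathrm{d}\xi$. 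So the claim that the quadratic form reduces to "bilinear terms each of the same shape as $\mathcal{X}_{k,l}$" does not hold, and the machinery of Step 1 of Lemma \ref{crosstermsimilar} cannot be borrowed at that point.

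What the paper does instead is exploit a special geometric feature of $\mathcal{Z}^{s}$: the two points $u^{+}$ and $w=|u|\,u^{+}/|u^{+}|$ are \emph{collinear} (both on the ray $\R_{+}\varsigma$ with $\varsigma=u^{+}/|u^{+}|$), so they differ only by a dilation. The change of variable $(u,\sigma)\to(r,\tau,\varsigma)$ with $u=r\tau$, $\varsigma=(\tau+\sigma)/|\tau+\sigma|$ (the same one used in the spherical part, Lemma \ref{spherical-part}) diagonalizes the quadratic form into a one-dimensional dilation difference: $\mathcal{Z}^{s}(f)\lesssim (1-s)\int_{\R^{3}}\int_{0}^{\pi/4}\theta^{-1-2s}\,|f(v)-f(v\cos\theta)|^{2}\,\mathrm{d}\theta\,\mathrm{d}v$ after setting $v=r\varsigma$. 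Only at this stage does the argument "unsquare" the difference into $f(v)\bigl(\overline{f(v)-f(v\cos\theta)}\bigr)$ and $f(v\cos\theta)\bigl(\overline{f(v)-f(v\cos\theta)}\bigr)$ — each of which still carries the full cancellation inside the parenthesis — and then the phase/frequency localization from Lemma \ref{crosstermsimilar} takes over. This change-of-variable reduction to a pure dilation is the missing ingredient in your plan; without it you have no legitimate route from the quadratic $\mathcal{Z}^{s}$ to a bilinear $\mathcal{X}$-type pairing, and the Fourier identity you invoke does not apply.
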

	\begin{proof} 
		By the change of variable $(u, \sigma) \rightarrow (r, \tau, \varsigma)$ with $u=r\tau$ and $\varsigma=\f{\sigma+\tau}{|\sigma+\tau|}$, we have
		\beno
		\mathcal{Z}^{s}(f) = 4 \int b^{s}(2(\tau\cdot\varsigma)^{2} - 1)|f(r\varsigma) - f((\tau\cdot\varsigma)r\varsigma)|^{2} (\tau\cdot\varsigma) r^{2} \mathrm{d}r \mathrm{d}\tau \mathrm{d}\varsigma.
		\eeno
		Let $v = r\varsigma$, and $\theta$ be the angle between $\tau$ and $\varsigma$. Since $b^{s}(2(\tau\cdot\varsigma)^{2} - 1) = b^{s}(\cos2\theta) \lesssim (1-s)\theta^{-2-2s} \mathrm{1}_{0 \leq \theta \leq \pi/4}$, and $r^{2} \mathrm{d}r \mathrm{d}\tau \mathrm{d}\varsigma = \sin\theta \mathrm{d}v \mathrm{d}\theta \mathrm{d}\mathbb{S}$, we have
		\beno
		\mathcal{Z}^{s}(f) \lesssim  (1-s)\int_{\R^{3}} \int_{0}^{\pi/4} \theta^{-1-2s}|f(v) - f(v\cos\theta)|^{2} \mathrm{d}v \mathrm{d}\theta
		\lesssim s^{-1} (|W_{s}(D) f|^{2}_{L^{2}} + |W_{s} f|^{2}_{L^{2}}).
		\eeno
		Here,  the last inequality follows from the same argument given in 
		step 1 in the  proof of Lemma \ref{crosstermsimilar}. More precisely, 
		we can consider $f(v)(f(v) - f(v \cos\theta))$ and $f(v \cos\theta)(f(v) - f(v \cos\theta))$
		separately and apply the localization techniques used in Lemma \ref{crosstermsimilar}.
		Thus,  we omit the details.
	\end{proof}

	\subsubsection{Spherical part} We now derive some estimates related to the spherical part.
	In the following Lemma, we recall a preliminary result on the characterization of norm $|(1-\Delta_{\SS^2})^{s/2}f|_{L^2(\SS^2)}$. The proof of this lemma can be found in Lemma 5.5 of \cite{he2018sharp}. Note that we add an factor $s(1-s)$ for 
	consideration of the limit 
	$s \to 1^{-}$.
	\begin{lem}\label{antin1} Let $f$ be a smooth function defined on $\SS^2$. If $0<s<1$, then it holds that
		\ben \label{anisotropic-norm-with-s}
		|f|_{L^2(\SS^2)}^2+ s(1-s) \iint \f{|f(\sigma)-f(\tau)|^2}{|\sigma-\tau|^{2+2s}}\mathrm{d}\sigma \mathrm{d}\tau\sim |(1-\Delta_{\SS^2})^{s/2}f|^2_{L^2(\SS^2)}.
		\een
		The constant associated to $\sim$ is independent of $s$.
	\end{lem}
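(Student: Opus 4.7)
The plan is to diagonalize both sides of \eqref{anisotropic-norm-with-s} in the spherical harmonic basis and reduce the equivalence to a pointwise comparison of eigenvalues that is uniform in $s$. Writing $f=\sum_{l\geq 0}\sum_{m=-l}^{l} f_l^m Y_l^m$, by definition of $(1-\Delta_{\mathbb{S}^2})^{s/2}$ together with the orthonormality of $\{Y_l^m\}$,
\beno
|(1-\Delta_{\mathbb{S}^2})^{s/2}f|_{L^2(\mathbb{S}^2)}^2 = \sum_{l,m}(1+l(l+1))^s|f_l^m|^2.
\eeno
Since $|\sigma-\tau|^{-2-2s}$ depends only on $\sigma\cdot\tau$, the quadratic form on the left of \eqref{anisotropic-norm-with-s} is $SO(3)$-invariant and hence diagonal in $\{Y_l^m\}$. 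Combining the addition formula $\sum_m Y_l^m(\sigma)\overline{Y_l^m(\tau)} = \tfrac{2l+1}{4\pi}P_l(\sigma\cdot\tau)$ with rotation invariance yields
\beno
\iint_{\mathbb{S}^2\times\mathbb{S}^2}\frac{|f(\sigma)-f(\tau)|^2}{|\sigma-\tau|^{2+2s}}\mathrm{d}\sigma\mathrm{d}\tau = \sum_{l,m}c_l^s|f_l^m|^2,\qquad c_l^s = 4\pi\int_{-1}^{1}\frac{1-P_l(t)}{(2-2t)^{1+s}}\mathrm{d}t,
\eeno
where $P_l$ is the Legendre polynomial of degree $l$. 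The claim \eqref{anisotropic-norm-with-s} is therefore equivalent to the eigenvalue-level estimate
\beno
1 + s(1-s)c_l^s \sim (1+l(l+1))^s,\qquad l\in\mathbb{N},
\eeno
with constants independent of $s\in(0,1)$.

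The case $l=0$ is trivial because $c_0^s=0$. For $l\geq 1$ I would proceed in two regimes. For $l$ bounded, the expansion $1-P_l(t) = \tfrac12 l(l+1)(1-t)+O((1-t)^2)$ near $t=1$ shows that $(1-s)c_l^s$ is bounded above and below uniformly in $s\in[1/2,1)$, while $c_l^s$ itself is bounded above and below uniformly in $s\in(0,1/2]$; combined with $(1+l(l+1))^s - 1 \sim s$ on the same ranges, this yields the comparison for bounded $l$. For large $l$, the Mehler--Heine asymptotics $P_l(1-u/(2l^2))\to J_0(\sqrt{2u})$ together with the localization of the integrand to $1-t\lesssim l^{-2}$ give, after the rescaling $u=2l^2(1-t)$,
\beno
c_l^s \sim l^{2s} A(s),\qquad A(s):= 4\pi\int_{0}^{\infty}\frac{1-J_0(\sqrt{2u})}{(2u)^{1+s}}\mathrm{d}u,
\eeno
and an elementary analysis shows $s(1-s)A(s)\sim 1$ uniformly in $s\in(0,1)$. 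This matches $(1+l(l+1))^s\sim l^{2s}$ in the large-$l$ regime.

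The main obstacle is the uniformity of constants near the two endpoints $s\to 0^+$ and $s\to 1^-$: the weight $s(1-s)$ is designed precisely to compensate two opposite singularities, namely the non-integrable behaviour of $(2-2t)^{-1-s}$ near the diagonal as $s\to 1^-$ (absorbed by the $(1-s)$ factor via the vanishing of $1-P_l(t)$ at $t=1$) and the logarithmic/slow decay of $c_l^s$ at $s=0$ (absorbed by the $s$ factor). Once this pointwise bound is established with these normalizations, summation over $(l,m)$ completes the proof. This is essentially the content of Lemma 5.5 in \cite{he2018sharp}; the only extra work here is to retain the $s$-dependence of the constants explicitly rather than treat $s$ as fixed.
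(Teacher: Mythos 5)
Your route is genuinely different from the paper's: the paper does not prove this lemma at all but defers to Lemma 5.5 of \cite{he2018sharp}, merely remarking that the prefactor $s(1-s)$ normalizes the constants near $s=1^-$. Your spherical-harmonic diagonalization is a natural, self-contained alternative. The Funk--Hecke reduction to the eigenvalue inequality $1+s(1-s)c_l^s\sim(1+l(l+1))^s$ is correct, and so is the identification of the two competing singularities that $s(1-s)$ is designed to absorb.

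Two points need more care before this is a complete proof. First, the Mehler--Heine step is stated as a pointwise limit, but the claim of the lemma is a two-sided bound uniform in the pair $(l,s)$, and a pointwise limit does not by itself give a threshold $l\ge L_0$ independent of $s$ beyond which $c_l^s\sim l^{2s}A(s)$. The cleaner (and fully elementary) route in the large-$l$ regime is to bypass Bessel asymptotics and bound $c_l^s$ directly: from $1-P_l(t)\le\min\{2,\tfrac12 l(l+1)(1-t)\}$, splitting the $t$-integral at $1-t\sim (l(l+1))^{-1}$ yields $s(1-s)c_l^s\lesssim (l(l+1))^s$ with a constant independent of $s$, and, for the lower bound, one combines the Taylor expansion of $1-P_l$ near $t=1$ (giving $s(1-s)c_l^s\gtrsim s\,(l(l+1))^s$) with the Bernstein-type decay $|P_l(\cos\theta)|\lesssim(l\sin\theta)^{-1/2}$, which gives $1-P_l(t)\ge\tfrac12$ once $1-t\gtrsim (l(l+1))^{-1}$ and hence $s(1-s)c_l^s\gtrsim(1-s)(l(l+1))^s$; taking the maximum of these two gives $s(1-s)c_l^s\gtrsim(l(l+1))^s$. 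This is exactly the phenomenon you describe, but it must be extracted from explicit integrals rather than from the $l\to\infty$ limit alone. Second, a minor slip: with the rescaling $1-t=u/(2l^2)$, Mehler--Heine gives $P_l\to J_0(\sqrt{u})$, not $J_0(\sqrt{2u})$; this only affects constants, but the mismatch should be fixed if you keep the rescaled form.
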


	\begin{lem}\label{spherical-part} Let $\mathcal{A}^{s}(f) := s\int b^{s}(\frac{u}{|u|} \cdot \sigma)|f(u) - f(|u|\frac{u^{+}}{|u^{+}|})|^{2} \mathrm{d}\sigma \mathrm{d}u $ where $u^{+} = \frac{u + |u|\sigma}{2}$, then
		\ben \label{spherical-part-result}
		\mathcal{A}^{s}(f) + |f|_{L^2}^2
		\sim  |W_{s}((-\Delta_{\mathbb{S}^{2}})^{1/2})f|^{2}_{L^{2}}.
		\een
	\end{lem}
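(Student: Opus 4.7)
The plan is to reduce $\mathcal{A}^s(f)$ to a double angular integral on a fixed sphere and then invoke Lemma~\ref{antin1}. The key geometric observation is that for $u=r\tau$ with $r=|u|$ and $\tau\in\SS^2$, the point $|u|\frac{u^+}{|u^+|}$ also has norm $r$ and equals $r\varsigma$ with
\[
\varsigma := \frac{\tau+\sigma}{|\tau+\sigma|}.
\]
Thus both arguments of the difference $f(u)-f(|u|\tfrac{u^+}{|u^+|})$ live on the sphere of radius $r$, so the estimate is intrinsically angular.

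The first step is to apply the change of variables $u\mapsto(r,\tau)$, giving $\mathrm{d}u=r^2\mathrm{d}r\,\mathrm{d}\tau$, so
\[
\mathcal{A}^s(f)=s\int_0^\infty r^2\int_{\SS^2}\int_{\SS^2}b^s(\tau\cdot\sigma)\,|f(r\tau)-f(r\varsigma)|^2\,\mathrm{d}\sigma\,\mathrm{d}\tau\,\mathrm{d}r.
\]
Next, for fixed $\tau$, I would change variables $\sigma\mapsto\varsigma$ on $\SS^2$. If $\theta$ denotes the angle between $\tau$ and $\sigma$ and $\phi$ the angle between $\tau$ and $\varsigma$, then $\phi=\theta/2\in[0,\pi/4]$, $\sin\tfrac{\theta}{2}=\sin\phi$, and a short computation (as in the spherical-coordinate calculation used in Lemma~\ref{gammanonzerotozero}) gives $\mathrm{d}\sigma=4\cos\phi\,\mathrm{d}\varsigma$. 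Since $b^s(\cos\theta)=(1-s)(\sin\phi)^{-2-2s}\mathbf{1}_{\phi\le\pi/4}$ and on $[0,\pi/4]$ we have $\sin\phi\sim\phi$, $\cos\phi\sim1$, together with $|\tau-\varsigma|=2\sin(\phi/2)\sim\phi$, I get the pointwise equivalence of measures
\[
s\,b^s(\tau\cdot\sigma)\,\mathrm{d}\sigma \;\sim\; s(1-s)\,\frac{\mathbf{1}_{\phi\le\pi/4}}{|\tau-\varsigma|^{2+2s}}\,\mathrm{d}\varsigma.
\]

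With this in hand, up to a harmless cutoff $\mathbf{1}_{\phi\le\pi/4}$, the inner double integral becomes the Gagliardo seminorm on $\SS^2$ of order $s$ of the function $\tau\mapsto f(r\tau)$. To remove the cutoff I would split the $\SS^2\times\SS^2$ integral into $\phi\le\pi/4$ and $\phi>\pi/4$; on the complementary set $|\tau-\varsigma|^{-2-2s}$ is bounded, so that part is $\lesssim |f(r\cdot)|_{L^2(\SS^2)}^2$ by a trivial $(a-b)^2\le 2a^2+2b^2$ estimate, and can be absorbed in the $|f|_{L^2}^2$ term on the left of \eqref{spherical-part-result}. Conversely, adding back the $\phi>\pi/4$ piece only inflates the double integral by a similar lower-order term. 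Combining, for each fixed $r$,
\[
\frac{\mathcal{A}^s_r(f)}{r^2}+|f(r\cdot)|_{L^2(\SS^2)}^2 \;\sim\; |f(r\cdot)|_{L^2(\SS^2)}^2+s(1-s)\iint\frac{|f(r\tau)-f(r\varsigma)|^2}{|\tau-\varsigma|^{2+2s}}\mathrm{d}\tau\,\mathrm{d}\varsigma,
\]
where $\mathcal{A}^s_r(f)$ denotes the $r$-slice.

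Finally, I apply Lemma~\ref{antin1} to the right-hand side to identify it with $|(1-\Delta_{\SS^2})^{s/2}f(r\cdot)|_{L^2(\SS^2)}^2$, then integrate in $r$ with weight $r^2\,\mathrm{d}r$. By the spectral definition \eqref{DeltaWe}, this integral is exactly $|W_s((-\Delta_{\SS^2})^{1/2})f|_{L^2}^2$, which yields \eqref{spherical-part-result}. The only mildly delicate point is making sure the constant in the final equivalence is uniform in $s\in(0,1)$: this is guaranteed by Lemma~\ref{antin1} and the uniform boundedness of $\cos\phi$, $\sin\phi/\phi$ on $[0,\pi/4]$, so no $s$-dependent factor escapes. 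I do not expect any genuine obstacle beyond bookkeeping the change of variables and the treatment of the cutoff.
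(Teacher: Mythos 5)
Your proposal is correct and follows essentially the same route as the paper's proof: the change of variables $(u,\sigma)\mapsto(r,\tau,\varsigma)$ with $\varsigma=(\tau+\sigma)/|\tau+\sigma|$ and Jacobian $4(\tau\cdot\varsigma)r^2\,\mathrm{d}r\,\mathrm{d}\tau\,\mathrm{d}\varsigma$, the uniform comparison of $(\sin\tfrac{\theta}{2})^{-2-2s}$ with $|\tau-\varsigma|^{-2-2s}$ on the restricted angular range, absorption of the large-angle contribution into $|f|_{L^2}^2$, invocation of Lemma \ref{antin1}, and fibrewise integration in $r$ to recover $|W_s((-\Delta_{\SS^2})^{1/2})f|_{L^2}^2$. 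Your remark on uniformity of the constants in $s$ is the correct delicate point, and you handle it the same way as the paper does.
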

	\begin{proof}
		Let $ r= |u|, \tau = u/|u|$ and $\varsigma = \frac{\tau+\sigma}{|\tau+\sigma|}$, then $\frac{u}{|u|} \cdot \sigma = 2(\tau\cdot\varsigma)^{2} - 1$ and $|u|\frac{u^{+}}{|u^{+}|} = r \varsigma$. For the change of variable $(u, \sigma) \rightarrow (r, \tau, \varsigma)$, one has
		$
		\mathrm{d}u \mathrm{d}\sigma = 4  (\tau\cdot\varsigma) r^{2} \mathrm{d}r \mathrm{d}\tau \mathrm{d}\varsigma.
		$ Let $\theta$ be the angle between $\tau$ and $\sigma$, then $|\tau-\sigma|=2 \sin\frac{\theta}{2}, |\tau - \varsigma| = 2 \sin\frac{\theta}{4}$ and thus
		$\sin\frac{\theta}{2} = \frac{1}{2} |\tau-\sigma| \leq |\tau - \varsigma| \leq |\tau-\sigma| = 2\sin\frac{\theta}{2}$.
		Therefore
		\beno
		|\tau - \varsigma|^{-2-2s} \mathrm{1}_{|\tau - \varsigma| \leq \sqrt{2}/2} \leq   (\sin\frac{\theta}{2})^{-2-2s} \mathrm{1}_{\sin\frac{\theta}{2} \leq \sqrt{2}/2}
		\leq   2^{2+2s} |\tau - \varsigma|^{-2-2s} \mathrm{1}_{|\tau - \varsigma| \leq \sqrt{2}}.
		\eeno
		By \eqref{anisotropic-norm-with-s}, we have
		\beno
		\mathcal{A}^{s}(f)  &=& 4 s \int_{0}^{\infty} \int_{\mathbb{S}^2 \times \mathbb{S}^2} b^{s}(\theta)|f(r\tau) - f(r\varsigma)|^{2} (\tau\cdot\varsigma) r^{2} \mathrm{d}r \mathrm{d}\tau \mathrm{d}\varsigma 
		\\&\lesssim& s (1-s) \int |f(r\tau) - f(r\varsigma)|^{2}|\tau - \varsigma|^{-2-2s}\mathrm{1}_{|\tau-\varsigma| \leq \sqrt{2}}  r^{2} \mathrm{d}r \mathrm{d}\tau \mathrm{d}\varsigma
		\lesssim |W_{s}((-\Delta_{\mathbb{S}^{2}})^{1/2})f|^{2}_{L^{2}}.
		\eeno
		In the last inequality, we have used the fact that
		the	Fourier transform and the operator $W_{s}((-\Delta_{\mathbb{S}^{2}})^{1/2})$ are commutable  and the Plancherel's  Theorem.  This gives the direction of $\lesssim$  in  \eqref{spherical-part-result}.

		On the other hand,
		\beno
		\mathcal{A}^{s}(f)  
		\gtrsim s(1-s) \int |f(r\tau) - f(r\varsigma)|^{2}|\tau - \varsigma|^{-2-2s}\mathrm{1}_{|\tau-\varsigma| \leq \sqrt{2}/2}  r^{2} \mathrm{d}r \mathrm{d}\tau \mathrm{d}\varsigma.
		\eeno
		It is straightforward to check that
		\beno
		\int |f(r\tau) - f(r\varsigma)|^{2}|\tau - \varsigma|^{-2-2s}\mathrm{1}_{|\tau-\varsigma| \geq \sqrt{2}/2}  r^{2} \mathrm{d}r \mathrm{d}\tau \mathrm{d}\varsigma \lesssim
		|f|_{L^2}^2.
		\eeno
		From this and \eqref{anisotropic-norm-with-s}, we get
		\beno
		\mathcal{A}^{s}(f)  
		\gtrsim   s(1-s) \int |f(r\tau) - f(r\varsigma)|^{2}|\tau - \varsigma|^{-2-2s}   r^{2} \mathrm{d}r \mathrm{d}\tau \mathrm{d}\varsigma - C|f|_{L^2}^2
		\gtrsim   |W_{s}((-\Delta_{\mathbb{S}^{2}})^{1/2})f|^{2}_{L^{2}} - C|f|_{L^2}^2.
		\eeno
		This gives the  direction of  $\gtrsim$ in \eqref{spherical-part-result}. Hence, this completes
		the proof of  the lemma.
	\end{proof}

	\subsubsection{Upper bound of $\langle Q^{s,\gamma,\eta}(g,h), f\rangle$}
	We now turn  to prove the following upper bound  of $ Q^{s,\gamma,\eta}$.
	\begin{prop}\label{ubqepsilonnonsingular} It holds that
		\beno
		|\langle Q^{s,\gamma,\eta}(g,h), f\rangle| \lesssim s^{-1} \eta^{\gamma} |g|_{L^{1}_{|\gamma|+2s}}|h|_{s,\gamma/2}|f|_{s,\gamma/2}.
		\eeno
	\end{prop}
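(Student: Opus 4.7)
The plan is to apply the pre-post collisional change of variables to symmetrize the operator, then for fixed $v_*$ translate to the relative velocity $u=v-v_*$ and split $\tilde f(u^+)-\tilde f(u)$ via the radial--spherical decomposition \eqref{sphere-radius}, apply Lemma~\ref{crosstermsimilar} to the radial part and Lemma~\ref{spherical-part} (after a symmetrization trick) to the spherical part, and finally reintroduce the $v_*$-integration against $g_*$ using Peetre-type inequalities.

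The standard change of variables $(v,v_*,\sigma)\to(v',v'_*,\sigma')$ on the gain term yields
\beno
\langle Q^{s,\gamma,\eta}(g,h),f\rangle=\int B^{s,\gamma,\eta}(v-v_*,\sigma)\,g_*h(f'-f)\,\mathrm{d}\sigma\,\mathrm{d}v\,\mathrm{d}v_*.
\eeno
Fix $v_*$, set $u=v-v_*$, $\tilde h(u)=h(v_*+u)$, $\tilde f(u)=f(v_*+u)$, and write $\tilde f(u^+)-\tilde f(u)$ as the sum of its radial part $\tilde f(u^+)-\tilde f(|u|u^+/|u^+|)$ and its spherical part $\tilde f(|u|u^+/|u^+|)-\tilde f(u)$, inducing a decomposition $\mathcal R_{v_*}+\mathcal S_{v_*}$ of the inner $(u,\sigma)$-integral. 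The radial piece $\mathcal R_{v_*}$ exactly matches the bilinear form of Lemma~\ref{crosstermsimilar} with additional weight factor $|u|^\gamma(1-\psi_\eta(u))$, so
\beno
|\mathcal R_{v_*}|\lesssim s^{-1}\eta^\gamma(|W_{\gamma/2}\tilde h|_{L^2_s}+|W_{\gamma/2}\tilde h|_{H^s})(|W_{\gamma/2}\tilde f|_{L^2_s}+|W_{\gamma/2}\tilde f|_{H^s}).
\eeno

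For the spherical piece, the map $u\mapsto u_S:=|u|u^+/|u^+|$ is a rotation of $u$ fixing $|u|$. The change of variable $\sigma\mapsto\hat u_S=(\hat u+\sigma)/|\hat u+\sigma|$ (for fixed $\hat u=u/|u|$) has bounded Jacobian and transforms $b^s(\hat u\cdot\sigma)$ into a kernel depending only on $\hat u\cdot\hat u_S$, making the integrand symmetric in $(\hat u,\hat u_S)$. Antisymmetrizing by swapping $\hat u\leftrightarrow\hat u_S$ cancels the piece in which $\tilde h$ appears symmetrically, leaving
\beno
\mathcal S_{v_*}=\tfrac12\int b^s|u|^\gamma(1-\psi_\eta)[\tilde h(u)-\tilde h(u_S)][\tilde f(u_S)-\tilde f(u)]\,\mathrm{d}\sigma\,\mathrm{d}u.
\eeno
Using the pointwise bound $|u|^\gamma(1-\psi_\eta(u))\lesssim_\gamma\eta^\gamma\langle u\rangle^\gamma$ on $|u|\geq 3\eta/4$ (valid for $\gamma\leq 0$, $\eta\leq 1$) and the identity $W_{\gamma/2}(u)=W_{\gamma/2}(u_S)$, the weight pulls into each increment. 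Cauchy--Schwarz followed by Lemma~\ref{spherical-part} then gives $|\mathcal S_{v_*}|\lesssim s^{-1}\eta^\gamma|W_{\gamma/2}\tilde h|_{s,0}\,|W_{\gamma/2}\tilde f|_{s,0}$.

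Finally I reintroduce the $v_*$-integration against $g_*$. Peetre's inequality $\langle v-v_*\rangle^\gamma\lesssim\langle v\rangle^\gamma\langle v_*\rangle^{|\gamma|}$ (for $\gamma\leq 0$) converts the $u$-weight $\langle u\rangle^{\gamma/2}$ into the $v$-weight $\langle v\rangle^{\gamma/2}$ at the cost of a factor $\langle v_*\rangle^{|\gamma|/2}$, which becomes $\langle v_*\rangle^{|\gamma|}$ after squaring in the $v_*$-integral. Moreover, the translated spherical Laplacian acting on $\tilde h(u)=h(v_*+u)$ corresponds to rotations of $h$ around $v_*$, and via the identity $(v-v_*)\times\nabla_v=v\times\nabla_v-v_*\times\nabla_v$ these rotations are controlled by the origin-centered spherical derivative of $h$ plus $\langle v_*\rangle^s|W_s(D)h|_{L^2}$-type terms, both already present in $|h|_{s,\gamma/2}$; squaring supplies the additional $\langle v_*\rangle^{2s}$ weight. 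Fubini together with Cauchy--Schwarz in $v_*$ (linearly on $g$, bilinearly on $h,f$) then yields the asserted bound $s^{-1}\eta^\gamma|g|_{L^1_{|\gamma|+2s}}|h|_{s,\gamma/2}|f|_{s,\gamma/2}$. The main technical obstacle will be this last step: quantifying precisely the $v_*$-dependence of translating the spherical-harmonic seminorm of $\tilde h$ in $u$ back into $|h|_{s,\gamma/2}$ in $v$, so that the $v_*$-weight ends up exactly as $\langle v_*\rangle^{|\gamma|+2s}$ while retaining the sharp $s^{-1}\eta^\gamma$ prefactor.
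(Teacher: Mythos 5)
Your proposal matches the paper's proof essentially step for step: the pre-post collisional rewrite to $\int B^{s,\gamma,\eta}g_*h(f'-f)\,\mathrm{d}V$, the radial--spherical decomposition \eqref{sphere-radius} applied to $f$ with $T_{v_*}$, Lemma~\ref{crosstermsimilar} on the radial piece, the same symmetrization in $(\tau,\varsigma)$ leading to a difference-squared form estimated via Cauchy--Schwarz and Lemma~\ref{spherical-part}, and finally the $v_*$-integration handled through translation bounds of Peetre type together with commutator arguments for the weighted Sobolev and spherical seminorms. Your identity $(v-v_*)\times\nabla_v = v\times\nabla_v - v_*\times\nabla_v$ is a sound and in fact somewhat more explicit justification for the translation bound on $|W_s((-\Delta_{\mathbb{S}^2})^{1/2})W_{\gamma/2}T_{v_*}h|_{L^2}$ than the paper's terse appeal to radiality of $W_s,W_{\gamma/2}$ and Lemma~\ref{operatorcommutator1}, but the underlying mechanism is the same.
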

	\begin{proof} 
		By geometric decomposition in the phase space, we have
		$\langle Q^{s,\gamma,\eta}(g,h), f\rangle = \mathcal{D}_{1} + \mathcal{D}_{2}$,
		where
		\beno \mathcal{D}_{1} := \int b^{s}(\frac{u}{|u|}\cdot\sigma)|u|^{\gamma} \psi^{\eta}(u)g_{*} (T_{v_{*}}h)(u) ((T_{v_{*}}f)(u^{+})-(T_{v_{*}}f)(|u|\frac{u^{+}}{|u^{+}|})) \mathrm{d}\sigma \mathrm{d}v_{*} \mathrm{d}u, \\
		\mathcal{D}_{2}:=\int b^{s}(\frac{u}{|u|}\cdot\sigma)|u|^{\gamma} \psi^{\eta}(u)g_{*} (T_{v_{*}}h)(u)((T_{v_{*}}f)(|u|\frac{u^{+}}{|u^{+}|})- (T_{v_{*}}f)(u)) \mathrm{d}\sigma \mathrm{d}v_{*} \mathrm{d}u.
		\eeno
Observe that $\mathcal{D}_{1}$ and  $\mathcal{D}_{2}$ contain the radial and spherical parts respectively.
		
		{\it Step 1: Estimate of $\mathcal{D}_{1}$.}
		By   Lemma \ref{crosstermsimilar}, we have
		\beno
		|\mathcal{D}_{1}| &\lesssim& s^{-1} \eta^{\gamma} \int |g_{*}| (|W_{s}W_{\gamma/2}T_{v_{*}}h|_{L^{2}}+|W_{s}(D)W_{\gamma/2}T_{v_{*}}h|_{L^{2}})
		\\&& \quad  \quad \quad \times(|W_{s}W_{\gamma/2}T_{v_{*}}f|_{L^{2}}+|W_{s}(D)W_{\gamma/2}T_{v_{*}}f|_{L^{2}}) \mathrm{d}v_{*}.
		\eeno
		Since $\langle u+v\rangle^{s} \leq \langle u\rangle^{s}\langle v\rangle^{s}$, then for $u \in \mathbb{R}^{3}$, we have
		\ben \label{translation-out-cf}
		|W_{s}T_{u}f|_{L^{2}} \lesssim W_{s}(u) |W_{s}f|_{L^{2}}.
		\een
		For $u \in \mathbb{R}^{3}, l \in \mathbb{R}$, $
		(T_{u} W^{l})(v) = \langle v + u \rangle^{l} \lesssim C(l)\langle u \rangle^{|l|} \langle v \rangle^{l}.$
		As a result, we have
		\ben \label{translation-out-weight}
		|T_{u}f|_{L^{2}_{l}} \lesssim \langle u \rangle^{|l|} |f|_{L^{2}_{l}}.
		\een
		By \eqref{translation-out-cf} and \eqref{translation-out-weight}, we have
		\begin{eqnarray}\label{tvstartonovstar1}
			|W_{s}W_{\gamma/2}T_{v_{*}}h|_{L^{2}} \lesssim  W_{|\gamma|/2+s}(v_{*})|W_{s}W_{\gamma/2}h|_{L^{2}}.
		\end{eqnarray}
		Since $W_{s} \in S^{s}_{1,0}, W_{\gamma/2} \in S^{\gamma/2}_{1,0}$, by Lemma \ref{operatorcommutator1}, we have
		\begin{eqnarray}\label{tvstartonovstar2}
			|W_{s}(D)W_{\gamma/2}T_{v_{*}}h|_{L^{2}} &\lesssim& |W_{\gamma/2}W_{s}(D)T_{v_{*}}h|_{L^{2}} + |T_{v_{*}}h|_{L^{2}_{\gamma/2-1}}
			\\&=& |W_{\gamma/2}T_{v_{*}}W_{s}(D)h|_{L^{2}} + |T_{v_{*}}h|_{L^{2}_{\gamma/2-1}}
			\nonumber\\&\lesssim& W_{|\gamma|/2}(v_{*})(|W_{\gamma/2}W_{s}(D)h|_{L^{2}} + |h|_{L^{2}_{\gamma/2-1}})
			\nonumber\\&\lesssim& W_{|\gamma|/2}(v_{*})|W_{s}(D)W_{\gamma/2}h|_{L^{2}}, \nonumber
		\end{eqnarray}
		where we have used the fact  that $T_{v_{*}}$ and $W_{s}(D)$ are commutable, inequality \eqref{translation-out-weight} and Lemma \ref{operatorcommutator1}.
		By \eqref{tvstartonovstar1} and \eqref{tvstartonovstar2},
		we get
		\beno
		|\mathcal{D}_{1}| \lesssim s^{-1} \eta^{\gamma} | g|_{L^{1}_{|\gamma|+2s}}( |W_{s}(D)W_{\gamma/2}h|_{L^{2}} + |W_{s}W_{\gamma/2}h|_{L^{2}})
		( |W_{s}(D)W_{\gamma/2}f|_{L^{2}} + |W_{s}W_{\gamma/2}f|_{L^{2}}).
		\eeno

		{\it Step 2: Estimate of $\mathcal{D}_{2}$.}
		The spherical part $\mathcal{D}_{2}$ has some symmetry and essentially is an 2nd-order term.
		Let $u = r \tau$ and $\varsigma = \frac{\tau+\sigma}{|\tau+\sigma|}$, then $\frac{u}{|u|} \cdot \sigma = 2(\tau\cdot\varsigma)^{2} - 1$ and $|u|\frac{u^{+}}{|u^{+}|} = r \varsigma$. By the change of variable $(u, \sigma) \rightarrow (r, \tau, \varsigma)$, one has
		$
		\mathrm{d}\sigma \mathrm{d}u = 4  (\tau\cdot\varsigma) r^{2} \mathrm{d}r \mathrm{d}\tau \mathrm{d}\varsigma.
		$
		Then
		\beno 
		\mathcal{D}_{2} &=& 4 \int r^\gamma \psi^{\eta}(r) b^{s}(2(\tau\cdot\varsigma)^{2} - 1)(T_{v_*}h)(r\tau)\big((T_{v_*}f)(r\varsigma) - (T_{v_*}f) (r\tau)\big) (\tau\cdot\varsigma) r^{2} \mathrm{d}r \mathrm{d}\tau \mathrm{d}\varsigma \mathrm{d}v_{*}\\
		&=& 2 \int r^\gamma \psi^{\eta}(r) b^{s}(2(\tau\cdot\varsigma)^{2} - 1)\big((T_{v_*}h)(r\tau) - (T_{v_*}h) (r\varsigma)\big)\\ &&\times \big((T_{v_*}f)(r\varsigma) - (T_{v_*}f) (r\tau)\big) (\tau\cdot\varsigma) r^{2} \mathrm{d}r \mathrm{d}\tau \mathrm{d}\varsigma \mathrm{d}v_{*}\\&=&
		-\frac{1}{2}\int b^{s}(\frac{u}{|u|}\cdot\sigma)|u|^{\gamma} \psi^{\eta}(u)g_{*} ((T_{v_{*}}h)(|u|\frac{u^{+}}{|u^{+}|})-(T_{v_{*}}h)(u))\\ &&\times
		((T_{v_{*}}f)(|u|\frac{u^{+}}{|u^{+}|})-(T_{v_{*}}f)(u)) \mathrm{d}\sigma \mathrm{d}v_{*} \mathrm{d}u.
		\eeno
		Then by the Cauchy-Schwarz inequality and the fact $|u|^{\gamma} \psi^{\eta}(u) \lesssim \eta^{\gamma} \langle u \rangle^{\gamma}$, we have
		\beno
		|\mathcal{D}_{2}| &\lesssim& \eta^{\gamma} \big(\int b^{s}(\frac{u}{|u|}\cdot\sigma)\langle u \rangle^{\gamma}|g_{*}| ((T_{v_{*}}h)( |u|\frac{u^{+}}{|u^{+}|})-(T_{v_{*}}h)( u))^{2} \mathrm{d}\sigma \mathrm{d}v_{*} \mathrm{d}u\big)^{1/2}
		\\&& \times \big(\int b^{s}(\frac{u}{|u|}\cdot\sigma)\langle u \rangle^{\gamma}|g_{*}|
		((T_{v_{*}}f)(|u|\frac{u^{+}}{|u^{+}|})-(T_{v_{*}})f(u))^{2} \mathrm{d}\sigma \mathrm{d}v_{*} \mathrm{d}u\big)^{1/2}
		:= \eta^{\gamma} (\mathcal{D}_{2,1})^{1/2}(\mathcal{D}_{2,2})^{1/2}.
		\eeno
		Note that $\mathcal{D}_{2,1}$ and $\mathcal{D}_{2,2}$ have exactly the same structure. Hence it suffices to estimate  $\mathcal{D}_{2,1}$. For this, by Lemma \ref{spherical-part},
		we have
		\beno
		\mathcal{D}_{2,1} =  s^{-1} \int |g_{*}| \mathcal{A}^{s}(W_{\gamma/2}T_{v_{*}}h) \mathrm{d}v_{*} \lesssim s^{-1} 
		|g|_{L^{1}_{|\gamma|+2s}} |f|^{2}_{s,\gamma/2}.
		\eeno
		Here we have used
		\beno
		|W_{s}((-\Delta_{\mathbb{S}^{2}})^{1/2})W_{\gamma/2}T_{v_{*}}h|_{L^{2}}
		&=&	| W_{\gamma/2} W_{s}((-\Delta_{\mathbb{S}^{2}})^{1/2})
		T_{v_{*}}h|_{L^{2}}
		\\&\lesssim& W_{|\gamma|/2+s}(v_{*})
		(|W_{s}((-\Delta_{\mathbb{S}^{2}})^{1/2})W_{\gamma/2}h|_{L^{2}} + |h|_{H^{s}_{\gamma/2}}), 
		\eeno
		because $W_{s} \in S^{s}_{1,0}, W_{\gamma/2} \in S^{\gamma/2}_{1,0}$ are radial functions.

		Thus, 
		$$|\mathcal{D}_{2}| \lesssim s^{-1} \eta^{\gamma} (\mathcal{D}_{2,1})^{1/2}(\mathcal{D}_{2,2})^{1/2} \lesssim s^{-1} \eta^{\gamma} |g|_{L^{1}_{|\gamma|+2s}}|h|_{s,\gamma/2}|f|_{s,\gamma/2}.$$
		Combining the estimates on $\mathcal{D}_{1}$ and $\mathcal{D}_{2}$ completes the
		proof of the proposition.
	\end{proof}

	\subsection{{Upper bound the operator $ I^{s,\gamma,\eta}$}}
	In this subsection, we study the upper bound of $\langle I^{s,\gamma,\eta}(g,h;\beta), f\rangle$ where $I^{s,\gamma,\eta}(g,h;\beta)$ is defined by 
	\eqref{I-ep-ga-geq-eta-beta} with 	$B^{s,\gamma,\eta}$.
	For this, we first derive a slightly revised version of the
	cancellation lemma introduced in \cite{alexandre2000entropy} for the kernel
	$B^{s,\gamma,\eta}$. 
	\begin{lem} [Revised cancellation lemma for relative velocity away from origin] \label{cancellation-lemma-general-gamma-minus3-mu} Let $0 < s <1, \gamma \leq 0$. For  $|a| \leq -\gamma$ and $0<\eta \leq 1$,  we have
		\ben \label{B-s-gamma-eta-large-constant-result}
		|\int B^{s,\gamma, \eta}(|v-v_{*}|,\cos\theta) g_{*} (h^{\prime}-h) \mathrm{d}V| \leq C_{\gamma} \eta^{\gamma}|g|_{L^{1}_{a}} |h|_{L^{1}_{-a}}  .
		\een
		The constant $C_{\gamma}$  depends on $\gamma$ but is uniformly bounded for $-5 \leq \gamma \leq 0$.
	\end{lem}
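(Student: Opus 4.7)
The plan is to apply the classical cancellation lemma of \cite{alexandre2000entropy}. Since $B^{s,\gamma,\eta}$ depends only on $|v-v_{*}|$ and $\cos\theta$, that identity gives
\begin{equation*}
\int B^{s,\gamma,\eta}(|v-v_{*}|,\cos\theta)\, g_{*}(h^{\prime}-h)\, \mathrm{d}V = \int S(|v-v_{*}|)\, g_{*} h\, \mathrm{d}v \mathrm{d}v_{*},
\end{equation*}
with
\begin{equation*}
S(|u|) = 2\pi \int_{0}^{\pi/2} \sin\theta \bigl[\cos^{-3}(\theta/2)\, B^{s,\gamma,\eta}(|u|/\cos(\theta/2),\cos\theta) - B^{s,\gamma,\eta}(|u|,\cos\theta)\bigr] \mathrm{d}\theta.
\end{equation*}
Everything reduces to proving the pointwise bound $|S(|v-v_{*}|)| \leq C_{\gamma}\eta^{\gamma}\langle v_{*}\rangle^{a}\langle v\rangle^{-a}$ for $|a|\leq -\gamma$, after which integration against $|g_{*}||h|$ yields \eqref{B-s-gamma-eta-large-constant-result}.

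Factoring out $|u|^{\gamma}$ and separating the Jacobian contribution from the cutoff contribution, write $S(|u|) = |u|^{\gamma}(\tilde S_{1}(|u|) + \tilde S_{2}(|u|))$ with
\begin{equation*}
\tilde S_{1}(|u|) = 2\pi \int_{0}^{\pi/2} \sin\theta\, b^{s}(\theta)\bigl(\cos^{-3-\gamma}(\theta/2)-1\bigr)\bigl(1-\psi_{\eta}(|u|/\cos(\theta/2))\bigr)\mathrm{d}\theta,
\end{equation*}
\begin{equation*}
\tilde S_{2}(|u|) = 2\pi \int_{0}^{\pi/2} \sin\theta\, b^{s}(\theta)\bigl(\psi_{\eta}(|u|)-\psi_{\eta}(|u|/\cos(\theta/2))\bigr)\mathrm{d}\theta.
\end{equation*}
A Taylor expansion at $\theta=0$ gives $|\cos^{-3-\gamma}(\theta/2)-1| \leq C_{\gamma}\theta^{2}$ uniformly for $\gamma \in [-5,0]$ and $\theta \in [0,\pi/2]$; together with $b^{s}(\theta)\sin\theta \lesssim (1-s)\theta^{-1-2s}$, the integrand for $\tilde S_{1}$ is dominated by $C_{\gamma}(1-s)\theta^{1-2s}$, whose integral is bounded uniformly in $s \in (0,1)$, so $|\tilde S_{1}(|u|)| \leq C_{\gamma}$. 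For $\tilde S_{2}$, the mean value theorem with $|\psi_{\eta}'| \lesssim \eta^{-1}$ gives $|\psi_{\eta}(|u|)-\psi_{\eta}(|u|/\cos(\theta/2))| \lesssim \eta^{-1}|u|\theta^{2}$; this difference vanishes unless $|u| \sim \eta$, so $\eta^{-1}|u| = O(1)$ on the support, and $|\tilde S_{2}(|u|)| \leq C$ by the same integration. Hence $|S(|u|)| \leq C_{\gamma}|u|^{\gamma}$, and since $S$ is supported in $|u| \geq 3\eta/4$ with $\gamma \leq 0$, $\eta \leq 1$, also $|S(|u|)| \leq C_{\gamma}\eta^{\gamma}$.

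To distribute the weight, for $|v-v_{*}| \geq 1$ use $|v-v_{*}|^{\gamma} \lesssim \langle v-v_{*}\rangle^{\gamma}$ together with Peetre's inequality in its two symmetric forms to get $\langle v-v_{*}\rangle^{\gamma} \leq C_{\gamma}\langle v_{*}\rangle^{|\gamma|}\langle v\rangle^{-|\gamma|}$ and $\langle v-v_{*}\rangle^{\gamma} \leq C_{\gamma}\langle v\rangle^{|\gamma|}\langle v_{*}\rangle^{-|\gamma|}$; for $3\eta/4 \leq |v-v_{*}| \leq 1$, $\langle v\rangle \sim \langle v_{*}\rangle$ makes both weight ratios bounded below, so the uniform bound $C_{\gamma}\eta^{\gamma}$ dominates each pointwise. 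Using $\eta^{\gamma} \geq 1$ to unify the two regions, both
\begin{equation*}
|S(|v-v_{*}|)| \leq C_{\gamma}\eta^{\gamma}\langle v_{*}\rangle^{|\gamma|}\langle v\rangle^{-|\gamma|}, \quad |S(|v-v_{*}|)| \leq C_{\gamma}\eta^{\gamma}\langle v\rangle^{|\gamma|}\langle v_{*}\rangle^{-|\gamma|}
\end{equation*}
hold. Writing $|S| = |S|^{\theta}|S|^{1-\theta}$ and applying each estimate to one factor yields $|S(|v-v_{*}|)| \leq C_{\gamma}\eta^{\gamma}\langle v_{*}\rangle^{a}\langle v\rangle^{-a}$ with $a = |\gamma|(2\theta-1)$, covering the full range $|a| \leq -\gamma$ as $\theta$ varies over $[0,1]$.

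The main obstacle is maintaining uniformity of the constant both as $s \to 1^{-}$ and across $\gamma \in [-5,0]$. The former hinges on the factor $(1-s)$ in $b^{s}$ combining with the order-two cancellations from either $\cos^{-3-\gamma}(\theta/2)-1$ or the cutoff difference to produce integrals of the form $(1-s)\int_{0}^{\pi/2}\theta^{1-2s}\mathrm{d}\theta$ that remain finite at $s=1$; the latter follows from continuity of the Taylor coefficient of $\cos^{-3-\gamma}(\theta/2)-1$ in $\gamma$, which remains well-defined and bounded through the sign change at $\gamma=-3$.
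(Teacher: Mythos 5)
Your proposal is correct and follows essentially the same route as the paper: apply the Alexandre--Desvillettes--Villani cancellation lemma to reduce to a pointwise bound on the convolution kernel $S^{s,\gamma,\eta}$, use the order-two cancellation at $\theta=0$ together with the $(1-s)$ factor in $b^s$ to bound $|S|\lesssim_{\gamma}\mathrm{1}_{|v-v_*|\gtrsim\eta}|v-v_*|^{\gamma}$, then distribute the weight by splitting into $|v-v_*|\leq 1$ (where $\langle v\rangle\sim\langle v_*\rangle$) and $|v-v_*|\geq 1$ (where Peetre applies). The one cosmetic difference is that you make the second-order cancellation explicit by splitting $S$ into a Jacobian piece (involving $\cos^{-3-\gamma}(\theta/2)-1$) and a cutoff-difference piece (involving $\psi_{\eta}(|u|)-\psi_{\eta}(|u|/\cos(\theta/2))$), whereas the paper just cites the mean-momentum-transfer bound \eqref{mean-momentum-transfer}; and you close the weight range $|a|\leq -\gamma$ by interpolating the two extremal Peetre estimates, which the paper leaves implicit.
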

	\begin{proof} By the cancellation lemma in \cite{alexandre2000entropy},  recalling $B^{s,\gamma,\eta}(|v-v_{*}|,\cos\theta) = |v-v_{*}|^{\gamma} b^{s}(\theta) \psi^{\eta}(|v-v_{*}|)$ where $\psi^{\eta} = 1- \psi_{\eta}$,
		we have
		\ben \label{cancelation-result-B-s-gamma-eta}
		\int B^{s,\gamma,\eta}(|v-v_{*}|,\cos\theta) g_{*} (h^{\prime}-h) \mathrm{d}V = \int S^{s,\gamma,\eta}(v-v_{*}) g_{*} h \mathrm{d}v_{*} \mathrm{d}v,
		\een
		where
		\ben \label{convolution-kernel-B-s-gamma-eta}
		S^{s,\gamma,\eta}(v-v_{*}) = \int |v-v_{*}|^{\gamma}b^{s}(\theta)\left( \cos^{-\gamma-3}\frac{\theta}{2} \psi^{\eta}(\frac{|v-v_{*}|}{\cos\frac{\theta}{2}})  - \psi^{\eta}(|v-v_{*}|) \right)
		\mathrm{d}\sigma.
		\een
	By 
	 using \eqref{mean-momentum-transfer}, we get
		\beno
		|S^{s,\gamma,\eta}(v-v_{*})| \lesssim \mathrm{1}_{|v-v_{*}|/\eta \geq 3/(4\sqrt{2})} |v-v_{*}|^{\gamma}.
		\eeno
		Observe that
		\beno
	\mathrm{1}_{|v-v_{*}|/\eta \geq 3/(4\sqrt{2})} |v-v_{*}|^{\gamma} \lesssim_{\gamma} \mathrm{1}_{1 \geq |v-v_{*}| \geq \eta \times 3/(4\sqrt{2}) } \eta^{\gamma} \langle v_{*} \rangle^{a} \langle v \rangle^{-a} +\mathrm{1}_{|v-v_{*}| \geq 1} \langle v_{*} \rangle^{a} \langle v \rangle^{-a},
		\eeno
		which yields \eqref{B-s-gamma-eta-large-constant-result} and this completes the proof of
		the lemma.
	\end{proof}
	
	Recalling the decomposition \eqref{functional-N-decomposition},
	by Lemma \ref{cancellation-lemma-general-gamma-minus3-mu} and Proposition \ref{ubqepsilonnonsingular}, we have
	the following  upper bound estimate on $\mathcal{N}^{s,\gamma,\eta}$.
	
	\begin{prop}\label{functional-N-up-geq-eta} It holds uniformly for $1/4 \leq a \leq 1/2$ that
		\beno \mathcal{N}^{s,\gamma,\eta}(\mu^{a},f) \lesssim s^{-1} \eta^{\gamma} |f|^{2}_{s,\gamma/2}.
		\eeno
	\end{prop}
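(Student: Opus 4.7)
The plan is to follow the hint given just before the proposition, namely to use the decomposition \eqref{functional-N-decomposition} adapted to the regular kernel $B^{s,\gamma,\eta}$ rather than $B^{s,\gamma}_{\eta}$, and then estimate each of the two resulting pieces by the tools already available.

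More precisely, using the elementary identity $(f'-f)^2 = (f^2)'-f^2 - 2f(f'-f)$, I first write
\begin{equation*}
\mathcal{N}^{s,\gamma,\eta}(\mu^{a},f) = \int B^{s,\gamma,\eta}\,\mu_{*}^{2a}\bigl((f^{2})' - f^{2}\bigr)\,\mathrm{d}V \;-\; 2\,\langle Q^{s,\gamma,\eta}(\mu^{2a},f),\,f\rangle.
\end{equation*}
For the first term, I would apply the revised cancellation lemma (Lemma \ref{cancellation-lemma-general-gamma-minus3-mu}) with test functions $g=\mu^{2a}$ and $h=f^{2}$ and with the weight parameter chosen to be $-\gamma$ (which satisfies $|-\gamma|\leq -\gamma$ since $\gamma\leq0$); this gives
\begin{equation*}
\Bigl|\int B^{s,\gamma,\eta}\,\mu_{*}^{2a}\bigl((f^{2})' - f^{2}\bigr)\,\mathrm{d}V\Bigr| \lesssim \eta^{\gamma}\, |\mu^{2a}|_{L^{1}_{-\gamma}}\, |f^{2}|_{L^{1}_{\gamma}} \lesssim \eta^{\gamma}\, |f|_{L^{2}_{\gamma/2}}^{2} \lesssim \eta^{\gamma}\, |f|_{s,\gamma/2}^{2},
\end{equation*}
where $|\mu^{2a}|_{L^{1}_{-\gamma}}$ is finite uniformly for $1/4\leq a\leq 1/2$ thanks to the Gaussian decay of $\mu$, and the identity $|f^{2}|_{L^{1}_{\gamma}} = |f|_{L^{2}_{\gamma/2}}^{2}$ is immediate.

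For the second term, I would apply Proposition \ref{ubqepsilonnonsingular} directly, with $g=\mu^{2a}$, $h=f$, and with the inner product test function also taken to be $f$; this yields
\begin{equation*}
|\langle Q^{s,\gamma,\eta}(\mu^{2a},f),f\rangle| \lesssim s^{-1}\,\eta^{\gamma}\,|\mu^{2a}|_{L^{1}_{|\gamma|+2s}}\, |f|_{s,\gamma/2}^{2} \lesssim s^{-1}\,\eta^{\gamma}\,|f|_{s,\gamma/2}^{2},
\end{equation*}
where again the $L^{1}$ weighted norm of $\mu^{2a}$ is bounded uniformly for $1/4\leq a\leq 1/2$ and $0<s<1$. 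Summing the two contributions gives the claim, since the dominating factor is $s^{-1}\eta^{\gamma}$.

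There is no real obstacle here: all the effort was invested in proving the revised cancellation lemma (Lemma \ref{cancellation-lemma-general-gamma-minus3-mu}) and the upper bound Proposition \ref{ubqepsilonnonsingular}. The only very mild point requiring attention is the choice of the weight parameter in the cancellation lemma so that the $L^{1}$ norm of $f^{2}$ matches the $L^{2}_{\gamma/2}$ scaling that appears in the definition of $|\cdot|_{s,\gamma/2}$, and the uniform control in $a\in[1/4,1/2]$ of the Gaussian weighted norms of $\mu^{2a}$.
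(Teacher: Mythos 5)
Your proposal is correct and follows exactly the route the paper intends: write $\mathcal{N}^{s,\gamma,\eta}(\mu^{a},f)$ via the identity $(f'-f)^2=(f^2)'-f^2-2f(f'-f)$ as a cancellation-lemma term plus $-2\langle Q^{s,\gamma,\eta}(\mu^{2a},f),f\rangle$, then invoke Lemma \ref{cancellation-lemma-general-gamma-minus3-mu} (with weight $-\gamma$, so that $|f^2|_{L^1_{\gamma}}=|f|^2_{L^2_{\gamma/2}}$) and Proposition \ref{ubqepsilonnonsingular} respectively. The uniformity in $a\in[1/4,1/2]$ is indeed immediate from the Gaussian weighted $L^1$ norms of $\mu^{2a}$, and the dominant constant $s^{-1}\eta^{\gamma}$ comes from the second term, as you observe.
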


	The following lemma is about an  integral 
	over the sphere $\mathbb{S}^{2}$.
	\begin{lem}\label{symbol}
		Recall $b^{s}(\theta) = (1-s)\sin^{-2-2s}\frac{\theta}{2} \mathrm{1}_{0 \leq \theta \leq \pi/2}$ for $0< s <1$. Denote
		\beno A_{s}(\xi):= \int_{\mathbb{S}^2} b^{s}(\theta)\min\{ |\xi|^2\sin^2(\theta/2),1\} \mathrm{d}\sigma.\eeno
		Then 
		\beno A_{s}(\xi)  = \mathrm{1}_{|\xi| \leq \sqrt{2}} 4 \pi \times 2^{s-1} |\xi|^{2} + \mathrm{1}_{|\xi| > \sqrt{2}} 4 \pi \times (|\xi|^{2s} + \f{1-s}{s}(|\xi|^{2s} - 2^{s})).\eeno
		Hence, 
		\beno A_{s}(\xi) + 1  \gtrsim  \langle \xi \rangle^{2s}, 
		\quad A_{s}(\xi)  \lesssim \mathrm{1}_{|\xi| \leq \sqrt{2}} |\xi|^{2} + \mathrm{1}_{|\xi| > \sqrt{2}} \max\{\frac{1-s}{s}, 1\} |\xi|^{2s} \lesssim s^{-1}  \langle \xi \rangle^{2s}.\eeno
	\end{lem}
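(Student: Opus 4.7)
The proof is a direct computation. First I would integrate out the azimuthal angle on $\mathbb{S}^2$ and use $d\sigma = \sin\theta\, d\theta\, d\phi$ together with $\sin\theta = 2\sin(\theta/2)\cos(\theta/2)$ to obtain
\beno
A_s(\xi) = 4\pi(1-s)\int_0^{\pi/2} \sin^{-1-2s}(\theta/2)\cos(\theta/2)\,\min\{|\xi|^2\sin^2(\theta/2),1\}\,d\theta.
\eeno
Then I would substitute $u = \sin(\theta/2)$, $du = \tfrac{1}{2}\cos(\theta/2)\,d\theta$, which maps $[0,\pi/2]$ to $[0,\sqrt{2}/2]$, giving
\beno
A_s(\xi) = 8\pi(1-s)\int_0^{\sqrt{2}/2} u^{-1-2s}\,\min\{|\xi|^2 u^2,1\}\,du.
\eeno

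Next I would split according to whether $|\xi|\le\sqrt{2}$ or $|\xi|>\sqrt{2}$. In the first regime, $|\xi|^2 u^2 \le 1$ on the whole interval, so the $\min$ equals $|\xi|^2 u^2$ and the integral reduces to $|\xi|^2\int_0^{\sqrt{2}/2} u^{1-2s}\,du = \tfrac{|\xi|^2}{2(1-s)}(\sqrt{2}/2)^{2-2s}$. Since $(\sqrt{2}/2)^{2-2s} = 2^{s-1}$, the $(1-s)$ factor cancels and yields $4\pi \cdot 2^{s-1}|\xi|^2$ as claimed. In the second regime, the threshold $u = 1/|\xi|$ lies in $(0,\sqrt{2}/2)$, and I would split the integral at that point: on $[0,1/|\xi|]$ the integrand is $|\xi|^2 u^{1-2s}$, giving $\tfrac{|\xi|^{2s}}{2(1-s)}$; on $[1/|\xi|,\sqrt{2}/2]$ the integrand is $u^{-1-2s}$, giving $\tfrac{1}{2s}(|\xi|^{2s}-2^s)$. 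Summing and multiplying by $8\pi(1-s)$ produces the stated formula $4\pi\big(|\xi|^{2s}+\tfrac{1-s}{s}(|\xi|^{2s}-2^s)\big)$.

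Finally, the two pointwise estimates follow immediately from the explicit expression. For the lower bound: when $|\xi|\le\sqrt{2}$, $\langle\xi\rangle^{2s}$ is bounded by a constant, so $A_s(\xi)+1\ge 1\gtrsim\langle\xi\rangle^{2s}$; when $|\xi|>\sqrt{2}$, one has $A_s(\xi)\ge 4\pi|\xi|^{2s}\gtrsim\langle\xi\rangle^{2s}$ since the extra term $\tfrac{1-s}{s}(|\xi|^{2s}-2^s)$ is nonnegative. For the upper bound: in the small regime $A_s(\xi)\le 4\pi|\xi|^2$, and in the large regime one observes $A_s(\xi)\le 4\pi|\xi|^{2s}\bigl(1+\tfrac{1-s}{s}\bigr) = \tfrac{4\pi}{s}|\xi|^{2s}$, which is bounded by $\max\{(1-s)/s,1\}|\xi|^{2s}$ up to an absolute constant and in turn by $s^{-1}\langle\xi\rangle^{2s}$.

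Since the argument is purely an explicit one-dimensional integration followed by elementary bookkeeping, there is no real obstacle; the only mild point of care is tracking the factor $2^{s-1}$ from $(\sqrt{2}/2)^{2-2s}$ so that the cancellation of $(1-s)$ produces the cleanly stated constant, and verifying that the subleading constant $-4\pi\tfrac{1-s}{s}2^s$ is indeed absorbed when extracting the leading $|\xi|^{2s}$ behaviour.
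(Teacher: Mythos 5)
Your proof is correct and follows essentially the same route as the paper: passing to the one-dimensional integral via $u=\sin(\theta/2)$ (the paper writes $\mathrm{d}\sigma = 4\sin(\theta/2)\,\mathrm{d}\sin(\theta/2)\,\mathrm{d}\mathbb{S}$, which is exactly your $\phi$-integration plus double-angle step), then splitting at $|\xi|=\sqrt{2}$ and, in the large-$|\xi|$ case, at $u=1/|\xi|$, with the $(1-s)$-cancellation handled identically. No gaps.
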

	\begin{proof}  Using  $\mathrm{d}\sigma = 4 \sin(\theta/2) \mathrm{d} \sin(\theta/2) \mathrm{d} \mathbb{S}$,
		we have 
		\beno A_{s}(\xi)=8 \pi (1-s) \int_0^{\pi/2} \sin^{-1-2s}(\theta/2) \min\{|\xi|^2\sin^2(\theta/2),1\} 
		\mathrm{d} \sin(\theta/2). \eeno
		By the change of variable: $t=\sin(\theta/2)$, we get
		\beno A_{s}(\xi) = 8 \pi  (1-s)  \int_0^{\sqrt{2}/{2}} t^{-1-2s} \min\{ |\xi|^2 t^2,1\}\mathrm{d}t.
		\eeno
		When $|\xi| \leq \sqrt{2}$, it holds
		\beno A_{s}(\xi) = 8 \pi  (1-s) |\xi|^2 \int_0^{\sqrt{2}/{2}} t^{1-2s} \mathrm{d}t = 8 \pi  (1-s)  \times \frac{2^{s-1}}{2-2s}|\xi|^{2}
		= 4 \pi    \times 2^{s-1} |\xi|^{2}.
		\eeno
		When $|\xi| > \sqrt{2}$, it holds
		\beno A_{s}(\xi) &=& 8 \pi (1-s) |\xi|^2 \int_0^{|\xi|^{-1}} t^{1-2s} \mathrm{d}t +
		8 \pi (1-s) \int_{|\xi|^{-1}}^{\sqrt{2}/{2}} t^{-1-2s} \mathrm{d}t
		\\&=& 8 \pi (1-s) \times (\frac{|\xi|^{2s}}{2-2s} + \frac{|\xi|^{2s} - 2^{s}}{2s}) =4 \pi \times (|\xi|^{2s} + \f{1-s}{s}(|\xi|^{2s} - 2^{s})).
		\eeno
		Combining the estimates in  two cases completes the proof of the lemma.
	\end{proof}

As for Proposition 4.2 in \cite{he2021boltzmann} and Proposition 3.2 in \cite{duan2023solutions}, by applying
Proposition \ref{ubqepsilonnonsingular} and
Lemma \ref{cancellation-lemma-general-gamma-minus3-mu},
we can derive  the upper bound of $\langle I^{s,\gamma,\eta}(g,h;\beta), f\rangle$ stated in the following  proposition. Note that the factor $s^{-1}$ comes from Lemma \ref{symbol} and the factor $\eta^{\gamma}$ comes from
	\ben \label{v-minus-vstar-lower-no-singularity}
|v-v_{*}|^{\gamma} \psi^{\eta}(|v-v_{*}|) \lesssim \eta^{\gamma} \langle v -v_{*} \rangle^{\gamma}. \een

	\begin{prop}\label{upforI-ep-ga-et}
		It holds that
		\beno |\langle I^{s,\gamma,\eta}(g,h; \beta), f\rangle|  \lesssim s^{-1} \eta^{\gamma} |g|_{L^{2}}|h|_{s,\gamma/2}|W_{s}f|_{L^{2}_{\gamma/2}}.\eeno
		The  constant associated to $\lesssim$ may depend on $|\beta|$ but not on $s,\gamma$.
	\end{prop}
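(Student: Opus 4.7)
The plan is to mimic the approach of Proposition~4.2 in \cite{he2021boltzmann} and Proposition~3.2 in \cite{duan2023solutions}, combining Cauchy--Schwarz with Proposition~\ref{ubqepsilonnonsingular} (for the $Q^{s,\gamma,\eta}$-bound producing the $s^{-1}$ through Lemma~\ref{symbol}) and Lemma~\ref{cancellation-lemma-general-gamma-minus3-mu} (for the cancellation identity producing the $\eta^{\gamma}$ through \eqref{v-minus-vstar-lower-no-singularity}). For brevity write $M := \pa_{\beta}\mu^{1/2}$, noting that $M$ is a Schwartz function and its Gaussian decay is preserved under the change of variable $v_* \to v'_*$ by virtue of \eqref{mu-weight-result}.

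Starting from
\begin{equation*}
\langle I^{s,\gamma,\eta}(g,h;\beta), f\rangle = \int B^{s,\gamma,\eta}(M_{*}-M'_{*}) g'_{*} h' f \, \mathrm{d}V,
\end{equation*}
I would substitute $g'_* = g_* + (g'_* - g_*)$ and $h' = h + (h' - h)$ to split the integral into four pieces. The ``pure'' piece $\int B^{s,\gamma,\eta}(M_* - M'_*) g_* h f\,\mathrm{d}V$ is handled by integrating first in $\sigma$: the map $v_*\mapsto \int B^{s,\gamma,\eta}(M_* - M'_*)\,\mathrm{d}\sigma$ is the subject of a variant of Lemma~\ref{cancellation-lemma-general-gamma-minus3-mu} (where the cancellation happens in the $v_*$-variable, which is equivalent to the $v$-variable case via the $(v,v_*,\sigma)\leftrightarrow(v_*,v,-\sigma)$ symmetry of $B^{s,\gamma,\eta}$). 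The resulting convolution kernel is controlled by $\eta^{\gamma}\langle v-v_*\rangle^{\gamma}$ via \eqref{v-minus-vstar-lower-no-singularity}, so combined with the rapid decay of $M$ this piece is bounded by $\eta^{\gamma}|g|_{L^{2}}|h|_{L^{2}_{\gamma/2}}|f|_{L^{2}_{\gamma/2}}$, which is already stronger than needed.

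For the three remaining ``difference'' pieces involving $(g'_*-g_*)$ or $(h'-h)$, I would apply Cauchy--Schwarz to isolate a quadratic form of the type $\int B^{s,\gamma,\eta}|M_* - M'_*|^{2}(\cdots)\,\mathrm{d}V$. By Taylor expansion $|M_* - M'_*|^2 \lesssim |v-v_*|^2 \sin^2(\theta/2)\,\widetilde{M}(v_*)\widetilde{M}(v'_*)$ for some Schwartz $\widetilde{M}$, and after the $\sigma$-integration provides an order-two cancellation against $b^s(\theta)$, this factor is essentially $\mathcal{N}^{s,\gamma,\eta}(\mu^{1/8}, \cdot)$ from \eqref{definition-of-N-s-ga-eta-leq} with additional Gaussian decay; the decomposition \eqref{functional-N-decomposition} together with Proposition~\ref{ubqepsilonnonsingular} then supplies the factor $s^{-1}\eta^{\gamma}$ and the appropriate $|\cdot|_{s,\gamma/2}^{2}$ norm, where the $s^{-1}$ traces back to Lemma~\ref{symbol}. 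The companion factor from Cauchy--Schwarz, of the type $\int B^{s,\gamma,\eta}(g'_*-g_*)^2 h^2 f^2\,\mathrm{d}V$ or $\int B^{s,\gamma,\eta}(h'-h)^2 (g'_*)^2 f^2\,\mathrm{d}V$, is estimated by Proposition~\ref{ubqepsilonnonsingular} or its variant with a Gaussian weight absorbed.

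The main obstacle I anticipate is the careful bookkeeping of weights and derivatives across the three-piece split. The anisotropic norm $|h|_{s,\gamma/2}$ bundles the $W_{s}(D)$, $W_{s}((-\Delta_{\mathbb{S}^{2}})^{1/2})$, and $W_{s}W_{\gamma/2}$ components, and one must check that after Cauchy--Schwarz each of these lands on the correct side with the correct $\gamma/2$-weight, while ensuring that the Gaussian factor inherited from $(\pa_\beta\mu^{1/2})_*$ is not damaged by the changes of variable $v_*\to v'_*$ (handled by \eqref{mu-weight-result}) or $v\to v'$. A secondary difficulty is that $|g|_{L^2}$ (no derivatives, no weight) must be produced on the $g$-side, which forces the Gaussian decay to remain entirely on the $M$-factor and the $\gamma/2$-weight to be redistributed between $h$ and $f$ through the cancellation lemma step.
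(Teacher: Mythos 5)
The paper does not give a self‑contained proof of Proposition \ref{upforI-ep-ga-et}; it simply states that the argument mirrors the singular‑region counterpart, Proposition \ref{I-less-eta-upper-bound}, with Proposition \ref{ubqepsilonnonsingular} and Lemma \ref{cancellation-lemma-general-gamma-minus3-mu} replacing their singular‑region analogues. That proof begins by performing the pre/post‑collision change of variable $(v,v_*,\sigma)\to(v',v'_*,\sigma')$ so that
\begin{equation*}
\langle I^{s,\gamma,\eta}(g,h;\beta),f\rangle=\int B^{s,\gamma,\eta}\bigl((\pa_\beta\mu^{1/2})'_*-(\pa_\beta\mu^{1/2})_*\bigr)g_* h f'\,\mathrm{d}V,
\end{equation*}
and then splits only the \emph{test} function $f'=f+(f'-f)$, keeping $g_*$ and $h$ intact. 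Your decomposition goes the other way --- splitting $g'_*$ and $h'$ while leaving $f$ --- and that choice is where the argument breaks.

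Two concrete gaps. First, in the ``pure'' piece $\int B^{s,\gamma,\eta}(M_*-M'_*)g_*hf\,\mathrm{d}V$ (with $M=\pa_\beta\mu^{1/2}$), the cancellation lemma cannot be invoked: Lemma \ref{cancellation-lemma-general-gamma-minus3-mu}, like \eqref{cancelation-result-B-s-gamma-eta}, turns $\int B\,(w'_*-w_*)\,\mathrm{d}\sigma\,\mathrm{d}v_*$ into $\int S(v-v_*)w_*\,\mathrm{d}v_*$ for a \emph{single} function $w$ of $v_*$, with the other factors depending only on $v$. Here the multiplier is $g_*hf$, which depends on both $v$ and $v_*$, and $(M_*-M'_*)g_*\neq (Mg)_*-(Mg)'_*$. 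What is needed instead is a second‑order Taylor expansion of $M'_*-M_*$ combined with the symmetry identity \eqref{cancell1}: the first‑order $\sigma$‑integral picks up $\sin^2\tfrac{\theta}{2}$ and the remainder carries $|v-v_*|^2\sin^2\tfrac{\theta}{2}$, both harmless because $\int b^s\sin^2\tfrac{\theta}{2}\,\mathrm{d}\sigma$ is uniformly bounded by \eqref{mean-momentum-transfer}. This is a genuinely different tool from the cancellation lemma.

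Second, the ``difference'' pieces created by splitting $g'_*$ and $h'$ do not close with the stated $|g|_{L^2}$. After Cauchy--Schwarz, the quadratic form involving $|M_*-M'_*|^2$ absorbs the entire Gaussian decay of $\pa_\beta\mu^{1/2}$; the companion factor --- your $\int B^{s,\gamma,\eta}(g'_*-g_*)^2h^2f^2\,\mathrm{d}V$ or $\int B^{s,\gamma,\eta}(h'-h)^2(\cdot)^2f^2\,\mathrm{d}V$ --- is then left with \emph{no decay} in the unbounded variable $v$ or $v_*$ (the weights $h^2f^2$, $f^2$ live in $v$), and it carries no angular cancellation either, so even $\int_{\mathbb S^2}b^s\,\mathrm{d}\sigma$ diverges for the ``undifferenced'' part $(h'-h)^2\supset h^2$. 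Redistributing the Gaussian via $|M_*-M'_*|^{1/2}$ on each side only produces a linear $\min\{1,|v-v_*|\sin\tfrac{\theta}{2}\}$ in each factor, which fails to tame $b^s$ once $s\ge 1/2$. Moreover, any term with $(g'_*-g_*)^2$ extracts regularity of $g$, which is incompatible with the $|g|_{L^2}$ on the right‑hand side. The way to avoid all of this is precisely the paper's route: change variables first, keep $g_*$ undifferenced so the Gaussian factor always sits next to $g_*$ after Cauchy--Schwarz, pair $(\pa_\beta\mu^{1/4})'_*-(\pa_\beta\mu^{1/4})_*$ with $g_*h$ and $(\pa_\beta\mu^{1/4})'_*+(\pa_\beta\mu^{1/4})_*$ with $f'-f$ (producing $\mathcal N^{s,\gamma,\eta}(\mu^{1/4},f)$, controlled by Proposition \ref{functional-N-up-geq-eta}), and treat the remaining $f$‑piece by Taylor plus \eqref{cancell1}. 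That is where Lemma \ref{symbol} produces $s^{-1}$ and \eqref{v-minus-vstar-lower-no-singularity} produces $\eta^{\gamma}$, as the paper indicates.
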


	\subsection{Upper bound of $\langle \Gamma^{s,\gamma, \eta}(g,h;\beta), f\rangle$} 	
	
	Note that
	\ben \label{Gamma-into-Q-I-geq-eta}
	\Gamma^{s,\gamma,\eta}(g,h;\beta) =   Q^{s,\gamma,\eta}(g\partial_{\beta}\mu^{1/2},h) +
	I^{s,\gamma,\eta}(g,h;\beta).
	\een
	By Propositions \ref{ubqepsilonnonsingular} and \ref{upforI-ep-ga-et},
	we have the following estimate.
	\begin{prop}\label{upGammagh-geq-eta}
		It holds that
		\beno
		|\langle \Gamma^{s,\gamma,\eta}(g,h; \beta), f\rangle| \lesssim s^{-1} \eta^{\gamma} |g|_{L^{2}}|h|_{s,\gamma/2}|f|_{s,\gamma/2}.
		\eeno
	\end{prop}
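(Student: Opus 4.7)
The plan is to use the identity \eqref{Gamma-into-Q-I-geq-eta}, which decomposes
$$
\Gamma^{s,\gamma,\eta}(g,h;\beta) = Q^{s,\gamma,\eta}(g\,\partial_{\beta}\mu^{1/2},h) + I^{s,\gamma,\eta}(g,h;\beta),
$$
and then estimate each summand separately by invoking the two propositions that were just proved.

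For the second summand, Proposition \ref{upforI-ep-ga-et} yields directly
$$
|\langle I^{s,\gamma,\eta}(g,h;\beta),f\rangle| \lesssim s^{-1}\eta^{\gamma}|g|_{L^{2}}|h|_{s,\gamma/2}|W_{s}f|_{L^{2}_{\gamma/2}},
$$
and the factor $|W_{s}f|_{L^{2}_{\gamma/2}} = |W_{\gamma/2}W_{s}f|_{L^{2}}$ is one of the three pieces appearing in the definition \eqref{norm-definition} of $|f|_{s,\gamma/2}$, so it is controlled by $|f|_{s,\gamma/2}$.

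For the first summand, I would apply Proposition \ref{ubqepsilonnonsingular} with the first argument replaced by $g\,\partial_{\beta}\mu^{1/2}$:
$$
|\langle Q^{s,\gamma,\eta}(g\,\partial_{\beta}\mu^{1/2},h),f\rangle| \lesssim s^{-1}\eta^{\gamma}|g\,\partial_{\beta}\mu^{1/2}|_{L^{1}_{|\gamma|+2s}}|h|_{s,\gamma/2}|f|_{s,\gamma/2}.
$$
It then remains to absorb $\partial_{\beta}\mu^{1/2}$, which is done by Cauchy--Schwarz:
$$
|g\,\partial_{\beta}\mu^{1/2}|_{L^{1}_{|\gamma|+2s}} \leq |g|_{L^{2}}\cdot |W_{|\gamma|+2s}\,\partial_{\beta}\mu^{1/2}|_{L^{2}} \lesssim |g|_{L^{2}},
$$
since every derivative of $\mu^{1/2}$ is Schwartz and therefore has finite weighted $L^{2}$-norm bounded uniformly in $s \in (0,1)$ and in $\gamma$ in the allowed range (the constant depends on $\beta$ but this is harmless).

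Summing the two contributions produces the stated inequality. There is no real obstacle here; the proposition is a packaging step that combines the already-established operator bounds on $Q^{s,\gamma,\eta}$ and $I^{s,\gamma,\eta}$, the only small point to verify being the uniform Gaussian moment bound above, which is immediate.
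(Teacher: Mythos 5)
Your proposal is correct and is exactly the paper's argument: decompose via \eqref{Gamma-into-Q-I-geq-eta}, bound the $Q^{s,\gamma,\eta}$ piece with Proposition \ref{ubqepsilonnonsingular} and the $I^{s,\gamma,\eta}$ piece with Proposition \ref{upforI-ep-ga-et}. The Cauchy--Schwarz absorption of $\partial_{\beta}\mu^{1/2}$ into a Gaussian moment (uniform in $s,\gamma$ since $|\gamma|+2s$ stays bounded) and the observation that $|W_{s}f|_{L^{2}_{\gamma/2}}\le|f|_{s,\gamma/2}$ are precisely the routine details the paper leaves implicit.
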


	\subsection{Upper bound of  $\langle \mathcal{L}^{s,\gamma,\eta}f , h\rangle$ } 
	Recall that
	\beno
	\mathcal{L}^{s,\gamma,\eta}(f; \beta_0, \beta_1)
	= \mathcal{L}^{s,\gamma,\eta}_{1}(f; \beta_0, \beta_1) + \mathcal{L}^{s,\gamma,\eta}_{2}(f; \beta_0, \beta_1),
	\\
	\mathcal{L}^{s,\gamma,\eta}_{1}(f; \beta_0, \beta_1) := - \Gamma^{s,\gamma,\eta}(\pa_{\beta_{1}}\mu^{1/2}, f;\beta_{0}), \quad
	\mathcal{L}^{s,\gamma,\eta}_{1}(f; \beta_0, \beta_1) := - \Gamma^{s,\gamma,\eta}(f, \pa_{\beta_{1}}\mu^{1/2};\beta_{0}).
	\eeno
	If $|\beta_0| = |\beta_1| = 0$,  we have 
	\beno
	\mathcal{L}^{s,\gamma,\eta}f = \mathcal{L}^{s,\gamma,\eta}(f; 0, 0), \quad	\mathcal{L}^{s,\gamma,\eta}_{1}f =  \mathcal{L}^{s,\gamma,\eta}_{1}(f; 0, 0), \quad
	\mathcal{L}^{s,\gamma,\eta}_{2}f = \mathcal{L}^{s,\gamma,\eta}_{2}(f; 0, 0).
	\eeno
	
	By using Proposition \ref{upGammagh-geq-eta}, we have the following estimates.
	
	\begin{prop}\label{geq-eta-part-l1} It holds that
		\beno |\langle \mathcal{L}^{s,\gamma,\eta}_{1}(f; \beta_0, \beta_1), h\rangle| \lesssim  s^{-1} \eta^{\gamma} |h|_{s,\gamma/2}|f|_{s,\gamma/2}.  \eeno
	\end{prop}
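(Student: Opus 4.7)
The plan is to derive this estimate as a direct consequence of Proposition \ref{upGammagh-geq-eta}. Recalling the definition
\beno
\mathcal{L}^{s,\gamma,\eta}_{1}(f; \beta_0, \beta_1) = - \Gamma^{s,\gamma,\eta}(\pa_{\beta_{1}}\mu^{1/2}, f;\beta_{0}),
\eeno
I would set $g := \pa_{\beta_{1}}\mu^{1/2}$ in Proposition \ref{upGammagh-geq-eta} and pair with the given test function $h$. This yields
\beno
|\langle \mathcal{L}^{s,\gamma,\eta}_{1}(f; \beta_0, \beta_1), h\rangle|
= |\langle \Gamma^{s,\gamma,\eta}(\pa_{\beta_{1}}\mu^{1/2}, f; \beta_0), h\rangle|
\lesssim s^{-1}\eta^{\gamma} |\pa_{\beta_{1}}\mu^{1/2}|_{L^{2}} |f|_{s,\gamma/2} |h|_{s,\gamma/2}.
\eeno

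The only remaining input is that $|\pa_{\beta_{1}}\mu^{1/2}|_{L^{2}}$ is finite. Since $\pa_{\beta_{1}}\mu^{1/2}$ is a polynomial in $v$ times $\mu^{1/2}$, the Gaussian decay ensures $|\pa_{\beta_{1}}\mu^{1/2}|_{L^{2}} \leq C_{\beta_1}$ with $C_{\beta_1}$ depending only on $|\beta_1|$. Absorbing this constant into the implicit constant of $\lesssim$ (which was already allowed to depend on $|\beta|$ in Proposition \ref{upforI-ep-ga-et} and hence in Proposition \ref{upGammagh-geq-eta}) gives exactly the claimed bound.

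There is no genuine obstacle here; the work has already been absorbed into the weighted bilinear estimate of Proposition \ref{upGammagh-geq-eta}, which in turn combines the $Q^{s,\gamma,\eta}$ estimate (Proposition \ref{ubqepsilonnonsingular}) with the $I^{s,\gamma,\eta}$ estimate (Proposition \ref{upforI-ep-ga-et}). The decomposition $\Gamma^{s,\gamma,\eta}(g,h;\beta_0) = Q^{s,\gamma,\eta}(g\,\partial_{\beta_0}\mu^{1/2}, h) + I^{s,\gamma,\eta}(g,h;\beta_0)$ from \eqref{Gamma-into-Q-I-geq-eta} ensures both pieces are controlled: the $Q^{s,\gamma,\eta}$ contribution produces $|g\,\partial_{\beta_0}\mu^{1/2}|_{L^{1}_{|\gamma|+2s}} \lesssim_{\beta_0,\beta_1} 1$, while the $I^{s,\gamma,\eta}$ contribution produces $|g|_{L^{2}} \lesssim_{\beta_1} 1$. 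Both bounds are uniform in $(s,\gamma)$ on the relevant ranges thanks to the Gaussian weight, which is why the proposition inherits the sharp factors $s^{-1}\eta^{\gamma}$ without any additional loss.
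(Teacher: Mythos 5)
Your proposal is correct and takes essentially the same approach as the paper: the paper states Proposition \ref{geq-eta-part-l1} as an immediate consequence of Proposition \ref{upGammagh-geq-eta} with $g = \pa_{\beta_1}\mu^{1/2}$, absorbing the Gaussian-weighted constant $|\pa_{\beta_1}\mu^{1/2}|_{L^2}$ into the implicit constant. Your additional remarks correctly trace how the $Q^{s,\gamma,\eta}$ and $I^{s,\gamma,\eta}$ pieces of the decomposition \eqref{Gamma-into-Q-I-geq-eta} feed into that bilinear estimate.
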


	%
	%
	%

	\begin{prop} \label{geq-eta-part-l2}  It holds that
		\ben \label{full-L2} |\langle \mathcal{L}^{s,\gamma,\eta}_{2}(f; \beta_0, \beta_1), h\rangle| \lesssim  \eta^{\gamma} |\mu^{1/32}f|_{L^{2}}|\mu^{1/32}h|_{L^{2}}.  \een
	\end{prop}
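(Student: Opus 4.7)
My plan is to mirror the proof of Proposition \ref{less-eta-part-l2}, replacing the singular-region kernel $B^{s,\gamma}_\eta$ by the regular-region kernel $B^{s,\gamma,\eta}$ throughout. The derivatives indexed by $\beta_0,\beta_1$ are cosmetic since $\pa_\beta\mu^{1/2}$ is still a polynomial times a Gaussian; I may as well treat the case $\beta_0=\beta_1=0$. Using the collisional conservation $\mu\mu_* = \mu'\mu'_*$ to rewrite the factor $\mu\bigl((\mu^{-1/2}h)'-\mu^{-1/2}h\bigr)$, one obtains the same algebraic splitting
\[
\langle -\mathcal{L}^{s,\gamma,\eta}_{2}f,h\rangle = Y_1 + Y_2,
\]
where $Y_1 := \int B^{s,\gamma,\eta} (\mu^{1/2}f)_*\mu^{1/2}(h'-h)\,\mathrm{d}V$ and $Y_2 := \int B^{s,\gamma,\eta} f_*\mu^{1/2}h'\bigl((\mu^{1/2})'_*-\mu^{1/2}_*\bigr)\,\mathrm{d}V$, and I further split $Y_1 = Y_{1,1}+Y_{1,2}$ and $Y_2 = Y_{2,1}+Y_{2,2}$ exactly as in the cited proof.

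The first substantive step is $Y_{1,1} = \int B^{s,\gamma,\eta}(\mu^{1/2}f)_*\bigl((\mu^{1/2}h)'-\mu^{1/2}h\bigr)\,\mathrm{d}V$, where the argument genuinely diverges from Proposition \ref{less-eta-part-l2}. Rather than invoking Proposition \ref{new-version-cancellation} (which would cost a non-uniform factor $s^{-1}C_{s,\gamma,\eta}$), I apply Lemma \ref{cancellation-lemma-general-gamma-minus3-mu}, the revised cancellation lemma tailored for $B^{s,\gamma,\eta}$; it yields $|Y_{1,1}| \lesssim \eta^\gamma |\mu^{1/2}f|_{L^1_a}|\mu^{1/2}h|_{L^1_{-a}}$, with a constant independent of $s$, and the polynomial weights $\langle\cdot\rangle^{\pm a}$ are immediately absorbed by the Gaussians to give $\eta^\gamma|\mu^{1/4}f|_{L^2}|\mu^{1/4}h|_{L^2}$.

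For the three Taylor-difference terms $Y_{1,2}, Y_{2,1}, Y_{2,2}$, each carries at least one factor of the form $(\mu^{1/2})' - \mu^{1/2}$ or $(\mu^{1/2})'_* - \mu^{1/2}_*$. I apply the second-order expansions \eqref{Taylor1}--\eqref{Taylor2} together with the $\sigma$-symmetry cancellations \eqref{cancell1}--\eqref{cancell2}, which convert each such difference into a remainder carrying a factor of $\sin^{2}(\theta/2)|v-v_*|^2$. Then \eqref{mean-momentum-transfer} gives $\int b^s(\theta)\sin^2(\theta/2)\,\mathrm{d}\sigma = 4\pi\cdot 2^{s-1}$, which is bounded \emph{uniformly in $s$}; \eqref{v-minus-vstar-lower-no-singularity} provides the $\eta^\gamma$ prefactor from the non-singular support; and \eqref{mu-weight-result} keeps the second derivatives of $\mu^{1/2}$ evaluated at the intermediate points $v(\kappa),v_*(\iota)$ Gaussian-decaying. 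For $Y_{1,2}$ and $Y_{2,2}$ this is a direct single-difference argument, possibly preceded by the change of variables $v\to v'$ or $(v,v_*,\sigma)\to(v',v'_*,\sigma')$.

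The main obstacle, as in the singular-region analogue, is $Y_{2,1}$, which carries two Taylor-differences simultaneously. I would handle it by Cauchy--Schwarz split symmetrically so that the Gaussian weights are shared:
\[
|Y_{2,1}|^2 \leq \Bigl(\int B^{s,\gamma,\eta} f_*^2 \bigl((\mu^{1/4})'-\mu^{1/4}\bigr)^2 \bigl((\mu^{1/4})'_*+\mu^{1/4}_*\bigr)^2 \mathrm{d}V\Bigr)\cdot\Bigl(\text{the same factor with } h' \text{ in place of } f_*\Bigr),
\]
and the pre/post-collision change of variables on the second factor converts $h'$ to $h_*$, bringing both factors into a common form. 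Each resulting integral is then bounded by $\eta^\gamma|\mu^{1/32}f|_{L^2}^2$ (resp.\ $\eta^\gamma|\mu^{1/32}h|_{L^2}^2$) by the same Taylor-plus-symmetry mechanism above, the delicate point being to distribute the Gaussian weight carefully so that enough remains after the intermediate evaluation $\mu(v(\kappa))$ (via \eqref{mu-weight-result}) to dominate all polynomial factors. Assembling the four pieces $Y_{1,1},Y_{1,2},Y_{2,1},Y_{2,2}$ yields the stated estimate, with constant independent of $s$.
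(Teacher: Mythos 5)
Your proposal is correct and matches the paper's own (tersely stated) argument: the paper's proof is a one-line pointer back to Proposition \ref{less-eta-part-l2}, with the remark that the $s^{-1}$ factor is avoided because Lemma \ref{cancellation-lemma-general-gamma-minus3-mu} replaces the role of Proposition \ref{new-version-cancellation}; your unpacking identifies exactly this substitution in $Y_{1,1}$ and correctly tracks the remaining terms $Y_{1,2},Y_{2,1},Y_{2,2}$ through \eqref{Taylor1}--\eqref{cancell2}, \eqref{mean-momentum-transfer}, \eqref{v-minus-vstar-lower-no-singularity} and \eqref{mu-weight-result}.
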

	\begin{proof} Since the proof is similar to the one  of Proposition \ref{less-eta-part-l2},  we omit the details. Here, we do not have  the factor $s^{-1}$  because of the cancellation Lemma \ref{cancellation-lemma-general-gamma-minus3-mu}.
	\end{proof}

	Propositions  \ref{geq-eta-part-l1} and \ref{geq-eta-part-l2} give the following estimate.
	
	\begin{prop}\label{geq-eta-part-l} It holds that
		\beno |\langle \mathcal{L}^{s,\gamma,\eta}(f; \beta_0, \beta_1), h\rangle| \lesssim   s^{-1} \eta^{\gamma} |h|_{s,\gamma/2}|f|_{s,\gamma/2}.  \eeno
	\end{prop}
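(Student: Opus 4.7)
The plan is essentially to add the two estimates from Propositions \ref{geq-eta-part-l1} and \ref{geq-eta-part-l2} via the splitting
\[
\mathcal{L}^{s,\gamma,\eta}(f;\beta_0,\beta_1) = \mathcal{L}^{s,\gamma,\eta}_{1}(f;\beta_0,\beta_1) + \mathcal{L}^{s,\gamma,\eta}_{2}(f;\beta_0,\beta_1),
\]
which holds by definition of $\mathcal{L}^{s,\gamma,\eta}$ in Subsection \ref{ope-spl}. First I would invoke Proposition \ref{geq-eta-part-l1} to get directly the bound $|\langle \mathcal{L}^{s,\gamma,\eta}_{1}(f;\beta_0,\beta_1),h\rangle| \lesssim s^{-1}\eta^{\gamma}|h|_{s,\gamma/2}|f|_{s,\gamma/2}$, which already has the desired form on the right-hand side.

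For the second piece, Proposition \ref{geq-eta-part-l2} gives a much stronger estimate $|\langle \mathcal{L}^{s,\gamma,\eta}_{2}(f;\beta_0,\beta_1),h\rangle| \lesssim \eta^{\gamma}|\mu^{1/32}f|_{L^{2}}|\mu^{1/32}h|_{L^{2}}$. The only thing to check is that this estimate can be absorbed into the target. Since the Gaussian weight $\mu^{1/32}$ dominates any polynomial weight, one has $|\mu^{1/32}f|_{L^{2}} \lesssim |f|_{L^{2}_{\gamma/2}} \lesssim |f|_{s,\gamma/2}$ by the definition \eqref{norm-definition}, and likewise for $h$. Combining the two bounds and noting that $s^{-1}\ge 1$ gives the claimed inequality.

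The proof is therefore a one-line combination with no real obstacle; the substantive work has already been done in Propositions \ref{geq-eta-part-l1} and \ref{geq-eta-part-l2}. The only subtle point worth stating explicitly is the domination $|\mu^{1/32}\cdot|_{L^{2}} \lesssim |\cdot|_{s,\gamma/2}$, so that the estimate for $\mathcal{L}^{s,\gamma,\eta}_{2}$ is absorbed into the larger one for $\mathcal{L}^{s,\gamma,\eta}_{1}$.
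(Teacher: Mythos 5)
Your proposal is correct and matches the paper's own (implicit) proof: the paper states that Propositions \ref{geq-eta-part-l1} and \ref{geq-eta-part-l2} together give Proposition \ref{geq-eta-part-l}, which is exactly the one-line combination you describe, and the absorption $|\mu^{1/32}\cdot|_{L^{2}} \lesssim |\cdot|_{L^2_{\gamma/2}} \lesssim |\cdot|_{s,\gamma/2}$ together with $s^{-1}\ge 1$ is indeed the only point worth noting.
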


	\subsection{Upper bound of $Q^{s,\gamma}, I^{s,\gamma}, \Gamma^{s,\gamma}, \mathcal{L}^{s,\gamma}$} 	
	By applying Propositions \ref{ubqepsilon-singular} and \ref{ubqepsilonnonsingular}, 
	by taking $\eta=1$ and $C_{\delta, s,\gamma} = \delta^{-1/2} s^{-1} (\gamma+2s+3)^{-1}$,
	we have the following upper bound estimate on $Q^{s,\gamma}$.
	\begin{thm}\label{Q-full-up-bound} 
		Let $l_1, l_2, l_3 \in \R$ satisfying $l_1 + l_2 + l_3 = 0$. Let $0<\delta \leq \f12$, for any combination $a_1 , a_2 \geq s$ satisfying the constraint
		$a_1 + a_2 = s+\f32 + \delta$, 
		it holds that
		\beno
		|\langle Q^{s,\gamma}(g,h), f\rangle| \lesssim C_{\delta, s,\gamma}
		|g|_{H^{a_{1}}_{l_1}} |h|_{H^{a_{2}}_{l_2}}
		|f|_{H^{s}_{l_3}} + s^{-1}
		|g|_{L^{1}_{|\gamma|+2s}}|h|_{s,\gamma/2}|f|_{s,\gamma/2}.
		\eeno
	\end{thm}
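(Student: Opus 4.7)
The plan is to prove Theorem~\ref{Q-full-up-bound} by a direct splitting of the collision kernel at $\eta = 1$, reducing it to two previously-established estimates. Specifically, I would use the decomposition
\[
Q^{s,\gamma}(g,h) = Q^{s,\gamma,1}(g,h) + Q^{s,\gamma}_{1}(g,h)
\]
from \eqref{Q-ep-ga-sep-eta}, so that the regular part $Q^{s,\gamma,1}$ carries the factor $\psi^{1}(|v-v_*|) = 1 - \psi_{1}(|v-v_*|)$ (supported in $|v-v_*| \geq 3/4$) and the singular part $Q^{s,\gamma}_{1}$ carries $\psi_{1}(|v-v_*|)$ (supported in $|v-v_*| \leq 4/3$).

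For the regular part I would directly apply Proposition~\ref{ubqepsilonnonsingular} with $\eta = 1$. Since $\eta^{\gamma} = 1$, this yields
\[
|\langle Q^{s,\gamma,1}(g,h), f\rangle| \lesssim s^{-1} |g|_{L^{1}_{|\gamma|+2s}}|h|_{s,\gamma/2}|f|_{s,\gamma/2},
\]
which is exactly the second term on the right-hand side of the theorem. For the singular part I would apply Proposition~\ref{ubqepsilon-singular} with $\eta = 1$, for which the constant in \eqref{constant-C-delta-s-gamma-eta} becomes $C_{\delta,s,\gamma,1} = \delta^{-1/2} s^{-1} (\gamma+2s+3)^{-1} = C_{\delta,s,\gamma}$. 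The choice I would make in that proposition is $a_3 = s$, which reduces the constraint $a_1 + a_2 + a_3 = 2s + \tfrac{3}{2} + \delta$ to $a_1 + a_2 = s + \tfrac{3}{2} + \delta$, and reduces the three inequalities $a_1 + a_2 \geq s$, $a_1 + a_3 \geq 2s$, $a_2 + a_3 \geq 2s$ to the single requirement $a_1, a_2 \geq s$. This matches exactly the hypotheses of the theorem, and gives
\[
|\langle Q^{s,\gamma}_{1}(g,h), f\rangle| \lesssim C_{\delta, s,\gamma} |g|_{H^{a_{1}}_{l_1}} |h|_{H^{a_{2}}_{l_2}} |f|_{H^{s}_{l_3}},
\]
which is the first term. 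Adding the two estimates completes the proof.

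There is essentially no real obstacle here, since both ingredients are already in place; the only small point to verify is the compatibility of the parameter constraints. In particular, one should check that $a_1, a_2 \geq s$ together with $a_1 + a_2 = s + \tfrac{3}{2} + \delta$ is consistent (which requires $s \leq \tfrac{3}{2} + \delta$, automatic since $s < 1$), and that the weight constraint $l_1 + l_2 + l_3 = 0$ from Proposition~\ref{ubqepsilon-singular} is exactly what the theorem asks. Since Proposition~\ref{ubqepsilonnonsingular} allows weights to be distributed through the $L^{1}_{|\gamma|+2s}$ and $|\cdot|_{s,\gamma/2}$ norms without a balance condition, the theorem's weight hypothesis is dictated by the singular piece alone.
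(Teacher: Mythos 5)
Your proof is correct and matches the paper's approach exactly: the paper likewise obtains Theorem~\ref{Q-full-up-bound} by writing $Q^{s,\gamma} = Q^{s,\gamma,1} + Q^{s,\gamma}_{1}$ and invoking Proposition~\ref{ubqepsilonnonsingular} and Proposition~\ref{ubqepsilon-singular} with $\eta = 1$, the latter with $a_3 = s$, which is precisely the reduction you carry out.
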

	
	By applying Propositions \ref{I-less-eta-upper-bound} and \ref{upforI-ep-ga-et} and
	taking $\eta=1$,
	we have  the following upper bound estimate on  $I^{s,\gamma}$.
	
	\begin{thm}\label{upforI-total} Let $0<\delta \leq \f12$, let $(a_{1},a_{2})=(\f{3}{2} + \delta, s)$ or $(0, \f{3}{2} + \delta)$. Then
		\beno
		|\langle I^{s,\gamma}(g,h; \beta) , f \rangle|  \lesssim C_{\delta, s,\gamma} |g|_{H^{a_{1}}_{-l}}|h|_{H^{a_{2}}_{-l}}
		|f|_{H^{s}_{-l}} + s^{-1} |g|_{L^{2}}|h|_{s,\gamma/2}|f|_{s,\gamma/2}.
		\eeno
	\end{thm}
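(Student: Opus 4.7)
The plan is to exploit the operator splitting introduced in Subsection \ref{ope-spl}, writing
\beno
I^{s,\gamma}(g,h;\beta) = I^{s,\gamma}_{\eta}(g,h;\beta) + I^{s,\gamma,\eta}(g,h;\beta),
\eeno
and apply the two previously established pieces, one to each piece, with the natural choice $\eta=1$. With this choice the singular-region constant has a factor $\eta^{\gamma+2s+3}=1$ and the regular-region constant has a factor $\eta^{\gamma}=1$, so the two bounds combine cleanly.

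For the singular part $I^{s,\gamma}_{\eta}(g,h;\beta)$, supported in $|v-v_{*}|\le 4\eta/3$, I would invoke Proposition \ref{I-less-eta-upper-bound} with $(a_{1},a_{2})$ chosen to be either $(\tfrac32+\delta,s)$ or $(0,\tfrac32+\delta)$. This yields
\beno
|\langle I^{s,\gamma}_{\eta}(g,h;\beta), f\rangle| \lesssim_{l} C_{\delta,s,\gamma,\eta}\, |g|_{H^{a_{1}}_{-l}}|h|_{H^{a_{2}}_{-l}}|f|_{H^{s}_{-l}},
\eeno
and specializing the constant from \eqref{constant-C-delta-s-gamma-eta} at $\eta=1$ gives exactly $C_{\delta,s,\gamma}=\delta^{-1/2} s^{-1} (\gamma+2s+3)^{-1}$, which is the constant appearing in the statement.

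For the regular part $I^{s,\gamma,\eta}(g,h;\beta)$, supported in $|v-v_{*}|\ge 3\eta/4$, I would apply Proposition \ref{upforI-ep-ga-et}, which gives
\beno
|\langle I^{s,\gamma,\eta}(g,h;\beta), f\rangle| \lesssim s^{-1}\eta^{\gamma}\, |g|_{L^{2}}|h|_{s,\gamma/2}|W_{s}f|_{L^{2}_{\gamma/2}}.
\eeno
Setting $\eta=1$ removes the factor $\eta^{\gamma}$, and the norm $|W_{s}f|_{L^{2}_{\gamma/2}}$ is controlled by $|f|_{s,\gamma/2}$ directly from definition \eqref{norm-definition}, since that dissipation norm already contains both $|W_{s}(D)W_{\gamma/2}f|_{L^{2}}$ and $|W_{s}W_{\gamma/2}f|_{L^{2}}$ as summands.

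Adding the two contributions produces the claimed inequality. I do not anticipate a genuine obstacle here: the theorem is essentially a packaging corollary that unifies Propositions \ref{I-less-eta-upper-bound} and \ref{upforI-ep-ga-et}. The only conceptual point is the choice $\eta=1$, which is the right scale because it simultaneously makes the singular and regular contributions independent of any auxiliary truncation while preserving the sharp dependence on $s$ and $\gamma+2s+3$ recorded in $C_{\delta,s,\gamma}$.
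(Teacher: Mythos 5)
Your proposal is correct and follows exactly the argument the paper intends: split $I^{s,\gamma}=I^{s,\gamma}_{\eta}+I^{s,\gamma,\eta}$, take $\eta=1$, apply Proposition \ref{I-less-eta-upper-bound} to the singular piece and Proposition \ref{upforI-ep-ga-et} to the regular piece, and observe that $|W_{s}f|_{L^{2}_{\gamma/2}}\le |f|_{s,\gamma/2}$. This is precisely the paper's (one-line) proof, and your explanation of why $\eta=1$ normalizes both constants is a correct reading of \eqref{constant-C-delta-s-gamma-eta} and the $\eta^{\gamma}$ factor.
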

	
	Also Propositions \ref{Gamma-less-eta-upper-bound}  and \ref{upGammagh-geq-eta} with  $\eta=1$ give 
	the following upper bound estimate on  $\Gamma^{s,\gamma}$.	
	
	\begin{thm}\label{Gamma-full-up-bound} For $0<\delta \leq \f12$, let  $(a_{1},a_{2})=(\f{3}{2} + \delta, s)$ or $(s, \f{3}{2} + \delta)$. Then 
		\ben
		\label{Gamma-full-on-g}
		|\langle \Gamma^{s,\gamma}(g,h;\beta), f\rangle| \lesssim_l C_{\delta, s,\gamma}|g|_{H^{a_{1}}_{-l}}|h|_{H^{a_{2}}_{-l}}
		|f|_{H^{s}_{-l}}+ s^{-1} |g|_{L^{2}}|h|_{s,\gamma/2}|f|_{s,\gamma/2}	.
		\een
	\end{thm}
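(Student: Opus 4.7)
The plan is to obtain Theorem \ref{Gamma-full-up-bound} by combining the two regimes already analyzed: the singular region $|v-v_*|\lesssim \eta$ and the regular region $|v-v_*|\gtrsim \eta$. Since the theorem contains no extra geometric cutoff and no adjustable $\eta$ in the statement, the natural choice is to apply the operator splitting of Subsection \ref{ope-spl} at the particular value $\eta=1$, and then sum. This is completely in line with the way the authors derived Theorem \ref{Q-full-up-bound} and Theorem \ref{upforI-total} from their singular/regular counterparts.

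First I would recall the decomposition \eqref{Gamma-ep-ga-sep-eta}, specialized to $\eta=1$, that is
\[
\Gamma^{s,\gamma}(g,h;\beta) \;=\; \Gamma^{s,\gamma}_{\eta}(g,h;\beta) \;+\; \Gamma^{s,\gamma,\eta}(g,h;\beta),\qquad \eta=1,
\]
which holds pointwise (and of course tested against $f$). Then, for the singular part, I would invoke Proposition \ref{Gamma-less-eta-upper-bound}, which is stated for precisely the admissible exponent pairs $(a_1,a_2)=(\tfrac{3}{2}+\delta,s)$ and $(s,\tfrac{3}{2}+\delta)$, yielding
\[
|\langle \Gamma^{s,\gamma}_{\eta}(g,h;\beta), f\rangle| \lesssim_{l} C_{\delta,s,\gamma,\eta}\,|g|_{H^{a_1}_{-l}}\,|h|_{H^{a_2}_{-l}}\,|f|_{H^{s}_{-l}}.
\]
With $\eta=1$, the constant $C_{\delta,s,\gamma,\eta}$ defined in \eqref{constant-C-delta-s-gamma-eta} collapses exactly to the quantity denoted $C_{\delta,s,\gamma}=\delta^{-1/2} s^{-1}(\gamma+2s+3)^{-1}$ in the statement.

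For the regular part, I would apply Proposition \ref{upGammagh-geq-eta}, which gives
\[
|\langle \Gamma^{s,\gamma,\eta}(g,h;\beta), f\rangle| \lesssim s^{-1}\eta^{\gamma}\,|g|_{L^{2}}\,|h|_{s,\gamma/2}\,|f|_{s,\gamma/2},
\]
and again setting $\eta=1$ produces precisely the second summand $s^{-1}|g|_{L^2}|h|_{s,\gamma/2}|f|_{s,\gamma/2}$ appearing in \eqref{Gamma-full-on-g}. Adding the two bounds and using the triangle inequality delivers the desired estimate for both choices of $(a_1,a_2)$.

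There is essentially no genuine obstacle here: the hard work is already invested in Proposition \ref{Gamma-less-eta-upper-bound} (where the angular singularity near $|v-v_*|=0$ is absorbed by the second-order Taylor expansions and the gain $\eta^{\gamma+2s+3}/(\gamma+2s+3)$) and in Proposition \ref{upGammagh-geq-eta} (where the geometric decomposition into radial and spherical parts, together with Lemma \ref{cancellation-lemma-general-gamma-minus3-mu}, controls the far-field behavior). The only minor book-keeping point is to verify that at $\eta=1$ the two constants produced by those propositions precisely coincide with the $C_{\delta,s,\gamma}$ and $s^{-1}$ in the theorem statement, which is immediate from \eqref{constant-C-delta-s-gamma-eta}. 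Hence the proof fits comfortably into a few lines.
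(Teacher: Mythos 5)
Your proposal is correct and matches the paper's own derivation exactly: the authors state immediately before Theorem \ref{Gamma-full-up-bound} that it follows from Propositions \ref{Gamma-less-eta-upper-bound} and \ref{upGammagh-geq-eta} with $\eta=1$, which is precisely the splitting \eqref{Gamma-ep-ga-sep-eta} you invoke. Your book-keeping of the constant $C_{\delta,s,\gamma,\eta}|_{\eta=1}=\delta^{-1/2}s^{-1}(\gamma+2s+3)^{-1}=C_{\delta,s,\gamma}$ is also correct.
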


	And Prop. \ref{less-eta-part-l}  and \ref{geq-eta-part-l} with  $\eta=1$ give the following upper bound estimate on  $\mathcal{L}^{s,\gamma}$.	
	
	\begin{prop}\label{part-l} Set
			\ben \label{def-C-s-gamma}
		C_{s, \gamma} = s^{-1} (\gamma+2s+3)^{-1}.
		\een
		It holds that
		\beno |\langle \mathcal{L}^{s,\gamma}(f; \beta_0, \beta_1), h\rangle| \lesssim   C_{s,\gamma} |h|_{s,\gamma/2}|f|_{s,\gamma/2}.  \eeno
	\end{prop}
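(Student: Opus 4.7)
The plan is to combine the two complementary estimates already established for the singular region (Proposition \ref{less-eta-part-l}) and the regular region (Proposition \ref{geq-eta-part-l}) by choosing $\eta=1$ in the splitting introduced in Subsection \ref{ope-spl}. Since $\mathcal{L}^{s,\gamma}(f;\beta_0,\beta_1) = \mathcal{L}^{s,\gamma}_{\eta}(f;\beta_0,\beta_1) + \mathcal{L}^{s,\gamma,\eta}(f;\beta_0,\beta_1)$ by construction, with $\eta=1$ both parts are well-controlled.

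For the singular part, I would apply Proposition \ref{less-eta-part-l} with $\eta=1$, giving the bound $C_{s,\gamma,1}|f|_{H^{s}_{-l}}|h|_{H^{s}_{-l}}$ where $C_{s,\gamma,1}=s^{-1}(\gamma+2s+3)^{-1}=C_{s,\gamma}$ by definition \eqref{constant-C-delta-s-gamma-eta}. Choosing $l = |\gamma|/2 \geq 0$ and using Lemma \ref{operatorcommutator1} to commute $W_{s}(D)$ with the weight $W_{-l}=W_{\gamma/2}$, together with the straightforward inequality $|f|_{L^{2}_{-l}} \leq |f|_{L^{2}_{\gamma/2}}$ (valid because $-l=\gamma/2$), one obtains
\begin{equation*}
|f|_{H^{s}_{-l}} \lesssim |W_{s}(D)W_{\gamma/2}f|_{L^{2}} + |W_{s}W_{\gamma/2}f|_{L^{2}} \lesssim |f|_{s,\gamma/2},
\end{equation*}
where the second inequality follows from the definition \eqref{norm-definition} of the dissipation norm.

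For the regular part, I would apply Proposition \ref{geq-eta-part-l} with $\eta=1$, which produces the bound $s^{-1}|h|_{s,\gamma/2}|f|_{s,\gamma/2}$. To match the target constant $C_{s,\gamma}$, observe that in the parameter range $0<s<1$, $-2s-3<\gamma\le 0$ we have $0 < \gamma+2s+3 \le 5$, hence $(\gamma+2s+3)^{-1}\geq 1/5$ and consequently $s^{-1}\leq 5\, C_{s,\gamma}$. Thus both pieces are controlled by $C_{s,\gamma}|h|_{s,\gamma/2}|f|_{s,\gamma/2}$ up to an absolute constant, and adding the two estimates completes the proof.

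The argument is essentially a packaging step and I do not anticipate a genuine obstacle; the only technical point is the passage from the weighted Sobolev norm $|\cdot|_{H^{s}_{-l}}$ produced by the singular-region estimate to the dissipation norm $|\cdot|_{s,\gamma/2}$ appearing in the target, which is handled by the standard commutator Lemma \ref{operatorcommutator1} together with the weight comparison $W_{-l}\leq W_{\gamma/2}$ when $l=|\gamma|/2$. All other ingredients — in particular the crucial factor $(\gamma+2s+3)^{-1}$ from the $L^{1}$ integrability \eqref{key-reason-near-origin} and the factor $s^{-1}$ from the angular integration in Lemma \ref{symbol} — are already encoded in the previous propositions.
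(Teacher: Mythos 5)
Your proposal is correct and follows exactly the paper's route: split $\mathcal{L}^{s,\gamma}$ into its singular part $\mathcal{L}^{s,\gamma}_{\eta}$ and regular part $\mathcal{L}^{s,\gamma,\eta}$ with $\eta=1$, then invoke Propositions~\ref{less-eta-part-l} and~\ref{geq-eta-part-l} and absorb the harmless factor $s^{-1}\lesssim C_{s,\gamma}$ (valid since $\gamma+2s+3$ is bounded above in the admissible range). The only superfluous detail is the appeal to Lemma~\ref{operatorcommutator1}: once you take $l=-\gamma/2$, the quantity $|f|_{H^{s}_{-l}}=|W_{s}(D)W_{\gamma/2}f|_{L^{2}}$ is literally one of the three summands defining $|f|_{s,\gamma/2}$ in~\eqref{norm-definition}, so no commutator is needed for that direction of the comparison.
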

	
	The following result will be used in Section \ref{proof-main-theorem} to obtain  dissipation 
	estimate on the macroscopic component.
	\begin{prop}\label{Gamma-g-h-mu} Let $P$ be a polynomial function.
		For any combination $a_1 , a_2 \geq 0$ satisfying the constraint
		$a_1 + a_2 = s$, 
		it holds that
		\ben
		\label{Gamma-g-h-mu-up-total-s}
		|\langle \Gamma^{s,\gamma}(g,h), \mu^{\f12}P\rangle| \lesssim C_{ s,\gamma}|\mu^{\f14}g|_{H^{a_1}} |\mu^{\f14}h|_{H^{a_2}} + s^{-1} |\mu^{\f14}g|_{L^{2}} |\mu^{\f14}h|_{L^{2}}.
		\een
	\end{prop}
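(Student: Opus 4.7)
The plan is to first reduce the claim to an estimate for $Q^{s,\gamma}$ paired with the polynomial $P$ alone, via the conjugation identity
\beno
\langle \Gamma^{s,\gamma}(g,h), \mu^{1/2}P\rangle = \langle Q^{s,\gamma}(\mu^{1/2}g, \mu^{1/2}h), P\rangle,
\eeno
which is immediate from \eqref{Def-nonlinear-linear} because the $L^2$ pairing allows $\mu^{-1/2}$ to be moved across onto $\mu^{1/2}P$. I would then split $Q^{s,\gamma} = Q^{s,\gamma}_{1} + Q^{s,\gamma,1}$ at scale $\eta = 1$ as in Subsection \ref{ope-spl}. The splitting is necessary because $P$ itself has no decay, so the dissipation-type norm $|P|_{s,\gamma/2}$ that appears in Proposition \ref{ubqepsilonnonsingular} is generally infinite for a non-constant polynomial; the singular and regular regions must be handled by different arguments.

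For the singular piece $\langle Q^{s,\gamma}_{1}(\mu^{1/2}g, \mu^{1/2}h), P\rangle$, I would apply Proposition \ref{ubqepsilon-singular} with the flexible choice $a_3 = s + \f32 + \delta$ and $a_1 + a_2 = s$, $a_1, a_2 \geq 0$. The side constraints $a_i + a_3 \geq 2s$ are automatic since $s < 1$ makes $s - \f32 - \delta$ negative. Fixing $\delta = 1/2$ makes $C_{\delta,s,\gamma,1}$ proportional to $C_{s,\gamma}$. I choose the weight triple $l_1 + l_2 + l_3 = 0$ with $l_3$ sufficiently negative, depending on $\deg P$, so that $|P|_{H^{s+3/2+\delta}_{l_3}}$ is a harmless constant. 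The Gaussian decay of $\mu^{1/2} = \mu^{1/4}\cdot\mu^{1/4}$ then absorbs both the polynomial weight $W_{l_i}$ and the $a_i$ derivatives via Leibniz, giving
\beno
|\mu^{1/2}g|_{H^{a_1}_{l_1}} \lesssim |\mu^{1/4}g|_{H^{a_1}}, \qquad |\mu^{1/2}h|_{H^{a_2}_{l_2}} \lesssim |\mu^{1/4}h|_{H^{a_2}},
\eeno
which yields the first term on the right-hand side of the claim.

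The main obstacle is the regular piece. Rather than use Proposition \ref{ubqepsilonnonsingular} (whose bound contains the divergent $|P|_{s,\gamma/2}$), I would work directly from the weak form
\beno
\langle Q^{s,\gamma,1}(\mu^{1/2}g, \mu^{1/2}h), P\rangle = \int B^{s,\gamma,1}\,(\mu^{1/2}g)_{*}(\mu^{1/2}h)(P' - P)\,\mathrm{d}V,
\eeno
and Taylor-expand $P' - P$ to second order via \eqref{Taylor1}. The first-order contribution vanishes to leading order in $\sigma$ by the cancellation identity \eqref{cancell1}, producing an extra $\sin^{2}(\theta/2)$; the second-order contribution already carries $|v'-v|^{2} = |v-v_{*}|^{2}\sin^{2}(\theta/2)$. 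In either case $\int b^{s}(\theta)\sin^{2}(\theta/2)\,\mathrm{d}\sigma$ is uniformly bounded by \eqref{mean-momentum-transfer}, so no $s^{-1}$ factor is generated by the angular integration. The remaining integral is supported in $|v-v_{*}| \geq 3/4$ where $|v-v_{*}|^{\gamma}\psi^{1}$ is not singular, and the polynomial growth of $\nabla P$ and $\nabla^{2}P$ on the segment $[v, v']$ is dominated by $\mu^{1/2}_{*}\mu^{1/2}$, so Cauchy-Schwarz yields a bound $\lesssim |\mu^{1/4}g|_{L^{2}}|\mu^{1/4}h|_{L^{2}}$ with an $s$-independent constant. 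This fits into the second term on the right-hand side of the claim since $s^{-1} \geq 1$, completing the proof.
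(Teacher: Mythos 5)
Your proof is correct and follows essentially the same route as the paper: the conjugation identity, the $\eta=1$ split into $Q^{s,\gamma}_1 + Q^{s,\gamma,1}$, Proposition~\ref{ubqepsilon-singular} with $a_3 = s+\f32+\delta$ and $l_3$ chosen negative enough to make $|P|_{H^{a_3}_{l_3}}$ finite for the singular piece, and a second-order Taylor expansion of $P'-P$ combined with the cancellation identity~\eqref{cancell1} and the finite moment~\eqref{mean-momentum-transfer} for the regular piece. Your added remark that Proposition~\ref{ubqepsilonnonsingular} cannot be invoked here because $|P|_{s,\gamma/2}$ diverges for a non-constant polynomial is a useful clarification of why the regular piece needs a direct argument, and your observation that the regular bound is in fact uniform in $s$ (so the stated $s^{-1}$ is a harmless over-estimate) is accurate.
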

	\begin{proof} First note that
		\beno
		\langle \Gamma^{s,\gamma}(g,h), \mu^{\f12}P\rangle = \langle Q^{s,\gamma}(\mu^{\f12}g, \mu^{\f12}h), P\rangle = \langle Q^{s,\gamma}_{1}(\mu^{\f12}g, \mu^{\f12}h), P\rangle + \langle Q^{s,\gamma,1}(\mu^{\f12}g, \mu^{\f12}h), P\rangle.
		\eeno
		Applying Proposition \ref{ubqepsilon-singular} with $a_3 = s+ \f32+ \delta$ and taking $l_3$ small enough relative to the degree of $P$,  we get
		\beno
		|\langle Q^{s,\gamma}_{1}(\mu^{\f12}g, \mu^{\f12}h), P\rangle|	\lesssim	 C_{ s,\gamma}|\mu^{\f14}g|_{H^{a_1}} |\mu^{\f14}h|_{H^{a_2}}.
		\eeno
		By using \eqref{Taylor1} to $P'-P$ and 
		\eqref{cancell1}, thanks to the factor $\mu^{\f12}\mu^{\f12}_{*}$,
		we have
		\beno
		|\langle Q^{s,\gamma,1}(\mu^{\f12}g, \mu^{\f12}h), P\rangle|	\lesssim s^{-1}|\mu^{\f14}g|_{L^{2}} |\mu^{\f14}h|_{L^{2}}.
		\eeno
		And this completes the proof of the proposition.
	\end{proof}

	\section{Coercivity estimate} \label{coercivity-spectral}
	In this section, we will prove coercivity estimate of the linear operator $\mathcal{L}^{s,\gamma,\eta}$ for some $\eta>0$. This is a linear counterpart of the famous H-theorem near Maxwellians.
	Unless otherwise specified, the parameter range is $-5 \leq \gamma \leq 0, 0<s<1$. The parameter $\gamma$ actually can tend to $-\infty$ because we consider in the domain $|v-v_{*}| \gtrsim \eta>0$.
	
	The proof contains two parts. One is a rough coercivity estimate capturing the  norm $|\cdot|_{s,\gamma/2}$ with a lower order correction norm $|\cdot|_{L^{2}_{\gamma/2}}$. The other is a spectrum-gap type estimate to recover the lower order norm $|\cdot|_{L^{2}_{\gamma/2}}$. Accordingly, we divide this section into two subsections.

	\subsection{Rough coercivity estimate}
	In this subsection, we will prove the rough coercivity estimate of $\mathcal{L}^{s, \gamma, \eta}$ for small $\eta > 0$ in Theorem \ref{strong-coercivity}.
	The  strategy  relies on the following relation (see \eqref{L-dominate-lower-bound} in the proof of Theorem \ref{strong-coercivity}):
	\ben \label{equivalence-relation} \langle \mathcal{L}^{s, \gamma, \eta} f,f \rangle+ \eta^{\gamma}
	|f|_{L^2_{\gamma/2}}^2 \gtrsim \mathcal{N}^{s, \gamma, \eta}(\mu^{1/2},f) + \mathcal{N}^{s, \gamma, \eta}(f,\mu^{1/2}), \een
	where the functional $\mathcal{N}^{s, \gamma, \eta}$ is defined by 
	\ben  \label{definition-of-N-s-ga-eta}
	\mathcal{N}^{s, \gamma, \eta}(g,h) := \int B^{s, \gamma, \eta}
	g^{2}_{*} (h^{\prime}-h)^{2} \mathrm{d}\sigma 
	\mathrm{d}v \mathrm{d}v_{*}.\een
	If $\eta=0$, then $\psi^{\eta} = 1$ and we write
	$\mathcal{N}^{s,\gamma} = \mathcal{N}^{s,\gamma,0}$. If $\gamma = \eta=0$, we write
	$\mathcal{N}^{s} = \mathcal{N}^{s,0,0}$ for simplicity.
	Thanks to \eqref{equivalence-relation},
	to obtain the coercivity estimate of $\mathcal{L}^{s, \gamma, \eta}$, it suffices to estimate from below the two functionals
	$\mathcal{N}^{s, \gamma, \eta}(\mu^{1/2},f)$  and $ \mathcal{N}^{s, \gamma, \eta}(f,\mu^{1/2})$.


	\subsubsection{Gain of weight from $\mathcal{N}^{s,\gamma,\eta}(f,\mu^{1/2})$} \label{gain-weight} 
	The functional  $\mathcal{N}^{s,\gamma,\eta}(f,\mu^{1/2})$ produces weight $W_{s}$ in the phase space. 
	\begin{prop}\label{lowerboundpart1} Let $0 \leq \eta \leq 1$, then
		\beno
		\mathcal{N}^{s,\gamma,\eta}(f,\mu^{1/2}) + |f|^{2}_{L^{2}_{\gamma/2}} \geq  C|f|^{2}_{L^{2}_{\gamma/2+s}},
		\eeno
		where $C>0$ is a generic constant.
	\end{prop}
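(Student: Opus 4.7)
The plan is to reduce the inequality to the pointwise lower bound
\beno
\Phi^{s,\gamma,\eta}(v_*):=\int_{\R^3\times\SS^2} B^{s,\gamma,\eta}(v-v_*,\sigma)(\mu^{1/2}(v')-\mu^{1/2}(v))^2 \mathrm{d}\sigma \mathrm{d}v \ \gtrsim\ \langle v_*\rangle^{\gamma+2s},
\eeno
valid for $|v_*|\ge R_0$ with $R_0$ an absolute constant; the complementary range $|v_*|\le R_0$ is then absorbed by $|f|^2_{L^2_{\gamma/2}}$ using $\gamma\le 0$ (so $\langle v_*\rangle^\gamma\gtrsim 1$ there while $\langle v_*\rangle^{\gamma+2s}$ is bounded). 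Since $\mathcal{N}^{s,\gamma,\eta}(f,\mu^{1/2})=\int f^2(v_*) \Phi^{s,\gamma,\eta}(v_*)\mathrm{d}v_*$ by Fubini, this is enough.

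For $|v_*|\ge R_0$, I would localize the $v$-integration to $B_1$, on which $\mu^{1/2}(v)\sim 1$, $|v-v_*|\sim|v_*|$, and (once $R_0$ is large) $1-\psi_\eta(|v-v_*|)\equiv 1$, so the $\eta$-cutoff disappears. Restrict the $\sigma$-integral to the grazing cap $\theta\in[0,\theta_\star]$ with $\theta_\star:=1/|v_*|$; then $|v'-v|=|v-v_*|\sin(\theta/2)\lesssim 1$ and a second-order Taylor expansion gives
$(\mu^{1/2}(v')-\mu^{1/2}(v))^2\ge \tfrac12(\nabla\mu^{1/2}(v)\cdot(v'-v))^2-C|v'-v|^4.$
Integrating over $\phi$ at fixed $\theta$ around the axis $(v-v_*)/|v-v_*|$ extracts the transverse component $|\nabla_\perp\mu^{1/2}(v)|^2 |v-v_*|^2\theta^2$ from the main term, and the $\theta$-integral gives
\beno
(1-s)\int_0^{\theta_\star}\theta^{-2-2s}\cdot|v_*|^2\theta^2\cdot\theta \mathrm{d}\theta\ \sim\ \frac{1-s}{2-2s}|v_*|^{2s}=\tfrac12 |v_*|^{2s},
\eeno
uniformly in $s$; the $|v'-v|^4$ remainder contributes at most $\lesssim|v_*|^{2s-2}$, which is lower order. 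Multiplying by $|v-v_*|^\gamma\sim|v_*|^\gamma$ and integrating the transverse gradient over $v\in B_1$ (bounded below by a positive constant via rotational symmetry of $\mu$) yields $\Phi^{s,\gamma,\eta}(v_*)\gtrsim\langle v_*\rangle^{\gamma+2s}$.

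The main obstacle is to preserve the constant as $s\to 1^{-}$, i.e.\ in the grazing limit: this is precisely where the identity $\frac{1-s}{2-2s}=\tfrac12$ intervenes and provides the $s$-uniform normalization, reflecting the Landau fingerprint of the kernel $(1-s)\sin^{-2-2s}(\theta/2)$ tested against a quadratic weight. A secondary care is required in choosing $R_0$ independently of $s,\gamma,\eta$ and in verifying that $\int_{B_1}|\nabla_\perp\mu^{1/2}(v)|^2 \mathrm{d}v$ (with axis approximately $-v_*/|v_*|$) is bounded below by a positive constant; this follows from $\nabla\mu^{1/2}(v)\propto v\mu^{1/2}(v)$ together with the symmetry of $B_1$.
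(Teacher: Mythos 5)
Your proposal follows the same route as the paper: Fubini to reduce to a pointwise lower bound on $\Phi^{s,\gamma,\eta}(v_*)$, localization of $v$ near the origin and of $\sigma$ to a grazing cap of width $\sim 1/|v_*|$ so that the $\eta$-cutoff trivializes, second-order Taylor expansion with the basic inequality $(A+B)^2\ge\tfrac12 A^2-B^2$, extraction of the transverse gradient via the $\phi$-average, and finally integration over the bulk $v$-ball to get a positive constant. That is exactly the structure of the paper's proof (there, $|v|\le 2$, $|v_*|\ge4$, and $\sin(\theta/2)\le\delta|v_*|^{-1}$).

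However, there is a genuine gap in the handling of the second-order remainder, and it traces back to a miscount of powers. On the grazing cap $\theta\lesssim\theta_\star=|v_*|^{-1}$ one has $|v'-v|^4=|v-v_*|^4\sin^4(\theta/2)\sim|v_*|^4\theta^4$, so
\[
(1-s)\int_0^{\theta_\star}\theta^{-2-2s}\,|v_*|^4\theta^4\cdot\theta\,\mathrm{d}\theta
\;\sim\;\frac{1-s}{4-2s}\,|v_*|^4\theta_\star^{4-2s}
\;=\;\frac{1-s}{4-2s}\,|v_*|^{2s},
\]
which is of the \emph{same} order $|v_*|^{2s}$ as the main term $\tfrac12|v_*|^{2s}$, not $\lesssim|v_*|^{2s-2}$ as you claim. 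The coefficient $\tfrac{1-s}{4-2s}$ does vanish as $s\to 1^-$, but it tends to $\tfrac14$ as $s\to 0^+$, so its absorption by the main term is \emph{not} automatic once you multiply by the generic Hessian bound $C\sim\sup|\nabla^2\mu^{1/2}|^2$ and compare with the (fixed, possibly small) lower bound $c_0=\int_{B_1}|\nabla_\perp\mu^{1/2}|^2\,\mathrm{d}v$. The fix is exactly what the paper does: take $\theta_\star=\delta\,|v_*|^{-1}$ with a small generic constant $\delta$ to be chosen at the end. Then the main term carries $\delta^{2-2s}$ and the remainder carries $\delta^{4-2s}$, so their ratio is $\lesssim\delta^2$ uniformly in $s\in(0,1)$, and choosing $\delta$ so that $C_2\delta^2\le C_1/2$ (in the paper's notation) closes the argument regardless of the specific constants. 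Without that free parameter, your $\theta_\star=|v_*|^{-1}$ choice leaves the remainder competing at the same order and the proof does not close.

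Two minor remarks: the Lagrange point $v(\kappa)$ in the Taylor remainder can leave $B_1$, but since $|v'-v|\lesssim 1$ it stays in a fixed compact set so the Hessian bound is uniform (worth saying explicitly); and when you invoke rotational symmetry to bound $\int_{B_1}|\nabla_\perp\mu^{1/2}(v)|^2\,\mathrm{d}v$ from below, note that the axis is $(v-v_*)/|v-v_*|$, which depends on $v$; the paper circumvents this by writing $(1-c_3^2)|v-v_*|^2=(1-(\hat v\cdot\hat v_*)^2)|v_*|^2$ before integrating over $v$, so that the resulting constant is manifestly $v_*$-independent.
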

	\begin{proof}  Let $0<\delta \leq 1$.
		We consider the set $A(\delta) := \{(v_{*},v,\sigma): |v| \leq 2, |v_{*}| \geq 4,  \sin(\theta/2)
		\leq \delta|v_{*}|^{-1}\}$. Since $|v-v_{*}| \geq 2 \geq 4/3$ in the set $A(\delta)$,
		we get
		\ben
		\label{reduce-to-set-A}
		\mathcal{N}^{s,\gamma,\eta}(f,\mu^{1/2}) \geq \int B^{s,\gamma} \mathrm{1}_{A(\delta)} f_{*}^{2}
		((\mu^{1/2})^{\prime}-\mu^{1/2})^{2} \mathrm{d}V.
		\een
		Note that $\nabla \mu^{1/2} = -\frac{\mu^{1/2}}{2} v$ and $\nabla^{2} \mu^{1/2} = \frac{\mu^{1/2}}{4} (-2I+v \otimes v)$. By Taylor expansion \eqref{Taylor1},
	using the basic inequality $(A-B)^{2} \geq A^{2}/2 - B^{2}$, we have
		\beno
		(\mu^{1/2}(v^{\prime}) - \mu^{1/2}(v))^{2} \geq \frac{\mu(v)}{8} |v \cdot (v^{\prime}-v)|^{2} - \int_{0}^{1}  |(\nabla^{2} \mu^{1/2}) (v(\kappa))|^{2}|v^{\prime}-v|^{4} \mathrm{d}\kappa.
		\eeno
		Plugging this into \eqref{reduce-to-set-A}, we get
		\begin{eqnarray}\label{vsmallvstarsmall}
			\mathcal{N}^{s,\gamma,\eta}(f,\mu^{1/2}) &\geq& \frac{1}{8}\int B^{s,\gamma} \mathrm{1}_{A(\delta)} \mu(v)|v \cdot (v^{\prime}-v)|^{2} f_{*}^{2}  \mathrm{d}V
			\\&& -  \int B^{s,\gamma} \mathrm{1}_{A(\delta)} |(\nabla^{2} \mu^{1/2}) (v(\kappa))|^{2}|v^{\prime}-v|^{4} f_{*}^{2}  \mathrm{d}V \mathrm{d}\kappa \nonumber
			\\&:=& \frac{1}{8}\mathcal{I}_{1}^{s,\gamma} (\delta) - \mathcal{I}_{2}^{s,\gamma} (\delta). \nonumber
		\end{eqnarray}
		
		To estimate $\mathcal{I}_{1}^{s,\gamma} (\delta)$, for fixed $v, v_*$, we introduce an orthogonal basis $(h^{1}_{v,v_{*}},h^{2}_{v,v_{*}}, \frac{v-v_{*}}{|v-v_{*}|})$ such that $\mathrm{d}\sigma= \sin\theta \mathrm{d}\theta \mathrm{d}\phi$. Then one has
		\beno
		\frac{v^{\prime}-v}{|v^{\prime}-v|} = \cos\frac{\theta}{2}\cos\phi h^{1}_{v,v_{*}} + \cos\frac{\theta}{2}\sin\phi h^{2}_{v,v_{*}} -\sin\frac{\theta}{2} \frac{v-v_{*}}{|v-v_{*}|},
		\eeno
		and
		\beno
		\frac{v}{|v|} = c_{1} h^{1}_{v,v_{*}} + c_{2} h^{2}_{v,v_{*}} + c_{3} \frac{v-v_{*}}{|v-v_{*}|},
		\eeno
		where $c_{3}=\frac{v}{|v|}\cdot \frac{v-v_{*}}{|v-v_{*}|}$ and $c_{1}, c_{2}$ are  constants independent of $\theta$ and $\phi$. Then we have
		\beno
		\frac{v}{|v|} \cdot \frac{v^{\prime}-v}{|v^{\prime}-v|}  = c_{1}\cos\frac{\theta}{2}\cos\phi + c_{2}\cos\frac{\theta}{2}\sin\phi - c_{3}\sin\frac{\theta}{2}.
		\eeno
		Thus
		\beno
		|\frac{v}{|v|} \cdot \frac{v^{\prime}-v}{|v^{\prime}-v|}|^{2}  &=& c^{2}_{1}\cos^{2}\frac{\theta}{2}\cos^{2}\phi + c^{2}_{2}\cos^{2}\frac{\theta}{2}\sin^{2}\phi + c^{2}_{3}\sin^{2}\frac{\theta}{2}
		\\ && + 2c_{1}c_{2}\cos^{2}\frac{\theta}{2}\cos\phi\sin\phi - 2c_{3}\cos\frac{\theta}{2}\sin\frac{\theta}{2}(c_{1}\cos\phi + c_{2}\sin\phi).
		\eeno
		Since $|v^{\prime}-v| = |v-v_{*}| \sin \frac{\theta}{2}$ and $ \cos^{2}\frac{\theta}{2} \geq \f12$,  by integrating with respect to $\sigma$ and using $b^{s}(\theta) = (1-s) \sin^{-2-2s} \frac{\theta}{2} \mathrm{1}_{0 \leq  \theta \leq  \pi/2}$,
		we have
		\beno
		\int b^{s}(\theta)\mathrm{1}_{A(\delta)}|v \cdot (v^{\prime}-v)|^{2}\mathrm{d}\sigma &=& 4 \int_{0}^{\pi}\int_{0}^{2\pi}b^{s}(\theta) \sin \frac{\theta}{2} \sin\theta \mathrm{1}_{A(\delta)}|v \cdot (v^{\prime}-v)|^{2} \mathrm{d}\phi \mathrm{d}\sin \frac{\theta}{2}
		\\ &\geq& 2 \pi(c^{2}_{1}+c^{2}_{2})|v|^{2}|v-v_{*}|^{2} \int_{0}^{\pi} b^{s}(\theta) \sin^{3} \frac{\theta}{2} \mathrm{1}_{A(\delta)} \mathrm{d}\sin \frac{\theta}{2}
		\\ &\gtrsim&
		\delta^{2-2s}(c^{2}_{1}+c^{2}_{2})|v_{*}|^{2s-2}|v|^{2}|v-v_{*}|^{2}\mathrm{1}_{B(\delta)},
		\eeno
		where $B(\delta) = \{(v_{*},v):  |v_{*}| \geq 4, |v| \leq 2\}$.
		Note that
		\beno (1-c_3^2)|v-v_{*}|^{2}=(1-(\frac{v}{|v|}\cdot\frac{v_{*}}{|v_{*}|})^{2})|v_{*}|^{2}\eeno
		gives
		\beno
		\int b^{s}(\theta)\mathrm{1}_{A(\delta)}|v \cdot (v^{\prime}-v)|^{2}\mathrm{d}\sigma  \gtrsim
		\delta^{2-2s}(1-(\frac{v}{|v|}\cdot\frac{v_{*}}{|v_{*}|})^{2})|v_{*}|^{2s}|v|^{2}\mathrm{1}_{B(\delta)}.
		\eeno
		Plugging this  estimate in the definition of
		$\mathcal{I}_{1}^{s,\gamma} (\delta)$, we get
		\beno
		\mathcal{I}_{1}^{s,\gamma} (\delta) \gtrsim \int \delta^{2-2s}(1-(\frac{v}{|v|}\cdot\frac{v_{*}}{|v_{*}|})^{2})|v_{*}|^{2s}|v|^{2}\mathrm{1}_{B(\delta)} |v-v_{*}|^{\gamma} \mu(v) f_{*}^{2}\mathrm{d}v \mathrm{d}v_{*}.
		\eeno

		Note that in the region $B(\delta)$, one has
		\ben \label{v-vstar-is-vstar-norm}
		\frac{1}{2} | v_* | \leq |v-v_{*}| \leq \frac{3}{2} | v_* |.
		\een
		We then obtain
		\beno
		\mathcal{I}_{1}^{s,\gamma} (\delta) &\gtrsim& \delta^{2-2s}
		\int (1-(\frac{v}{|v|}\cdot\frac{v_{*}}{|v_{*}|})^{2})|v_{*}|^{\gamma+2s} |v|^{2}\mathrm{1}_{B(\delta)} \mu(v) f_{*}^{2}\mathrm{d}v \mathrm{d}v_{*}
		\\&\gtrsim& \delta^{2-2s}
		\int  |v_{*}|^{\gamma+2s}\mathrm{1}_{ |v_{*}| \geq 4}  f_{*}^{2} \mathrm{d}v_{*},
		\eeno
		where we have used the fact  that $\int (1-(\frac{v}{|v|}\cdot\frac{v_{*}}{|v_{*}|})^{2}) |v|^{2} \mu(v) \mathrm{1}_{|v|\leq 2}\mathrm{d}v > 0$ which is independent of $v_{*}$.
		
		We now turn to estimate $\mathcal{I}_{2}^{s,\gamma} (\delta)$.
		By \eqref{v-vstar-is-vstar-norm} and $|v(\kappa)-v_{*}| \leq |v-v_{*}|$, we have
		\beno
		\mathrm{1}_{A(\delta)} \leq \mathrm{1}_{|v_{*}| \geq 4} \mathrm{1}_{\sin(\theta/2) \leq \f{3}{2}\delta|v-v_{*}|^{-1}}.
		\eeno 
		Recalling $B^{s,\gamma} = |v-v_{*}|^{\gamma}b^{s}(\theta), |v^{\prime}-v| = |v-v_{*}|\sin(\theta/2)$, by \eqref{v-vstar-is-vstar-norm}, and by using $|\nabla^{2} \mu^{1/2}| \lesssim \mu^{1/4}$ and
		the change of variable $v \to v(\kappa)$ in Lemma \ref{usual-change},
		we have
		\beno
		\mathcal{I}_{2}^{s,\gamma} (\delta) &=& \int |v-v_{*}|^{\gamma} b^{s}(\theta) \mathrm{1}_{A(\delta)} |(\nabla^{2} \mu^{1/2}) (v(\kappa))|^{2}|v^{\prime}-v|^{4} f_{*}^{2}  \mathrm{d}V \mathrm{d}\kappa
		\\&\lesssim& \int b^{s}(\theta) \sin^4(\theta/2)
		\mathrm{1}_{|v_{*}| \geq 4} \mathrm{1}_{\sin(\theta/2) \leq \f{3}{2}\delta|v-v_{*}|^{-1}} |v-v_{*}|^{4-2s} \mu^{1/2} (v(\kappa)) |v_{*}|^{\gamma+2s} f_{*}^{2}  \mathrm{d}V \mathrm{d}\kappa
		\\&=& \int b^{s}(\theta) \sin^4(\theta/2)
		\mathrm{1}_{|v_{*}| \geq 4} \mathrm{1}_{\sin(\theta/2) \leq \f{3}{2}\delta|v-v_{*}|^{-1}\psi^{-1}_{\kappa}(\theta)} |v-v_{*}|^{4-2s} \psi^{7-2s}_{\kappa}(\theta) \mu^{1/2}(v)|v_{*}|^{\gamma+2s} f_{*}^{2}  \ \mathrm{d}V \mathrm{d}\kappa
		\\&\lesssim& \delta^{4-2s} \int 
		\mathrm{1}_{|v_{*}| \geq 4}   \mu^{1/2}(v)|v_{*}|^{\gamma+2s} f_{*}^{2}  
		\mathrm{d}v \mathrm{d}v_{*} 	\lesssim
		\delta^{4-2s}
		\left(  \int   \mathrm{1}_{|v_{*}| \geq 4}|v_{*}|^{\gamma+2s} f_{*}^{2}  \mathrm{d}v_{*} \right).
		\eeno

		Combining  the estimate on $\mathcal{I}_{1}^{s,\gamma} (\delta)$ and $\mathcal{I}_{2}^{s,\gamma} (\delta)$ gives
		\beno
		\mathcal{N}^{s,\gamma,\eta}(f,\mu^{1/2}) \geq \left(C_{1} -C_{2} \delta^{2}\right) \delta^{2-2s} \int \mathrm{1}_{|v_{*}| \geq 4}|v_{*}|^{\gamma+2s} f_{*}^{2} \mathrm{d}v_{*},
		\eeno
		for some generic constants $C_{1},C_{2}>0$. By choosing $\delta$ such that $C_{2} \delta^{2} = C_{1}/2$, and observing
		$|v_{*}|^{\gamma+2s} \sim \langle v_{*} \rangle^{\gamma+2s}$ for $|v_{*}| \geq 4$,
		we get
		\ben \label{lowerboundvstarsmall}
		\mathcal{N}^{s,\gamma,\eta}(f,\mu^{1/2}) \gtrsim 
		\int \mathrm{1}_{|v_{*}| \geq 4} \langle v_{*} \rangle^{\gamma+2s} f_{*}^{2} \mathrm{d}v_{*} =  (|f|_{L^{2}_{\gamma/2+s}} - \int \mathrm{1}_{|v_{*}| < 4} \langle v_{*} \rangle^{\gamma+2s} f_{*}^{2} \mathrm{d}v_{*}).
		\een
		If $|v_{*}| \leq 4$, then $\langle v_{*} \rangle^{\gamma+2s} \sim \langle v_{*} \rangle^{\gamma}$. Then the proof of the proposition is completed. 
	\end{proof}
	
	In the following, we focus on gain of regularity from  $\mathcal{N}^{s,\gamma,\eta}(\mu^{\f12},f)$. The strategy can be stated as follows.
	\begin{enumerate}
		\item Gain of regularity from $\mathcal{N}^{s}(g,f)$;
		
		\item  Gain of regularity from $\mathcal{N}^{s,0,\eta}(g,f)$ by reducing to  $\mathcal{N}^{s}(g,f)$;
		
		\item  Gain of regularity from $\mathcal{N}^{s,\gamma,\eta}(g,f)$ by reducing  to $\mathcal{N}^{s,0,\eta}(g,f)$.
	\end{enumerate}

	\subsubsection{Gain of  regularity from $\mathcal{N}^{s}(g,f)$.} We derive Sobolev regularity from $\mathcal{N}^{s}(g,f)$ by the following argument used  in \cite{alexandre2000entropy}. For $g\ge0$ with $|g|_{L^1}\ge\delta>0$ and $|g|_{L^1_1}\le \lambda < \infty$, there exists a  constant  $C(\delta, \lambda)>0$ such that
	\ben \label{Sobolev-regularity}
	\int b(\cos\theta)g_*(f'-f)^2\mathrm{d}\sigma \mathrm{d}v_{*}\mathrm{d}v+|f|_{L^2}^2\ge C(\delta, \lambda)|a(D)f|_{L^2}^2, \een
	where $a(\xi):= \int b(\f{\xi}{|\xi|}\cdot \sigma)\min\{ |\xi|^2\sin^2(\theta/2),1\} \mathrm{d}\sigma + 1$. By applying \eqref{Sobolev-regularity} to the angular function $b^{s}$
	and  using
	Lemma \ref{symbol}, we have the following lemma.
	
	\begin{lem}\label{lowerboundpart1-general-g}
		Let $g$ be a function such that $|g|_{L^{2}} \geq \delta >0, |g|_{L^{2}_{1/2}} \leq \lambda < \infty$, then there is a  constant  $C(\delta, \lambda)>0$ such that
		\ben\label{sobolev-regularity-general-g}
		\mathcal{N}^{s}(g,f)+  |f|^{2}_{L^{2}} \geq  C(\delta, \lambda) |f|^{2}_{H^{s}} .
		\een
	\end{lem}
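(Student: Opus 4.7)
The plan is to apply the cited result \eqref{Sobolev-regularity} with the choice of angular kernel $b = b^s$ and with $g$ replaced by $g^2$, and then invoke Lemma \ref{symbol} to identify the resulting Fourier multiplier symbol as bounded below by $\langle \xi \rangle^{2s}$.

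First, I would observe that $\mathcal{N}^s(g,f)$ as defined carries a factor $g_*^2$ (not $g_*$). To bring it into the form required by \eqref{Sobolev-regularity}, set $G := g^2 \geq 0$ and note that the hypotheses $|g|_{L^2} \geq \delta$ and $|g|_{L^2_{1/2}} \leq \lambda$ translate into
\[
|G|_{L^1} = |g|_{L^2}^2 \geq \delta^2, \qquad |G|_{L^1_1} = \int g^2 \langle v \rangle \, \mathrm{d}v \leq |g|_{L^2_{1/2}}^2 \leq \lambda^2.
\]
Thus $G$ satisfies the $L^1$/$L^1_1$ bounds required by \eqref{Sobolev-regularity}, with constants depending only on $\delta$ and $\lambda$.

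Next, applying \eqref{Sobolev-regularity} to the kernel $b^s$ with the nonnegative function $G$ in place of $g$, I get a constant $C(\delta,\lambda) > 0$ such that
\[
\mathcal{N}^s(g,f) + |f|_{L^2}^2 \;\geq\; C(\delta,\lambda)\, |a_s(D) f|_{L^2}^2,
\]
where $a_s(\xi) := \int_{\mathbb{S}^2} b^s\!\left(\tfrac{\xi}{|\xi|}\cdot\sigma\right)\min\{|\xi|^2 \sin^2(\theta/2),1\}\,\mathrm{d}\sigma + 1 = A_s(\xi) + 1$ in the notation of Lemma \ref{symbol}. Since $b^s$ is isotropic in the angular variable, $A_s$ depends only on $|\xi|$ and Lemma \ref{symbol} gives $A_s(\xi) + 1 \gtrsim \langle \xi \rangle^{2s}$, so $|a_s(D) f|_{L^2}^2 \gtrsim |f|_{H^s}^2$, which yields the conclusion.

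There is no significant obstacle here beyond the bookkeeping of squares: the only subtlety is that the functional $\mathcal{N}^s$ in this paper uses $g_*^2$ rather than $g_*$, so one must verify that the $L^1$-type hypothesis of \eqref{Sobolev-regularity} is satisfied by $g^2$ under the stated $L^2$-type hypotheses on $g$; this is immediate. The lower bound on the symbol $A_s(\xi)+1$ by $\langle \xi \rangle^{2s}$ uniformly in $s \in (0,1)$ has already been established in Lemma \ref{symbol}, so no additional work on the $s \to 1^-$ dependence is needed here.
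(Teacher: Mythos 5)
Your proposal is correct and matches the paper's intended (and very terse) justification: apply \eqref{Sobolev-regularity} with the angular kernel $b^s$ and with $g^2$ in place of $g$, then invoke Lemma \ref{symbol} for the lower bound $A_s(\xi)+1 \gtrsim \langle\xi\rangle^{2s}$. You have in fact spelled out the one detail the paper leaves implicit, namely that $|g^2|_{L^1} = |g|_{L^2}^2$ and $|g^2|_{L^1_1} = |g|_{L^2_{1/2}}^2$, so the $L^2$-type hypotheses on $g$ translate into the required $L^1$-type hypotheses on $g^2$.
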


	%

	We now extract
	the anisotropic norm  $|W_{s}((-\Delta_{\mathbb{S}^{2}})^{1/2})f|^{2}_{L^{2}_{\gamma/2}}$ from $\mathcal{N}^{s}(g,f)$ by Bobylev's formula and the upper bound of the radial part.

	\begin{lem}\label{lowerboundpart2-general-g}
		It holds that
		\ben\label{anisotropic-regularity-general-g} 
		\mathcal{N}^{s}(g,f) + |g|^{2}_{L^{2}_{s}}(|W_{s}(D)f|^{2}_{L^{2}}+ |W_{s}f|^{2}_{L^{2}}) \gtrsim  |g|^{2}_{L^{2}} |W_{s}((-\Delta_{\mathbb{S}^{2}})^{1/2})f|^{2}_{L^{2}}.
		\een
	\end{lem}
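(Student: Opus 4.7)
The plan is to pass to the Fourier side via Bobylev's identity and then transport the geometric spherical/radial decomposition \eqref{sphere-radius}, used previously on the physical variable $u$ in the proofs of Lemmas \ref{crosstermsimilar} and \ref{spherical-part}, to the frequency variable $\xi$.

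First I would expand $(f'-f)^2 = ((f')^2-f^2) - 2f(f'-f)$ and apply the cancellation formula (a specialization of Lemma \ref{cancellation-lemma-general-gamma-minus3-mu} to $\gamma=\eta=0$, in which the convolution kernel $S^s$ collapses to a positive constant $c_s$) to obtain
\[ \mathcal{N}^s(g,f) \;=\; c_s|g|^2_{L^2}|f|^2_{L^2} \;-\; 2\langle Q^s(g^2,f),f\rangle. \]
Plancherel combined with the Maxwellian Bobylev identity then rewrites the second term as a singular integral over $\xi,\sigma$; after adding and subtracting $|g|^2_{L^2}|\hat f(\xi)|^2$ under the $\sigma$-integral the angular singularity at $\theta=0$ is regularized by the two differences $|g|^2_{L^2}-\widehat{g^2}(\xi^-)$ and $\hat f(\xi)-\hat f(\xi^+)$, giving
\[ \mathcal{N}^s(g,f) - c_s|g|^2_{L^2}|f|^2_{L^2} = \tfrac{2}{(2\pi)^3}\!\int b^s\Bigl\{\bigl[|g|^2_{L^2}-\mathrm{Re}\,\widehat{g^2}(\xi^-)\bigr]|\hat f(\xi)|^2 + \mathrm{Re}\bigl[\widehat{g^2}(\xi^-)(\hat f(\xi)-\hat f(\xi^+))\overline{\hat f(\xi)}\bigr]\Bigr\}d\sigma\,d\xi. \]

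Next I would apply $\hat f(\xi)-\hat f(\xi^+) = [\hat f(\xi)-\hat f(|\xi|\xi^+/|\xi^+|)] + [\hat f(|\xi|\xi^+/|\xi^+|)-\hat f(\xi^+)]$, the analogue of \eqref{sphere-radius} in frequency. Pairing the spherical piece with the leading $|g|^2_{L^2}$ obtained by splitting $\widehat{g^2}(\xi^-)=|g|^2_{L^2}+[\widehat{g^2}(\xi^-)-|g|^2_{L^2}]$ and using $2\mathrm{Re}(A\overline B)=|A|^2+|B|^2-|A-B|^2$, one gets essentially $|g|^2_{L^2}\int b^s|\hat f(\xi)-\hat f(|\xi|\xi^+/|\xi^+|)|^2 d\sigma d\xi$; Lemma \ref{spherical-part} applied fibrewise on each sphere $|\xi|=r$ bounds this below by $|g|^2_{L^2}\bigl(|W_s((-\Delta_{\mathbb{S}^2})^{1/2})\hat f|^2_{L^2}-|\hat f|^2_{L^2}\bigr)$, and because $W_s((-\Delta_{\mathbb{S}^2})^{1/2})$ commutes with the Fourier transform, Plancherel delivers exactly the right-hand side $|g|^2_{L^2}|W_s((-\Delta_{\mathbb{S}^2})^{1/2})f|^2_{L^2}$, up to a benign $|g|^2_{L^2}|f|^2_{L^2}$ remainder. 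For the radial-part integral, Cauchy--Schwarz together with the uniform bound $|\widehat{g^2}(\xi^-)|\leq|g|^2_{L^2}$ reduces it to $|g|^2_{L^2}$ times the functional $\mathcal{Z}^s(\hat f)$ of Lemma \ref{gammanonzerotozero} applied to $\hat f$, which is controlled by $s^{-1}|g|^2_{L^2}(|W_s(D)\hat f|^2_{L^2}+|W_s\hat f|^2_{L^2})$; by Plancherel this equals $s^{-1}|g|^2_{L^2}(|W_sf|^2_{L^2}+|W_s(D)f|^2_{L^2})$, which is no larger than the error allowed on the left-hand side of \eqref{anisotropic-regularity-general-g}.

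The main obstacle, and the reason the stronger weight $|g|^2_{L^2_s}$ (rather than $|g|^2_{L^2}$) must appear on the left, will be the residual multiplier $|g|^2_{L^2}-\widehat{g^2}(\xi^-) = \int g^2(v)(1-e^{-iv\cdot\xi^-})dv$ appearing both in the first bracket of the Fourier formula above and, after the $\widehat{g^2}(\xi^-)=|g|^2_{L^2}+(\widehat{g^2}(\xi^-)-|g|^2_{L^2})$ split, cross-paired with the spherical and radial pieces in the second bracket. Exploiting $|\xi^-|=|\xi|\sin(\theta/2)$ together with the interpolation $|1-e^{-iv\cdot\xi^-}|\lesssim |v|^s|\xi|^s\sin^s(\theta/2)$ and the $\sigma$-integration formulas of Lemma \ref{symbol}, one trades a factor $\langle\xi\rangle^{2s}$ on $\hat f$ for the $v$-moment $\int|v|^{2s}g^2\,dv = |g|^2_{L^2_s}$. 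Handling the cross-paired residuals carefully, with a small multiplicative constant so that any resulting anisotropic term can be absorbed back into the main spherical contribution, is the delicate step; once it is executed, the inequality \eqref{anisotropic-regularity-general-g} follows.
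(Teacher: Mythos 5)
Your plan is essentially the paper's, only routed through an extra detour: you first deploy the cancellation identity and write $\mathcal{N}^s(g,f)=c_s|g|^2_{L^2}|f|^2_{L^2}-2\langle Q^s(g^2,f),f\rangle$, then apply Bobylev, then add and subtract $|g|^2_{L^2}|\hat f(\xi)|^2$, while the paper applies Bobylev directly to $\mathcal{N}^s$ and arrives at once at a split $\mathcal{N}^s=\frac{|g|^2_{L^2}}{(2\pi)^3}\mathcal{I}_1+\frac{2}{(2\pi)^3}\mathcal{I}_2$ with $\mathcal{I}_1=\int b^s|\hat f(\xi)-\hat f(\xi^+)|^2\,d\sigma\,d\xi$ carrying the clean weight $\widehat{g^2}(0)=|g|^2_{L^2}$ and $\mathcal{I}_2$ carrying the residual $\widehat{g^2}(0)-\widehat{g^2}(\xi^-)$. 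Because the paper confines the residual to $\mathcal{I}_2$ and never pushes the geometric decomposition of $\hat f(\xi)-\hat f(\xi^+)$ into that residual-weighted term — instead bounding $\hat f(\xi^+)\overline{\hat f}(\xi)$ crudely by Cauchy–Schwarz and changing variables — it never produces a ``cross-paired anisotropic term'' and therefore no absorption into the leading spherical contribution is ever required. In your scheme that absorption step appears, you flag it as the ``delicate step,'' and you do not carry it out; it is in fact avoidable entirely if you keep the geometric decomposition attached only to the $|g|^2_{L^2}$-weighted block.

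There is also a concrete technical error in your treatment of the residual multiplier. You write $|1-e^{-iv\cdot\xi^-}|\lesssim|v|^s|\xi|^s\sin^s(\theta/2)$, so the corresponding bound for the real part is $1-\cos(v\cdot\xi^-)\lesssim(|v||\xi|\sin(\theta/2))^{2s}$. Feeding this into the $\sigma$-integral gives
\begin{equation*}
\int_{\mathbb{S}^2} b^s(\theta)\,\sin^{2s}(\theta/2)\,d\sigma \;=\; (1-s)\int_{\mathbb{S}^2}\sin^{-2}(\theta/2)\,\mathrm{1}_{0\le\theta\le\pi/2}\,d\sigma,
\end{equation*}
which diverges logarithmically at $\theta=0$. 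You cannot interpolate the $\min$ away before doing the $\sigma$-integral. What is needed, and what the paper uses, is precisely $1-\cos(v\cdot\xi^-)\lesssim\min\{|v|^2|\xi|^2\sin^2(\theta/2),1\}$ kept intact, so that the $\sigma$-integral becomes the symbol $A_s(|v||\xi|)$ of Lemma \ref{symbol} and yields the finite bound $s^{-1}\langle|v||\xi|\rangle^{2s}$, factorized via $W_s(|v||\xi|)\lesssim W_s(|v|)\,W_s(|\xi|)$ to produce the $|g|^2_{L^2_s}\,|W_s(D)f|^2_{L^2}$ tolerance. Your citation of Lemma \ref{symbol} suggests you had the right tool in mind, but the stated intermediate bound is incompatible with it. One more small point: after applying Lemma \ref{spherical-part} and Lemma \ref{gammanonzerotozero} you should track the uniform $s^{-1}$ factors on both the good spherical lower bound and the bad radial/cross upper bounds, multiply the whole inequality by $s$, and then drop $s\mathcal{N}^s\le\mathcal{N}^s$; as written your claim that $s^{-1}|g|^2_{L^2}(\cdot)$ is ``no larger than the error allowed'' is not literally true, since $s^{-1}$ can be large, though the bookkeeping does close.
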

	\begin{proof}
		By Bobylev's formula, we have
		\beno
		\mathcal{N}^{s}(g,f) &=& \frac{1}{(2\pi)^{3}}\int  b^{s}(\frac{\xi}{|\xi|} \cdot \sigma)(\widehat{g^{2}}(0)|\hat{f}(\xi) - \hat{f}(\xi^{+})|^{2} + 2\Re((\widehat{g^{2}}(0) - \widehat{g^{2}}(\xi^{-}))\hat{f}(\xi^{+})\bar{\hat{f}}(\xi)) \mathrm{d}\sigma \mathrm{d}\xi
		\\ &:=& \frac{|g|^{2}_{L^{2}}}{(2\pi)^{3}}\mathcal{I}_{1} + \frac{2}{(2\pi)^{3}}\mathcal{I}_{2},
		\eeno
		where $\xi^{+} = \frac{\xi+|\xi|\sigma}{2}$ and $\xi^{-} = \frac{\xi-|\xi|\sigma}{2}$.
		Note that $\widehat{g^{2}}(0) - \widehat{g^{2}}(\xi^{-}) = \int (1-\cos(v \cdot \xi^{-}))g^{2}(v) \mathrm{d}v$ and
		$
		1-\cos(v \cdot \xi^{-}) \lesssim \min\{|v|^{2}|\xi|^{2}|\frac{\xi}{|\xi|} - \sigma|^{2},1\} \sim  \min\{|v|^{2}|\xi^{+}|^{2}|\frac{\xi^{+}}{|\xi^{+}|} - \sigma|^{2},1\}.
		$
		By the Cauchy-Schwarz inequality and 
		the change of variable $\xi \rightarrow \xi^{+}$, using
		Lemma \ref{symbol} and the fact that $W_{s}(|v||\xi|) \lesssim W_{s}(|v|)W_{s}(|\xi|)$,
		we have
		\ben \label{upper-I-2-another}
		|\mathcal{I}_{2}| \lesssim s^{-1}\int (W_{s})^{2}(|v||\xi|)|\hat{f}(\xi)|^{2}g^{2}(v)\mathrm{d}v\mathrm{d}\xi
		\lesssim  s^{-1} |W_{s}g|^{2}_{L^{2}} |W_{s}(D)f|^{2}_{L^{2}}.
		\een

		Now we turn to estimate $\mathcal{I}_{1}$. By the geometric decomposition
		\ben \label{geo-deco-frequency-space-another}
		\hat{f}(\xi) - \hat{f}(\xi^{+}) = \hat{f}(\xi) - \hat{f}(|\xi|\frac{\xi^{+}}{|\xi^{+}|})+ \hat{f}(|\xi|\frac{\xi^{+}}{|\xi^{+}|}) - \hat{f}(\xi^{+}),
		\een
		and using $(A+B)^{2} \geq \frac{1}{2}A^{2} - B^{2}$,
		we have $\mathcal{I}_{1} \geq	\frac{1}{2}\mathcal{I}_{1,1} - \mathcal{I}_{1,2}$
		where		
		\beno
		\mathcal{I}_{1,1} :=	 \int b^{s}(\frac{\xi}{|\xi|} \cdot \sigma)|\hat{f}(\xi) - \hat{f}(|\xi|\frac{\xi^{+}}{|\xi^{+}|})|^{2} \mathrm{d}\sigma \mathrm{d}\xi,
		\quad
		\mathcal{I}_{1,2} :=
		\int b^{s}(\frac{\xi}{|\xi|} \cdot \sigma)|\hat{f}(|\xi|\frac{\xi^{+}}{|\xi^{+}|}) - \hat{f}(\xi^{+})|^{2} \mathrm{d}\sigma \mathrm{d}\xi.
		\eeno	 
		By Lemma \ref{spherical-part}, we have
		\ben \label{upper-lower-I-11-another}
		\mathcal{I}_{1,1}+  s^{-1}|f|_{L^2}^2 \sim s^{-1}  |W_{s}((-\Delta_{\mathbb{S}^{2}})^{1/2})f|^{2}_{L^{2}}.
		\een
		By Lemma \ref{gammanonzerotozero},
		\ben \label{upper-I-12-another} \mathcal{I}_{1,2} \lesssim s^{-1} (|W_{s}(D)\hat{f}|^{2}_{L^{2}} +  |W_{s}\hat{f}|^{2}_{L^{2}}) = s^{-1} (|W_{s}(D)f|^{2}_{L^{2}} +  |W_{s}f|^{2}_{L^{2}}).\een
		Combining \eqref{upper-I-2-another}, \eqref{upper-I-12-another} and \eqref{upper-lower-I-11-another} gives \eqref{anisotropic-regularity-general-g}.
	\end{proof}

	\subsubsection{Gain of  regularity from $\mathcal{N}^{s,0,\eta}(g,f)$}
	We first introduce some notations. Recall $\psi_{R}(v):=\psi(v/R)$.
	Let $\psi_{r,u}(v) := \psi_{r}(v-u)$ and $\phi_{R,r,u} := \psi_{14R} -\psi_{4r,u}$ for some $r,R>0$ and $u \in \mathbb{R}^{3}$. The following lemma gives some 
	bound estimates on  $\mathcal{N}^{s,0,\eta}(g,f)$ by $\mathcal{N}^{s}(g,f)$ from below provided the distance between supports of $g$ and $f$ is suitably large.

	\begin{lem}\label{reduce-eta-to-0} For  $0 \leq \eta \leq 1 \leq R$, we have
		\ben \label{eta-to-0-part1}
		\mathcal{N}^{s,0,\eta}(g,f) +  |g|^{2}_{L^{2}}|f|^{2}_{L^{2}}  \gtrsim \mathcal{N}^{s}(\psi_{R}g,(1-\psi_{4R})f).
		\een
		For  $0 \leq \eta \leq r \leq 1 \leq R,  u \in B_{6R}$, we have
		\ben  \label{eta-to-0-part2}
		\mathcal{N}^{s,0,\eta}(g,f) +  r^{-2}R^{2}|g|^{2}_{L^{2}}|f|^{2}_{L^{2}}  \gtrsim \mathcal{N}^{s}(\phi_{R,r,u}g,\psi_{r,u}f).
		\een
	\end{lem}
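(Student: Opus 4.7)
The plan is to exploit the spatial separation between the supports of the two cutoffs on the right-hand side so that the truncation factor $1-\psi_\eta(|v-v_*|)$ can be inserted for free, and then to decompose the increment $(\chi f)'-\chi f$ into a \emph{main} collisional part controlled pointwise by $(f'-f)^2$ and a \emph{commutator} part controlled by a Lipschitz estimate on the cutoff $\chi$.

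For \eqref{eta-to-0-part1}, set $\chi:=1-\psi_{4R}$. If $(\psi_R g)_*$ is nonzero and either $\chi(v)$ or $\chi(v')$ is nonzero, then $|v_*|\leq 4R/3$ while $\max(|v|,|v'|)\geq 3R$; combined with $|v'-v_*|=|v-v_*|\cos(\theta/2)\leq|v-v_*|$, this gives $|v-v_*|\geq 5R/3\geq 5/3>1\geq\eta$, so $(1-\psi_\eta)(|v-v_*|)=1$ on the effective support of the integrand. I would then use the symmetric decomposition
\[
(\chi f)'-\chi f=\tfrac{\chi(v)+\chi(v')}{2}\bigl(f'-f\bigr)+\tfrac{f(v)+f(v')}{2}\bigl(\chi(v')-\chi(v)\bigr),
\]
square via $(a+b)^2\leq 2(a^2+b^2)$, and integrate against $b^s(\psi_R g)_*^2$. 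The first term is dominated by $2\mathcal{N}^{s,0,\eta}(g,f)$ since $\chi,\psi_R\leq 1$ and $(1-\psi_\eta)=1$ on its support. For the commutator term, the collision symmetry $(v,v_*,\sigma)\leftrightarrow(v',v_*',(v-v_*)/|v-v_*|)$ (unit Jacobian, $\theta$ preserved) reduces matters to a symmetrized integrand, and the capped Lipschitz bound $|\chi(v')-\chi(v)|^2\leq \min\bigl(1,R^{-2}|v-v_*|^2\sin^2(\theta/2)\bigr)$ combined with Lemma \ref{symbol} applied to $|\xi|=R^{-1}|v-v_*|$ yields $\int b^s|\chi'-\chi|^2\,\mathrm{d}\sigma\lesssim\min(1,R^{-2}|v-v_*|^2)$. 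Splitting the $(v,v_*)$ integral at $|v-v_*|=R$ absorbs each piece into $|g|_{L^2}^2|f|_{L^2}^2$.

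For \eqref{eta-to-0-part2} the same scheme applies with $\chi:=\psi_{r,u}$. In the joint support one has $|v_*-u|\geq 3r$ from $\phi_{R,r,u}$ and $|v-u|\leq 4r/3$ from $\psi_{r,u}$, so $|v-v_*|\geq 5r/3\geq 5\eta/3>\eta$. Moreover $|v_*|\leq 56R/3$ from $\phi_{R,r,u}$ and $|v|\leq|u|+4r/3\leq 6R+4r/3\lesssim R$ by the assumption $u\in B_{6R}$, giving $|v-v_*|\lesssim R$ throughout. The Lipschitz bound $|\psi_{r,u}(v')-\psi_{r,u}(v)|\lesssim r^{-1}|v-v_*|\sin(\theta/2)$, combined with $\int b^s\sin^2(\theta/2)\,\mathrm{d}\sigma\lesssim 1$ from \eqref{mean-momentum-transfer} and $|v-v_*|^2\lesssim R^2$, directly produces the stated error bound $r^{-2}R^2|g|_{L^2}^2|f|_{L^2}^2$.

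The principal difficulty is the commutator term in \eqref{eta-to-0-part1}, where the localization scales of $g$ and of $\chi$ coincide so that $|v-v_*|$ is not a priori bounded above: one cannot simply use $|\chi'-\chi|\leq 1$ because $b^s\sim\theta^{-2-2s}$ is non-integrable in $\sigma$. The resolution is the capped quadratic Lipschitz estimate together with the two-regime behavior of $A_s$ from Lemma \ref{symbol}, which yields a kernel in $(v,v_*)$ that is uniformly bounded in the small-relative-velocity region and has acceptable decay in the large-relative-velocity region. Everything else is routine bookkeeping.
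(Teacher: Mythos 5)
Your truncation-insertion step and the symmetric decomposition
\[
(\chi f)'-\chi f=\tfrac{\chi(v)+\chi(v')}{2}\bigl(f'-f\bigr)+\tfrac{f(v)+f(v')}{2}\bigl(\chi(v')-\chi(v)\bigr)
\]
are valid, and your treatment of \eqref{eta-to-0-part2} matches the paper's. But there is a genuine gap in your commutator estimate for \eqref{eta-to-0-part1}. You claim that Lemma \ref{symbol} applied to $|\xi|=R^{-1}|v-v_{*}|$ gives
\[
\int b^{s}\,|\chi'-\chi|^{2}\,\mathrm{d}\sigma\ \lesssim\ \min\bigl(1,\,R^{-2}|v-v_{*}|^{2}\bigr).
\]
That bound does not follow. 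Lemma \ref{symbol} gives $A_{s}(\xi)\lesssim \mathrm{1}_{|\xi|\leq\sqrt2}|\xi|^{2}+\mathrm{1}_{|\xi|>\sqrt2}\,s^{-1}|\xi|^{2s}$; the right-hand side is \emph{not} capped at a constant but grows like $s^{-1}|\xi|^{2s}$ for $|\xi|\to\infty$. Consequently, on the region $|v-v_{*}|>R$ the quantity you would need to integrate, namely $(\psi_{R}g)_{*}^{2}\,\tfrac{(f+f')^{2}}{4}\,A_{s}(R^{-1}(v-v_{*}))$, involves an unbounded weight $\sim s^{-1}R^{-2s}|v-v_{*}|^{2s}$ against functions controlled only in $L^{2}$, and cannot be absorbed into $|g|_{L^{2}}^{2}|f|_{L^{2}}^{2}$. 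Splitting the $(v,v_{*})$ integral at $|v-v_{*}|=R$ therefore does not close the argument.

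The missing ingredient — which the paper supplies explicitly — is that the commutator term has compact support in $|v-v_{*}|$, so the large-$|v-v_{*}|$ regime never occurs. Since $\psi_{4R}$ is supported in $|v|\leq\tfrac{16}{3}R$, the difference $\psi_{4R}(v')-\psi_{4R}(v)$ is nonzero only if at least one of $|v|,|v'|$ lies in the transition annulus; the paper shows that if $|v_{*}|\leq 2R$ and $|v|\geq 20R$ then $|v'|\geq 6R$ so $\psi_{4R}(v')=\psi_{4R}(v)=0$, hence $(\psi_{R})_{*}^{2}(\psi_{4R}'-\psi_{4R})^{2}\leq \mathrm{1}_{|v|\leq 20R,\,|v_{*}|\leq 2R}\,R^{-2}|\nabla\psi|_{L^{\infty}}^{2}|v-v_{*}|^{2}\sin^{2}(\theta/2)\lesssim\sin^{2}(\theta/2)$. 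After the change of variable $v\to v'$ and \eqref{mean-momentum-transfer}, this is bounded by $|g|_{L^{2}}^{2}|f|_{L^{2}}^{2}$. You correctly invoked this type of support constraint for \eqref{eta-to-0-part2} (noting $|v-v_{*}|\lesssim R$ there), but not for \eqref{eta-to-0-part1}; without it the kernel bound you wrote is false and the estimate fails.

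Apart from this, your decomposition is a legitimate alternative to the paper's one-sided identity $(f'-f)\chi=(\chi f)'-\chi f-f'(\chi'-\chi)$ with $(a-b)^{2}\geq\tfrac12 a^{2}-b^{2}$; neither buys a structural advantage once the support observation is in place.
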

	\begin{proof} We proceed in the spirit of \cite{he2014well}. Note that
		$\psi_{R}$ is supported in $|v| \leq \f{4}{3}R$ and equals to $1$ in $|v| \leq \f{3}{4}R$. 
		$1- \psi_{R}$ is supported in $|v| \geq \f{3}{4}R$ and equals to $1$ in $|v| \geq \f{4}{3}R$. 
		If $|v_{*}| \leq \f{4}{3}R$ and $|v| \geq 3R$, then $|v-v_{*}| \geq \f{5}{3} R \geq \f{4}{3} \eta$, which gives $\psi_{R}(v_*) (1-\psi_{4R}(v)) \leq \textrm{1}_{|v-v_{*}|\geq 4\eta/3}$.
		Hence,
		\beno  \mathcal{N}^{s,0,\eta}(g,f) &\geq& \int b^{s}(\theta) \textrm{1}_{|v-v_{*}|\geq 4\eta/3} g^{2}_{*} (f^{\prime}-f)^{2} \mathrm{d}V
		\\
		&\geq& \int b^{s}(\theta)  (\psi_{R}g)^{2}_{*} (f^{\prime}-f)^{2} (1-\psi_{4R})^{2} \mathrm{d}V
		\\&\geq& \frac{1}{2} \int b^{s}(\theta)  (\psi_{R}g)^{2}_{*}  (((1-\psi_{4R})f)^{\prime}-(1-\psi_{4R})f)^{2} \mathrm{d}V
		\\&&-\int b^{s}(\theta)   (\psi_{R}g)^{2}_{*} (f^{\prime})^{2} (\psi_{4R}^{\prime}-\psi_{4R})^{2} \mathrm{d}V
		:= \frac{1}{2}  \mathcal{I}_{1} - \mathcal{I}_{2}.
		\eeno
		Observe that $\mathcal{I}_{1}  = \mathcal{N}^{s}(\psi_{R}g,(1-\psi_{4R})f)$. Since $|\nabla \psi_{4R}|_{L^{\infty}} \lesssim R^{-1} |\nabla \psi|_{L^{\infty}} \lesssim R^{-1}$, we get $(\psi_{R})^{2}_{*}(\psi_{4R}^{\prime}-\psi_{4R})^{2} \lesssim R^{-2}|v^{\prime}-v|^{2} = R^{-2}|v-v_{*}|^{2}\sin^{2}(\theta/2)$. If $|v_{*}| \leq \f{4}{3}R \leq 2R, |v|\geq 20R, 0 \leq \theta \leq \pi/2$, we have
		\beno
		|v^{\prime}-v_{*}|=\cos(\theta/2)|v-v_{*}|\geq \cos(\theta/2)(|v|-|v_{*}|) \geq  9 \sqrt{2}R.
		\eeno
		Then we have
		$ |v^{\prime}| \geq |v^{\prime}-v_{*}| -|v_{*}| \geq 9 \sqrt{2}R -2R \geq 6R,
		$
		which gives $\psi_{4R}(v^{\prime}) =0 = \psi_{4R}(v)$. Since $\theta \leq \pi/2$, we have
		\beno
		(\psi_{R})^{2}_{*}(\psi_{4R}^{\prime}-\psi_{4R})^{2} \leq \mathrm{1}_{|v|\leq 20R, |v_{*}|\leq 2R} R^{-2}|\nabla \psi|^{2}_{L^{\infty}}|v-v_{*}|^{2}\sin^{2}(\theta/2) \lesssim \sin^{2}(\theta/2).
		\eeno
		By the change of variable $v \to v^{\prime}$ and using \eqref{mean-momentum-transfer}, we get
		\beno  \mathcal{I}_{2} \lesssim  \int g^{2}_{*}  f^{2} \mathrm{d}v \mathrm{d}v_{*}\lesssim  |g|^{2}_{L^{2}}|f|^{2}_{L^{2}}.\eeno
		This together with the fact that $\mathcal{I}_{1}  = \mathcal{N}^{s}(\psi_{R}g,(1-\psi_{4R})f)$ give \eqref{eta-to-0-part1}.

		If $v \in \mathrm{supp} \psi_{r,u}, v_{*} \in \mathrm{supp}\phi_{R,r,u}$, we claim $|v-v_{*}| \geq r \geq \eta$.
		In fact, if $v \in \mathrm{supp} \psi_{r,u}$, then $|v-u| \leq \f{4}{3} r$.  If $|v_*-u| \leq 3r = \f{3}{4} \times 4r$, then 
		$\psi_{4r,u}(v_*) = 1$. Moreover, 
		$|v_*| \leq |u|+ |u-v_{*}| \leq 6R + 3r \leq 9R \leq \f{3}{4} \times 14R$, then $\psi_{14R}(v_*) = 1$. As a result, $\phi_{R,r,u}(v_*) = 0$. From this if 
		$v_{*} \in \mathrm{supp}\phi_{R,r,u}$, then
		$|v_*-u| \geq 3r$. Therefore,
		if $v \in \mathrm{supp} \psi_{r,u}, v_{*} \in \mathrm{supp}\phi_{R,r,u}$, then $|v-v_{*}| \geq 
		|v_{*}-u| - |v-u| \geq 
		\f{5}{3}  r \geq \f{4}{3} \eta$. Thus, 
		\beno  \mathcal{N}^{s,0,\eta}(g,f) &=& \int b^{s}(\theta) \textrm{1}_{|v-v_{*}|\geq 4 \eta/3} g^{2}_{*} (f^{\prime}-f)^{2} \mathrm{d}V
		\\&\geq& \int b^{s}(\theta)   (\phi_{R,r,u}g)^{2}_{*} (f^{\prime}-f)^{2}  \psi^{2}_{r,u} \mathrm{d}V
		\\&\geq& \frac{1}{2} \int b^{s}(\theta)  (\phi_{R,r,u}g)^{2}_{*}  ((\psi_{r,u}f)^{\prime}-\psi_{r,u}f)^{2} \mathrm{d}V
		\\&&-\int b^{s}(\theta)   (\phi_{R,r,u}g)^{2}_{*} (f^{\prime})^{2} (\psi_{r,u}^{\prime}-\psi_{r,u})^{2} \mathrm{d}V
		:= \frac{1}{2}\mathcal{J}_{1} - \mathcal{J}_{2}.
		\eeno
		Observe that $\mathcal{J}_{1}  =  \mathcal{N}^{s}(\phi_{R,r,u}g,\psi_{r,u}f)$.
		Since $|\nabla \psi_{r,u} (v)| \lesssim r^{-1} |\nabla \psi|_{L^{\infty}}\mathrm{1}_{3r/4 \leq |v-u|\leq 4r/3}$, by Taylor expansion, we get \beno
		|\psi_{r,u}^{\prime}-\psi_{r,u}|^{2} = |\int_{0}^{1} \nabla \psi_{r,u} \left(v(\kappa)\right)\cdot(v^{\prime}-v) \mathrm{d}\kappa|^{2} \lesssim r^{-2}|v-v_{*}|^{2}\sin^{2}(\theta/2) \int_{0}^{1} \mathrm{1}_{3r/4 \leq |v(\kappa)-u|\leq 4r/3}
		\mathrm{d}\kappa.\eeno
		For $u \in B_{6R}, |v_{*}| \leq 20R, 3r/4 \leq |v(\kappa)-u|\leq 4r/3$, we have
		\beno|v-v_{*}| \leq \sqrt{2}|v(\kappa)-v_{*}| \leq \sqrt{2}|v(\kappa)-u| +\sqrt{2}|u-v_{*}| \leq 4\sqrt{2}r/3+\sqrt{2}(6R+20R) \leq 28\sqrt{2}R,\eeno
		and 
		\beno (\phi_{R,r,u})^{2}_{*}(\psi_{r,u}^{\prime}-\psi_{r,u})^{2} \lesssim r^{-2}R^{2}\sin^{2}(\theta/2).\eeno
		By the change of variable $v \rightarrow v^{\prime}$ and \eqref{mean-momentum-transfer}, we get
		\beno  \mathcal{J}_{2} \lesssim  r^{-2}R^{2} \int g^{2}_{*}  f^{2} \mathrm{d}v \mathrm{d}v_{*} \lesssim  r^{-2}R^{2}|g|^{2}_{L^{2}}|f|^{2}_{L^{2}}.\eeno
		This together with the fact that $\mathcal{J}_{1}  = \mathcal{N}^{s}(\phi_{R,r,u}g,\psi_{r,u}f)$ give \eqref{eta-to-0-part2}.
	\end{proof}

	\subsubsection{Gain of  regularity from $\mathcal{N}^{s,\gamma,\eta}(\mu^{1/2},f)$}

	We first establish a relation between $	\mathcal{N}^{s,\gamma,\eta}$ and $\mathcal{N}^{s,0,\eta}$.
	
	\begin{lem}\label{reduce-gamma-to-0-no-sigularity} For  $\gamma \leq 0 \leq \eta$, one has
		\beno 
		\mathcal{N}^{s,\gamma,\eta}(g,f)
		+ |g|^{2}_{L^{2}_{|\gamma/2|+1}}|f|^{2}_{L^{2}_{\gamma/2}} \gtrsim  \mathcal{N}^{s,0,\eta}(W_{\gamma/2}g,W_{\gamma/2}f).
		\eeno
	\end{lem}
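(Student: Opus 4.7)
Set $h := W_{\gamma/2}f$ so that $W_{\gamma/2}(v)f(v)\cdot$ differences become $h'-h$; then
\[
\mathcal{N}^{s,0,\eta}(W_{\gamma/2}g, W_{\gamma/2}f) \;=\; \int b^{s}(\theta)\,\psi^{\eta}(|v-v_*|)\,W_{\gamma/2}^{2}(v_*)\,g_*^{2}\,(h'-h)^{2}\,\mathrm{d}V .
\]
The key decomposition is
\[
h'-h \;=\; W_{\gamma/2}(v')\bigl(f'-f\bigr) \;+\; \bigl(W_{\gamma/2}(v')-W_{\gamma/2}(v)\bigr)f,
\]
from which $(a+b)^{2}\le 2a^{2}+2b^{2}$ splits the integral into a ``kinetic'' piece and a ``weight-difference'' piece.

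For the kinetic piece, the relevant weight combination is $W_{\gamma/2}^{2}(v_*)W_{\gamma/2}^{2}(v')$. Since $\langle v_*\rangle\langle v'\rangle\gtrsim \langle v'-v_*\rangle$ and $\gamma\le 0$, one has $W_{\gamma/2}^{2}(v_*)W_{\gamma/2}^{2}(v') \le \langle v'-v_*\rangle^{\gamma}\le |v'-v_*|^{\gamma}$. Because $|v'-v_*|=|v-v_*|\cos(\theta/2)\ge |v-v_*|/\sqrt{2}$ for $\theta\in[0,\pi/2]$, and $\gamma\le 0$, we get $|v'-v_*|^{\gamma}\lesssim |v-v_*|^{\gamma}$. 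This piece is therefore bounded by $C\,\mathcal{N}^{s,\gamma,\eta}(g,f)$, giving the first term on the left-hand side.

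For the weight-difference piece $\int b^{s}\psi^{\eta} W_{\gamma/2}^{2}(v_*) g_*^{2}\,f^{2}\,(W_{\gamma/2}(v')-W_{\gamma/2}(v))^{2}\,\mathrm{d}V$, I apply Taylor expansion $W_{\gamma/2}(v')-W_{\gamma/2}(v)=\int_0^1\nabla W_{\gamma/2}(v(\kappa))\cdot(v'-v)\,\mathrm{d}\kappa$ with $|\nabla W_{\gamma/2}(w)|\lesssim \langle w\rangle^{\gamma/2-1}$ and $|v'-v|=|v-v_*|\sin(\theta/2)$. Integrating $\sigma$ using the uniform bound $\int b^{s}(\theta)\sin^{2}(\theta/2)\,\mathrm{d}\sigma\lesssim 1$ from \eqref{mean-momentum-transfer} reduces matters to
\[
\int \psi^{\eta}(|v-v_*|)\,W_{\gamma/2}^{2}(v_*)\,g_*^{2}\,|v-v_*|^{2}\Bigl(\sup_{\kappa\in[0,1]}\langle v(\kappa)\rangle^{\gamma-2}\Bigr)\,f^{2}\,\mathrm{d}v\,\mathrm{d}v_*.
\]

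The last step is a pointwise distribution of weights. Using $|v-v_*|\lesssim\langle v\rangle\langle v_*\rangle$ together with the sharper control $\sup_{\kappa}\langle v(\kappa)\rangle^{\gamma-2}|v-v_*|^{2}\lesssim \langle v_*\rangle^{2-\gamma}\langle v\rangle^{\gamma}$ (combined with $\gamma+(2-\gamma)=2$), the integrand collapses to $\langle v_*\rangle^{-\gamma+2}g_*^{2}\cdot\langle v\rangle^{\gamma}f^{2}$, which integrates precisely to $|g|^{2}_{L^{2}_{|\gamma/2|+1}}|f|^{2}_{L^{2}_{\gamma/2}}$ since $|\gamma/2|+1=-\gamma/2+1$ for $\gamma\le 0$.

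The main obstacle is this last weight redistribution in the region $|v|\gg|v_*|$, where the naive bound $|v-v_*|^{2}\lesssim \langle v\rangle^{2}+\langle v_*\rangle^{2}$ would only yield $|f|^{2}_{L^{2}_{1}}$ rather than $|f|^{2}_{L^{2}_{\gamma/2}}$. Resolving this requires exploiting the decay $\langle v(\kappa)\rangle^{\gamma-2}$ of the derivative of $W_{\gamma/2}$ so as to transfer an effective factor $\langle v\rangle^{\gamma}$ onto $f$; in the complementary regime $|v|\lesssim\langle v_*\rangle$, one simply dominates $\langle v_*\rangle^{\gamma}\lesssim\langle v\rangle^{\gamma}$ directly. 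This is precisely the balance that produces the stated weight exponents.
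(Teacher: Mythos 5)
Your proof is correct and takes essentially the same commutator decomposition as the paper, but runs the inequality in the opposite direction. The paper writes $f=W_{-\gamma/2}F$ with $F=W_{\gamma/2}f$, starts from $\mathcal N^{s,\gamma,\eta}(g,f)$, uses $(A+B)^2\ge\frac12 A^2-B^2$ to extract a lower bound, converts the $|v-v_*|^\gamma$ weight to $\langle v_*\rangle^\gamma$ via $\langle v_*\rangle^\gamma\lesssim\langle v-v_*\rangle^\gamma\langle v'\rangle^{-\gamma}$ on the main term, and controls the remainder $\mathcal I_2$ by the clean Peetre chain $\langle v-v_*\rangle^\gamma|v-v_*|^2\langle v(\kappa)\rangle^{-\gamma-2}\lesssim\langle v(\kappa)-v_*\rangle^{\gamma+2}\langle v(\kappa)\rangle^{-\gamma-2}\lesssim\langle v_*\rangle^{|\gamma|+2}$. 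You instead set $h=W_{\gamma/2}f$, start from the target $\mathcal N^{s,0,\eta}(W_{\gamma/2}g,W_{\gamma/2}f)$, use $(a+b)^2\le 2a^2+2b^2$, and convert $\langle v_*\rangle^\gamma\langle v'\rangle^\gamma$ down to $|v-v_*|^\gamma$. This is logically equivalent and equally valid.

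The one place your argument is heavier than the paper's is the weight-redistribution for the error term: the paper's Peetre chain above is entirely algebraic and requires no splitting, while your inequality $\sup_\kappa\langle v(\kappa)\rangle^{\gamma-2}|v-v_*|^2\lesssim\langle v_*\rangle^{2-\gamma}\langle v\rangle^\gamma$ (multiplied by $W_{\gamma/2}^2(v_*)=\langle v_*\rangle^\gamma\le 1$) genuinely needs a two-regime argument. In the regime $|v|\gg|v_*|$ you need $\langle v(\kappa)\rangle\gtrsim\langle v\rangle$, which is not immediate from the segment structure of $v(\kappa)$ alone; it follows from the kinetic-energy comparability $(1-\frac{\sqrt2}2)(|v|^2+|v_*|^2)\le|v(\kappa)|^2+|v_*|^2$ recorded just before Proposition \ref{less-eta-part-l2}, which for $|v|\ge C|v_*|$ with $C$ large enough gives $|v(\kappa)|^2\gtrsim|v|^2$ uniformly in $\kappa$ and $\sigma$. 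You gesture at this ("exploiting the decay ... to transfer an effective factor $\langle v\rangle^\gamma$ onto $f$") without citing the energy bound explicitly, so this step deserves to be made precise. Also, strictly speaking the $\sup$ should be over both $\kappa\in[0,1]$ and $\sigma\in\mathbb S^2$ since $v(\kappa)$ depends on $\sigma$ through $v'$; the bound is in fact uniform in $\sigma$, so nothing breaks, but the notation hides this. With those two clarifications, your proof is a correct alternative to the paper's.
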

	\begin{proof} Set $F= W_{\gamma/2}f$. If $\gamma \leq  0$, then $|v-v_{*}|^{\gamma} \geq \langle v-v_{*} \rangle^{\gamma}$, and thus 
		\beno 	 \mathcal{N}^{s,\gamma,\eta}(g,f) \geq  \int b^{s}(\theta) \psi^{\eta}(|v-v_{*}|)\langle v-v_{*} \rangle^{\gamma}  g^{2}_{*} ((W_{-\gamma/2}F)^{\prime}-W_{-\gamma/2}F)^{2} \mathrm{d}V.
		\eeno
		We apply the following decomposition
		\beno
		(W_{-\gamma/2}F)^{\prime}-W_{-\gamma/2}F = (W_{-\gamma/2})^{\prime} (F^{\prime}-F) + F(W_{-\gamma/2}^{\prime}-W_{-\gamma/2}) :=  A + B.
		\eeno
		Using $(A+B)^{2} \geq \frac{1}{2}A^{2} - B^{2}$, we get $\mathcal{N}^{s,\gamma,\eta}(g,f) \geq  \frac{1}{2}\mathcal{I}_{1}- \mathcal{I}_{2}$,
		where
		\beno
		\mathcal{I}_{1} &:=& \int b^{s}(\theta) \psi^{\eta}(|v-v_{*}|)\langle v-v_{*} \rangle^{\gamma}  g^{2}_{*} W_{-\gamma}^{\prime} (F^{\prime}-F)^{2} \mathrm{d}V,
		\\
		\mathcal{I}_{2} &:=& \int b^{s}(\theta) \psi^{\eta}(|v-v_{*}|)\langle v-v_{*} \rangle^{\gamma}  g^{2}_{*}F^{2} (W_{-\gamma/2}^{\prime}-W_{-\gamma/2})^{2} \mathrm{d}V.
		\eeno
		Since $ \langle v_{*} \rangle^{\gamma} \lesssim \langle v_{*}-v^{\prime} \rangle^{\gamma} \langle v^{\prime} \rangle^{-\gamma} \sim \langle v_{*}-v \rangle^{\gamma} \langle v^{\prime} \rangle^{-\gamma}$,
		we get $ \mathcal{I}_{1}  \gtrsim  \mathcal{N}^{s,0,\eta}(W_{\gamma/2}g,W_{\gamma/2}f)$.
		Taylor expansion implies that 
		\beno (W_{-\gamma/2}^{\prime}-W_{-\gamma/2})^{2} \lesssim |v-v_{*}|^{2}\sin^{2}(\theta/2)
		\int  \langle v(\kappa) \rangle^{-\gamma-2}
		\mathrm{d}\kappa.
		\eeno
		Note that
		\beno\langle v-v_{*} \rangle^{\gamma} |v-v_{*}|^{2} \langle v(\kappa) \rangle^{-\gamma-2}   \lesssim 
		\langle v(\kappa)-v_{*} \rangle^{\gamma+2}  v(\kappa) \rangle^{-\gamma-2}  \lesssim \langle v_{*} \rangle^{|\gamma|+2}.\eeno	
		By the above estimate and \eqref{mean-momentum-transfer},
		we get
		\beno  \mathcal{I}_{2} \lesssim   \int g^{2}_{*} \langle v_{*} \rangle^{|\gamma|+2} F^{2} \mathrm{d}v \mathrm{d}v_{*} \lesssim |g|^{2}_{L^{2}_{|\gamma/2|+1}}|F|^{2}_{L^{2}}.\eeno
		Combining the estimates on $\mathcal{I}_{1}$ and $\mathcal{I}_{2}$
		completes the proof of the lemma.
	\end{proof}

	We are now ready to derive gain of regularity from $\mathcal{N}^{s,\gamma,\eta}(\mu^{1/2},f)$.

	\begin{lem}\label{lowerboundpart1-gamma-eta} For $-5 \leq \gamma \leq 0 \leq \eta \leq  \eta_{1} := \f{9}{16} (2\pi)^{1/2} (\f{1}{4}W_{-5}(3/4)\mu(3/4))^{1/3} $, it holds that
		\ben \label{full-regularity-gamma-eta} \mathcal{N}^{s,\gamma,\eta}(\mu^{1/2},f) + |W_{s} W_{\gamma/2}f|_{L^2}^2\gtrsim |W_{s}((-\Delta_{\mathbb{S}^{2}})^{1/2})W_{\gamma/2}f|^{2}_{L^{2}} +  |W_{s}(D)W_{\gamma/2}f|_{L^2}^2. \een
	\end{lem}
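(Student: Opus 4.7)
Following the three-step strategy outlined just before the lemma, I would reduce $\mathcal{N}^{s,\gamma,\eta}(\mu^{1/2}, f)$ first to a case without the relative-velocity weight $|v-v_*|^\gamma$, and then to the case without the cutoff $\psi^\eta$. The first step is furnished directly by Lemma \ref{reduce-gamma-to-0-no-sigularity}: with $F := W_{\gamma/2} f$, we obtain
$$\mathcal{N}^{s,\gamma,\eta}(\mu^{1/2}, f) + C|W_s W_{\gamma/2} f|_{L^2}^2 \gtrsim \mathcal{N}^{s,0,\eta}\bigl(W_{\gamma/2}\mu^{1/2}, F\bigr),$$
so it suffices to extract the desired anisotropic regularity of $F$ from the right-hand side, modulo a correction absorbed into $|W_s W_{\gamma/2} f|_{L^2}^2$.

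\textbf{Localization in $v$.} To remove the cutoff $\psi^\eta$, I partition the velocity space into a large-velocity piece and a bounded piece. Fix $R \geq 1$ large and $r \in (0,1]$ small (with $r \sim \eta$) to be chosen. For the outer region I apply \eqref{eta-to-0-part1} with $g = W_{\gamma/2}\mu^{1/2}$, producing the term $\mathcal{N}^s(\psi_R W_{\gamma/2}\mu^{1/2}, (1-\psi_{4R}) F)$. For the inner region I cover $\overline{B_{4R}}$ by finitely many centers $\{u_i\}_{i=1}^N \subset \overline{B_{6R}}$ so that $\{\psi_{r,u_i}\}$ are the building blocks of a smooth partition of unity on $\overline{B_{4R}}$, and apply \eqref{eta-to-0-part2} at each $u_i$, producing the terms $\mathcal{N}^s(\phi_{R,r,u_i} W_{\gamma/2}\mu^{1/2}, \psi_{r,u_i} F)$. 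The threshold $\eta_1$ is the point of this whole construction: it is chosen so that even after deleting the bump of radius $\lesssim \eta \leq \eta_1$ from $\psi_{14R}$, the $L^2$ mass of $\phi_{R,r,u_i} W_{\gamma/2}\mu^{1/2}$ remains bounded below by a universal positive $\delta_0$, uniformly in $u_i$. The explicit formula for $\eta_1$ comes from the pointwise inequality $W_{\gamma/2}\mu^{1/2} \geq W_{-5}^{1/2}(3/4)\,\mu^{1/2}(3/4)$ on the shell $\{|v| \leq 3/4\}$ combined with a crude volume comparison.

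\textbf{Extracting regularity from $\mathcal{N}^s$ and reassembly.} On each localized piece I invoke Lemma \ref{lowerboundpart1-general-g} to recover $|\cdot|_{H^s}^2$ and Lemma \ref{lowerboundpart2-general-g} to recover the anisotropic term $|W_s((-\Delta_{\mathbb{S}^2})^{1/2}) \cdot|_{L^2}^2$, both with constants controlled by $\delta_0$ and $|W_{\gamma/2}\mu^{1/2}|_{L^2_{1/2}}$. Summing over $i$, adding the outer piece, and using that $\sum_i \psi_{r,u_i}^2 + (1-\psi_{4R})^2$ dominates a constant on $\mathbb{R}^3$, I obtain a lower bound proportional to $|F|_{H^s}^2$ and to the full anisotropic seminorm of $F$. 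The key reassembly is to commute $W_s(D)$ and $W_s((-\Delta_{\mathbb{S}^2})^{1/2})$ past the smooth cutoffs $\psi_{r,u_i}$ and $1-\psi_{4R}$ via Lemma \ref{operatorcommutator1}; the resulting commutators have order $s-1$ and produce only $|W_s W_{\gamma/2}f|_{L^2}^2$-type errors that absorb into the left-hand side of the claimed inequality.

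\textbf{Main obstacle.} The delicate point is the anisotropic reassembly: unlike $|\cdot|_{H^s}$, the seminorm $|W_s((-\Delta_{\mathbb{S}^2})^{1/2})\cdot|_{L^2}$ is not localized in phase space, since the spherical Laplacian acts tangentially to spheres centered at the origin and interacts nontrivially with translates $\psi_{r,u_i}$ for $u_i \neq 0$. I will have to argue that the mixed terms produced by commuting $W_s((-\Delta_{\mathbb{S}^2})^{1/2})$ with $\psi_{r,u_i}$ are of order $\langle v\rangle^{-1}$ in phase space and so contribute only to the lower-order correction; this is where the bookkeeping, together with keeping constants uniform in $R,r$ so that the prescribed threshold $\eta_1$ really closes the estimate, is the most technical part of the proof.
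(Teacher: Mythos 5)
Your proposal follows essentially the same route as the paper: reduce $\gamma \to 0$ via Lemma~\ref{reduce-gamma-to-0-no-sigularity}, remove the cutoff $\psi^\eta$ via the two parts of Lemma~\ref{reduce-eta-to-0} on an outer annulus and a finite covering of a ball, invoke Lemmas~\ref{lowerboundpart1-general-g} and~\ref{lowerboundpart2-general-g} on each localized piece, and reassemble. Two small points to tighten. First, after passing to $\mathcal{N}^{s,0,\eta}(W_{\gamma/2}\mu^{1/2},F)$ you should, as the paper does, replace $W_{\gamma/2}\mu^{1/2}$ by the smaller $W_{-5/2}\mu^{1/2}$ so that the lower bound $\delta_*$ on $|\phi_{R,r,u}\cdot|_{L^2}$ is genuinely uniform in $\gamma\in[-5,0]$; your phrasing near the explicit formula for $\eta_1$ shows you are aware of the worst case $\gamma=-5$, but the reduction needs to happen \emph{before} applying the lower-bound lemmas, not just in the final arithmetic. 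Second, the parenthetical ``$r\sim\eta$'' is the wrong scaling and would spoil the estimate: the error term in \eqref{eta-to-0-part2} carries a factor $r^{-2}R^2$, so $r$ must be a \emph{fixed} universal radius (indeed the paper takes $R=1$ and $r=\eta_1$); the constraint then becomes $\eta\le r=\eta_1$, whereas $r$ proportional to $\eta$ would blow up the covering number and the $r^{-2}$ factor as $\eta\to 0$, which is allowed in the lemma. Your flagged obstacle about commuting $W_s((-\Delta_{\mathbb{S}^2})^{1/2})$ past off-center cutoffs is the same delicate point the paper handles tersely (``By applying the similar argument''), and the correct resolution is exactly what the intermediate estimate \eqref{anisotropic-regularity-gamma-eta} records: the commutator errors are absorbed into the already-proved Sobolev piece $|W_s(D)W_{\gamma/2}f|_{L^2}^2$ together with $|W_sW_{\gamma/2}f|_{L^2}^2$, which is why those terms are carried on the left before the final recombination.
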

	\begin{proof}  By Lemma \ref{reduce-gamma-to-0-no-sigularity}, we have
		\ben
		\label{reduce-gamma-to-0}
		\mathcal{N}^{s,\gamma,\eta}(g,f) +  |g|^{2}_{L^{2}_{|\gamma/2|+1}}|f|^{2}_{L^{2}_{\gamma/2}}  \geq  C\mathcal{N}^{s,0,\eta}(W_{\gamma/2}g,W_{\gamma/2}f),
		\een
		where $C$ is a generic constant.
		Taking $g=\mu^{1/2}$ in 
		\eqref{reduce-gamma-to-0}, we have
		\beno
		\mathcal{N}^{s,\gamma,\eta}(\mu^{1/2},f) +|f|^{2}_{L^{2}_{\gamma/2}}  \gtrsim  \mathcal{N}^{s,0,\eta}(W_{\gamma/2}\mu^{1/2},W_{\gamma/2}f) \geq \mathcal{N}^{s,0,\eta}(W_{-5/2}\mu^{1/2},W_{\gamma/2}f).
		\eeno
		Taking $g=W_{-5/2}\mu^{1/2}, f = F := W_{\gamma/2}f$ in Lemma \ref{reduce-eta-to-0}, we have for $\eta \leq r \leq 1 \leq R,  u \in B_{6R}$ that 
		\begin{eqnarray}\label{eta-to-0-part1-special-g}
			\mathcal{N}^{s,0,\eta}(W_{-5/2}\mu^{1/2}, F) + |F|^{2}_{L^{2}}  \gtrsim \mathcal{N}^{s}(\psi_{R}W_{-5/2}\mu^{1/2},(1-\psi_{4R})F),
			\\ \label{eta-to-0-part2-special-g}
			\mathcal{N}^{s,0,\eta}(W_{-5/2}\mu^{1/2},F) + r^{-2}R^{2}|F|^{2}_{L^{2}}  \gtrsim \mathcal{N}^{s}(\phi_{R,r,u}W_{-5/2}\mu^{1/2},\psi_{r,u}F).
		\end{eqnarray}
		From now on, take $R = 1$, then $\psi_{R} = \psi$, we get
		\beno |\psi_{R}W_{-5/2}\mu^{1/2}|^{2}_{L^{2}} = \int \psi^{2}W_{-5}\mu \mathrm{d}v  \geq 
		\frac{4 \pi}{3} (3/4)^{3}
		W_{-5}(3/4)\mu(3/4)
		:=\delta^{2}_{*}. \eeno
		Recalling $\phi_{R,r,u} = \psi_{14R} -\psi_{4r,u}$ and $\psi_{14R} \geq \psi_{R}$, we have
		\beno  \int \phi^{2}_{R,r,u}W_{-5}\mu \mathrm{d}v \geq \frac{1}{2} \int \psi^{2}_{14R}W_{-5}\mu \mathrm{d}v
		-  \int \psi^{2}_{3r,u}W_{-5}\mu \mathrm{d}v \geq \frac{1}{2}\delta^{2}_{*}
		-  \int \psi^{2}_{3r,u}W_{-5}\mu \mathrm{d}v. \eeno
		Note that $\int \psi^{2}_{3r,u}W_{-5}\mu \mathrm{d}v \leq \frac{4\pi}{3}(\f{4}{3}r)^{3} (2\pi)^{-\frac{3}{2}}  := C r^{3}$.
		By choosing $r$ such that $C r^{3} = \frac{1}{4}\delta^{2}_{*}$, we get
		\beno |\phi_{R,r,u}W_{-5/2}\mu^{1/2}|^{2}_{L^{2}} \geq \delta^{2}_{*}/4. \eeno
		Note that $r = \f{9}{16} (2\pi)^{1/2} (\f{1}{4}W_{-5}(3/4)\mu(3/4))^{1/3}$.
		Therefore, we have
		\ben \label{lower-bound-mu-l2}\min\{|\phi_{R,r,u}W_{-5/2}\mu^{1/2}|_{L^{2}}, |\psi_{R}W_{-5/2}\mu^{1/2}|_{L^{2}}\} \geq \delta_{*}/2.\een
		On the other hand, note that 
		\ben \label{upper-bound-mu-l21}
		\max\{|\psi_{R}W_{-5/2}\mu^{1/2}|_{L^{2}_{1}},|\phi_{R,r,u}W_{-5/2}\mu^{1/2}|_{L^{2}_{1}}\}  \leq |\mu|_{L^{1}_{2}}^{1/2} := \lambda_{*}. \een
		
		Thanks to \eqref{lower-bound-mu-l2} and \eqref{upper-bound-mu-l21}, by Lemma \ref{lowerboundpart1-general-g},
		we get
		\begin{eqnarray}\label{special-g-r-R-large-v} \mathcal{N}^{s}(\psi_{R}W_{-3/2}\mu^{1/2},(1-\psi_{4R})F) + |(1-\psi_{4R})F|^{2}_{L^{2}} \geq  C(\delta_{*}/2, \lambda_{*})|(1-\psi_{4R})F|^{2}_{H^{s}},
			\\ \label{special-g-r-R-small-v} \mathcal{N}^{s}(\phi_{R,r,u}W_{-3/2}\mu^{1/2},\psi_{r,u}F) + |\psi_{r,u}F|^{2}_{L^{2}} \geq  C(\delta_{*}/2, \lambda_{*})|\psi_{r,u}F|^{2}_{H^{s}}.
		\end{eqnarray}
		Note that $1-\psi_{4R}(v) =1$ if $|v| \geq 6 R \geq \f{16}{3} R$.
		There is a finite cover of $B_{6R}$ with open ball $B_{r}(u_{j})$ for $u_{j} \in B_{6R}$. More precisely,
		there exists $\{u_{j}\}_{j=1}^{N} \subset B_{6R}$ such that $B_{6R} \subset \cup_{j=1}^{N}B_{r}(u_{j})$, where $N \sim \frac{1}{r^{3}}$ is a generic constant. We then have $\psi_{4R} \leq \sum_{j=1}^{N}\psi_{r,u_{j}}$ and thus
		$
		|\psi_{4R}F|^{2}_{H^{s}} \leq N \sum_{j=1}^{N} |\psi_{r,u_{j}}F|^{2}_{H^{s}}.
		$
		From this together with \eqref{eta-to-0-part1-special-g}, \eqref{eta-to-0-part2-special-g}, \eqref{special-g-r-R-large-v}, \eqref{special-g-r-R-small-v},
		we get for any $0 \leq \eta \leq r$,
		\beno \mathcal{N}^{s,\gamma,\eta}(\mu^{1/2},f) +|f|_{L^2_{\gamma/2}}^2 \gtrsim r^{8} |W_{\gamma/2}f|_{H^{s}}^{2}. \eeno
		Since $r$ is a generic constant, we get 
		\ben \label{sobolev-regularity-gamma-eta} \mathcal{N}^{s,\gamma,\eta}(\mu^{1/2},f) +|f|_{L^2_{\gamma/2}}^2\gtrsim |W_{s}(D)W_{\gamma/2}f|_{L^2}^2.
		\een

		Thanks to \eqref{lower-bound-mu-l2} and \eqref{upper-bound-mu-l21},
		by \eqref{anisotropic-regularity-general-g} in Lemma \ref{lowerboundpart2-general-g}, we get
		\beno
		\mathcal{N}^{s}(\psi_{R}W_{-3/2}\mu^{1/2},(1-\psi_{4R})F) + \lambda^{2}_{*}(|(1-\psi_{4R})F|^{2}_{H^{s}}+|(1-\psi_{4R})F|^{2}_{L^{2}_{s}}) \\ \gtrsim  \delta^{2}_{*}|W_{s}((-\Delta_{\mathbb{S}^{2}})^{1/2}) (1-\psi_{4R})F|^{2}_{L^{2}}, \\
		\mathcal{N}^{s}(\phi_{R,r,u}W_{-3/2}\mu^{1/2},\psi_{r,u}F) + \lambda^{2}_{*}(|\psi_{r,u}F|^{2}_{H^{s}}+|\psi_{r,u}F|^{2}_{L^{2}_{s}}) \gtrsim \delta^{2}_{*}|W_{s}((-\Delta_{\mathbb{S}^{2}})^{1/2}) \psi_{r,u}f|^{2}_{L^{2}} .
		\eeno
		By applying the  similar argument, we also have
	\ben \label{anisotropic-regularity-gamma-eta} \mathcal{N}^{s,\gamma,\eta}(\mu^{1/2},f) +  |W_{s}(D)W_{\gamma/2}f|_{L^2}^2 + |W_{s} W_{\gamma/2}f|_{L^2}^2\gtrsim |W_{s}((-\Delta_{\mathbb{S}^{2}})^{1/2})W_{\gamma/2}f|^{2}_{L^{2}}. \een
	
	Finally a  suitable combination of \eqref{sobolev-regularity-gamma-eta} and  \eqref{anisotropic-regularity-gamma-eta} gives \eqref{full-regularity-gamma-eta}.
	\end{proof}


	\subsubsection{Rough coercivity estimate of $\langle \mathcal{L}^{s,\gamma,\eta}f,f\rangle$}
	By Lemma \ref{lowerboundpart1} and  \eqref{full-regularity-gamma-eta} in Lemma \ref{lowerboundpart1-gamma-eta},  we have the following estimate.
	
	\begin{lem}\label{two-parts-together} Let $-5 \leq \gamma \leq 0 \leq \eta \leq \eta_{1}$ where $\eta_{1}$ is the constant in  Lemma \ref{lowerboundpart1-gamma-eta}. We have
		\ben \label{two-parts-together-N-N}
		\mathcal{N}^{s,\gamma,\eta}(\mu^{1/2},f) + \mathcal{N}^{s,\gamma,\eta}(\mu^{1/2},f) + |f|^{2}_{L^{2}_{\gamma/2}} \gtrsim |f|_{s,\gamma/2}^2 . 
		\een
	\end{lem}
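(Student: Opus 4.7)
The plan is straightforward: combine the two main estimates \textbf{(Proposition \ref{lowerboundpart1})} and \textbf{(Lemma \ref{lowerboundpart1-gamma-eta}, equation \eqref{full-regularity-gamma-eta})} that have been proved in the preceding subsubsections. Recall that by definition
\[
|f|_{s,\gamma/2}^2 = |W_{s}((-\Delta_{\mathbb{S}^{2}})^{1/2}) W_{\gamma/2}f|^{2}_{L^{2}} + |W_{s}(D)W_{\gamma/2}f|^2_{L^{2}} + |W_{s} W_{\gamma/2}f|^{2}_{L^{2}},
\]
so I need to control each of the three pieces by the right hand side of \eqref{two-parts-together-N-N}.

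First I would use Proposition \ref{lowerboundpart1} to obtain the weighted $L^2$ piece. Since $|W_s W_{\gamma/2}f|_{L^2}^2 = |f|_{L^2_{\gamma/2+s}}^2$, that proposition yields directly
\[
|W_s W_{\gamma/2}f|_{L^2}^2 \lesssim \mathcal{N}^{s,\gamma,\eta}(f,\mu^{1/2}) + |f|^{2}_{L^{2}_{\gamma/2}}.
\]
This captures the weight gain $W_s$ in phase space.

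Next, for the two regularity pieces, I would invoke \eqref{full-regularity-gamma-eta}, which gives
\[
\mathcal{N}^{s,\gamma,\eta}(\mu^{1/2},f) + |W_{s} W_{\gamma/2}f|_{L^2}^2 \gtrsim |W_{s}((-\Delta_{\mathbb{S}^{2}})^{1/2})W_{\gamma/2}f|^{2}_{L^{2}} + |W_{s}(D)W_{\gamma/2}f|_{L^2}^2 .
\]
Substituting the previous bound on $|W_s W_{\gamma/2}f|_{L^2}^2$ into the left side absorbs that term into $\mathcal{N}^{s,\gamma,\eta}(f,\mu^{1/2}) + |f|^{2}_{L^{2}_{\gamma/2}}$, producing
\[
\mathcal{N}^{s,\gamma,\eta}(\mu^{1/2},f) + \mathcal{N}^{s,\gamma,\eta}(f,\mu^{1/2}) + |f|^{2}_{L^{2}_{\gamma/2}} \gtrsim |W_{s}((-\Delta_{\mathbb{S}^{2}})^{1/2})W_{\gamma/2}f|^{2}_{L^{2}} + |W_{s}(D)W_{\gamma/2}f|_{L^2}^2 .
\]

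Finally, combining this inequality with the weighted $L^2$ estimate from the first step (each multiplied by an appropriate small constant so both contributions can be added) yields \eqref{two-parts-together-N-N}. There is no real obstacle here — all the heavy lifting has been done in the preceding subsubsections (the weight gain from the $(f,\mu^{1/2})$-functional and the anisotropic/Sobolev regularity gain from the $(\mu^{1/2},f)$-functional); the only care needed is to notice that the ``correction'' term $|W_s W_{\gamma/2}f|_{L^2}^2$ appearing on the left of \eqref{full-regularity-gamma-eta} is itself controlled via Proposition \ref{lowerboundpart1}, so no separate spectral-gap argument is required at this rough stage (the lower order $|f|_{L^2_{\gamma/2}}^2$ correction will be removed later via the spectrum-gap type estimate mentioned at the start of the section).
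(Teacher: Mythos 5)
Your proof is correct and follows exactly the same approach as the paper, which simply cites Proposition \ref{lowerboundpart1} together with \eqref{full-regularity-gamma-eta} without writing out the combination; you supply the bookkeeping the paper omits. (You also implicitly and correctly read the evident typo in the statement — the second $\mathcal{N}^{s,\gamma,\eta}(\mu^{1/2},f)$ should be $\mathcal{N}^{s,\gamma,\eta}(f,\mu^{1/2})$, as your proof makes clear and as the later use in Theorem \ref{strong-coercivity} confirms.)
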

	Now we are ready to prove the following rough coercivity estimate.
	\begin{thm}\label{strong-coercivity} Let $-5 \leq \gamma \leq 0 < \eta \leq \eta_{1}$. We have
		\ben \label{rough-coercivity}
		\langle \mathcal{L}^{s,\gamma, \eta}f, f\rangle + \eta^{\gamma}|f|^{2}_{L^{2}_{\gamma/2}} \gtrsim  |f|_{s,\gamma/2}^2.
		\een
	\end{thm}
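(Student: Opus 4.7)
The strategy is to combine two ingredients. First, an equivalence-type relation bounding $\langle\mathcal{L}^{s,\gamma,\eta}f,f\rangle$ from below by the two quadratic functionals $\mathcal{N}^{s,\gamma,\eta}(\mu^{1/2},f)$ and $\mathcal{N}^{s,\gamma,\eta}(f,\mu^{1/2})$, up to an error of order $\eta^{\gamma}|f|^2_{L^2_{\gamma/2}}$. Second, the two-parts-together Lemma~\ref{two-parts-together}, which produces the full anisotropic norm $|f|^2_{s,\gamma/2}$ from these same functionals (modulo a $|f|^2_{L^2_{\gamma/2}}$ correction). Concretely, once the intermediate bound
\beno
\langle -\mathcal{L}^{s,\gamma,\eta}f,f\rangle + \eta^{\gamma} |f|^2_{L^2_{\gamma/2}} \gtrsim \mathcal{N}^{s,\gamma,\eta}(\mu^{1/2},f) + \mathcal{N}^{s,\gamma,\eta}(f,\mu^{1/2})
\eeno
is established (this is the relation announced as \eqref{equivalence-relation}), Lemma~\ref{two-parts-together} upgrades its right-hand side to $|f|^2_{s,\gamma/2} - C|f|^2_{L^2_{\gamma/2}}$, and since $\gamma\le 0$ together with $0<\eta\le\eta_1<1$ gives $\eta^{\gamma}\ge 1$, the $-C|f|^2_{L^2_{\gamma/2}}$ is absorbed by the $\eta^{\gamma}|f|^2_{L^2_{\gamma/2}}$ already present on the left, yielding \eqref{rough-coercivity}.

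The main work is the intermediate bound. I would decompose $\langle -\mathcal{L}^{s,\gamma,\eta}f,f\rangle = -\langle\Gamma^{s,\gamma,\eta}(\mu^{1/2},f),f\rangle - \langle\Gamma^{s,\gamma,\eta}(f,\mu^{1/2}),f\rangle$ and treat the two pieces in parallel. For the first piece, the algebraic identity $(\mu^{1/2})'_*f'-\mu^{1/2}_*f = \mu^{1/2}_*(f'-f)+f'[(\mu^{1/2})'_*-\mu^{1/2}_*]$ isolates a main term $\int B^{s,\gamma,\eta}\mu_*f(f-f')\,\mathrm{d}V$ and a cross term. The polarization identity $2f(f-f')=(f-f')^2+(f^2-(f')^2)$ turns the main term into $\tfrac12\mathcal{N}^{s,\gamma,\eta}(\mu^{1/2},f)$ plus a remainder $\tfrac12\int B^{s,\gamma,\eta}\mu_*(f^2-(f^2)')\,\mathrm{d}V$, which the revised cancellation lemma (Lemma~\ref{cancellation-lemma-general-gamma-minus3-mu} with $g=\mu$, $h=f^2$, weight parameter $a=-\gamma$) bounds by $C\eta^{\gamma}|f|^2_{L^2_{\gamma/2}}$. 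The cross term is controlled by Cauchy--Schwarz with an $\epsilon$-split distributing $\sin^{\pm 1}(\theta/2)$, and then Taylor expansion of $(\mu^{1/2})'_*$ around $v_*$ using $|v'_*-v_*|=|v-v_*|\sin(\theta/2)$ reduces it to $\epsilon\,\mathcal{N}^{s,\gamma,\eta}(\mu^{1/2},f)+C_\epsilon\eta^{\gamma}|f|^2_{L^2_{\gamma/2}}$, absorbable into the leading $\tfrac12\mathcal{N}$. An analogous treatment of the second piece, now using $f'_*(\mu^{1/2})'-f_*\mu^{1/2}=f_*[(\mu^{1/2})'-\mu^{1/2}]+(f'_*-f_*)(\mu^{1/2})'$ together with the pre-post collision change of variables $(v,v_*,\sigma)\to(v',v'_*,\sigma')$ and the detailed balance identity $\mu\mu_*=\mu'\mu'_*$, extracts $\tfrac12\mathcal{N}^{s,\gamma,\eta}(f,\mu^{1/2})$ plus remainders again of size $\eta^{\gamma}|f|^2_{L^2_{\gamma/2}}$.

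The principal technical difficulty is the second piece: in $\langle\Gamma^{s,\gamma,\eta}(f,\mu^{1/2}),f\rangle$ the Maxwellian weight $\mu^{1/2}$ sits on the non-starred variable while only $f_*$ provides the $v_*$-dependence, so the cross-term remainders are not automatically Gaussian-integrable in $v_*$. Here the support restriction $|v-v_*|\gtrsim\eta$ encoded in $\psi^{\eta}$, together with the elementary bound $|v-v_*|^{\gamma}\psi^{\eta}(|v-v_*|)\lesssim\eta^{\gamma}\langle v-v_*\rangle^{\gamma}$, supplies the decisive $\eta^{\gamma}$ prefactor whenever these remainders are reduced through the cancellation lemma or Cauchy--Schwarz; this is precisely why the correction term on the left of the intermediate bound must carry the weight $\eta^{\gamma}$ rather than a bounded constant. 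Once all cross terms are shown to be absorbable into a small fraction of $\mathcal{N}^{s,\gamma,\eta}(\mu^{1/2},f)+\mathcal{N}^{s,\gamma,\eta}(f,\mu^{1/2})$ plus $\eta^{\gamma}|f|^2_{L^2_{\gamma/2}}$, the intermediate bound follows, and combining it with Lemma~\ref{two-parts-together} as above gives \eqref{rough-coercivity}.
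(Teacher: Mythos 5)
Your outer scaffold matches the paper's: you correctly aim for the intermediate relation \eqref{equivalence-relation} and then invoke Lemma~\ref{two-parts-together}, and you correctly observe that $\eta^{\gamma}\geq 1$ lets the $-C|f|^2_{L^2_{\gamma/2}}$ correction be absorbed. However, your derivation of the intermediate bound contains a genuine gap in the treatment of the second piece.

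You claim that $\langle\mathcal{L}^{s,\gamma,\eta}_2 f,f\rangle = -\langle\Gamma^{s,\gamma,\eta}(f,\mu^{1/2}),f\rangle$ ``extracts $\tfrac12\mathcal{N}^{s,\gamma,\eta}(f,\mu^{1/2})$ plus remainders again of size $\eta^{\gamma}|f|^2_{L^2_{\gamma/2}}$.'' This cannot be right. Proposition~\ref{geq-eta-part-l2} shows that $|\langle \mathcal{L}^{s,\gamma,\eta}_{2} f,f \rangle| \lesssim \eta^{\gamma}|\mu^{1/32}f|^2_{L^2}$, which is even controlled by $\eta^{\gamma}|f|^2_{L^2_{\gamma/2}}$, while Proposition~\ref{lowerboundpart1} shows $\mathcal{N}^{s,\gamma,\eta}(f,\mu^{1/2}) + |f|^2_{L^2_{\gamma/2}} \gtrsim |f|^2_{L^2_{\gamma/2+s}}$. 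Testing with $f_n = \mathrm{1}_{n\leq|v|\leq n+1}$ makes the clash explicit: $\mathcal{N}^{s,\gamma,\eta}(f_n,\mu^{1/2})\gtrsim n^{\gamma+2s+2}$ while $\eta^{\gamma}|f_n|^2_{L^2_{\gamma/2}}\sim n^{\gamma+2}$ and $\langle\mathcal{L}^{s,\gamma,\eta}_2 f_n,f_n\rangle\to 0$ because of the Gaussian weight, so the inequality $\langle\mathcal{L}^{s,\gamma,\eta}_2 f,f\rangle\geq \tfrac12\mathcal{N}^{s,\gamma,\eta}(f,\mu^{1/2}) - C\eta^{\gamma}|f|^2_{L^2_{\gamma/2}}$ is false. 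The algebraic identity and detailed balance step you describe cannot produce a positive quadratic term in $f_*^2$ with the $((\mu^{1/2})'-\mu^{1/2})^2$ structure from $\mathcal{L}_2$; they merely recast it as a small remainder. Without $\mathcal{N}^{s,\gamma,\eta}(f,\mu^{1/2})$ in the intermediate bound, Lemma~\ref{two-parts-together} cannot be applied, because the heavy moment $|f|^2_{L^2_{\gamma/2+s}}$ (which is strictly stronger than $|f|^2_{L^2_{\gamma/2}}$) is produced only by that functional.

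The missing idea is that \emph{both} functionals must come from $\mathcal{L}^{s,\gamma,\eta}_1$ alone. Using detailed balance $\mu\mu_*=\mu'\mu'_*$ one finds $\langle\mathcal{L}^{s,\gamma,\eta}_1 f,f\rangle = \tfrac12\int B^{s,\gamma,\eta}(\mu^{1/2}_*f-(\mu^{1/2})'_*f')^2\,\mathrm{d}V - \tfrac12\int B^{s,\gamma,\eta}\mu_*(f^2-(f')^2)\,\mathrm{d}V$; the symmetric square here, after two further expansions (one isolating $\mu^{1/2}_*(f-f')$, the other — after the $(v,v_*)\leftrightarrow(v',v'_*)$ and $v\leftrightarrow v_*$ symmetries — isolating $f_*((\mu^{1/2})'-\mu^{1/2})$), yields a positive fraction of $\mathcal{N}^{s,\gamma,\eta}(\mu^{1/2},f)+\mathcal{N}^{s,\gamma,\eta}(f,\mu^{1/2})$ modulo the cancellation remainder. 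That is precisely the inequality from Proposition~2.16 of~\cite{alexandre2012boltzmann} which the paper invokes, after which Lemma~\ref{cancellation-lemma-general-gamma-minus3-mu} bounds the remainder by $\eta^{\gamma}|f|^2_{L^2_{\gamma/2}}$ and Proposition~\ref{geq-eta-part-l2} disposes of $\mathcal{L}^{s,\gamma,\eta}_2$ as a pure error.
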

	\begin{proof}
		We recall that $\mathcal{N}^{s,\gamma,\eta}(\mu^{1/2},f) + \mathcal{N}^{s,\gamma,\eta}(f,\mu^{1/2})$ corresponds to
		the anisotropic norm $|||\cdot|||$ introduced in \cite{alexandre2012boltzmann}.
		By the proof of Proposition 2.16 in \cite{alexandre2012boltzmann}, 
		\beno
		\langle \mathcal{L}^{s,\gamma,\eta}_{1} f,f \rangle \geq \frac{1}{10}(\mathcal{N}^{s,\gamma,\eta}(\mu^{1/2},f) + \mathcal{N}^{s,\gamma,\eta}(f,\mu^{1/2})) - \frac{3}{10} \big|\int B^{s,\gamma,\eta} \mu_{*} (f^{2} -f^{\prime 2}) \mathrm{d}V \big|.
		\eeno
		By the cancellation Lemma \ref{cancellation-lemma-general-gamma-minus3-mu}, we have
		\beno \big|\int B^{s,\gamma, \eta} \mu_{*} (f^{2} -f^{\prime 2}) \mathrm{d}V \big|
		\lesssim  \eta^{\gamma} |f|^{2}_{L^{2}_{\gamma/2}}.
		\eeno
		Therefore, we have
		\ben \label{L-1-dominate}
		\langle \mathcal{L}^{s,\gamma,\eta}_{1} f,f \rangle \geq \frac{1}{10}(\mathcal{N}^{s,\gamma,\eta}(\mu^{1/2},f) + \mathcal{N}^{s,\gamma,\eta}(f,\mu^{1/2})) - C \eta^{\gamma} |f|^{2}_{L^{2}_{\gamma/2}}.
		\een
		By Proposition \ref{geq-eta-part-l2}, we get
		\ben \label{L-2-lower}
		|\langle \mathcal{L}^{s,\gamma, \eta}_{2} f,f \rangle| \lesssim \eta^{\gamma} |\mu^{1/8}f|_{L^2}^2 \lesssim \eta^{\gamma} |f|_{L^2_{\gamma/2}}^2.
		\een
		\eqref{L-1-dominate} and \eqref{L-2-lower} imply \eqref{equivalence-relation}.
		Then by applying Lemma \ref{two-parts-together}, we complete the proof of the theorem.
	\end{proof}

	\subsection{Spectrum-gap type estimate} In this subsection, we consider the coercivity estimates of $\mathcal{L}^{s,\gamma,\eta}$ in the microscopic space. This is also referred as the "spectral gap" estimate.

	Recall $\mathrm{ker}(\mathcal{L}^{s,\gamma}_{B})=\mathrm{ker}(\mathcal{L}^{\gamma}_{L})
	= \mathrm{span}\{\sqrt{\mu}, \sqrt{\mu}v_1, \sqrt{\mu}v_2,\sqrt{\mu}v_3, \sqrt{\mu}|v|^2 \}:=\mathrm{ker}$.
	An orthonormal basis of $\mathrm{ker}$ can be chosen as $$\{\sqrt{\mu}, \sqrt{\mu}v_1, \sqrt{\mu}v_2,\sqrt{\mu}v_3, \sqrt{\mu}(|v|^2-3) /\sqrt{6} \}
	:= \{e_{j}\}_{1 \leq j \leq 5}.$$ The projection operator $\mathbf{P}$ on the kernel space
	is defined as follows:
	\ben\label{DefProj} \mathbf{P}f:=\sum_{j=1}^{5}\langle f, e_{j}\rangle e_{j} =(a+b\cdot v+c|v|^2)\sqrt{\mu}, \een
	where for $1\le i\le 3$, \ben\label{Defabc}
	a=\int_{\R^3} (\frac{5}{2}-\frac{|v|^{2}}{2})\sqrt{\mu}f\mathrm{d}v; \quad b_i=\int_{\R^3} v_i\sqrt{\mu}f\mathrm{d}v; \quad c=\int_{\R^3} (\frac{|v|^2}{6}-\frac{1}{2})\sqrt{\mu}f\mathrm{d}v.
	\een
	We will show that the lower order term $|f|^{2}_{L^{2}_{\gamma/2}}$ in
	\eqref{rough-coercivity} can be dropped for $f \in \mathrm{ker}^{\perp}$.
	
	The idea is to firstly consider the case when $\gamma=0$ case and then to use mathematical induction for the general $\gamma<0$ case.
	
	\subsubsection{The case $\gamma=0$} This case is  clear, cf.  the explicit spectrum computation by Wang-Chang \cite{wang1952propagation}, in which the authors showed that the smallest positive eigenvalue is bounded from below by $\int b(\cos\theta) \sin^{2}\f{\theta}{2} \mathrm{d}\sigma$ upto a multiplicative factor.
	Recalling \eqref{mean-momentum-transfer}, it holds 
	for $f \in \mathrm{ker}^{\perp}$
	\beno
	\langle \mathcal{L}^{s,0,0}f, f\rangle \geq \lambda_{1} |f|_{L^{2}}^{2}.
	\eeno
	By the proof of Theorem \ref{strong-coercivity} for the case of $\gamma=0$, we can also take $\eta =0$ to get
	\beno
	\langle \mathcal{L}^{s,0,0}f, f\rangle + |f|_{L^{2}}^{2} \gtrsim |f|^{2}_{s,0}.
	\eeno
	Hence, there exists a generic  constant $\lambda_{2} > 0$ such that
	\ben \label{full-range-coercicity-gamma-0}
	\langle \mathcal{L}^{s,0,0}f, f \rangle  \geq \lambda_{2} |f|^{2}_{s,0}.
	\een
	
	We now show that $\mathcal{L}^{s,0,\eta}$ also satisfies the above estimate if $\eta$ is small enough.
	For this,
	we prove smallness of $\langle  \mathcal{L}^{s,0}_{\eta}f, f\rangle$ when $\eta$ is small.
	\begin{lem}\label{gamma-0-eat-lb} Let $0 \leq \eta \leq 1$, then it holds for $f \in \mathrm{ker}^{\perp}$
		that
		\beno \langle  \mathcal{L}^{s,0}_{\eta}f, f\rangle \lesssim \eta^{3}
		|f|^{2}_{H^{s}}. \eeno
	\end{lem}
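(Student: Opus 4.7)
My plan is to invoke Proposition~\ref{less-eta-part-l} directly. Setting $\gamma = 0$, $|\beta_0|=|\beta_1|=0$, $h=f$, and $l=0$ in that estimate yields
\[
|\langle \mathcal{L}^{s,0}_\eta f, f\rangle| \lesssim C_{s,0,\eta}\, |f|^2_{H^s},\qquad
C_{s,0,\eta} = \tfrac{1}{s}\cdot\tfrac{\eta^{2s+3}}{2s+3},
\]
the latter being the value of $C_{s,\gamma,\eta}$ from \eqref{constant-C-delta-s-gamma-eta} at $\gamma=0$. Since $\eta \leq 1$ and $2s+3 \geq 3$, one has $\eta^{2s+3} = \eta^{2s}\,\eta^3 \leq \eta^3$, and the prefactor $\tfrac{1}{s(2s+3)}$ may be absorbed into the $\lesssim$ constant. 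This produces exactly the claimed bound $\langle \mathcal{L}^{s,0}_\eta f, f\rangle \lesssim \eta^3 |f|^2_{H^s}$. The hypothesis $f \in \mathrm{ker}^{\perp}$ is actually not used in proving the inequality: for each $\phi = \mu^{1/2}\psi$ with $\psi \in \{1, v_i, |v|^2\}$ one checks by the weak-form identity
$\int[Q^{s,0}_\eta(\mu,\mu\psi)+Q^{s,0}_\eta(\mu\psi,\mu)]\varphi\,dv = \tfrac14\int B^{s,0}_\eta\,\mu\mu_*\,[\psi+\psi_*-\psi'-\psi'_*]\,[(\varphi'+\varphi'_*)-(\varphi+\varphi_*)]\,d\sigma dv dv_*$
that $\mathcal L^{s,0}_\eta$ annihilates $\mathrm{ker}$ regardless of the support cut-off $\psi_\eta$; the kernel hypothesis just records the intended context, in which the bound will be combined with the spectral-gap estimate~\eqref{full-range-coercicity-gamma-0}.

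A self-contained derivation would start from the H-theorem-type identity
\[
\langle \mathcal{L}^{s,0}_\eta f, f\rangle = \tfrac14\int B^{s,0}_\eta\,\mu\mu_*\,(\tilde f + \tilde f_* - \tilde f' - \tilde f'_*)^2\,d\sigma dv dv_*,\qquad \tilde f := \mu^{-1/2}f,
\]
which follows from $\mu\mu_* = \mu'\mu'_*$ and also makes the annihilation of $\mathrm{ker}$ transparent. A second-order Taylor expansion of the bracket in $\Delta = v'-v$ (of length $|u|\sin(\theta/2)$), with the first-order piece cancelling thanks to $v+v_* = v'+v'_*$, gives the pointwise control $|\tilde f + \tilde f_* - \tilde f' - \tilde f'_*|^2 \lesssim |u|^4\sin^2(\theta/2)\,|\nabla^2\tilde f|^2_{L^\infty_{\mathrm{loc}}}$. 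The surviving $\sin^2(\theta/2)$ tames $(\sin\theta/2)^{-2-2s}$ uniformly in $s$ via the mean-momentum-transfer identity~\eqref{mean-momentum-transfer}, while $\int_{|u|\leq 4\eta/3}|u|^4\,du \sim \eta^7$ combined with the Maxwellian weight yields a bound of the form $\lesssim \eta^7\,\|f\|^2_{H^2_{\text{weighted}}}$---sharper in $\eta$ but requiring $H^2$-regularity.

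The conceptual obstacle to combining the advantages of the two routes (sharpness in $s$ together with only $H^s$-regularity) is the intrinsic trade-off: the $\sin^2(\theta/2)$ angular cancellation that removes the $s^{-1}$ in $C_{s,0,\eta}$ requires two Taylor derivatives, whereas we only have fractional regularity $s<1$. Since the lemma is stated with an implicit constant that may depend on $s$---and the application immediately afterwards only needs $\eta$ chosen small relative to the fixed-$s$ spectral gap $\lambda_2$ from~\eqref{full-range-coercicity-gamma-0}---the direct route via Proposition~\ref{less-eta-part-l} suffices and is what I would present.
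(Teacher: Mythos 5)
Your proof is correct, but the paper does not invoke Proposition~\ref{less-eta-part-l}; it instead goes back to the quadratic-form origin of $\mathcal{L}^{s,0}_\eta$, starting from the elementary inequality
\[
\langle\mathcal{L}^{s,0}_\eta f, f\rangle \le 2\,\mathcal{N}^{s}_{\eta}(\mu^{1/2},f) + 2\,\mathcal{N}^{s}_{\eta}(f,\mu^{1/2}),
\]
then bounding $\mathcal{N}^{s}_{\eta}(\mu^{1/2},f)\lesssim \eta^{2s+3}|f|^2_{H^s}$ via \eqref{functional-N-lower-eta} and $\mathcal{N}^{s}_{\eta}(f,\mu^{1/2})\lesssim \eta^{5}|f|^2_{L^2}$ by Taylor-expanding only the smooth $\mu^{1/2}$ in $(\mu^{1/2})'-\mu^{1/2}$ together with \eqref{mean-momentum-transfer}. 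The two routes rest on the same technology --- Proposition~\ref{less-eta-part-l} is itself assembled from the $\Gamma = Q + I$ split and the very same $\mathcal{N}$-functional estimate \eqref{functional-N-lower-eta} --- so your one-line invocation is logically equivalent and cleaner, while the paper's version is more self-contained within Section 4 and records the mild refinement that the $g_*$-slot only needs $|f|_{L^2}$ with the sharper power $\eta^5$, neither of which matters for the stated $\eta^3\,|f|^2_{H^s}$ bound since $\eta\le 1$. Your two asides are also sound: the $\mathrm{ker}^\perp$ hypothesis plays no role in an upper bound, and $\mathcal{L}^{s,0}_\eta$ does annihilate $\mathrm{ker}$ because the cutoff $\psi_\eta(|v-v_*|)$ is invariant under both $(v,v_*)\mapsto(v_*,v)$ and $(v,v_*)\mapsto(v',v'_*)$, so the hypothesis merely records the setting in which the lemma is later combined with \eqref{full-range-coercicity-gamma-0}. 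Your proposed second-order-Taylor alternative (gaining $\eta^7$ at the price of $H^2$-regularity) correctly identifies the trade-off between angular cancellation and regularity, and is rightly set aside since the lemma only needs the cruder bound with $s$-dependent constant.
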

	\begin{proof} Firstly, note that 
		$\langle \mathcal{L}^{s,0}_{\eta} f,f \rangle \leq 2\mathcal{N}^{s}_{\eta}(\mu^{1/2},f) + 2\mathcal{N}^{s}_{\eta}(f,\mu^{1/2})$, where
		the functional $\mathcal{N}^{s, \gamma}_{\eta}$ is defined in \eqref{definition-of-N-s-ga-eta-leq}.
		By \eqref{functional-N-lower-eta}, we have
		\beno
		\mathcal{N}^{s}_{\eta}(\mu^{1/2},f) \lesssim \eta^{2s+3} |f|^{2}_{H^{s}}.
		\eeno
		Recall
		\beno \mathcal{N}^{s}_{\eta}(f,\mu^{1/2}) = \int b^{s}(\theta)
		\psi_{\eta}(|v-v_{*}|)
		f^{2}_{*} ((\mu^{1/2})^{\prime}-\mu^{1/2})^{2} \mathrm{d}V. \eeno
		By using
		$|(\mu^{1/2})^{\prime}-\mu^{1/2}| \lesssim |v-v_{*}|\theta$ and  \eqref{mean-momentum-transfer}, we have
		\beno \mathcal{N}^{s}_{\eta}(f,\mu^{1/2}) \lesssim  \int b^{s}(\theta)
		\psi_{\eta}(|v-v_{*}|)
		f^{2}_{*} |v-v_{*}|^{2}\theta^{2} \mathrm{d}V
		\lesssim  \int \mathrm{1}_{|v-v_{*}| \leq 4\eta/3}
		f^{2}_{*} |v-v_{*}|^{2} \mathrm{d}v \mathrm{d}v_{*} \lesssim  \eta^{5} |f|^{2}_{L^{2}}. \eeno
		Combining  the above estimates completes the proof.
	\end{proof}
	
	From \eqref{full-range-coercicity-gamma-0}, by taking $\eta$ small enough in Lemma \ref{gamma-0-eat-lb},
	we get the following coercivity estimate.
	
	\begin{lem}\label{gamma-0-pure-coercivity} There is a generic constant $\eta_{2}>0$ such that
		 for $f \in \mathrm{ker}^{\perp}$, we have
		\beno  \langle \mathcal{L}^{s,0,\eta_2}f, f\rangle \geq \lambda_0
		|f|^{2}_{s,0}. \eeno
	\end{lem}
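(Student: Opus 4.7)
The plan is to combine the coercivity estimate \eqref{full-range-coercicity-gamma-0} for the full operator $\mathcal{L}^{s,0,0}=\mathcal{L}^{s,0}$ with the smallness estimate from Lemma \ref{gamma-0-eat-lb} on the singular part $\mathcal{L}^{s,0}_{\eta}$, and then choose $\eta_2$ small enough to absorb the perturbation.

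First, I would use the operator splitting from Subsection \ref{ope-spl}: since $B^{s,0}=B^{s,0,\eta}+B^{s,0}_{\eta}$, the linearized operator decomposes as $\mathcal{L}^{s,0}=\mathcal{L}^{s,0,\eta}+\mathcal{L}^{s,0}_{\eta}$, so for any $f$,
\beno
\langle \mathcal{L}^{s,0,\eta}f,f\rangle \;=\; \langle \mathcal{L}^{s,0,0}f,f\rangle - \langle \mathcal{L}^{s,0}_{\eta}f,f\rangle.
\eeno
For $f\in\mathrm{ker}^{\perp}$, estimate \eqref{full-range-coercicity-gamma-0} gives $\langle \mathcal{L}^{s,0,0}f,f\rangle \geq \lambda_{2}|f|_{s,0}^{2}$, which provides the main positive lower bound.

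Next, I would control the remainder using Lemma \ref{gamma-0-eat-lb}, which yields
\beno
|\langle \mathcal{L}^{s,0}_{\eta}f,f\rangle| \;\lesssim\; \eta^{3}|f|_{H^{s}}^{2} \;\leq\; C_{*}\,\eta^{3}|f|_{s,0}^{2},
\eeno
where the last inequality uses the definition \eqref{norm-definition} of $|\cdot|_{s,0}$, which dominates $|W_{s}(D)f|_{L^{2}}^{2}=|f|_{H^{s}}^{2}$. Combining the two bounds gives
\beno
\langle \mathcal{L}^{s,0,\eta}f,f\rangle \;\geq\; \bigl(\lambda_{2}-C_{*}\eta^{3}\bigr)|f|_{s,0}^{2}.
\eeno
The final step is to select $\eta_{2}>0$ small enough (a generic constant, independent of $s$ after keeping track of $\lambda_2$ and $C_*$) so that $C_{*}\eta_{2}^{3}\leq \lambda_{2}/2$, and then set $\lambda_{0}:=\lambda_{2}/2$. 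There is no real obstacle here: Lemma \ref{gamma-0-eat-lb} already encodes the hard work of quantifying that the near-diagonal part of the collision operator is of size $\eta^{3}$ in the relevant norm, and \eqref{full-range-coercicity-gamma-0} supplies the full spectral gap. The only point deserving attention is that the constants $\lambda_{2}$ and $C_{*}$ are independent of $s\in(0,1)$, so that the chosen $\eta_{2}$ is a generic constant as asserted.
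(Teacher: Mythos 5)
Your proof follows exactly the route the paper intends: decompose $\mathcal{L}^{s,0,\eta}=\mathcal{L}^{s,0,0}-\mathcal{L}^{s,0}_{\eta}$, invoke the full-range coercivity \eqref{full-range-coercicity-gamma-0} for the first piece and the $O(\eta^{3})$ smallness of Lemma \ref{gamma-0-eat-lb} for the second, then shrink $\eta$ to absorb the perturbation. The only cosmetic remark is that Lemma \ref{gamma-0-eat-lb} supplies an upper bound on $\langle \mathcal{L}^{s,0}_{\eta}f,f\rangle$ rather than on its absolute value, but that is all the argument actually requires, so your proof is correct and matches the paper's.
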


	\subsubsection{The general case $\gamma<0$}
	The coercivity estimate of $\mathcal{L}^{s,\gamma,\eta}$ in the $\mathrm{ker}^{\perp}$ space for $\gamma<0$ can be stated as follows.
	
	\begin{thm}\label{micro-dissipation} For  $-5 \leq \gamma \leq 0$, with
		the constant $\eta_1$ defined 
		in Theorem \ref{strong-coercivity} and the constants $\eta_2, \lambda_0$ defined 
		in Lemma \ref{gamma-0-pure-coercivity}, 
		let $\eta = \min \{\eta_1, \eta_2\}$.
		There is a generic constant $0<c<1$ such that for any $\gamma \in [-5, 0]$ satisfying
		$-ks \leq  \gamma< -(k-1) s$ for some integer $k \geq 0$,
		it holds for $f \in \mathrm{ker}^{\perp}$
		that
		\ben \label{gap-estimate}  \langle  \mathcal{L}^{s,\gamma, \eta}f, f\rangle \geq c^{2^{k}-1} s^{2^{k}-1} \lambda_0^{2^{k}}  |f|^{2}_{s,\gamma/2}. \een
	\end{thm}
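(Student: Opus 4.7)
The plan is to prove the estimate by induction on $k$. The base case $k = 0$ corresponds to $\gamma = 0$ and is exactly the content of Lemma \ref{gamma-0-pure-coercivity}, giving $\langle \mathcal{L}^{s,0,\eta}f,f\rangle \geq \lambda_0 |f|^2_{s,0}$.

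For the inductive step, assume the claim holds for $k-1$, so that for $\tilde\gamma \in [-(k-1)s, -(k-2)s)$ one has $\langle \mathcal{L}^{s,\tilde\gamma,\eta} f, f\rangle \geq \lambda_{k-1} |f|^2_{s,\tilde\gamma/2}$ on $\mathrm{ker}^\perp$, where $\lambda_{k-1} = c^{2^{k-1}-1} s^{2^{k-1}-1} \lambda_0^{2^{k-1}}$. Fix $\gamma \in [-ks, -(k-1)s)$ and set $\tilde\gamma = \gamma + s$. The starting point is the rough coercivity estimate of Theorem \ref{strong-coercivity}:
\begin{equation*}
\langle \mathcal{L}^{s,\gamma,\eta} f, f\rangle + C_1 \eta^\gamma |f|^2_{L^2_{\gamma/2}} \geq C_0 |f|^2_{s,\gamma/2},
\end{equation*}
and the task is to eliminate the lower-order term $|f|^2_{L^2_{\gamma/2}}$. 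I would apply Cauchy--Schwarz with the factorization $\langle v \rangle^\gamma = \langle v\rangle^{\gamma+s}\cdot \langle v\rangle^{-s}$, yielding
\begin{equation*}
|f|^2_{L^2_{\gamma/2}} \le |f|_{L^2_{\gamma+s}}\,|f|_{L^2_{-s}} \le |f|_{s,\gamma/2}\,|f|_{L^2},
\end{equation*}
where the second inequality uses $\gamma + s \leq \gamma/2 + s$ (so $|f|_{L^2_{\gamma+s}}$ is contained in $|f|_{s,\gamma/2}$) and $|f|_{L^2_{-s}} \leq |f|_{L^2}$.

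The next step is to use the inductive hypothesis to bound $|f|_{L^2}$. Since $|f|^2_{s,\tilde\gamma/2}$ controls $|f|^2_{L^2_{\tilde\gamma/2+s}} = |f|^2_{L^2_{\gamma/2+3s/2}}$, which in turn dominates $|f|^2_{L^2}$ (directly when $\gamma + 3s \geq 0$; otherwise by an iterated version of the same splitting in the lower-order cases), the IH gives a bound of the form $|f|^2_{L^2} \lesssim \lambda_{k-1}^{-1} \langle \mathcal{L}^{s,\tilde\gamma,\eta} f, f\rangle$. This is then tied back to $\langle \mathcal{L}^{s,\gamma,\eta} f, f\rangle$ via the comparison of kernels $B^{s,\tilde\gamma,\eta} = |v-v_*|^s B^{s,\gamma,\eta}$; on the support $\{|v-v_*| \geq 3\eta/4\}$ the factor $|v-v_*|^s$ is bounded below by $(3\eta/4)^s$ so the nonnegative quadratic form $\mathcal{N}^{s,\tilde\gamma,\eta}$ dominates $\eta^s \mathcal{N}^{s,\gamma,\eta}$. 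Combining these with Young's inequality $ab \le \varepsilon a^2 + b^2/(4\varepsilon)$ to absorb the $|f|_{s,\gamma/2}$-factor into the left side of rough coercivity produces the recursive improvement $\lambda_k \gtrsim c\, s\, \lambda_{k-1}^2$, which iterates to the claimed bound $\lambda_k = c^{2^k-1} s^{2^k-1} \lambda_0^{2^k}$.

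The main technical obstacle will be making the operator comparison between $\mathcal{L}^{s,\gamma,\eta}$ and $\mathcal{L}^{s,\gamma+s,\eta}$ fully rigorous: the kernel ratio $|v-v_*|^s$ is uniformly bounded \emph{below} on the support of $\psi^\eta$, but grows unboundedly as $|v-v_*|\to\infty$, so the large-$|v-v_*|$ regime cannot be compared pointwise. This is exactly where one must exploit the Gaussian decay of $\mu_*$ in the definition of the linearized operators (via the decomposition $\mathcal{L} = \mathcal{L}_1 + \mathcal{L}_2$ and the $\mu$-type weighted upper bound for $\mathcal{L}_2$ established in Proposition \ref{geq-eta-part-l2}) to effectively localize $|v-v_*| \lesssim \langle v\rangle$ and convert the $|v-v_*|^s$ growth into a $\langle v\rangle^s$ weight that is absorbable by the $|\cdot|_{s,\gamma/2}$ norm. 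Tracking the power $s^{2^k-1}$ in the final constant will require careful bookkeeping of the $s^{-1}$ losses in the upper bound estimates versus the $s$-factors recovered from the Young's inequality balancing at each induction step.
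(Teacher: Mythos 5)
Your overall architecture matches the paper's: an induction on $k$ with base case $\gamma=0$ from Lemma \ref{gamma-0-pure-coercivity}, and an inductive step that improves the coercivity by one notch of $s$, producing the recursion $\lambda_k \gtrsim s\,\lambda_{k-1}^2$. However, the \emph{mechanism} of your inductive step is quite different from the paper's, and it contains two gaps that you gesture at but do not close.

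First, the kernel comparison you invoke points the wrong way. You want to replace the lower-order term $|f|^2_{L^2}$ appearing after Young's inequality by $\langle \mathcal{L}^{s,\gamma,\eta}f,f\rangle$, and for that you would need an \emph{upper} bound of the form $\langle\mathcal{L}^{s,\tilde\gamma,\eta}f,f\rangle \lesssim \langle\mathcal{L}^{s,\gamma,\eta}f,f\rangle + (\text{small})\,|f|^2_{s,\gamma/2}$. But the pointwise fact you cite --- $|v-v_*|^s\ge(3\eta/4)^s$ on $\{|v-v_*|\ge 3\eta/4\}$, hence $\mathcal{N}^{s,\tilde\gamma,\eta}\ge\eta^s\mathcal{N}^{s,\gamma,\eta}$ --- gives a \emph{lower} bound for $\mathcal{N}^{s,\tilde\gamma,\eta}$, not an upper bound, so it does not feed into the chain of estimates. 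The obstruction you correctly flag --- that $|v-v_*|^s$ is unbounded above --- is exactly the content of the hard work, and "exploit the Gaussian decay of $\mu_*$" is the starting intuition rather than a proof. Second, the claim that the inductive hypothesis at level $\tilde\gamma=\gamma+s$ controls $|f|^2_{L^2}$ is false when $\gamma<-3s$: what you actually get is $\lambda_{k-1}^{-1}\langle\mathcal{L}^{s,\tilde\gamma,\eta}f,f\rangle\gtrsim |f|^2_{L^2_{\gamma/2+3s/2}}$, a norm \emph{weaker} than $|f|^2_{L^2}$ in that range. Your suggested "iterated version of the same splitting" would change the recursion structure and has not been checked against the claimed constant $c^{2^k-1}s^{2^k-1}\lambda_0^{2^k}$.

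For contrast, the paper's induction step never attempts a direct Dirichlet-form comparison across values of $\gamma$. It instead factors the kernel as $|v-v_*|^\gamma=|v-v_*|^\alpha|v-v_*|^\beta$ with $\alpha+\beta=\gamma$, $-s\le\beta\le 0$, bounds the singular factor \emph{pointwise from below} as $|v-v_*|^\beta\ge C_\beta\delta^{-\beta}U^\beta_\delta(v)U^\beta_\delta(v_*)$ with the scaled weight $U_\delta=(1+\delta^2|v|^2)^{1/2}$, and then \emph{commutes the weight through the collision structure} $\mathbb{F}(\cdot,\cdot)$ (Step~1 of the proof). This turns $\mathcal{J}^{s,\gamma,\eta}(f)$ into $\delta^s\mathcal{J}^{s,\alpha,\eta}(U_\delta^{\beta/2}f)$ plus commutator errors that are $O(\delta^{2s})|f|^2_{s,\gamma/2}$ (Lemmas \ref{difference-term-complication}, \ref{difference-term-complication-2}). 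Crucially, the weighted function $U_\delta^{\beta/2}f$ is no longer in $\ker^\perp$, and the paper handles this with the projection estimate \eqref{delta-small-projection-to-not-projection}. The output is the key inequality \eqref{key-estimate-order-s-2s}, which has $|f|^2_{L^2_{\gamma/2+s}}$ (not $|f|^2_{L^2}$) on the positive side --- precisely the right norm to cancel against the rough coercivity estimate \eqref{known-estimate}. Your proposal misses both the weight commutator machinery and the projection issue, which are the substantive content of the proof.
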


\begin{rmk}\label{gamma-can-be-negative} Theorem  \ref{micro-dissipation} indeed holds for any $\gamma \leq 0$ even though  we only need it for $-5 \leq \gamma \leq 0$.
	The analysis can also be applied to the case when $\gamma>0$.
\end{rmk}	
	
	For later use, set
	$\lambda_{s,\gamma} := c^{2^{k}-1} s^{2^{k}-1} \lambda_0^{2^{k}}$. 
	The following remark is about the lower bound of $\lambda_{s,\gamma}$.

\begin{rmk}\label{lower-bound-on-lambda}	 For $
	-2s - 3\gamma \leq -2s$, as $-\gamma/s \leq 2+ 3/s$,	then
	\ben \label{lower-bound-on-lambda-explicit}
	\lambda_{s,\gamma} \geq c^{2^{\lceil 2+3/s \rceil}-1} s^{2^{\lceil 2+3/s \rceil}-1} \lambda_0^{2^{\lceil 2+3/s \rceil}} := \lambda_{s}.
	\een
	Here $\lceil a \rceil$ is the smallest integer no less than $a$. Note that $\lambda_{s}$ is non-decreasing with respect to $s$.
	\end{rmk}

	Motivated by  \cite{alexandre2012boltzmann,he2021boltzmann} about the exchanging the 
	kinetic component  in the cross-section with a weight of velocity on the function, we can
	 apply an induction argument based on the estimate for the case $\gamma=0$
	obtained in Lemma \ref{gamma-0-pure-coercivity} and the gain of moment of order $s$. As the first step, we reduce the case when $-s\le \gamma<0$  to  $\gamma=0$, and then by induction  to cover the whole range $-5 \leq \gamma \leq 0$. For this, we first introduce a scaled weight function 
	\ben\label{specialweightfun} U_{\delta}(v) := W(\delta v) = (1+ \delta^{2}|v|^{2})^{1/2} \geq \max\{\delta|v|,1\}. \een
	Here $\delta$ is a sufficiently small parameter to be chosen later. We now give two technical lemmas on some integrals involving $U_{\delta}$.

	\begin{lem}\label{difference-term-complication} Let $-5 \leq \alpha, \beta<0<s, \delta <1$.
		Recall $b^{s}(\theta)  := (1-s)
		\sin^{-2-2s}\frac{\theta}{2} \mathrm{1}_{0 \leq \theta \leq \pi/2}$.
		Set 
		\beno 
		X(\beta,\delta) := \delta^{-\beta}\left((U^{\beta/2}_{\delta})^{\prime}(U^{\beta/2}_{\delta})^{\prime}_{*}-
		U^{\beta/2}_{\delta}(U^{\beta/2}_{\delta})_{*}\right)^{2}.
		\eeno
		Then for $v \in \R^{3}$, 
		\ben \label{chi-W-difference-part} \int b^{s}(\theta) |v-v_{*}|^{\alpha} \psi^{\eta}(|v-v_{*}|)
		X(\beta,\delta) \mu_{*} \mathrm{d}\sigma \mathrm{d}v_{*}
		\lesssim  s^{-1} \delta^{2s} \eta^{\alpha} \langle v \rangle^{\alpha+\beta+2s}.\een
	\end{lem}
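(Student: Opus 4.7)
The proof proceeds by bounding $(F'-F)^2$ where $F(v,v_*) := U_\delta^{\beta/2}(v) U_\delta^{\beta/2}(v_*)$, then integrating against the angular kernel $b^s$, the power $|v-v_*|^\alpha \psi^\eta$, and the Maxwellian $\mu_*$. Setting $G(w) := U_\delta^{\beta/2}(w)$, I first note that $0 \leq G(w) \leq 1$ for $\beta<0$ since $U_\delta(w)\geq 1$. The plan is to decompose
$$F(v',v_*') - F(v,v_*) = G(v_*')[G(v') - G(v)] + G(v)[G(v_*') - G(v_*)],$$
so that $(F'-F)^2 \leq 2[G(v')-G(v)]^2 + 2 G^2(v) [G(v_*') - G(v_*)]^2$, and treat the two pieces separately.

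For the first piece I would apply first-order Taylor expansion $G(v')-G(v) = \int_0^1 \nabla G(v(\kappa))\cdot(v'-v)\,d\kappa$. A direct computation shows $|\nabla G(w)| \leq C\delta\, U_\delta^{\beta/2-1}(w)$, which combined with $|v'-v| = |v-v_*|\sin(\theta/2)$ yields
$$[G(v')-G(v)]^2 \leq C\delta^2|v-v_*|^2\sin^2(\theta/2)\sup_\kappa U_\delta^{\beta-2}(v(\kappa)).$$
A region decomposition in $v_*$ (either $|v_*|\leq |v|/2$, where the relevant angular region gives $|v(\kappa)|\gtrsim |v|$ and so $\sup_\kappa U_\delta^{\beta-2}(v(\kappa)) \lesssim U_\delta^{\beta-2}(v)$; or $|v_*|>|v|/2$, where exponential decay of $\mu_*$ absorbs everything) together with $\int b^s(\theta)\sin^2(\theta/2)\,d\sigma = 4\pi\cdot 2^{s-1}$ from \eqref{mean-momentum-transfer} and $|v-v_*|^{\alpha+2}\psi^\eta \lesssim \eta^\alpha \langle v-v_*\rangle^\alpha(|v|^2+|v_*|^2)$ produces a first-piece contribution of size $\delta^{-\beta+2}\eta^\alpha U_\delta^{\beta-2}(v)\langle v\rangle^{\alpha+2}$. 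The crux is then the weight identity
$$\delta^{2-\beta} U_\delta^{\beta-2}(v)\langle v\rangle^{\alpha+2} \lesssim \delta^{2s}\langle v\rangle^{\alpha+\beta+2s},$$
which I verify by splitting on $\delta\langle v\rangle\leq 1$ versus $\delta\langle v\rangle\geq 1$. In the first case $U_\delta\sim 1$ and the inequality reduces to $(\delta\langle v\rangle)^{2-\beta-2s}\lesssim 1$, which holds since $2-\beta-2s\geq 0$. In the second case, $U_\delta(v)\sim\delta\langle v\rangle$; the powers of $\delta$ cancel and what remains is $\langle v\rangle^{-2s}\lesssim \delta^{2s}$, which is exactly $\delta\langle v\rangle\geq 1$.

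For the second piece $G(v)[G(v_*')-G(v_*)]$ I apply the analogous Taylor argument in $v_*$. The obstruction here is that $v_*(\iota) = v_*+\iota(v_*'-v_*)$ may pass arbitrarily close to the origin, so that only $\sup_\iota U_\delta^{\beta-2}(v_*(\iota))\leq 1$ is available, and the $I_1$-type weight calibration fails in the regime $\delta|v|\geq 1$. The remedy is to exploit the prefactor $G(v)^2 = U_\delta^\beta(v)$, which supplies the missing $\langle v\rangle^\beta$ decay when $\delta|v|\geq 1$; in the complementary regime $\delta|v|\leq 1$ the same weight-calibration argument as above closes the estimate. To integrate over $\sigma$ in the regime $\delta|v|\geq 1$, I further split at the threshold $\sin(\theta/2)=1/(\delta|v|)$: for small angles Taylor gives $\delta^2|v-v_*|^2\sin^2(\theta/2)$, while for large angles I invoke the elementary interpolation $|G(w)-1|\leq C\min(\delta^2|w|^2,1)\leq C(\delta|w|)^{2p}$ (valid for any $p\in[0,2]$, proved by comparing the two bounds according to whether $\delta|w|\leq 1$). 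Taking $p$ slightly greater than $s$ keeps $\int b^s\sin^{2p}(\theta/2)\,d\sigma\lesssim (1-s)/(p-s)$, and balancing the cutoff value between the two bounds yields the claimed $\delta^{2s}$ scaling together with the $s^{-1}$ factor.

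The main obstacle is exactly this second piece: because the Taylor remainder in the $v_*$-direction does not enjoy a $U_\delta(v)$-weight, one must combine the prefactor $G(v)^2$, a threshold splitting of $\sigma$, and the Taylor/interpolation interpolation to recover the correct decay in $\delta|v|$ uniformly across $(0,\infty)$. Keeping track of the $s$-dependence of every angular integral and applying the weight calibration in both $\delta\langle v\rangle\lessgtr 1$ regimes then produces the claimed bound $\lesssim s^{-1}\delta^{2s}\eta^\alpha\langle v\rangle^{\alpha+\beta+2s}$.
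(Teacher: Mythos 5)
Your proposal is correct in substance, but it takes a genuinely different route from the paper for the $v$-difference term, so let me compare the two. The paper keeps the front factor $(U^\beta_\delta)'_*$ in the decomposition $A_1 = \delta^{-\beta}(U^\beta_\delta)'_*\big[(U^{\beta/2}_\delta)'-U^{\beta/2}_\delta\big]^2$, uses the weaker derivative bound $|\nabla U^a_\delta|\lesssim|a|\delta\, U^a_\delta$ to get $U^\beta_\delta(v(\kappa))$ in the Taylor remainder, and then pairs $(U^\beta_\delta)'_*$ with $U^\beta_\delta(v(\kappa))$ via kinetic-energy conservation $|v'_*|^2+|v(\kappa)|^2\sim|v|^2+|v_*|^2$ (inequality \eqref{natural-bound-using-conservation}), producing the clean bound $\lesssim\langle v\rangle^\beta$ which is then closed by a split on $\delta|v-v_*|\lessgtr 1$. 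You instead discard $(U^\beta_\delta)'_*\leq 1$ and compensate by taking the sharper $|\nabla U^a_\delta|\lesssim|a|\delta\,U^{a-1}_\delta$, giving $U^{\beta-2}_\delta(v(\kappa))$, which you then control by a phase-space region decomposition using the $\theta\leq\pi/2$ geometry (on $\{|v_*|\ll|v|\}$ the interpolation point stays away from the origin) plus Gaussian decay of $\mu_*$ on the complement. Because your remainder carries the full $U^{\beta-2}_\delta(v)$, the weight calibration $\delta^{2-\beta}U^{\beta-2}_\delta(v)\langle v\rangle^{\alpha+2}\lesssim\delta^{2s}\langle v\rangle^{\alpha+\beta+2s}$ closes uniformly, avoiding the paper's secondary splitting for this piece. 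The paper's conservation trick is arguably cleaner since it never invokes geometry of the interpolation segment; your route trades that for a sharper derivative estimate plus geometry. For the $v_*$-difference piece the two arguments essentially coincide: the unprimed $U^\beta_\delta(v)$ supplies the needed decay and one balances Taylor against the trivial bound via an angular threshold, paying the $s^{-1}$ from $\int_{\sin(\theta/2)\geq T}b^s\,\mathrm{d}\sigma\lesssim s^{-1}T^{-2s}$.

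A few small inaccuracies to patch. The threshold $|v_*|\leq|v|/2$ is too loose for the claim $|v(\kappa)|\gtrsim|v|$: from $(v(\kappa)-v_*)\cdot(v-v_*)\geq\tfrac12|v-v_*|^2$ (valid since $\theta\leq\pi/2$) one only gets $|v(\kappa)|\geq\tfrac12|v|-\tfrac32|v_*|$, so you need $|v_*|\leq c|v|$ with $c<1/3$ (say $c=1/4$); the remainder region is still handled by $\mu_*$. The stated range $p\in[0,2]$ for $\min(x^2,1)\leq x^{2p}$ should be $p\in[0,1]$ (for $x\leq 1$ the inequality needs $2p\leq 2$), but you only use $p$ slightly above $s<1$, so nothing breaks. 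Finally, the interpolation is an unnecessary detour for the large-angle regime: the trivial bound $\big[(U^{\beta/2}_\delta)'_*-(U^{\beta/2}_\delta)_*\big]^2\leq 4$ combined with the tail estimate on $\int b^s$ already gives the stated $s^{-1}(\delta|v-v_*|)^{2s}$ scaling when the threshold is taken at $\sin(\theta/2)=1/(\delta|v-v_*|)$; your choice $1/(\delta|v|)$ also works but requires the extra case analysis in $v_*$ that you sketch.
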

	\begin{proof}
		First, it is straightforward  to check
		\beno
		|v-v_{*}|^{\alpha} \psi^{\eta}(|v-v_{*}|) \mu_{*} \lesssim \eta^{\alpha} \langle 	v-v_{*}\rangle^{\alpha} \mu_{*} \lesssim \eta^{\alpha} \langle v \rangle^{\alpha} 
		\langle v_{*}\rangle^{|\alpha|} \mu_{*}
		\lesssim \eta^{\alpha} \langle v \rangle^{\alpha} 
		\mu_{*}^{\f12}.
		\eeno
		Note that 
		\beno X(\beta,\delta) \lesssim \delta^{-\beta}(U^{\beta}_{\delta})^{\prime}_{*}\left((U^{\beta/2}_{\delta})^{\prime}-(U^{\beta/2}_{\delta})\right)^{2}
		+\delta^{-\beta}U^{\beta}_{\delta}\left((U^{\beta/2}_{\delta})^{\prime}_{*}-(U^{\beta/2}_{\delta})_{*}\right)^{2}
		:= A_{1} + A_{2}. \eeno
		We only estimate $A_{1}$ because $A_{2}$ can be estimated similarly.
		
		For $a \leq 0$, one has
		\ben \label{derivative-bounds}
		|\nabla U^{a}_{\delta}| \lesssim  |a| \delta U^{a}_{\delta},
		\een
		which gives
		\beno \big((U^{a}_{\delta})^{\prime}-(U^{a}_{\delta})\big)^{2}  = \big|\int_{0}^{1} (\nabla U^{a}_{\delta})(v(\kappa))\cdot (v^{\prime}-v)\mathrm{d}\kappa\big|^{2}
		\lesssim a^{2} \delta^{2} \sin^{2}\f{\theta}{2}|v-v_{*}|^{2}  \int_{0}^{1}  U^{2a}_{\delta}(v(\kappa))\mathrm{d}\kappa.
		\eeno
		Thanks to $|v^{\prime}_{*}|^{2}+|v(\kappa)|^{2} \sim |v|^{2}+|v_{*}|^{2}$, we have
		\ben \label{natural-bound-using-conservation}
		\delta^{-2a}(U^{2a}_{\delta})^{\prime}_{*} U^{2a}_{\delta}(v(\kappa)) \lesssim \langle v \rangle^{2a},
		\een 
		which gives
		\ben \label{order-2-out}
		\delta^{-2a}(U^{2a}_{\delta})^{\prime}_{*}\big((U^{a}_{\delta})^{\prime}-(U^{a}_{\delta})\big)^{2} \lesssim a^{2} \delta^{2} \sin^{2}\f{\theta}{2}|v-v_{*}|^{2} \langle v \rangle^{2a}.
		\een

		Divide the integral $ \int b^{s}(\theta) |v-v_{*}|^{\alpha}
		A_{1} \mu_{*} \mathrm{d}\sigma \mathrm{d}v_{*}$ into two parts: $\mathcal{I}_{\leq}$ and $\mathcal{I}_{\geq}$ corresponding to $\delta|v-v_{*}|\leq 1$ and  $\delta|v-v_{*}|\geq 1$.
		When $\delta|v-v_{*}|\leq 1$, using \eqref{order-2-out} for $a = \beta/2$,
		we have
		\beno
		\mathcal{I}_{\leq} \lesssim  \eta^{\alpha}  \delta^{2} \langle v \rangle^{\alpha+\beta} \int \mathrm{1}_{|v-v_{*}|\leq \delta^{-1}} b^{s}(\theta) \sin^{2}\f{\theta}{2} |v-v_{*}|^{2} \mu^{\f12}_{*} \mathrm{d}\sigma \mathrm{d}v_{*}
		\lesssim \eta^{\alpha}\delta^{2s} \langle v \rangle^{\alpha+\beta+2s}.
		\eeno
		
		When $\delta|v-v_{*}|\geq 1$. We further divide  the integral $\mathcal{I}_{\geq}$ into two parts: 
		$\mathcal{I}_{\geq,\leq}$ and $\mathcal{I}_{\geq,\geq}$ corresponding to
		$\sin\f{\theta}{2} \leq \delta^{-1}|v-v_{*}|^{-1}$ and
		$\sin\f{\theta}{2} \geq \delta^{-1}|v-v_{*}|^{-1}$ respectively. 
		By using \eqref{order-2-out} for $a = \beta/2$,
		we have
		\beno
		\mathcal{I}_{\geq, \leq} \lesssim \eta^{\alpha}  \delta^{2} \langle v \rangle^{\alpha+\beta} \int  \mathrm{1}_{\sin\f{\theta}{2} \leq \delta^{-1}|v-v_{*}|^{-1}} b^{s}(\theta) \sin^{2}\f{\theta}{2} |v-v_{*}|^{2} \mu^{\f12}_{*} \mathrm{d}\sigma \mathrm{d}v_{*}
		\lesssim \eta^{\alpha}\delta^{2s} \langle v \rangle^{\alpha+\beta+2s}.
		\eeno
		
		For the remainder with $\sin\f{\theta}{2} \geq \delta^{-1}|v-v_{*}|^{-1}$, it holds from \eqref{natural-bound-using-conservation} that
		$A_{1} \lesssim \langle v \rangle^{\beta}$ and
		\beno
		\mathcal{I}_{\geq, \geq} &\lesssim&  \eta^{\alpha}  \langle v \rangle^{\alpha+\beta} \int \mathrm{1}_{\sin\f{\theta}{2} \geq \delta^{-1}|v-v_{*}|^{-1}} b^{s}(\theta)  \mu^{\f12}_{*} \mathrm{d}\sigma \mathrm{d}v_{*}
		\lesssim  s^{-1} \eta^{\alpha}\delta^{2s} \langle v \rangle^{\alpha+\beta+2s}.
		\eeno
		Combining the above estimates completes the proof of the lemma. 
	\end{proof}

	\begin{lem}\label{difference-term-complication-2} Let $-5 \leq \alpha, \beta<0<s, \eta, \delta <1$.
		Set $\varphi_{\beta,\delta}:= (1-U^{\beta/2}_{\delta})\mu^{\f{1}{2}}$, then
		\beno \mathcal{I} := \int b^{s}(\theta) |v-v_{*}|^{\alpha} \psi^{\eta}(|v-v_{*}|)
		(\varphi_{\beta,\delta}^{\prime} -  \varphi_{\beta,\delta})^{2} \mathrm{d}\sigma \mathrm{d}v
		\lesssim s^{-1} \delta^2 \eta^{\alpha} \langle  v_{*} \rangle^{\alpha+2s}.\eeno
	\end{lem}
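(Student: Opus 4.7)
The plan is to reduce the lemma to a uniform pointwise bound on $\varphi_{\beta,\delta}$ and its gradient, and then apply the angular-decomposition strategy already used in Lemma \ref{difference-term-complication}. The key observation is that $\varphi_{\beta,\delta}$ is pointwise of size $\delta^2$ with Gaussian decay. Specifically, I claim that for $\beta \leq 0$ and $\delta \leq 1$,
\beno
|\varphi_{\beta,\delta}(v)| + |\nabla \varphi_{\beta,\delta}(v)| \lesssim \delta^2\, \mu^{1/4}(v).
\eeno
To establish this, I would split into $\delta|v|\leq 1$ and $\delta|v|\geq 1$: in the first region, the Taylor expansion $U_\delta^{\beta/2} = 1 + O(\delta^2|v|^2)$ and the identity $\nabla U_\delta^{\beta/2} = (\beta/2)\delta^2 v\,U_\delta^{\beta/2-2}$ (together with $U_\delta^{\beta/2-2}\leq 1$ since $\beta\leq 0$) yield both bounds with the powers of $|v|$ absorbed by $\mu^{1/2}$; in the second region the super-exponential decay $\mu^{1/2}(v) \leq e^{-1/(8\delta^2)}\mu^{1/4}(v) \lesssim \delta^2 \mu^{1/4}(v)$ more than compensates for the trivial bounds $|1-U_\delta^{\beta/2}|\leq 1$ and $|\nabla U_\delta^{\beta/2}|\leq |\beta|\delta/2$.

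Next, I decompose the angular integral at the threshold $T := \min\{1,|v-v_*|^{-1}\}$. On the region $\sin(\theta/2) \leq T$, a first-order Taylor expansion gives
\beno
(\varphi_{\beta,\delta}' - \varphi_{\beta,\delta})^2 \lesssim |v-v_*|^2 \sin^2(\theta/2) \int_0^1|\nabla\varphi_{\beta,\delta}(v(\kappa))|^2 d\kappa \lesssim \delta^4 |v-v_*|^2\sin^2(\theta/2) \int_0^1 \mu^{1/2}(v(\kappa))\,d\kappa,
\eeno
and Lemma \ref{usual-change} with $\iota=0$ allows me to replace $\mu^{1/2}(v(\kappa))$ by $\mu^{1/2}(v)$ at the cost of a bounded Jacobian. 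On $\sin(\theta/2)\geq T$, I use the crude bound $(\varphi'-\varphi)^2 \leq 2(\varphi')^2 + 2\varphi^2 \lesssim \delta^4\big(\mu^{1/2}(v')+\mu^{1/2}(v)\big)$ together with the standard change $v\to v'$ on the primed piece. The two angular integrations give $\int b^s\sin^2(\theta/2)\mathrm{1}_{\sin(\theta/2)\leq T}\,d\sigma \lesssim T^{2-2s}$ and $\int b^s\mathrm{1}_{\sin(\theta/2)\geq T}\,d\sigma \lesssim s^{-1} T^{-2s}$; with the chosen $T$, both contributions end up multiplied by $|v-v_*|^{2s}$ after combining with the $|v-v_*|^2$ from Taylor (respectively the trivial $1$ for the large-angle piece).

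Thus we are left to bound $\delta^4 s^{-1}\int |v-v_*|^{\alpha+2s}\psi^\eta(|v-v_*|)\mu^{1/2}(v)\,dv$. The factor $|v-v_*|^\alpha\psi^\eta \lesssim \eta^\alpha\langle v-v_*\rangle^\alpha$ from \eqref{v-minus-vstar-lower-no-singularity} upgrades the integrand to $\eta^\alpha\langle v-v_*\rangle^{\alpha+2s}\mu^{1/2}(v)$, and the standard Gaussian convolution estimate $\int\langle v-v_*\rangle^{\alpha+2s}\mu^{1/2}(v)\,dv \lesssim \langle v_*\rangle^{\alpha+2s}$ completes the proof, noting $\delta^4\leq\delta^2$ for $\delta\leq 1$. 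The main obstacle is establishing the pointwise estimate in the first step uniformly in $\delta\in(0,1]$: both the Taylor regime and the super-exponential regime must be shown to produce the same $\delta^2$ gain, and the sign constraint $\beta\leq 0$ is essential to ensure $U_\delta^{\beta/2-2}\leq 1$; once this is secured, the remainder of the argument is a direct adaptation of the machinery already developed in Lemma \ref{difference-term-complication}.
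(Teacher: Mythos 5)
Your proof is correct and follows essentially the same route as the paper: a pointwise bound on $\varphi_{\beta,\delta}$ and its gradient, a combination of Taylor expansion and a crude bound controlled by $\min\{1,\sin^2\tfrac{\theta}{2}|v-v_*|^2\}$ (which you implement as an explicit angular threshold split instead of invoking Lemma \ref{symbol}, an equivalent computation), the change of variable $v\to v(\kappa)$ with bounded Jacobian, and the Gaussian convolution bound. One remark: you work harder than necessary on the pointwise estimate. The paper only uses $|\varphi_{\beta,\delta}|+|\nabla\varphi_{\beta,\delta}|\lesssim |\beta|\,\delta\,\mu^{1/4}$ (which is immediate from \eqref{derivative-bounds} and \eqref{lclfact1}, no splitting into $\delta|v|\lessgtr 1$ and no super-exponential-decay argument); since $(\varphi_{\beta,\delta}'-\varphi_{\beta,\delta})^2$ brings a square, this already produces the required factor $\delta^2$. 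Your sharper $\delta^2$ pointwise bound is true (and the sign $\beta\le 0$ plus the two-regime analysis you describe do establish it uniformly in $\delta\in(0,1]$), but it yields $\delta^4$, which you then discard via $\delta^4\le\delta^2$ — so the "main obstacle" you identify is not actually one.
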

	\begin{proof} By \eqref{derivative-bounds}, we get
		\ben \label{derivative-bounds-2}
		|\varphi_{\beta,\delta}| \lesssim  |\beta| \delta \mu^{\f{1}{4}}, \quad |\nabla \varphi_{\beta,\delta}| \lesssim  |\beta| \delta \mu^{\f{1}{4}}.
		\een
		From this, we first have
		\ben \label{varphi-uniform-ub-by-mu}
		(\varphi_{\beta,\delta}^{\prime} -  \varphi_{\beta,\delta})^{2}  \lesssim
		\delta^2  ((\mu^{1/2})^{\prime} + \mu^{1/2}).	\een
		By 1st-order Taylor expansion,
		we get
		\ben \label{varphi--by-taylor}
		(\varphi_{\beta,\delta}^{\prime} -  \varphi_{\beta,\delta})^{2}  \lesssim
		\delta^2 \sin^{2}\frac{\theta}{2} |v-v_{*}|^{2} \int_{0}^{1} \mu^{1/2}(v(\kappa))\mathrm{d}\kappa.\een
		combing these two estimates gives
		\ben \label{two-cases-combine}
		(\varphi_{\beta,\delta}^{\prime} -  \varphi_{\beta,\delta})^{2}   \lesssim \delta^2 \min \{1, \sin^{2}\frac{\theta}{2} |v-v_{*}|^{2}\}  \int_{0}^{1}  ((\mu^{1/2})^{\prime} + \mu^{1/2} + \mu^{1/2}(v(\kappa)))
		\mathrm{d}\kappa.
		\een
		By Lemma \ref{usual-change} and Lemma \ref{symbol},
		we get
		\beno 
		\mathcal{I} \lesssim \delta^2  \int b^{s}(\theta)  |v-v_{*}|^{\alpha} \psi^{\eta}(|v-v_{*}|) \min \{1, \sin^{2}\frac{\theta}{2} |v-v_{*}|^{2}\} \mu^{\f12} \mathrm{d}\sigma \mathrm{d}v
		\\ \lesssim s^{-1}\delta^2 \eta^{\alpha} 
		\int \langle v- v_{*} \rangle^{\alpha+2s}  \mu^{\f12} \mathrm{d}v 
		\lesssim s^{-1} \delta^2 \eta^{\alpha} \langle  v_{*} \rangle^{\alpha+2s},
		\eeno
	which completes the proof of the lemma.	
	\end{proof}

	We are now ready to  prove the coercivity estimate of $\mathcal{L}^{s,\gamma, \eta}$ for $-5 \leq \gamma \leq 0$ by induction.

	\begin{proof}[Proof of Theorem \ref{micro-dissipation}] In the proof, $0<\eta = \min \{\eta_1, \eta_2\}<1$ is a fixed constant. Hence,  $1 \leq \eta^{\gamma} \leq \eta^{-5}$ for $-5 \leq \gamma \leq 0$. 
		For brevity, set
		\beno J^{s,\gamma, \eta}(f) := 4 \langle  
		\mathcal{L}^{s,\gamma, \eta}f, f\rangle,\quad\mathbb{A}(g, h):=(g_{*}h + g h_{*} - g^{\prime}_{*}h^{\prime} - g^{\prime} h^{\prime}_{*}), \quad\mathbb{F}(g, h):= \mathbb{A}^{2}(g, h).\eeno 
		With these notations, we have
		$J^{s,\gamma, \eta}(f) = \int B^{s,\gamma, \eta} \mathbb{F}(\mu^{1/2}, f) \mathrm{d}V.$
		We divide the proof  into five steps.
		
		{\it Step 1: Localization of $\mathcal{J}^{s,\gamma,\eta}(f)$.}
		By  \eqref{specialweightfun} and if $a \le 0$,  we get \beno |v - v_{*}|^{-a}
		\leq  \max\{1, 2^{-a-1}\} \delta^{a} ((\delta|v|)^{-a}+(\delta|v_{*}|)^{-a})
		\leq 2 \max\{1, 2^{-a-1}\} \delta^{a} U^{-a}_{\delta}(v)U^{-a}_{\delta}(v_{*}),  \eeno
		which gives
		\beno |v - v_{*}|^{a} \geq C_a \delta^{-a} U^{a}_{\delta}(v)U^{a}_{\delta}(v_{*}), \eeno
		where $C_a = \f12 \min \{1, 2^{a+1}\}$.
		With $\gamma= \alpha + \beta, \gamma \leq \alpha, \beta \leq 0$, we have
		\beno \mathcal{J}^{s,\gamma,\eta}(f) \geq C_\beta \delta^{-\beta}\int
		B^{s,\alpha,\eta}  U^{\beta}_{\delta}(U^{\beta}_{\delta})_{*}
		\mathbb{F}(\mu^{1/2}, f) \mathrm{d}V.
		\eeno
		By setting  $h= U^{\beta/2}_{\delta}, \phi=\mu^{\f12}$ and commuting the weight function $U^{\beta}_{\delta}(U^{\beta}_{\delta})_{*}$ with $\mathbb{F}(\cdot,\cdot)$, we have
		\ben \label{move-inside} \nonumber
		&& U^{\beta}_{\delta}(U^{\beta}_{\delta})_{*}\mathbb{F}(\mu^{1/2}, f)  
		\\  &=&
		h^{2}_{*}h^{2}\mathbb{F}(\phi, f)
		=
		\left(h h_{*}\left(\phi_{*}f + \phi f_{*}\right) -
		h h_{*}\left(\phi^{\prime}_{*}f^{\prime} + \phi^{\prime} f^{\prime}_{*}\right)\right)^{2}
		\nonumber \\&=& \left(h h_{*}\left(\phi_{*}f + \phi f_{*}\right) -
		h^{\prime} h^{\prime}_{*}\left(\phi^{\prime}_{*}f^{\prime} + \phi^{\prime} f^{\prime}_{*}\right)
		+ \left(h^{\prime} h^{\prime}_{*}-
		h h_{*}\right)
		\left(\phi^{\prime}_{*}f^{\prime} + \phi^{\prime} f^{\prime}_{*}\right)
		\right)^{2}
		\nonumber \\  &\geq&  \frac{1}{2}  \left(h h_{*}\left(\phi_{*}f + \phi f_{*}\right) -
		h^{\prime} h^{\prime}_{*}\left(\phi^{\prime}_{*}f^{\prime} + \phi^{\prime} f^{\prime}_{*}\right) \right)^{2}\nonumber -\left(h^{\prime} h^{\prime}_{*}-
		h h_{*}\right)^{2}
		\left(\phi^{\prime}_{*}f^{\prime} + \phi^{\prime} f^{\prime}_{*}\right)^{2}\nonumber
		\\  &=&  \frac{1}{2}  \mathbb{F}(h \phi, h f)
		-\left(h^{\prime} h^{\prime}_{*}-
		h h_{*}\right)^{2}
		\left(\phi^{\prime}_{*}f^{\prime} + \phi^{\prime} f^{\prime}_{*}\right)^{2}.
		\een
		Thus, 
		\ben \label{move-inside-sym} \mathcal{J}^{s,\gamma,\eta}(f) &\geq&
		\frac{1}{2} C_{\beta} \delta^{-\beta}
		\int B^{s,\alpha,\eta}  \mathbb{F}(U^{\beta/2}_{\delta}\mu^{\f12}, U^{\beta/2}_{\delta}f) \mathrm{d}V
		\\  && - C_{\beta} \delta^{-\beta}\int B^{s,\alpha,\eta} \left(h^{\prime} h^{\prime}_{*}-
		h h_{*}\right)^{2}
		\left(\phi^{\prime}_{*}f^{\prime} + \phi^{\prime} f^{\prime}_{*}\right)^{2} \mathrm{d}V. \nonumber
		\een
		We further rewrite 
		$\mathbb{F}(U^{\beta/2}_{\delta}\mu^{\f12}, U^{\beta/2}_{\delta}f)$ 
		as  $\mathbb{F}(\mu^{\f12}, U^{\beta/2}_{\delta}f)$ plus some correction terms. That is,
		\ben \label{move-outside-asym} \mathbb{F}(U^{\beta/2}_{\delta}\mu^{\f12}, U^{\beta/2}_{\delta}f)  &=& \mathbb{A}^{2}(U^{\beta/2}_{\delta}\mu^{\f12}, U^{\beta/2}_{\delta}f)
		\nonumber \\&=&\left( \mathbb{A}(\mu^{\f12}, U^{\beta/2}_{\delta}f)  - \mathbb{A}\big((1-U^{\beta/2}_{\delta})\mu^{\f12}, U^{\beta/2}_{\delta}f\big) \right)^{2}
		\nonumber \\&\geq&\frac{1}{2} \mathbb{A}^{2}(\mu^{\f12}, U^{\beta/2}_{\delta}f)
		-\mathbb{A}^{2}\big((1-U^{\beta/2}_{\delta})\mu^{\f12}, U^{\beta/2}_{\delta}f\big)
		\nonumber \\&=&\frac{1}{2}\mathbb{F}(\mu^{\f12}, U^{\beta/2}_{\delta}f) -  \mathbb{F}((1-U^{\beta/2}_{\delta})\mu^{\f12}, U^{\beta/2}_{\delta}f).
		\een
		By symmetry and noting $\phi=\mu^{\f12}$, 
		we have
		\ben \label{sym-term}  \int B^{s,\alpha,\eta} \left(h^{\prime} h^{\prime}_{*}-
		h h_{*}\right)^{2}
		\left(\phi^{\prime}_{*}f^{\prime} + \phi^{\prime} f^{\prime}_{*}\right)^{2} \mathrm{d}V 
		\leq 4\int B^{s,\alpha,\eta} \left(h^{\prime} h^{\prime}_{*}-
		h h_{*}\right)^{2}\mu_{*}f^{2} \mathrm{d}V.
		\een
		By \eqref{move-inside-sym}, \eqref{move-outside-asym} and \eqref{sym-term}, we get
		\beno  \mathcal{J}^{s,\gamma,\eta}(f) &\geq&
		\frac{1}{4} C_{\beta}\delta^{-\beta}
		\int B^{s,\alpha,\eta} \mathbb{F}(\mu^{\f12}, U^{\beta/2}_{\delta}f) \mathrm{d}V
		\\  && - \frac{1}{2} C_{\beta} \delta^{-\beta}\int B^{s,\alpha,\eta} \mathbb{F}((1-U^{\beta/2}_{\delta})\mu^{\f12}, U^{\beta/2}_{\delta}f) \mathrm{d}V
		\\   && - 4 C_{\beta} \delta^{-\beta}\int B^{s,\alpha,\eta} \left(h^{\prime} h^{\prime}_{*}-
		h h_{*}\right)^{2} \mu_{*}f^{2} \mathrm{d}V
		\\	&:=& \frac{1}{4} C_{\beta} J_{1}^{\alpha, \beta} - \frac{1}{2} C_{\beta} J_{2}^{\alpha, \beta} -  4 C_{\beta} J_{3}^{\alpha, \beta}.
		\eeno
		
		We always choose $\beta$ in the range
		$-1 \leq -s \leq \beta \leq 0$. It is straightforward to check that
		 $C_{\beta} = \f{1}{2}$. Noting that $J_{1}^{\alpha, \beta} = \delta^{-\beta}  \mathcal{J}^{s,\alpha,\eta}(U^{\beta/2}_{\delta}f)$, we have
		\ben \label{separate-key-part} \mathcal{J}^{s,\gamma,\eta}(f)
		\geq  \frac{1}{8} \delta^{s}  \mathcal{J}^{s,\alpha,\eta}(U^{\beta/2}_{\delta}f) - \frac{1}{4}  J_{2}^{\alpha, \beta} -  2 J_{3}^{\alpha, \beta}. 
		\een

		{\it Step 2: Upper bound of $J_2^{\alpha, \beta}$.} 
		For simplicity of notations, set $\varphi_{\beta,\delta}= (1-U^{\beta/2}_{\delta})\mu^{\f12}, \psi_{\beta,\delta}=U^{\beta/2}_{\delta}f$. Then
		\ben  \label{J2-part}
		J_{2}^{\alpha, \beta} =
		\delta^{-\beta}\int B^{s,\alpha,\eta} \mathbb{F}(\varphi_{\beta,\delta}, \varphi_{\beta,\delta}) \mathrm{d}V
		 \lesssim \delta^{-\beta}
		\mathcal{N}^{s, \alpha, \eta}(\varphi_{\beta,\delta}, \varphi_{\beta,\delta}) + \delta^{-\beta}
		\mathcal{N}^{s, \alpha, \eta}(\varphi_{\beta,\delta},\varphi_{\beta,\delta}).
		\een
		By \eqref{derivative-bounds}, for $a \leq 0$, 
		\ben\label{lclfact1} 0 \leq  1-U^{a}_{\delta}(v) = U^{a}_{\delta}(0)-U^{a}_{\delta}(v)\lesssim |a|\delta |v|.\een
		By \eqref{lclfact1}, we have
		\ben \label{f-gamma-upper}
		(\varphi_{\beta,\delta}^{2})_{*} = ((1-U^{\beta/2}_{\delta})\mu^{\f12})^{2}_{*} \lesssim \delta^{2} \mu^{\f12}_{*}.
		\een
		From this and Prop. \ref{functional-N-up-geq-eta}, by using the fact that $\delta^{-\beta/2}U^{\beta/2}_{\delta} \in S^{\beta/2}_{1,0}$ is a radial  symbol of order $\beta/2$, we obtain
		\ben \label{J21-line2}
		\delta^{-\beta}
		\mathcal{N}^{s, \alpha, \eta}(\varphi_{\beta,\delta}, \varphi_{\beta,\delta}) \lesssim 
		\delta^{2}\delta^{-\beta} \mathcal{N}^{s, \alpha, \eta}(\mu^{1/4},\psi_{\beta,\delta})
		\lesssim s^{-1} \delta^{2}\delta^{-\beta}|U^{\beta/2}_{\delta}f|^{2}_{s,\alpha/2} \lesssim s^{-1} \delta^{2}|f|^{2}_{s,\gamma/2}.
		\een
		By  Lemma \ref{difference-term-complication-2}, we then have
		\ben \label{J22}
		J_{2,2}\lesssim s^{-1}
		\delta^2 \eta^{\alpha} 
		\delta^{-\beta} \int (U^{\beta/2}_{\delta}f)^{2}_{*} \langle  v_{*} \rangle^{\alpha+2s} \mathrm{d}v_{*}
		\lesssim s^{-1} \delta^{2}|W_{\gamma/2+s}f|^{2}_{L^{2}}
		\lesssim s^{-1} \delta^{2}|f|^{2}_{s,\gamma/2}.
		\een
		Plugging the estimates \eqref{J21-line2} and \eqref{J22} into \eqref{J2-part}, we get
		\ben \label{estimate-J-2}
		J_{2}^{\alpha, \beta} \lesssim s^{-1} \delta^{2}|f|^{2}_{s,\gamma/2}.
		\een

		{\it Step 3: Upper bound of $J_2^{\alpha, \beta}$.} 
		Lemma \ref{difference-term-complication} gives
		\ben \label{estimate-J-3}
		J_{3}^{\alpha, \beta} = \int B^{s,\alpha,\eta} 	X(\beta,\delta) \mu_{*}f^{2} \mathrm{d}V
		\lesssim s^{-1} \delta^{2s} |f|^{2}_{L^{2}_{\gamma/2+s}}.
		\een

		{\it Step 4: The case $-s \leq \gamma <0$.} We take $\alpha = 0, \beta =\gamma$.
		Recall $\mathcal{J}^{s,\alpha,\eta}(U^{\beta/2}_{\delta}f)  = 4 \langle \mathcal{L}^{s,\alpha,\eta}U^{\gamma/2}_{\delta}f, U^{\gamma/2}_{\delta}f\rangle$. 
		By Lemma \ref{gamma-0-pure-coercivity}, we have
		\ben \label{out-truncation-projection}
		\mathcal{J}^{s,0,\eta}(U^{\beta/2}_{\delta}f) \geq 4 
		\lambda_0 |(\mathbf{I}-\mathbf{P})U^{\gamma/2}_{\delta}f|^{2}_{s,0} \geq 4 
		\lambda_0 |(\mathbf{I}-\mathbf{P})U^{\gamma/2}_{\delta}f|^{2}_{L^{2}_{s}}
		. \een
		We claim that there exists $\delta_1>0$ such that if $0<\delta \leq \delta_1$, then for any $-5/2 \leq a \leq 0$,
		\ben \label{delta-small-projection-to-not-projection}
		|(\mathbf{I}-\mathbf{P})U^{a}_{\delta}f|^{2}_{L^{2}_{s}} \geq \f{1}{4} |f|^{2}_{L^{2}_{s+a}}.
		\een
		This yields
		\ben \label{J1-0-gamma}
		\mathcal{J}^{s,0,\eta}(U^{\beta/2}_{\delta}f) \geq  \lambda_0
		|f|^{2}_{L^{2}_{s+\gamma/2}}
		. \een
		
		We now prove \eqref{delta-small-projection-to-not-projection}. 
		Note that
		\beno 
		|(\mathbf{I}-\mathbf{P})(U^{a}_{\delta}f)|^{2}_{L^{2}_s} \geq  \frac{1}{2} |U^{a}_{\delta}f|^{2}_{L^{2}_s} - |\mathbf{P}(U^{a}_{\delta}f)|^{2}_{L^{2}_s}.
		\eeno
		Since $\delta \leq 1$ and $a \leq 0$,  $U^{a}_{\delta} \geq W_{a}$. Hence, 
		\ben \label{leading-term} |U^{a}_{\delta}f|^{2}_{L^{2}_s} \geq |f|^{2}_{L^{2}_{s+a}}.\een
		We now estimate $|\mathbf{P}(U^{a}_{\delta}f)|_{L^{2}}$ for $f \in \ker^{\perp}$. 
		Since
		\beno  \mathbf{P}(U^{a}_{\delta}f) = \sum_{i=1}^{5} e_{i}\int e_{i}U^{a}_{\delta}f \mathrm{d}v
		= \sum_{i=1}^{5} e_{i}\int e_{i}(U^{a}_{\delta}-1)f \mathrm{d}v ,\eeno
		then 
		\beno \big|\int e_{i}(U^{a}_{\delta}-1)f \mathrm{d}v\big| \lesssim |a| \delta |\mu^{\f18}f|_{L^{2}}.
		\eeno
		Therefore, 
		\ben \label{large-velocity-part-12}
		|\mathbf{P}(U^{a}_{\delta}f)|^{2}_{L^{2}_s} 
		\lesssim a^{2} \delta^{2} |\mu^{\f18}f|^{2}_{L^{2}}
		\lesssim \delta^{2} |f|^{2}_{L^{2}_{s+a}}
		.\een
		By combining the estimates \eqref{leading-term} and \eqref{large-velocity-part-12}
		and choosing $\delta_1$ suitably small, 
		we obtain \eqref{delta-small-projection-to-not-projection}.
		
		By plugging the estimates  \eqref{J1-0-gamma},   \eqref{estimate-J-2},  \eqref{estimate-J-3} into \eqref{separate-key-part},  for any $-s \leq \gamma \leq 0$ and
		$0<\delta \leq \delta_1$, for some generic constants $0< C_{1}$ and $ 1 \leq  C_{2}$,
		we have
		\ben \label{key-estimate-order-s-2s}   \mathcal{J}^{s,\gamma,\eta}(f) \geq C_{1} \delta^{s}|f|^{2}_{L^{2}_{\gamma/2+s}}-
		C_{2}s^{-1}\delta^{2s}|f|^{2}_{s,\gamma/2}. \een
		It is straightforward to check from above that 
		$C_{1} = \lambda_0 /8$.
		Recalling Theorem \ref{strong-coercivity},
		for some generic constants $0< C_{3} \leq 1 \leq  C_{4}$, we have
		\ben \label{known-estimate}    \mathcal{J}^{s,\gamma,\eta}(f) \geq C_{3}  |f|^{2}_{s,\gamma/2}- C_{4} |f|^{2}_{L^{2}_{\gamma/2+s}}.\een
		We can assume $\frac{C_{1}C_{3}}{2C_{4}C_{2}s^{-1}} \leq \delta_1^s$. Otherwise, we can take a larger $C_4$.

		Then the  combination $\eqref{known-estimate} \times C_{5}\delta^{2s} + \eqref{key-estimate-order-s-2s}$ gives
		\ben \label{combination}  (1+C_{5}\delta^{2s}) \mathcal{J}^{s,\gamma,\eta}(f) \geq (
		C_{1}-C_{4}C_{5}\delta^{s})\delta^{s}|f|^{2}_{L^{2}_{\gamma/2+s}}
		+(C_{3}C_{5}-C_{2}s^{-1})\delta^{2s}|f|^{2}_{s,\gamma/2}.  \een
		We can then  take $C_{5}$ large enough such that $C_{3}C_{5}-C_{2}s^{-1} \geq C_{2}s^{-1}$, for example  $C_{5} = 2C_{2}s^{-1}/C_{3} \geq 2$.
		And  then we choose  $\delta$ small enough such that $C_{1}-C_{4}C_{5}\delta^{s} \geq 0$, for example  $\delta^{s}=\frac{C_{1}}{C_{4}C_{5}} = \frac{C_{1}C_{3}}{2C_{4}C_{2}s^{-1}} \leq \delta_1^s$. Note that we can assume 
		$C_{5}\delta^{2s} = 
		\frac{C_{1}^2 C_3}{2 C_{4}^2 C_{2}s^{-1}} 
		\leq \delta_1^s \frac{C_{1}}{C_{4}}
		\leq 1$. Otherwise, we can take a larger $C_4$.
		Thus,  we get
		\ben \label{conclusion1-gamma-minus2}   \mathcal{J}^{s,\gamma,\eta}(f) \geq \f{1}{2}C_{2}s^{-1} \delta^{2s}|f|^{2}_{s,\gamma/2} = \f{1}{2} C_{2}s^{-1}\left(\frac{C_{1}C_{3}}{2C_{2}s^{-1}C_{4}}\right)^{2}  |f|^{2}_{s,\gamma/2}
		=   \f{1}{2} \left(\frac{C_{3}}{16(C_{2}s^{-1})^{1/2}C_{4}}\right)^{2} \lambda_0^2   |f|^{2}_{s,\gamma/2}
		. \een
		Recalling $\mathcal{J}^{s,\gamma,\eta}(f)  = 4 \langle \mathcal{L}^{s,\gamma,\eta}f,  f\rangle$, we get for $-s \leq \gamma \leq 0$ that
		\beno
		\langle \mathcal{L}^{s,\gamma,\eta}f,  f\rangle \geq c_s \lambda_0^2   |f|^{2}_{s,\gamma/2},
		\eeno
		where 
		\beno
		c_s = \f{1}{8} \left(\frac{C_{3}}{16C_{2}^{1/2}C_{4}}\right)^{2} s = 2^{-11} C_{3}^2 C_{2}^{-1}C_{4}^{-2} s.
		\eeno
		
		{\it Step 5: The case $-ks \leq  \gamma< -(k-1) s$  for $k \geq 2$.}
		In the previous step, starting from the $\gamma=0$ case
		by using	Lemma \ref{gamma-0-pure-coercivity}	
		where the constant is $\lambda_0$,
		to derive the $-s \leq \gamma<0$ case, we have a new constant $c_s \lambda_0^2$. 
		For
		$-ks \leq  \gamma< -(k-1) s$, we can choose 
		$\alpha = -(k-1)s$ and $\beta = \gamma + (k-1)s$ to apply the result of $\langle \mathcal{L}^{s,\alpha,\eta}f,  f\rangle$.
		Note that the constants $C_2, C_3, C_4$
		are generaic with respect to  $\alpha, \beta$ satisfying $\alpha + \beta = \gamma, -s \leq \beta \leq 0, -5 \leq \gamma \leq \alpha \leq 0$. 
		$\lambda_n = c_{s} \lambda_{n-1}^2$ implies that
		\beno
		\lambda_k = c_{s}^{2^{k}-1}  \lambda_0^{2^{k}}.
		\eeno
		For $-ks \leq  \gamma< (k-1) s$, by induction 
		we will have
		\ben \label{general-gamma-2-minus3}   \mathcal{J}^{s,\gamma,\eta}(f) \geq \lambda_k  |f|^{2}_{s,\gamma/2}. \een
		This completes the proof of the theorem by taking $c= 2^{-11} C_{3}^2 C_{2}^{-1}C_{4}^{-2}$.
	\end{proof}

	\section{Commutator estimates and weighted estimates}
	\label{Commutator-Estimate}
	In this section, we will study the  commutator estimates between the collision operators and the weight function $W_{l}$ for obtaining the  energy estimates in weighted Sobolev space. In this section, unless indicated otherwise, $0<s<1, -3-2s < \gamma \leq 0$ and $g,h,f$ are suitable smooth functions.

	\subsection{Commutator estimates for $Q^{s,\gamma}$}

	We first prove the following proposition.
	
	\begin{prop}\label{commutatorQepsilon}
		Let $l , l_1 \geq 0$. 
		Recall $C_{\delta,s,\gamma} = \delta^{-\f12}s^{-1} (\gamma+2s+3)^{-1}$.
		Let  $(a_{1},a_{2})=(\f{3}{2} + \delta, s)$ or $(0, \f{3}{2} + \delta)$, then
		\beno |\langle Q^{s,\gamma}(\mu^{1/2}g,W_{l}h)-W_{l}Q^{s,\gamma}(\mu^{1/2}g,h), f\rangle| &\lesssim_{l,l_1}
		s^{-1/2}  |\mu^{1/16} g|_{L^{2}} |h|_{L^{2}_{l+\gamma/2}} |f|_{s,\gamma/2} 
		\\ & \quad \quad+ C_{\delta, s, \gamma}  |\mu^{1/64} g|_{H^{a_1}}
		|h|_{H^{a_2}_{-l_1}} |f|_{s,\gamma/2}. \eeno
	\end{prop}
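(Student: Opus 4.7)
The plan is to observe first that since $W_l$ depends only on $v$ and is independent of $\sigma$ and $v_*$, the commutator collapses to a single trilinear expression:
\begin{align*}
\langle Q^{s,\gamma}(\mu^{1/2}g, W_l h) - W_l Q^{s,\gamma}(\mu^{1/2}g, h), f\rangle = \int B^{s,\gamma}(\mu^{1/2}g)'_*\bigl(W_l(v') - W_l(v)\bigr) h(v') f(v)\,\mathrm{d}V,
\end{align*}
so the key small factor driving the gain is $W_l(v')-W_l(v)$. Following the strategy of Sections \ref{Upper-Bound-Estimate-near} and \ref{Upper-Bound-Estimate-away}, I would split the kernel as $B^{s,\gamma} = B^{s,\gamma}_1 + B^{s,\gamma,1}$ via \eqref{kernel-split} with $\eta = 1$, treating the two regions separately, each producing one of the two bounds in the statement.

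For the regular piece $B^{s,\gamma,1}$ (where $|v-v_*|\gtrsim 1$), a single-order Taylor expansion $W_l(v')-W_l(v) = \int_0^1 \nabla W_l(v(\kappa))\cdot(v'-v)\,\mathrm{d}\kappa$ combined with $|\nabla W_l|\lesssim W_{l-1}$ and $|v'-v| = |v-v_*|\sin(\theta/2)$ absorbs one order of angular singularity. After the change of variable $v\to v(\kappa)$ (Lemma \ref{usual-change}), the integrand has the structure of an $I^{s,\gamma,1}$-type operator with reduced angular singularity, to which Proposition \ref{upforI-ep-ga-et} applies; redistribution of the $W_{l-1}$-weight between the three factors (placing total weight $l+\gamma/2$ on $h$) produces the $s^{-1/2}|\mu^{1/16}g|_{L^2}|h|_{L^2_{l+\gamma/2}}|f|_{s,\gamma/2}$ bound, the $s^{-1/2}$ arising from a Cauchy--Schwarz split of the $\sigma$-integral at a threshold that remains summable as $s\to 1^-$.

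For the singular piece $B^{s,\gamma}_1$ (where $|v-v_*|\leq 4/3$), I would use the full second-order Taylor expansion
\begin{align*}
W_l(v') - W_l(v) = \nabla W_l(v)\cdot(v'-v) + \int_0^1 (1-\kappa)\,\nabla^2 W_l(v(\kappa)):(v'-v)\otimes(v'-v)\,\mathrm{d}\kappa.
\end{align*}
The order-two remainder carries the extra $\sin^2(\theta/2)$ needed to place the integrand in the range treated by Propositions \ref{ubqepsilon-singular} and \ref{I-less-eta-upper-bound}, producing the $C_{\delta,s,\gamma}$-term. The order-one piece is handled by writing $h(v') = h(v) + (h(v')-h(v))$ and $(\mu^{1/2}g)'_* = (\mu^{1/2}g)_* + ((\mu^{1/2}g)'_* - (\mu^{1/2}g)_*)$: on the doubly-isotropic piece $h(v)(\mu^{1/2}g)_*$, the identity \eqref{cancell1} converts the $\int b^{s}(v'-v)\,\mathrm{d}\sigma$ factor into $\int b^{s}\sin^2(\theta/2)(v_*-v)\,\mathrm{d}\sigma$, recovering the missing order-2 cancellation; the three cross terms are controlled by Cauchy--Schwarz against the functional $\mathcal{N}^{s,\gamma}_1$ whose bound $\lesssim C_{s,\gamma}|f|^2_{s,\gamma/2}$ is provided by \eqref{functional-N-lower-eta}.

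The main technical obstacle will be this first-order Taylor term in the singular region, where both $h(v')$ and $(\mu^{1/2}g)'_*$ depend on $\sigma$, obstructing direct use of \eqref{cancell1}. The double-splitting above is the standard workaround, but demands careful bookkeeping of Gaussian weights: the extra polynomial factors produced by the splitting must be absorbed by $\mu^{1/2}_*$, which explains why the Gaussian weight on $g$ degrades from $\mu^{1/16}$ in the regular term to $\mu^{1/64}$ in the singular term, and why the weights on $h$ have opposite signs ($l+\gamma/2$ versus $-l_1$) in the two bounds, the negative weight being the flexible one used to match the constraints on $(a_1,a_2)$ inherited from Proposition \ref{ubqepsilon-singular}.
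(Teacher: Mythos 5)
Your representation of the commutator is correct and pre-post equivalent to the paper's form $\int B^{s,\gamma}(W_l-W'_l)\mu^{1/2}_*g_*hf'\,\mathrm{d}V$, and your singular-region plan is a workable (if more laborious) alternative to what the paper does. But your plan for the regular region contains a genuine gap.

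The paper's first split is \emph{not} on the kernel; it is on $f'=(f'-f)+f$, producing $\mathcal{A}_1$ (carrying $f'-f$) and $\mathcal{A}_2$ (carrying $f$). Only afterwards is each of these split by kernel region. For $\mathcal{A}_1$, Cauchy--Schwarz isolates $\int B\mu^{1/2}_*(f'-f)^2\,\mathrm{d}V=\mathcal{N}^{s,\gamma}_1(\mu^{1/4},f)$ or $\mathcal{N}^{s,\gamma,1}(\mu^{1/4},f)$ from $\int B(W_l-W'_l)^2\mu^{1/2}_*g^2_*h^2\,\mathrm{d}V$. Because $(W_l-W'_l)^2\lesssim|v-v_*|^2\sin^2(\theta/2)\,W_{2l-2}(v)\cdots$, squaring the first-order difference already supplies the full $\sin^2(\theta/2)$ cancellation; no second-order Taylor expansion of $W_l$ and no use of \eqref{cancell1} is needed there, and the $s^{-1/2}$ comes from $\big(\mathcal{N}^{s,\gamma,1}(\mu^{1/4},f)\big)^{1/2}\lesssim s^{-1/2}|f|_{s,\gamma/2}$.

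Your plan keeps $f$ at the isotropic point, performs only a first-order Taylor expansion of $W_l$ on the regular piece, and then invokes Proposition \ref{upforI-ep-ga-et}. This does not close: after one order of Taylor the $\sigma$-integrand still carries $b^s(\theta)\sin(\theta/2)\sim\theta^{-1-2s}$, and $\int b^s(\theta)\sin(\theta/2)\,\mathrm{d}\sigma=8\pi(1-s)\int_0^{1/\sqrt{2}}t^{-2s}\,\mathrm{d}t$ diverges for $s\geq\tfrac12$, which is exactly the regime of interest here. So there is no ``$I^{s,\gamma,1}$-type operator with reduced angular singularity'' to estimate, and the structure does not match Proposition \ref{upforI-ep-ga-et} anyway (that operator carries a Gaussian difference at $v_*$, not a polynomial-weight difference at $v$, yields $|h|_{s,\gamma/2}$ rather than $|h|_{L^2_{l+\gamma/2}}$, and produces $s^{-1}$, not $s^{-1/2}$). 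Your appeal to ``a Cauchy--Schwarz split of the $\sigma$-integral at a threshold'' leaves unspecified what gets squared; the only version that works is the one the paper uses, which presupposes the $f'-f$ difference has already been introduced.

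Two further remarks. First, for $\mathcal{A}_2$ the paper does not do a double-split on $h$ and $(\mu^{1/2}g)_*$; it reorganizes $\mathcal{A}_2=\pm\langle Q^{s,\gamma}(\mu^{1/2}g,hf),W_l\rangle$ so $W_l$ becomes the test function, applies Lemma \ref{a-special-term} on the singular kernel, and applies second-order Taylor on $W_l$ together with \eqref{cancell1} on the regular kernel (where $\int b^s\sin^2(\theta/2)\,\mathrm{d}\sigma$ is finite by \eqref{mean-momentum-transfer}). Your double-splitting is valid in the singular region but is more bookkeeping than the paper needs. Second, the degradation of the Gaussian exponent on $g$ from $\mu^{1/16}$ to $\mu^{1/64}$ is driven primarily by the repeated halvings from the Cauchy--Schwarz inequality and from inequalities like $((\mu^{1/4})'_*+\mu^{1/4}_*)^2\lesssim(\mu^{1/2})'_*+\mu^{1/2}_*$ and \eqref{mu-weight-result}, not chiefly by absorbing polynomial factors, though the latter is also present.
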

	\begin{proof}
		Note that
		\beno &&\langle Q^{s,\gamma}(\mu^{1/2}g,W_{l}h)-W_{l}Q^{s,\gamma}(\mu^{1/2}g,h), f\rangle  = \int B^{s,\gamma}(W_{l}-W^{\prime}_{l})\mu^{1/2}_{*}g_{*} h f^{\prime} \mathrm{d}V
		\\&=& \int B^{s,\gamma}(W_{l}-W^{\prime}_{l})\mu^{1/2}_{*}g_{*} h (f^{\prime}-f) \mathrm{d}V
		+\int B^{s,\gamma}(W_{l}-W^{\prime}_{l})\mu^{1/2}_{*}g_{*} h f \mathrm{d}V
		:=\mathcal{A}_{1} + \mathcal{A}_{2}.
		\eeno

		{\it Step 1: Estimate of $\mathcal{A}_{1}$.}
		We write $\mathcal{A}_{1} = \mathcal{A}_{1}^{\leq} + \mathcal{A}_{1}^{\geq}$ where $\mathcal{A}_{1}^{\leq}$ and $\mathcal{A}_{1}^{\geq}$ contains $B^{s,\gamma}_{1}$ and $B^{s,\gamma,1}$ respectively.
		By Cauchy-Schwarz inequality, we have
		\beno |\mathcal{A}_{1}^{\leq}| \leq \big(\int B^{s,\gamma}_{1} \mu^{1/2}_{*}(f^{\prime}-f)^{2} \mathrm{d}V\big)^{1/2}
		\big(\int B^{s,\gamma}_{1}(W_{l}-W^{\prime}_{l})^{2}\mu^{1/2}_{*} g^{2}_{*} h^{2}  
		\mathrm{d}V\big)^{1/2}
		:=(\mathcal{A}^{\leq}_{1,1})^{1/2}(\mathcal{A}^{\leq}_{1,2})^{1/2}. \eeno
		By \eqref{functional-N-lower-eta} and taking $\eta =1$, we have 	
		\beno \mathcal{A}^{\leq}_{1,1} =
		\mathcal{N}^{s,\gamma}_{1}(\mu^{1/4},f) \lesssim  C_{s,\gamma} |f|^{2}_{s,\gamma/2}.
		\eeno

		For any $\iota \in [0,1]$,
		note that 
		\ben \label{diff-wl-l-geq-0}
		|W^{\prime}_{l}-W_{l}| \lesssim l W_{l-1}(v) \left(\mathrm{1}_{0 \leq l < 1} W_{2-2l}(v_{*}(\iota)) + \mathrm{1}_{l \geq 1}W_{l-1}(v_{*}(\iota))\right) |v-v_{*}|\sin\frac{\theta}{2}.
		\een
		This and   \eqref{mean-momentum-transfer} give
		\ben \label{sigma-Wl-difference-leq-1}
		|v-v_{*}|^{\gamma}\psi_{1}(|v-v_{*}|)	\int \mu^{\f14}(v_{*}(\iota))
		b^{s}(\theta)(W_{l}-W^{\prime}_{l})^{2}\mathrm{d}\sigma \lesssim_{l}  \mathrm{1}_{|v-v_{*}| \leq 4/3} |v-v_{*}|^{\gamma + 2}
		\mu^{\f{1}{32}}(v) \mu^{\f{1}{32}}(v_*) 	
		, \een
		which yields for $b_1, b_2 \geq 0, b_1+b_2 =\f{3}{2}+\delta$ that
		\beno \mathcal{A}^{\leq}_{1,2}  
		\lesssim_{l} \delta^{-1} (\gamma+5)^{-1} 
		|\mu^{1/64}g|^{2}_{H^{b_1}} |\mu^{1/64}h|^{2}_{H^{b_2}}.
		\eeno
		Combining these two estimates gives
		$|\mathcal{A}_{1}^{\leq}| \lesssim_{l} C_{\delta,s,\gamma}  |\mu^{1/128}g|_{H^{b_1}} |\mu^{1/128}h|_{H^{b_2}}|f|_{s,\gamma/2}$.
		
		For $\mathcal{A}_{1}^{\geq}$, by  Cauchy-Schwarz inequality, we have
		\beno |\mathcal{A}_{1}^{\geq}| \leq \big(\int B^{s,\gamma,1} \mu^{1/2}_{*}(f^{\prime}-f)^{2} \mathrm{d}V\big)^{1/2}
		\big(\int B^{s,\gamma,1}(W_{l}-W^{\prime}_{l})^{2}\mu^{1/2}_{*} g^{2}_{*} h^{2}  \mathrm{d}V\big)^{1/2}
		:= (\mathcal{A}^{\geq}_{1,1})^{1/2}(\mathcal{A}^{\geq}_{1,2})^{1/2}. \eeno
		By \eqref{functional-N-lower-eta} and  taking $\eta =1$, we have 	
		\beno \mathcal{A}^{\geq}_{1,1} =
		\mathcal{N}^{s,\gamma,1}(\mu^{1/4},f) \lesssim  s^{-1} |f|^{2}_{s,\gamma/2}.
		\eeno
	 \eqref{sigma-Wl-difference-leq-1} and  \eqref{mean-momentum-transfer} give
		\ben \label{sigma-Wl-difference-geq-1}
		|v-v_{*}|^{\gamma}\psi^{1}(|v-v_{*}|)	\int \mu^{\f14}(v_{*}(\iota))
		b^{s}(\theta)(W_{l}-W^{\prime}_{l})^{2}\mathrm{d}\sigma \lesssim_{l}  \mathrm{1}_{|v-v_{*}| \geq 3/4} W_{2l+\gamma}
		, \een	
		which implies $ \mathcal{A}^{\geq}_{1,2}  
		\lesssim_{l}  |g|^{2}_{L^{2}} |h|^{2}_{L^{2}_{l+\gamma/2}}.
		$ Hence,
		$$|\mathcal{A}_{1}^{\geq}| \lesssim_{l} s^{-1/2}  |\mu^{1/8}g|_{L^{2}} |h|_{L^{2}_{l+\gamma/2}} |f|_{s,\gamma/2}.
		$$

		{\it Step 2: Estimate of $\mathcal{A}_{2}$.}
	Note that
		\beno
		|\mathcal{A}_{2}| = 
		| \langle Q^{s,\gamma}(\mu^{1/2}g, h f), W_{l} \rangle |
		= | \langle Q^{s,\gamma,1}(\mu^{1/2}g, h f), W_{l} \rangle + \langle Q^{s,\gamma}_1(\mu^{1/2}g, h f), W_{l} \rangle |.
		\eeno
		According to the proof of Lemma \ref{a-special-term}, for  $(a_{1},a_{2})=(\f{3}{2} + \delta, s)$ or $(0, \f{3}{2} + \delta)$, for $l, l_1 \geq 0$,
		it holds that
		\beno 
		|\langle Q^{s,\gamma}_{1}(\mu^{\f12}f_1, f_2 f_3), W_{l}\rangle|
		\lesssim_{l,l_1} C_{\delta, s,\gamma}  |\mu^{\f14}f_1|_{H^{a_1}}
		|f_2|_{H^{a_2}_{-l_1}} |f_3|_{H^{s}_{\gamma/2}},
		\eeno
		which gives
		\beno 
		|\langle Q^{s,\gamma}_1(\mu^{1/2}g, h f), W_{l} \rangle|
		\lesssim_{l,l_1} C_{\delta, s, \gamma}  |\mu^{\f14} g|_{H^{a_1}}
		|h|_{H^{a_2}_{-l_1}} |f|_{H^{s}_{\gamma/2}}.	\eeno

		By applying Taylor expansion \eqref{Taylor1}
		to 
		$W^{\prime}_{l} - W_{l}$,  we have
		\beno \langle Q^{s,\gamma,1}(\mu^{1/2}g, h f), W_{l} \rangle = \int B^{s,\gamma,1}(\nabla W_{l})(v)\cdot(v^{\prime}-v)\mu^{1/2}_{*}g_{*} h f \mathrm{d}V
		\\+\int B^{s,\gamma,1}(1-\kappa)(\nabla^{2}W_{l})(v(\kappa)):(v^{\prime}-v)\otimes(v^{\prime}-v)\mu^{1/2}_{*}g_{*} h f \mathrm{d}\kappa \mathrm{d}V
		:= \mathcal{A}_{2,1}+\mathcal{A}_{2,2}.\eeno
		By \eqref{cancell1}, $|(\nabla W_{l})(v)| \lesssim l \langle v \rangle^{l-1}$ and  \eqref{mean-momentum-transfer},  
		we have
		\beno |\mathcal{A}_{2,1}|  \lesssim   l \int \mathrm{1}_{|v-v_{*}|\geq 3/4} |v-v_{*}|^{\gamma+1}\langle v \rangle^{l-1}\mu^{1/2}_{*}|g_{*} h f| \mathrm{d}V
		\lesssim_{l}    |\mu^{1/16}g|_{L^{2}}|h|_{L^{2}_{l+\gamma/2}}|f|_{L^{2}_{\gamma/2}}.\eeno
		Since $|(\nabla^{2}W_{l})(v(\kappa))| \lesssim_{l} \langle v(\kappa) \rangle^{l-2} \lesssim_{l} \langle v \rangle^{l-2}\langle v_{*} \rangle^{|l-2|}$, by  \eqref{mean-momentum-transfer} again,
		we have
		\beno |\mathcal{A}_{2,2}| \lesssim_{l} \int \mathrm{1}_{|v-v_{*}|\geq 3/4} |v-v_{*}|^{\gamma+2}
		\langle v \rangle^{l-2} \mu^{1/4}_{*}|g_{*} h f |\mathrm{d}V
		\lesssim_{l} |\mu^{1/16}g|_{L^{2}}|h|_{L^{2}_{l+\gamma/2}}|f|_{L^{2}_{\gamma/2}}.
		\eeno
		Combining the above estimates completes the proof of the proposition.
	\end{proof}

	\subsection{Commutator estimates for $I^{s,\gamma}$} We now prove the following
	proposition.
	
	\begin{prop}\label{commutatorforI} Let $\gamma > -5, l \geq 0$,  $(a_{1},a_{2})=(\f{3}{2} + \delta, 0)$ or $(0, \f{3}{2} + \delta)$. Then
		\beno
		|\langle I^{s,\gamma}(g,W_{l}h;\beta) - W_{l} I^{s,\gamma}(g,h;\beta), f\rangle| &\lesssim_{l} & s^{-1/2}|g|_{L^{2}}|h|_{L^{2}_{l+\gamma/2}}|f|_{L^{2}_{\gamma/2+s}} 
		\\ && +
		\frac{1}{\gamma+5}
		|\mu^{1/64}g|_{H^{a_{1}}}|\mu^{1/64}h|_{H^{a_{2}}} |\mu^{1/16}f|_{L^{2}}.
		\eeno
	\end{prop}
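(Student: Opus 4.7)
The quantity to control is
$$\mathcal{K} := \langle I^{s,\gamma}(g, W_l h;\beta) - W_l I^{s,\gamma}(g, h;\beta), f\rangle = \int B^{s,\gamma}(\Phi_* - \Phi'_*)\, g'_*\, h'\, (W_l' - W_l)\, f \, \mathrm{d}V, \quad \Phi := \partial_\beta \mu^{1/2},$$
because $(W_l h)' - W_l h' = (W_l' - W_l) h'$. Following Subsection~\ref{ope-spl}, the plan is to split $\mathcal{K} = \mathcal{K}_1 + \mathcal{K}_2$ via the kernel decomposition $B^{s,\gamma} = B^{s,\gamma}_1 + B^{s,\gamma,1}$, estimating $\mathcal{K}_1$ (singular region $|v-v_*|\leq 4/3$) by the Sobolev term with prefactor $(\gamma+5)^{-1}$ and $\mathcal{K}_2$ (regular region $|v-v_*|\geq 3/4$) by the Lebesgue term with prefactor $s^{-1/2}$.

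For $\mathcal{K}_1$, I would apply the first-order Taylor expansion \eqref{Taylor1-order-1} to \emph{both} differences: $\Phi_* - \Phi'_* = \int_0^1(\nabla\Phi)(v_*(\iota))\cdot(v'_*-v_*)\,\mathrm{d}\iota$ and $W_l' - W_l = \int_0^1 (\nabla W_l)(v(\kappa))\cdot(v'-v)\,\mathrm{d}\kappa$. This produces an extra factor $|v-v_*|^2 \sin^2\frac{\theta}{2}$, with $\nabla\Phi$ contributing full Gaussian decay at $v_*(\iota)$ (which dominates $\mu^{1/16}(v)\mu^{1/16}(v_*)$ by \eqref{mu-weight-result}) and $|\nabla W_l(v(\kappa))|$ giving a polynomial weight handled as in \eqref{diff-wl-l-geq-0}. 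After the change of variables $v\to v(\kappa), v_*\to v_*(\iota)$ provided by Lemma~\ref{usual-change} and absorbing $\sin^2\frac{\theta}{2}$ into $b^s(\theta)$ (the resulting sphere integral is bounded by \eqref{mean-momentum-transfer}), the estimate reduces to a bilinear convolution with kernel $|v-v_*|^{\gamma+2}\mathrm{1}_{|v-v_*|\leq 4/3}$, whose $L^1$ norm is $\lesssim (\gamma+5)^{-1}$ precisely when $\gamma>-5$. I would then apply the dyadic/Sobolev-endpoint argument of Proposition~\ref{ubqepsilon-singular} to distribute the regularity as $a_1 + a_2 = 3/2 + \delta$ and conclude
$$|\mathcal{K}_1| \lesssim_l \frac{1}{\gamma+5}\,|\mu^{1/64}g|_{H^{a_1}}\,|\mu^{1/64}h|_{H^{a_2}}\,|\mu^{1/16}f|_{L^2}.$$

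For $\mathcal{K}_2$, I would apply Cauchy--Schwarz with the pairing $\bigl[(\Phi_*-\Phi'_*)h'\bigr]\cdot\bigl[g'_*(W_l'-W_l)f\bigr]$ to split $|\mathcal{K}_2|\leq \mathcal{J}_1^{1/2}\mathcal{J}_2^{1/2}$. For $\mathcal{J}_1 = \int B^{s,\gamma,1}(\Phi_*-\Phi'_*)^2 (h')^2\,\mathrm{d}V$, use $(\Phi_*-\Phi'_*)^2 \lesssim 2\Phi_*^2 + 2(\Phi'_*)^2$, apply the pre--post collision change of variables to the primed piece to recover $h$ from $h'$, and invoke Proposition~\ref{functional-N-up-geq-eta} together with Lemma~\ref{symbol} to get $\mathcal{J}_1 \lesssim s^{-1}|h|_{L^2_{l+\gamma/2}}^2$. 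For $\mathcal{J}_2 = \int B^{s,\gamma,1}(g'_*)^2(W_l'-W_l)^2 f^2\,\mathrm{d}V$, Taylor-expand $W_l' - W_l$ via \eqref{diff-wl-l-geq-0}, absorb $\sin^2\frac{\theta}{2}|v-v_*|^2$ into the kernel so that $\int b^s \sin^2\frac{\theta}{2}\,\mathrm{d}\sigma \lesssim 1$, and use the polynomial tails $\mathrm{1}_{|v-v_*|\geq 3/4}\langle v-v_*\rangle^{\gamma+2}$; a change of variable $v_* \to v'_*$ moving $g'_*$ to $g_*$ then produces $\mathcal{J}_2 \lesssim |g|_{L^2}^2|f|_{L^2_{\gamma/2+s}}^2$. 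Combining yields the $s^{-1/2}$ Lebesgue term.

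The main obstacle is the threshold $\gamma > -5$ in $\mathcal{K}_1$: as $\gamma \to -5^+$ the local factor $|v-v_*|^{\gamma+2}$ becomes non-integrable, so the \emph{double} Taylor cancellation (coming simultaneously from $\Phi_*-\Phi'_*$ and $W_l'-W_l$) is essential. This is the structural difference from the single-Taylor estimate used for $Q^{s,\gamma}$ in Proposition~\ref{commutatorQepsilon}, and it must be dovetailed carefully with the Sobolev-endpoint trick of Proposition~\ref{ubqepsilon-singular} in order to produce the explicit $(\gamma+5)^{-1}$ prefactor; the rest is a routine extension of the techniques already developed in Sections~\ref{Upper-Bound-Estimate-near}--\ref{Upper-Bound-Estimate-away}.
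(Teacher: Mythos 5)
Your high-level split of $\mathcal{K}$ into a singular part ($|v-v_*|\lesssim 1$, prefactor $(\gamma+5)^{-1}$ from $|v-v_*|^{\gamma+2}$) and a regular part ($|v-v_*|\gtrsim 1$, prefactor $s^{-1/2}$ from the angular average) matches the paper, and you correctly identify that the Taylor expansion of $W_l'-W_l$ together with the Gaussian factor is the source of the improved threshold $\gamma>-5$. However, the execution has two genuine gaps.

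First, the regular region $\mathcal{K}_2$. Your Cauchy--Schwarz pairing misses a crucial factorization. The paper writes $(\mu^{1/2})'_* - \mu^{1/2}_* = \big[(\mu^{1/4})'_* + \mu^{1/4}_*\big]\cdot\big[(\mu^{1/4})'_* - \mu^{1/4}_*\big]$ \emph{before} Cauchy--Schwarz and pairs $\big[(\mu^{1/4})'_* - \mu^{1/4}_*\big]\, f'$ against $\big[(\mu^{1/4})'_* + \mu^{1/4}_*\big]\,(W_l - W'_l)\,g_*\,h$. The point is that the Gaussian \emph{sum} factor travels together with $(W_l-W'_l)$: it supplies exactly the $\mu^{1/4}(v_*(\iota))$ that is needed in \eqref{sigma-Wl-difference-geq-1} to absorb the polynomial weight $W_{2-2l}(v_*(\iota))$ (or $W_{l-1}(v_*(\iota))$) coming from the Taylor expansion \eqref{diff-wl-l-geq-0}. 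Your pairing $\big[(\Phi_*-\Phi'_*)h'\big]\cdot\big[g'_*(W_l'-W_l)f\big]$ leaves $\mathcal{J}_2$ with no Gaussian protection, so the $v_*(\iota)$-weight from Taylor-expanding $W_l$ forces a weighted norm on $g$, not $|g|_{L^2}$. It also routes the $W_l$ weight onto $g,f$ whereas the target puts it on $h$, so the bookkeeping cannot match. Finally, the step $(\Phi_*-\Phi'_*)^2 \lesssim 2\Phi_*^2 + 2(\Phi'_*)^2$ destroys the angular cancellation: each of $\int B^{s,\gamma,1}\Phi_*^2(h')^2\mathrm{d}V$ and $\int B^{s,\gamma,1}(\Phi'_*)^2(h')^2\mathrm{d}V$ is infinite since $\int_{\mathbb{S}^2} b^s\,\mathrm{d}\sigma=\infty$; one must retain the factor $\min\{1,|v-v_*|\sin\frac{\theta}{2}\}$ in order to invoke Lemma~\ref{symbol}, which your trivial bound discards.

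Second, the singular region $\mathcal{K}_1$. The paper applies the \emph{same} Cauchy--Schwarz pairing there, so that $\mathcal{A}^{\leq}_1 = \mathcal{N}^{s,\gamma}_1(f,\mu^{1/4})$ involves only $f$ and the fixed function $\mu^{1/4}$, and $\mathcal{A}^{\leq}_2$ is a genuine bilinear convolution in $g,h$ with kernel $\mathrm{1}_{|v-v_*|\leq 4/3}|v-v_*|^{\gamma+2}$. Your direct double Taylor expansion without Cauchy--Schwarz leaves an integrand evaluated simultaneously at five distinct points --- $g$ at $v'_*$, $h$ at $v'$, $f$ at $v$, $\nabla\Phi$ at $v_*(\iota)$, $\nabla W_l$ at $v(\kappa)$ --- and Lemma~\ref{usual-change} only moves $v(\kappa)\to v$ and $v_*(\iota)\to v_*$, simultaneously dragging the others to nontrivial images; it does not reduce to the bilinear convolution you claim. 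The Cauchy--Schwarz step is what decouples the evaluation points, and it is not optional here.
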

	\begin{proof}
		We only give proof to the case when $\beta=0$ because  the argument can applied to the
		case when  $|\beta|>0$ as in \cite{he2021boltzmann}.
		
		By the definition \eqref{I-ep-ga-geq-eta} of $I^{s,\gamma}(g,h)$, we have
		\beno
		\mathcal{A} :=	\langle I^{s,\gamma}(g,W_{l}h) - W_{l} I^{s,\gamma}(g,h), f\rangle = \int B^{s,\gamma}((\mu^{1/2})_{*}^{\prime} - \mu_{*}^{1/2}) (W_{l}-W^{\prime}_{l})g_{*} h f^{\prime} \mathrm{d}V = \mathcal{A}^{\leq}
		+ \mathcal{A}^{\geq}
		.
		\eeno
		Here $\mathcal{A}^{\leq}$
		and $\mathcal{A}^{\geq}$  contain $B^{s,\gamma}_{1}$ and $B^{s,\gamma,1}$ respectively.
		
		Noting that $((\mu^{1/2})_{*}^{\prime} - \mu_{*}^{1/2}) =((\mu^{1/4})_{*}^{\prime} + \mu_{*}^{1/4})((\mu^{1/4})_{*}^{\prime} - \mu_{*}^{1/4})$,
		by Cauchy-Schwarz inequality, we have
		\beno |	\mathcal{A}^{\leq}| &\leq& \big(\int B^{s,\gamma}_{1} ((\mu^{1/4})_{*}^{\prime} - \mu_{*}^{1/4})^{2} f^{\prime 2} \mathrm{d}V\big)^{1/2}
		\\&&\times\big(\int B^{s,\gamma}_{1}((\mu^{1/4})_{*}^{\prime} + \mu_{*}^{1/4})^{2}(W_{l}-W^{\prime}_{l})^{2}g^{2}_{*} h^{2}  \mathrm{d}V\big)^{1/2}
		:=(\mathcal{A}^{\leq}_{1})^{1/2}(\mathcal{A}^{\leq}_{2})^{1/2}. \eeno
		By the change of variables $(v,v_{*},\sigma) \rightarrow (v_{*}^{\prime},v^{\prime},\sigma^{\prime})$ and Taylor expansion and by  \eqref{mean-momentum-transfer},
		we  obtain
		\beno \mathcal{A}^{\leq}_{1} = \int B^{s,\gamma}_{1} ((\mu^{1/4})^{\prime} - \mu^{1/4})^{2} f^{2}_{*} \mathrm{d}V = \mathcal{N}^{s,\gamma}_{1}(f,\mu^{1/4})
		\lesssim  \frac{1}{\gamma+5}|\mu^{1/16}f|^{2}_{L^{2}}.\eeno
		Using \eqref{sigma-Wl-difference-leq-1} for $\iota =0$ and $\iota =1$, we get
		\beno \mathcal{A}^{\leq}_{2}
		\lesssim \frac{1}{\gamma+5}
		|\mu^{1/64}g|^2_{H^{a_{1}}}|\mu^{1/64}h|^2_{H^{a_{2}}}.\eeno
		
		Similarly, 
		 we have
		\beno |	\mathcal{A}^{\geq}| &\leq& \big(\int B^{s,\gamma,1} ((\mu^{1/4})_{*}^{\prime} - \mu_{*}^{1/4})^{2} f^{\prime 2} \mathrm{d}V\big)^{1/2}
		\\&&\times\big(\int B^{s,\gamma,1}((\mu^{1/4})_{*}^{\prime} + \mu_{*}^{1/4})^{2}(W_{l}-W^{\prime}_{l})^{2}g^{2}_{*} h^{2}  \mathrm{d}V\big)^{1/2}
		:=(\mathcal{A}^{\geq}_{1})^{1/2}(\mathcal{A}^{\geq}_{2})^{1/2}. \eeno
		By the change of variables $(v,v_{*},\sigma) \rightarrow (v_{*}^{\prime},v^{\prime},\sigma^{\prime})$ and noting
	$
	|(\mu^{1/4})^{\prime} - \mu^{1/4}|\lesssim \min\{1,|v-v_{*}|\sin\frac{\theta}{2}\}$,
		 Lemma \ref{symbol} implies that 
		\beno \mathcal{A}^{\geq}_{1} = \int B^{s,\gamma,1} ((\mu^{1/4})^{\prime} - \mu^{1/4})^{2} f^{2}_{*} \mathrm{d}V = \mathcal{N}^{s,\gamma,1}(f,\mu^{1/4})
		\lesssim  s^{-1}|f|^{2}_{L^{2}_{\gamma/2+s}}.\eeno
		Using \eqref{sigma-Wl-difference-geq-1} for $\iota =0$ and $\iota =1$, we get
		\beno \mathcal{A}^{\geq}_{2}
		\lesssim 
		|g|^2_{L^{2}}|h|^2_{L^{2}_{l+\gamma/2}}.\eeno

	Combining  the above  estimates completes the proof  of the proposition.
	\end{proof}
	
	\subsection{Commutator esimates}  
	By Proposition \ref{commutatorQepsilon} and Proposition \ref{commutatorforI}, we have
	the following theorem.
	\begin{thm}\label{commutator-Gamma-geq-eta}
		Let $l , l_1 \geq 0$. Let  $(a_{1},a_{2})=(\f{3}{2} + \delta, s)$ or $(0, \f{3}{2} + \delta)$, then
		\beno |\langle \Gamma^{s,\gamma}(g,W_{l}h;\beta)-W_{l}\Gamma^{s,\gamma}(g,h;\beta), f\rangle| 
		\\ \lesssim_{l, l_1} 
		s^{-1/2}  |g|_{L^{2}} |h|_{L^{2}_{l+\gamma/2}} |f|_{s,\gamma/2} + C_{\delta, s, \gamma}  |\mu^{1/64} g|_{H^{a_1}}
		|h|_{H^{a_2}_{-l_1}} |f|_{s,\gamma/2}.				\eeno
	\end{thm}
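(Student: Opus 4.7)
The statement is a clean corollary of the two preceding propositions, so the plan is structural rather than computational. I would start from the splitting identity \eqref{Gamma-ep-ga-geq-eta-into-IQ-inner-beta}, namely
\[
\Gamma^{s,\gamma}(g,h;\beta) = Q^{s,\gamma}(g\partial_{\beta}\mu^{1/2},h) + I^{s,\gamma}(g,h;\beta),
\]
which immediately yields the commutator decomposition
\begin{align*}
&\Gamma^{s,\gamma}(g,W_l h;\beta) - W_l \Gamma^{s,\gamma}(g,h;\beta) \\
&= \bigl[Q^{s,\gamma}(g\partial_{\beta}\mu^{1/2},W_l h) - W_l Q^{s,\gamma}(g\partial_{\beta}\mu^{1/2},h)\bigr] + \bigl[I^{s,\gamma}(g,W_l h;\beta) - W_l I^{s,\gamma}(g,h;\beta)\bigr].
\end{align*}
The second bracket is controlled directly by Proposition \ref{commutatorforI}.

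For the first bracket, I would write $\partial_{\beta}\mu^{1/2} = P_{\beta}\mu^{1/2}$ with $P_{\beta}$ a polynomial depending only on $|\beta|$, so that $g\partial_{\beta}\mu^{1/2} = \mu^{1/2}(P_{\beta}g)$, and apply Proposition \ref{commutatorQepsilon} to $P_{\beta}g$ in place of $g$. The polynomial factor $P_{\beta}$ is absorbed by any Gaussian weight at the cost of slightly shrinking the exponent, giving bounds of the form $|\mu^{1/16}P_{\beta}g|_{L^2} \lesssim_{|\beta|} |g|_{L^2}$ and $|\mu^{1/64}P_{\beta}g|_{H^{a_1}} \lesssim_{|\beta|} |\mu^{1/128}g|_{H^{a_1}}$, which is then bounded by $|\mu^{1/64}g|_{H^{a_1}}$ up to an implicit constant.

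To match the final form, I would unify the norms on $f$: Proposition \ref{commutatorforI} produces $|f|_{L^2_{\gamma/2+s}}$ and $|\mu^{1/16}f|_{L^2}$, both of which are trivially dominated by $|f|_{s,\gamma/2}$ in view of \eqref{norm-definition}. I would also reconcile the admissible pairs $(a_1,a_2)$: Proposition \ref{commutatorQepsilon} supplies $(3/2+\delta,s)$ or $(0,3/2+\delta)$, while Proposition \ref{commutatorforI} supplies $(3/2+\delta,0)$ or $(0,3/2+\delta)$. When the target pair is $(3/2+\delta,s)$ the embedding $H^0_{-l_1} \supset H^s_{-l_1}$ allows the output of Proposition \ref{commutatorforI} to be bounded by $|h|_{H^s_{-l_1}}$; when the target pair is $(0,3/2+\delta)$ both propositions already match.

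I do not expect a genuine obstacle here: the only delicate point is the careful tracking of the two different admissible exponent pairs and the verification that the Gaussian weight factors $\mu^{1/16},\mu^{1/64}$ produced by the two intermediate propositions are consistent with the single weight $\mu^{1/64}$ appearing in the target estimate. These are bookkeeping issues that follow from the elementary bound $\mu^{a}|v|^{k} \lesssim_{a,k,a'} \mu^{a'}$ for any $0<a'<a$, so the claimed inequality follows by summing the two commutator estimates and taking the larger of the resulting constants.
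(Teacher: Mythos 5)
Your structural decomposition---split $\Gamma^{s,\gamma}$ via \eqref{Gamma-ep-ga-geq-eta-into-IQ-inner-beta} into a $Q$-commutator plus an $I$-commutator, apply Propositions \ref{commutatorQepsilon} and \ref{commutatorforI} respectively, and reconcile the $(a_1,a_2)$ pairs and $f$-norms---is exactly the paper's route (the paper's own ``proof'' is the single sentence pointing at these two propositions). The handling of the admissible pairs, the embedding $|h|_{H^0_{-l_1}}\le|h|_{H^s_{-l_1}}$, and the bounds $|f|_{L^2_{\gamma/2+s}},\,|\mu^{1/16}f|_{L^2}\lesssim|f|_{s,\gamma/2}$ are all correct.

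There is, however, one genuine slip in the Gaussian-weight bookkeeping for the second term. The step $|\mu^{1/64}P_\beta g|_{H^{a_1}}\lesssim_{|\beta|}|\mu^{1/128}g|_{H^{a_1}}$ is fine (the factor $P_\beta\mu^{1/128}$ is Schwartz, and multiplication by a Schwartz function is bounded on $H^{a_1}$), but the subsequent claim that $|\mu^{1/128}g|_{H^{a_1}}$ is then bounded by $|\mu^{1/64}g|_{H^{a_1}}$ goes the wrong way. Since $\mu\le 1$ pointwise, $\mu^{1/128}\ge\mu^{1/64}$ everywhere, so $|\mu^{1/128}g|$-quantities \emph{dominate} $|\mu^{1/64}g|$-quantities rather than being dominated by them; a bump $g$ supported near $|v|=R$ makes the ratio blow up like $e^{cR^2}$. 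The elementary bound $\mu^a|v|^k\lesssim_{a,k,a'}\mu^{a'}$ that you invoke is correct, but it only holds for $a'<a$: absorbing a polynomial necessarily \emph{shrinks} the Gaussian exponent (makes the weight weaker), and one cannot then restore the original exponent. Consequently, substituting $P_\beta g$ into Proposition \ref{commutatorQepsilon} produces $|\mu^{c}g|_{H^{a_1}}$ for some $c=c(|\beta|)<1/64$, and the literal exponent $1/64$ stated in the theorem is not recoverable this way. The clean fix is the one the paper already indicates in the proofs of Propositions \ref{I-less-eta-upper-bound} and \ref{commutatorforI}: re-run the proof of Proposition \ref{commutatorQepsilon} with $P_\beta\mu^{1/2}$ in place of $\mu^{1/2}$, absorbing the polynomial at each intermediate step where there is slack in the Gaussian exponent, so that the final exponent on $g$ remains a fixed constant. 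Alternatively, note that $1/64$ is effectively a placeholder---all downstream uses, e.g.\ Corollary \ref{commutator-L-full}, immediately majorize the Gaussian weight by a polynomial weight $W_{-l_1}$---but as written your chain of inequalities does not close.
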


	Theorem \ref{Gamma-full-up-bound} and Theorem \ref{commutator-Gamma-geq-eta} together give the following weighted upper bound estimate.
	\begin{col}\label{Gamma-full-up-bound-with-weight} Let $l \geq 0$. 	Let  $(a_{1},a_{2})=(\f{3}{2} + \delta, s)$ or $(s, \f{3}{2} + \delta)$.  Then
		\beno 
		|\langle \Gamma^{s,\gamma}(g,h;\beta), W_{2l}f\rangle| \lesssim_{l} s^{-1} |g|_{L^{2}}|h|_{s,l+\gamma/2}|f|_{s,l+\gamma/2}
		+	C_{\delta, s,\gamma} |g|_{H^{a_1}_{-5/2}}|h|_{H^{a_2}_{-5/2}}|f|_{s,l+\gamma/2}.
	\eeno
	\end{col}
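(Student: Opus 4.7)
The strategy is to transfer the weight $W_{2l}$ from the test function onto the argument of $\Gamma^{s,\gamma}$ via a commutator, splitting the inner product into a "main" piece handled by the unweighted upper bound Theorem \ref{Gamma-full-up-bound} and a "commutator" piece handled by Theorem \ref{commutator-Gamma-geq-eta}. Explicitly, I write
\begin{equation*}
\langle \Gamma^{s,\gamma}(g,h;\beta), W_{2l}f\rangle = \langle W_l \Gamma^{s,\gamma}(g,h;\beta), W_l f\rangle,
\end{equation*}
and decompose
\begin{equation*}
W_l \Gamma^{s,\gamma}(g,h;\beta) = \Gamma^{s,\gamma}(g, W_l h;\beta) - \bigl(\Gamma^{s,\gamma}(g, W_l h;\beta) - W_l \Gamma^{s,\gamma}(g,h;\beta)\bigr).
\end{equation*}

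For the main term $\langle \Gamma^{s,\gamma}(g, W_l h;\beta), W_l f\rangle$ I apply Theorem \ref{Gamma-full-up-bound}, choosing its free weight parameter to be $l + 5/2$. This produces the bound
\begin{equation*}
C_{\delta,s,\gamma} |g|_{H^{a_1}_{-l-5/2}} |W_l h|_{H^{a_2}_{-l-5/2}} |W_l f|_{H^{s}_{-l-5/2}} + s^{-1} |g|_{L^2} |W_l h|_{s,\gamma/2} |W_l f|_{s,\gamma/2}.
\end{equation*}
Since $W_{-l-5/2}W_l = W_{-5/2}$, Lemma \ref{operatorcommutator1} allows me to commute $W_l$ past $\langle D\rangle^{a_2}$, $W_s(D)$ and $W_s((-\Delta_{\mathbb{S}^2})^{1/2})$ at the cost of lower-order remainders absorbed into the leading norms. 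This yields $|W_l h|_{H^{a_2}_{-l-5/2}} \lesssim |h|_{H^{a_2}_{-5/2}}$, $|W_l h|_{s,\gamma/2} \sim |h|_{s,l+\gamma/2}$, $|W_l f|_{s,\gamma/2} \sim |f|_{s,l+\gamma/2}$, and $|W_l f|_{H^{s}_{-l-5/2}} \lesssim |f|_{H^{s}_{-5/2}} \leq |f|_{s,l+\gamma/2}$, where the last inequality uses $-5/2 \leq l+\gamma/2$ (which is guaranteed since $\gamma > -3-2s > -5$ and $l \geq 0$). The main term then fits the claimed bound.

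For the commutator term I apply Theorem \ref{commutator-Gamma-geq-eta} with $l_1 = 5/2$ and test function $W_l f$, giving
\begin{equation*}
s^{-1/2} |g|_{L^2} |h|_{L^2_{l+\gamma/2}} |W_l f|_{s,\gamma/2} + C_{\delta,s,\gamma}|\mu^{1/64} g|_{H^{a_1}} |h|_{H^{a_2}_{-5/2}} |W_l f|_{s,\gamma/2}.
\end{equation*}
The bound $|h|_{L^2_{l+\gamma/2}} \leq |h|_{s,l+\gamma/2}$ is immediate from \eqref{norm-definition}; the Gaussian factor gives $|\mu^{1/64} g|_{H^{a_1}} \lesssim |g|_{H^{a_1}_{-5/2}}$ by Schwartz multiplication (using a standard commutator between $\langle D\rangle^{a_1}$ and the rapidly-decreasing symbol $\mu^{1/64}$); and $|W_l f|_{s,\gamma/2} \sim |f|_{s,l+\gamma/2}$ as above. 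Absorbing $s^{-1/2}$ into $s^{-1}$ inside $C_{\delta,s,\gamma}$ and combining both pieces yields the corollary.

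The only nontrivial technical point is the equivalence $|W_l f|_{s,\gamma/2} \sim |f|_{s,l+\gamma/2}$, i.e., commuting $W_l$ past each of the three operators in \eqref{norm-definition}. For $W_s(D)$ this is the direct content of Lemma \ref{operatorcommutator1} with remainder in $H^{s-1}_{l+\gamma/2-1}$, absorbed into the leading term; for $W_s((-\Delta_{\mathbb{S}^2})^{1/2})$ one uses that this operator is purely angular and thus commutes with the radial weight $W_l$ modulo lower-order corrections of the same type; and the pure weight piece $|W_s W_{\gamma/2} W_l f|_{L^2}$ is immediate. Everything else is bookkeeping of constants.
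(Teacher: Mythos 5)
Your proposal is correct and takes exactly the approach the paper indicates, namely writing $W_{2l} = W_l W_l$, adding and subtracting $\Gamma^{s,\gamma}(g, W_l h;\beta)$, and then invoking Theorem~\ref{Gamma-full-up-bound} for the main term and Theorem~\ref{commutator-Gamma-geq-eta} for the commutator term.

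One point is worth correcting, though it does not affect validity: the step you flag as "the only nontrivial technical point" is in fact an exact identity, not an equivalence requiring Lemma~\ref{operatorcommutator1}. By the definitions in Subsection~\ref{notation}, the weight always acts first (innermost): $|h|_{H^n_l} = |\langle D\rangle^n W_l h|_{L^2}$ and $|f|^2_{s,l} = |W_s((-\Delta_{\mathbb{S}^2})^{1/2})W_l f|^2_{L^2} + |W_s(D) W_l f|^2_{L^2} + |W_s W_l f|^2_{L^2}$. Hence $|W_l h|_{H^{a_2}_{-l-5/2}} = |\langle D\rangle^{a_2} W_{-l-5/2}(W_l h)|_{L^2} = |\langle D\rangle^{a_2} W_{-5/2} h|_{L^2} = |h|_{H^{a_2}_{-5/2}}$ and $|W_l f|_{s,\gamma/2} = |f|_{s,l+\gamma/2}$ are equalities, with no operator passed through any weight and no remainder to absorb. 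Similarly $|W_l f|_{H^s_{-l-5/2}} = |f|_{H^s_{-5/2}}$ exactly, after which your observation $-5/2 \leq l + \gamma/2$ (valid since $\gamma > -3-2s > -5$ and $l\ge0$) gives $|f|_{H^s_{-5/2}} \lesssim |f|_{s,l+\gamma/2}$.
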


	As an application of Theorem \ref{commutator-Gamma-geq-eta}, we have the following
	corollary.
	\begin{col}\label{commutator-L-full}Let $l,l_1\geq 0$,  there holds
		\beno  |\langle W_{l} \mathcal{L}^{s,\gamma}(g; \beta_0, \beta_1) - \mathcal{L}^{s,\gamma}(W_{l}g; \beta_0, \beta_1), f\rangle| \lesssim_{l, l_1} 
		s^{-1/2}  |g|_{L^{2}_{l+\gamma/2}} |f|_{s,\gamma/2} + C_{s, \gamma}  
		|g|_{H^{s}_{-l_1}} |f|_{s,\gamma/2}.\eeno
	\end{col}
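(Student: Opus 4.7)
The plan is to decompose $\mathcal{L}^{s,\gamma}(g;\beta_0,\beta_1) = \mathcal{L}^{s,\gamma}_{1}(g;\beta_0,\beta_1) + \mathcal{L}^{s,\gamma}_{2}(g;\beta_0,\beta_1)$ according to \eqref{beta-version-L-1-2-def} and handle each piece by a different mechanism, reflecting the structural asymmetry: in $\mathcal{L}_1$ the weight $W_l$ acts on the second argument of $\Gamma^{s,\gamma}$, which is exactly the setting of Theorem \ref{commutator-Gamma-geq-eta}; in $\mathcal{L}_2$ it acts on the first argument, which does not fit the commutator theorem but can be treated by brute force because the second argument is Gaussian.

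For the $\mathcal{L}_1$ part, I would write
\[
W_l \mathcal{L}^{s,\gamma}_{1}(g;\beta_0,\beta_1) - \mathcal{L}^{s,\gamma}_{1}(W_l g;\beta_0,\beta_1) = \Gamma^{s,\gamma}\!\bigl(\pa_{\beta_1}\mu^{1/2},\, W_l g;\,\beta_0\bigr) - W_l\, \Gamma^{s,\gamma}\!\bigl(\pa_{\beta_1}\mu^{1/2},\, g;\,\beta_0\bigr),
\]
which matches Theorem \ref{commutator-Gamma-geq-eta} verbatim with the first slot $g$ there set equal to the Gaussian $\pa_{\beta_1}\mu^{1/2}$. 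Choosing the parameter pair $(a_1,a_2)=(\f{3}{2}+\delta,s)$ and fixing $\delta=\f12$ collapses $C_{\delta,s,\gamma}$ into $C_{s,\gamma}$, while $|\pa_{\beta_1}\mu^{1/2}|_{L^2}$ and $|\mu^{1/64}\pa_{\beta_1}\mu^{1/2}|_{H^{3/2+\delta}}$ are finite constants depending only on $|\beta_1|$. This delivers the desired bound
\[
s^{-1/2} |g|_{L^{2}_{l+\gamma/2}}|f|_{s,\gamma/2} + C_{s,\gamma}|g|_{H^{s}_{-l_1}}|f|_{s,\gamma/2}.
\]

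For the $\mathcal{L}_2$ part, Theorem \ref{commutator-Gamma-geq-eta} does not apply since $W_l$ now sits on the first slot. Instead I would use the triangle inequality
\[
|\langle W_l \mathcal{L}^{s,\gamma}_{2}(g;\beta_0,\beta_1) - \mathcal{L}^{s,\gamma}_{2}(W_l g;\beta_0,\beta_1), f\rangle| \le |\langle \mathcal{L}^{s,\gamma}_{2}(g;\beta_0,\beta_1), W_l f\rangle| + |\langle \mathcal{L}^{s,\gamma}_{2}(W_l g;\beta_0,\beta_1), f\rangle|,
\]
and bound each term directly by combining Propositions \ref{less-eta-part-l2} and \ref{geq-eta-part-l2} (with $\eta=1$), which together yield an estimate of shape $C_{s,\gamma}|\mu^{1/32}\cdot|_{H^{s}}|\mu^{1/32}\cdot|_{H^{s}}$. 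The polynomial factor $W_l$ is absorbed by the Gaussian $\mu^{1/32}$ into $\mu^{1/64}$, after which the bounds $|\mu^{1/64}W_l g|_{H^{s}}\lesssim_l |g|_{H^{s}_{-l_1}}$ (Gaussian beats any polynomial) and $|\mu^{1/64} W_l f|_{H^{s}}\lesssim_l |f|_{s,\gamma/2}$ (the latter using $\mu^{1/64}W_l W_{-\gamma/2}$ is still Gaussian together with a routine commutator with $W_s(D)$) finish the estimate. Note that no $s^{-1/2}$ factor arises here because the cancellation Lemma \ref{cancellation-lemma-general-gamma-minus3-mu} (absent in Proposition \ref{less-eta-part-l2} but recovered by Gaussian) kills the angular singularity without needing to rely on $\int b^s \sin^2 \approx s^{-1}(\cdots)$.

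The main obstacle is the asymmetric treatment of the $\mathcal{L}_2$ part, because the commutator $[W_l,\mathcal{L}_2]$ is not of the form covered by Theorem \ref{commutator-Gamma-geq-eta}; however, the Gaussian nature of the second slot $\pa_{\beta_1}\mu^{1/2}$ (together with the $\mu^{1/2}_*$ weight built into the $\Gamma$ kernel via $(\pa_{\beta_0}\mu^{1/2})_*$) absorbs arbitrary polynomial weights, so the crude split-and-estimate approach is enough and yields bounds already of the required form without any additional cancellation argument.
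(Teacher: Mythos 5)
Your treatment of the $\mathcal{L}^{s,\gamma}_1$ part is exactly the paper's argument: Theorem \ref{commutator-Gamma-geq-eta} applied with first slot $\pa_{\beta_1}\mu^{1/2}$, parameters $(a_1,a_2)=(\f32+\delta,s)$, $\delta=\f12$, and the Gaussian norms on that slot treated as constants, yielding precisely the two-term bound.

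For the $\mathcal{L}^{s,\gamma}_2$ part your route is genuinely different from the paper's. The paper keeps the commutator structure: writing $\Gamma^{s,\gamma}(\cdot,\pa_{\beta_1}\mu^{1/2};\beta_0)=Q^{s,\gamma}(\cdot\pa_{\beta_0}\mu^{1/2},\pa_{\beta_1}\mu^{1/2})+I^{s,\gamma}(\cdot,\pa_{\beta_1}\mu^{1/2};\beta_0)$ and re-running Propositions \ref{commutatorQepsilon} and \ref{commutatorforI} with $h=\pa_{\beta_1}\mu^{1/2}$, relying on \eqref{mu-weight-result} to see that when the second slot is Gaussian the weight difference $W_l-W_l'$ cancels the angular singularity and the Gaussian decay propagates to $g$; this yields the sharper intermediate bound $C_{s,\gamma}|\mu^{1/64}g|_{L^2}|f|_{s,\gamma/2}$ with only an $L^2$ norm on $g$. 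You instead discard the cancellation altogether: you split $|\langle W_l\mathcal{L}_2 g-\mathcal{L}_2(W_lg),f\rangle|$ by the triangle inequality and invoke the unweighted upper bounds of Propositions \ref{less-eta-part-l2} and \ref{geq-eta-part-l2} (at $\eta=1$) on each term, then absorb $W_l$ into the Gaussian via the Schwartz multiplier argument and commute past $W_s(D)$. This costs you an $H^s$ norm on $g$ instead of $L^2$, but that is still within the corollary's stated bound $|g|_{H^s_{-l_1}}$, so the proof closes. The trade-off is clean: the paper's version is sharper in the $g$ regularity and reuses the commutator machinery uniformly, while yours is shorter and avoids any Taylor expansion of $W_l-W_l'$, at the price of a slightly cruder — but here perfectly adequate — bound. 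Both are correct; the observation that the Gaussian second slot makes cancellation unnecessary for $\mathcal{L}_2$ is a legitimate simplification.
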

	\begin{proof} Recall that $\mathcal{L}^{s,\gamma}_{1}(g; \beta_0, \beta_1) = - \Gamma^{s,\gamma}(\pa_{\beta_{1}}\mu^{1/2}, g;\beta_{0})$.
		Taking $\delta = \f12, a_{1}= \f{3}{2} + \delta, a_{2}=s$ in Theorem \ref{commutator-Gamma-geq-eta}, we get
		\beno |\langle W_{l} \mathcal{L}^{s,\gamma}_{1}(g; \beta_0, \beta_1) - \mathcal{L}^{s,\gamma}_{1}(W_{l}g; \beta_0, \beta_1), f\rangle| \lesssim_{l, l_1} 
		s^{-1/2}  |g|_{L^{2}_{l+\gamma/2}} |f|_{s,\gamma/2} + C_{s, \gamma}  
		|g|_{H^{s}_{-l_1}} |f|_{s,\gamma/2}.\eeno
		Recall that $\mathcal{L}^{s,\gamma}_{2}(g; \beta_0, \beta_1) = - \Gamma^{s,\gamma}(g,\pa_{\beta_{1}}\mu^{1/2};\beta_{0})$.
		Taking $h=\pa_{\beta_{1}}\mu^{1/2}$ in Proposition \ref{commutatorforI}, thanks to
		\eqref{mu-weight-result},
		we can also get a $\mu$-type weight for $g$.
		Then combining this with Proposition \ref{commutatorQepsilon} with $h=\pa_{\beta_{1}}\mu^{1/2}$, we get
		\beno |\langle W_{l} \mathcal{L}^{s,\gamma}_{2}(g; \beta_0, \beta_1) - \mathcal{L}^{s,\gamma}_{2}(W_{l}g; \beta_0, \beta_1), f\rangle| \lesssim_{l} C_{s, \gamma}  |\mu^{1/64}g|_{L^{2}}|f|_{s,\gamma/2}.\eeno
		Combining the above two estimates completes the proof of the corollary.
	\end{proof}

%

%
%
%

%
%
%
	
	\section{Well-posedness and grazing limit}
	\label{proof-main-theorem}
	
	In this section, we will prove Theorem \ref{asymptotic-result}. We divide the proof into
	three subsections. The first subsection is about the {\it a priori} estimates for a linear
	equation with a general source. In Subsection \ref{well-posedenss}, we  prove the global well-posedness result in Theorem \ref{asymptotic-result}. In
	Subsection \ref{asymptotic}, we derive the global asymptotic formula \eqref{error-function-uniform-estimate} stated  in Theorem \ref{asymptotic-result}.

	\subsection{A priori estimate} \label{general}
	
	We consider
	the following linear equation with a general source $g$:
	\ben \label{lBE-general} \partial_{t}f + v\cdot \nabla_{x} f + \mathcal{L}^{s,\gamma}f= g. \een

	A temporal energy functional $\mathcal{I}^{N}(f)$
	satisfying 	for some generic constant $C_{1}$ that
	\ben \label{temporal-bounded-by-energy}
	|\mathcal{I}^{N}(f)| \leq C_{1} \|\mu^{1/8} f\|^{2}_{H^{N}_{x}L^{2}}
	\een
	is used 
	to capture the dissipation of the macro components $\mathrm{M}(t,x) := (a(t,x), b(t,x), c(t,x))$ 
 of the solution $f$.
	\begin{lem}\label{estimate-for-highorder-abc}
		There exist  two generic constants $C_2, c_{0} > 0$ such that for any $N \geq 2$,
		\ben \label{solution-property-part2} \frac{\mathrm{d}}{\mathrm{d}t}\mathcal{I}^{N}(f) + c_{0}|\mathrm{M}|^{2}_{H^{N}_{x}} \leq C_2( C_{s,\gamma}^2
		\|\mu^{1/8} f_{2}\|^{2}_{H^{N}_{x}L^2} + \mathrm{NL}^{N}(g)), 
		\een
		where 
		\beno
		\mathrm{NL}^{N}(g) := \sum_{j=1}^{13} |\langle  \pa^{\alpha}g, \mu^{1/2} P_j\rangle|_{H^{N-1}_{x}}^2.
		\eeno
		Here, the standard thirteen moments polynomials $P_{j}$ are defined by
		\beno P_{1} = 1, P_{2} = v_{1}, P_{3} = v_{2},P_{4} = v_{3},  P_{5} = v_{1}^{2}, P_{6} = v_{2}^{2}, P_{7} = v_{3}^{2}, 
		\\ P_{8} = v_{1}v_{2}, P_{9} = v_{2}v_{3},P_{10} = v_{3}v_{1},  P_{11} = |v|^{2}v_{1}, P_{12} = |v|^{2}v_{2},P_{13} = |v|^{2}v_{3}. \eeno
	\end{lem}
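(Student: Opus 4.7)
The plan is to follow the classical thirteen-moment macroscopic dissipation argument of Guo, adapted by Strain, Duan and others, in the non-cutoff setting here, while carefully tracking dependence on $s,\gamma$ through $C_{s,\gamma}$. Write $f=\mathbf{P}f+f_2$ with $\mathbf{P}f=(a+b\cdot v+c|v|^2)\sqrt{\mu}$ as in \eqref{DefProj}. Testing \eqref{lBE-general} against $\sqrt{\mu}P_j$ with $P_1,\dots,P_{13}$ and recalling $\mathcal{L}^{s,\gamma}\mathbf{P}f=0$, one obtains a closed first-order system for $(a,b,c)$ with sources involving only $f_2$ (through $\langle\mathcal{L}^{s,\gamma}f_2,\sqrt{\mu}P_j\rangle$), the streaming term $\langle v\cdot\nabla_x f_2,\sqrt{\mu}P_j\rangle$, and $\langle g,\sqrt{\mu}P_j\rangle$. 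The local conservation laws for $j=1,\dots,5$ give pure transport relations for $a,b,c$ (no $\mathcal{L}^{s,\gamma}$ contribution), while the remaining moments $j=6,\dots,13$ express spatial derivatives of $a,b,c$ as time derivatives of moments of $f_2$ plus controllable terms.

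The next step is to construct $\mathcal{I}^N(f)$ as a finite linear combination, summed over $|\alpha|\le N-1$, of bilinear pairings between $\partial^\alpha f_2$ paired against $\sqrt{\mu}P_j$ and against spatial derivatives $\partial^{\alpha'}\mathrm{M}$, chosen so that differentiating in $t$ and using the moment identities produces the positive quadratic form $c_0|\mathrm{M}|_{H^N_x}^2$. Concretely, one uses auxiliary functions of the form $\sqrt{\mu}\Psi_j(v)$ (elliptic-type moments solving $-\Delta_x a \sim \partial\partial\langle f_2,\sqrt{\mu}\Psi\rangle$, and analogously for $b,c$) and integrates the identities for $\partial^\alpha\mathrm{M}$ against $\partial^\alpha\mathrm{M}$ itself after commuting $\partial_t$ and $\partial_x^\alpha$ with the moment pairings. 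The bound \eqref{temporal-bounded-by-energy} is immediate from Cauchy--Schwarz since every pairing involves $\sqrt{\mu}$-weighted velocity integrals against $f$.

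After time differentiation, the ``bad'' terms split into: (i) streaming contributions $\langle v\cdot\nabla_x\partial^\alpha f_2,\sqrt{\mu}P_j\rangle$, bounded by $\|\mu^{1/8}f_2\|_{H^N_x L^2}|\mathrm{M}|_{H^N_x}$; (ii) linear collision contributions $\langle\partial^\alpha\mathcal{L}^{s,\gamma}f_2,\sqrt{\mu}P_j\rangle$, bounded using Proposition \ref{Gamma-g-h-mu} (the bilinear version gives $\langle\Gamma^{s,\gamma}(\cdot,\cdot),\sqrt{\mu}P\rangle$ estimates that transfer to $\mathcal{L}^{s,\gamma}$ by writing $\mathcal{L}^{s,\gamma}=-\Gamma^{s,\gamma}(\sqrt{\mu},\cdot)-\Gamma^{s,\gamma}(\cdot,\sqrt{\mu})$) by $C_{s,\gamma}\|\mu^{1/8}f_2\|_{H^N_x L^2}|\mathrm{M}|_{H^N_x}$; (iii) source pairings $\langle\partial^\alpha g,\sqrt{\mu}P_j\rangle$, which are exactly the terms composing $\mathrm{NL}^N(g)$. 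Cauchy--Schwarz with a small constant in front of $|\mathrm{M}|^2_{H^N_x}$ absorbs half of it into the left-hand side and produces $C_{s,\gamma}^2\|\mu^{1/8}f_2\|^2_{H^N_x L^2}$ on the right.

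The main obstacle is twofold: first, choosing the test-moment combination so that the Poincar\'e-type inequality actually yields $c_0|\mathrm{M}|^2_{H^N_x}$ (not merely $|\nabla_x\mathrm{M}|^2_{H^{N-1}_x}$) --- this is handled by the conservation constraints \eqref{Nuspace}, which allow one to recover the zeroth-order norms of $(a,b,c)$ via Poincar\'e on $\mathbb{T}^3$; second, ensuring that the collision-operator contribution really factors out a clean $C_{s,\gamma}$, which requires applying Proposition \ref{Gamma-g-h-mu} with $a_1=0,a_2=s$ (or vice versa) and paying the $\mu^{1/8}$ weight coming from $\mu^{1/4}$ in that proposition, together with the observation that polynomial test functions $P_j$ lose no $s$-regularity on the $\sqrt{\mu}P_j$ side.
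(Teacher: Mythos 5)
Your proposal correctly reproduces the standard thirteen-moment macroscopic dissipation argument that the paper itself defers to its cited references (Guo, Duan, He et al.), including the key adaptation here: isolating the $s,\gamma$-dependence of the collision moments via Proposition~\ref{Gamma-g-h-mu} with $(a_1,a_2)\in\{(s,0),(0,s)\}$ so that no $s$-derivative falls on $f_2$, which keeps $c_0$ and $C_2$ generic while producing the explicit $C_{s,\gamma}^2$ on the right-hand side.
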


	We refer readers to \cite{guo2003classical,duan2008cauchy,he2022asymptotic} for the detailed proof of Lemma \ref{estimate-for-highorder-abc}. 


	\begin{lem} \label{coercivity-L-alpha-beta}
		Let $|\alpha|+|\beta| \leq N$, $q\ge 0$, then
		\beno
		(\mathcal{L}^{s,\gamma} W_{q}\pa^{\alpha}_{\beta}f, W_{q}\pa^{\alpha}_{\beta}f) \geq
		(7/8)\lambda_{s}\| W_{q}\pa^{\alpha}_{\beta} f_2\|^2_{L^{2}_{x}L^2_{s,\gamma/2}} - C_{q,|\beta|}(\|
		\mu^{1/8}\pa^{\alpha} f_2\|^2_{L^{2}_{x}L^{2}}
		+|\partial^{\alpha}\mathrm{M}|^{2}_{L^{2}_{x}}).
		\eeno
	\end{lem}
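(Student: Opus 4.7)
The plan is to apply the spectrum-gap estimate of Theorem \ref{micro-dissipation} pointwise in $x$ to $g := W_q \partial^{\alpha}_{\beta} f(t,x,\cdot)$ and then to compare the resulting microscopic projection $(\mathbf{I}-\mathbf{P})g$ with the target $W_q\partial^{\alpha}_{\beta} f_2$ through a macro-micro decomposition.

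First, I would upgrade Theorem \ref{micro-dissipation} from the truncated operator $\mathcal{L}^{s,\gamma,\eta}$ to the full $\mathcal{L}^{s,\gamma}$. Self-adjointness of $\mathcal{L}^{s,\gamma}$ on $L^2_v$ together with the fact that $\mathcal{L}^{s,\gamma}$ annihilates $\mathrm{ker}$ gives $\langle \mathcal{L}^{s,\gamma} g, g\rangle_v = \langle \mathcal{L}^{s,\gamma}(\mathbf{I}-\mathbf{P})g, (\mathbf{I}-\mathbf{P})g\rangle_v$. Applying Theorem \ref{micro-dissipation} (together with Remark \ref{lower-bound-on-lambda}) to the $\eta$-truncated part, and controlling the remainder $\mathcal{L}^{s,\gamma}_{\eta}$ via Proposition \ref{less-eta-part-l}, whose constant $C_{s,\gamma,\eta}$ vanishes as $\eta\to 0^{+}$ since $\gamma+2s+3>0$, a small-$\eta$ absorption yields
\[
\langle \mathcal{L}^{s,\gamma}g, g\rangle_v \ge \lambda_s |(\mathbf{I}-\mathbf{P})g|^2_{s,\gamma/2}
\]
for any $g\in L^2_v$. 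Choosing $g=W_q \partial^{\alpha}_{\beta} f(t,x,\cdot)$ and integrating over $\mathbb{T}^3$ gives
\[
(\mathcal{L}^{s,\gamma} W_q \partial^{\alpha}_{\beta} f, W_q \partial^{\alpha}_{\beta} f) \ge \lambda_s \|(\mathbf{I}-\mathbf{P})(W_q \partial^{\alpha}_{\beta} f)\|^2_{L^2_x L^2_{s,\gamma/2}}.
\]

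Next, since $\mathbf{P}$ commutes with $\partial^{\alpha}_x$ but with neither $\partial^{v}_{\beta}$ nor multiplication by $W_q$, the discrepancy between the two microscopic components is
\[
\Delta := (\mathbf{I}-\mathbf{P})(W_q \partial^{\alpha}_{\beta} f) - W_q \partial^{\alpha}_{\beta} f_2 = W_q \partial^{\alpha}_{\beta}(\mathbf{P}f) - \mathbf{P}(W_q \partial^{\alpha}_{\beta} f).
\]
The first piece $W_q \partial^{\alpha}_{\beta}(\mathbf{P}f)$ equals $\partial^{\alpha}\mathrm{M}(t,x)$ multiplied by a fixed Schwartz $v$-function (a polynomial in $v$ times $W_q\,\partial^{\beta}_{v}\sqrt{\mu}$-type factors), so its $L^2_{s,\gamma/2}$-norm is $\lesssim_{q,|\beta|} |\partial^{\alpha}\mathrm{M}|$. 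For the second piece, expanding $\mathbf{P}h = \sum_{j=1}^{5} e_j \langle e_j, h\rangle_v$ and integrating by parts in $v$ to transfer $\partial_{\beta}$ onto $e_j W_q$, which retains $\sqrt{\mu}$-decay, yields $|\mathbf{P}(W_q \partial^{\alpha}_{\beta} f)|_{s,\gamma/2}\lesssim_{q,|\beta|} |\mu^{1/8}\partial^{\alpha} f|_{L^2_v}$. Splitting $\partial^{\alpha} f=\partial^{\alpha}f_1+\partial^{\alpha} f_2$ and using $|\mu^{1/8}\partial^{\alpha} f_1|_{L^2_v}\lesssim |\partial^{\alpha}\mathrm{M}|$ gives $|\Delta|_{s,\gamma/2} \lesssim_{q,|\beta|} |\partial^{\alpha}\mathrm{M}| + |\mu^{1/8}\partial^{\alpha} f_2|_{L^2_v}$.

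Combining via the elementary inequality $|X+Y|^2 \ge \tfrac{7}{8}|X|^2 - 7|Y|^2$ with $X=W_q\partial^{\alpha}_{\beta} f_2$ and $Y=\Delta$, integrating in $x$, and invoking the spectrum-gap from the first step then produces the stated bound. The main obstacle is the sharp upgrade in the first step: obtaining the explicit constant $\lambda_s$ (uniform across the admissible range of $\gamma$) for the full operator $\mathcal{L}^{s,\gamma}$ rather than the truncated $\mathcal{L}^{s,\gamma,\eta}$, which relies crucially on the decay $C_{s,\gamma,\eta}\to 0$ as $\eta\to 0^+$ given by Proposition \ref{less-eta-part-l}. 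The subsequent macro-micro bookkeeping is routine and rests only on integration by parts in $v$.
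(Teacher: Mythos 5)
Your proof follows the same macro--micro decomposition route as the paper: reduce to the microscopic component via self-adjointness, bound the discrepancy $\Delta = (\mathbf{I}-\mathbf{P})(W_q\pa^{\alpha}_{\beta}f)-W_q\pa^{\alpha}_{\beta}f_2 = W_q\pa^{\alpha}_{\beta}(\mathbf{P}f)-\mathbf{P}(W_q\pa^{\alpha}_{\beta}f)$ by $|\pa^{\alpha}\mathrm{M}|+|\mu^{1/8}\pa^{\alpha}f_2|_{L^2}$ using the Schwartz structure of $\mathbf{P}$, and then invoke the elementary inequality \eqref{basic-inequality-1} with $\alpha=7/8$. That part is correct and is precisely what the paper does.

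However, the first step --- upgrading the coercivity from the truncated operator $\mathcal{L}^{s,\gamma,\eta}$ of Theorem \ref{micro-dissipation} to the full $\mathcal{L}^{s,\gamma}$ --- is flawed as written. You propose a ``small-$\eta$ absorption'' using Proposition \ref{less-eta-part-l} together with $C_{s,\gamma,\eta}\to 0$ as $\eta\to 0^+$. But the $\eta$ in Theorem \ref{micro-dissipation} is a \emph{fixed} generic constant $\min\{\eta_1,\eta_2\}$, not a free parameter: the theorem provides the gap estimate only at that one $\eta$, so $C_{s,\gamma,\eta}$ is a fixed number that cannot be sent to zero. And even granting a small $\eta$, absorbing a term of size $C_{s,\gamma,\eta}|g|^2_{s,\gamma/2}$ into $\lambda_s|g|^2_{s,\gamma/2}$ would necessarily cost a strict fraction of $\lambda_s$, not preserve it. The gap has a trivial and sharper fix, which is what the paper uses tacitly: since the truncated cross-section $B^{s,\gamma}_{\eta}\geq 0$, the operator $\mathcal{L}^{s,\gamma}_{\eta}$ is the linearized collision operator of a nonnegative kernel, hence nonnegative as a quadratic form (the classical H-theorem identity $\langle\mathcal{L}h,h\rangle=\tfrac14\int B\mu\mu_*(\mu^{-1/2}h+\mu_*^{-1/2}h_*-\mu'^{-1/2}h'-\mu_*'^{-1/2}h'_*)^2\,\mathrm{d}V\geq 0$). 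Therefore $\langle\mathcal{L}^{s,\gamma}g,g\rangle = \langle\mathcal{L}^{s,\gamma,\eta}g,g\rangle+\langle\mathcal{L}^{s,\gamma}_{\eta}g,g\rangle\geq\langle\mathcal{L}^{s,\gamma,\eta}g,g\rangle$ with no loss, and Theorem \ref{micro-dissipation} applies directly. Replacing your absorption argument with this positivity observation makes the proof complete and aligned with the paper.
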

	\begin{proof} By Theorem \ref{micro-dissipation}
		 and recalling the constant $\lambda_{s}$ in \eqref{lower-bound-on-lambda-explicit}, 
		\beno (\mathcal{L}^{s,\gamma} W_{q}\pa^{\alpha}_{\beta}f, W_{q}\pa^{\alpha}_{\beta}f)\ge \lambda_{s} \|(\mathbf{I}-\mathbf{P})W_{q}\pa^{\alpha}_{\beta}f\|_{L^{2}_{x}L^2_{s,\gamma/2}}^2. \eeno
		It is straightforward  to check for any $ 0 \leq  \alpha < 1$ that 
		\ben  \label{basic-inequality-1}
		|x|^{2} \geq \alpha |y|^{2}- \frac{\alpha}{1-\alpha}|x-y|^{2}.
		\een
		By the macro-micro decomposition $f=f_{1}+f_{2}$, we deduce that
		\beno (\mathcal{L}^{s,\gamma} W_{q}\pa^{\alpha}_{\beta}f, W_{q}\pa^{\alpha}_{\beta}f)\ge \lambda_{s}\|(\mathbf{I}-\mathbf{P})W_{q}\pa^{\alpha}_{\beta}(f_1+f_2)\|_{L^{2}_{x}L^2_{s,\gamma/2}}^{2}\\
		\ge (7/8)\lambda_{s} \| W_{q}\pa^{\alpha}_{\beta} f_2\|_{L^{2}_{x}L^2_{s,\gamma/2}}^2- C_{q,|\beta|}(\|\mu^{1/8} \pa^{\alpha} f_2\|_{L^{2}}^2+|\partial^{\alpha}\mathrm{M}|^{2}_{L^{2}_{x}}), \eeno
		where we have used  \eqref{basic-inequality-1} to take out $W_{q}\pa^{\alpha}_{\beta} f_2$ as the leading term. This completes the proof of the lemma.
	\end{proof}
	
	We now apply the commutator estimate obtained
	in
	Corollary \ref{commutator-L-full} to derive the following lemma.
	
	\begin{lem} \label{commutator-L-alpha-beta}
		Let $|\alpha|+|\beta| \leq N, \beta_{0} + \beta_{1} + \beta_{2} = \beta$, $q \geq 0$,  then for any $0<\delta \leq 1$, we have
		\beno
		&&	|\left(
		W_{q} \mathcal{L}^{s,\gamma}(\pa^{\alpha}_{\beta_{2}}f; \beta_0, \beta_1) - \mathcal{L}^{s,\gamma}(W_{q}\pa^{\alpha}_{\beta_{2}}f; \beta_0, \beta_1),
		W_{q}\pa^{\alpha}_{\beta}f\right)| 
		\\&\leq&
		\delta \|\pa^{\alpha}_{\beta}f_2\|_{L^{2}_{x}L^{2}_{s,q+\gamma/2}}^{2} + \delta^{-1} C_{q} C_{s,\gamma}^2 (\|\pa^{\alpha}_{\beta_{2}}f_2\|_{L^{2}_{x}L^{2}_{q+\gamma/2}}^2+\|\pa^{\alpha}_{\beta_{2}}f_2\|_{H^{s}_{\gamma/2}}^{2}) + \delta^{-1} C_{q,|\beta|} C_{s,\gamma}^2 |\partial^{\alpha}\mathrm{M}|^{2}_{L^{2}_{x}}.
		\eeno
	\end{lem}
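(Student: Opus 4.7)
The plan is to apply Corollary \ref{commutator-L-full} pointwise in $x$ with $l=q$, argument $g = \pa^{\alpha}_{\beta_{2}} f(t,x,\cdot)$ and test function $W_{q} \pa^{\alpha}_{\beta} f(t,x,\cdot)$, then integrate over $\mathbb{T}^{3}$, apply Cauchy--Schwarz, and decompose $f = f_{1} + f_{2}$ with $f_{1} = \mathbf{P} f$ and $f_{2} = (\mathbf{I} - \mathbf{P}) f$. The only nonroutine point is to move the extra weight $W_{q}$ from the test side into the weight index of $|\cdot|_{s,\gamma/2}$ using Lemma \ref{operatorcommutator1}.

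Pointwise in $x$, Corollary \ref{commutator-L-full} gives, for any $l_{1} \geq 0$,
\[
\left| \langle W_{q} \mathcal{L}^{s,\gamma}(\pa^{\alpha}_{\beta_{2}} f; \beta_{0}, \beta_{1}) - \mathcal{L}^{s,\gamma}(W_{q} \pa^{\alpha}_{\beta_{2}} f; \beta_{0}, \beta_{1}), W_{q} \pa^{\alpha}_{\beta} f \rangle \right| \lesssim_{q, l_{1}} \left( s^{-1/2} |\pa^{\alpha}_{\beta_{2}} f|_{L^{2}_{q+\gamma/2}} + C_{s,\gamma} |\pa^{\alpha}_{\beta_{2}} f|_{H^{s}_{-l_{1}}} \right) |W_{q} \pa^{\alpha}_{\beta} f|_{s, \gamma/2}.
\]
Choose $l_{1} \geq -\gamma/2$, so that $|\pa^{\alpha}_{\beta_{2}} f|_{H^{s}_{-l_{1}}} \leq |\pa^{\alpha}_{\beta_{2}} f|_{H^{s}_{\gamma/2}}$. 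Since the symbols $W_{s}$, $W_{s}(D)$ and $W_{s}((-\Delta_{\mathbb{S}^{2}})^{1/2})$ entering $|\cdot|_{s, \gamma/2}$ in \eqref{norm-definition} are all radial and of order $s$, Lemma \ref{operatorcommutator1} together with $W_{q} W_{\gamma/2} \sim W_{q+\gamma/2}$ yields $|W_{q} \pa^{\alpha}_{\beta} f|_{s, \gamma/2} \lesssim |\pa^{\alpha}_{\beta} f|_{s, q+\gamma/2}$ modulo lower-order remainders of the same functional form that are harmlessly absorbed.

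Integrating over $x \in \mathbb{T}^{3}$, applying Cauchy--Schwarz, and using $s^{-1/2} \lesssim C_{s,\gamma}$, we obtain
\[
\textrm{LHS} \lesssim_{q, l_{1}} C_{s,\gamma} \left( \|\pa^{\alpha}_{\beta_{2}} f\|_{L^{2}_{x} L^{2}_{q+\gamma/2}} + \|\pa^{\alpha}_{\beta_{2}} f\|_{L^{2}_{x} H^{s}_{\gamma/2}} \right) \|\pa^{\alpha}_{\beta} f\|_{L^{2}_{x} L^{2}_{s, q+\gamma/2}}.
\]
Now substitute $\pa^{\alpha}_{\beta_{*}} f = \pa^{\alpha}_{\beta_{*}} f_{1} + \pa^{\alpha}_{\beta_{*}} f_{2}$ for $\beta_{*} \in \{\beta, \beta_{2}\}$. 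Since $f_{1} = (a + b \cdot v + c|v|^{2}) \sqrt{\mu}$ is polynomial in $v$ times $\sqrt{\mu}$, the quantity $\pa^{\alpha}_{\beta_{*}} f_{1}$ is Schwartz in $v$ and its norm in any of the weighted spaces above is bounded by $C_{q, |\beta|} |\pa^{\alpha} \mathrm{M}|_{L^{2}_{x}}$. Every cross term involving an $f_{1}$ factor, multiplied by the remaining factor (be it of $f_{1}$ or $f_{2}$ type, itself controlled by $\|f_{2}\|$-norms or another $|\pa^{\alpha} \mathrm{M}|$ factor) can be collected, after one Young's inequality $ab \leq \delta a^{2} + (4\delta)^{-1} b^{2}$, into the final $\delta^{-1} C_{q, |\beta|} C_{s,\gamma}^{2} |\pa^{\alpha} \mathrm{M}|^{2}_{L^{2}_{x}}$ term.

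For the remaining pure-$f_{2}$ product, apply Young's inequality with $a = \|\pa^{\alpha}_{\beta} f_{2}\|_{L^{2}_{x} L^{2}_{s, q+\gamma/2}}$ carrying the small factor $\delta$ and $b = C_{s,\gamma}(\|\pa^{\alpha}_{\beta_{2}} f_{2}\|_{L^{2}_{x} L^{2}_{q+\gamma/2}} + \|\pa^{\alpha}_{\beta_{2}} f_{2}\|_{L^{2}_{x} H^{s}_{\gamma/2}})$ carrying the large factor $\delta^{-1}$; this produces the stated inequality. The main obstacle I anticipate is the careful pseudo-differential bookkeeping in the weight-transfer step: one must verify that all commutator remainders generated when moving $W_{q}$ past the symbols defining $|\cdot|_{s, \gamma/2}$ are genuinely of lower differential order in $v$ and carry no $s$-dependence worse than $C_{s,\gamma}^{2}$, so that they can be absorbed into the two $f_{2}$ norms on the right-hand side; once this is settled, the rest is a routine Cauchy--Schwarz plus macro-micro decomposition.
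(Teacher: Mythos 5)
Your proposal is correct and follows essentially the same route as the paper: apply Corollary \ref{commutator-L-full} with $l = q$ and $l_1$ chosen so that $-l_1 \leq \gamma/2$, integrate over $x$, commute $W_q$ onto the weight index of the dissipation norm, and then use Young's inequality and the macro-micro decomposition $f = \mathbf{P}f + f_2$. The paper's one-display proof leaves the weight-transfer $|W_q \pa^{\alpha}_{\beta} f|_{s,\gamma/2} \lesssim_q |\pa^{\alpha}_{\beta} f|_{s,q+\gamma/2}$ and the bound $s^{-1/2} \lesssim C_{s,\gamma}$ implicit; you spelled both out correctly, so this is a faithful and slightly more careful rendering of the same argument.
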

	\begin{proof}
		By taking $l_1 = \gamma/2$ in
		Corollary \ref{commutator-L-full} and by using  the decomposition $f=f_1+f_2=\mathbf{P}f+f_2$,
	for any $0<\delta<1$, we have
		\beno && |\left(
		W_{q} \mathcal{L}^{s,\gamma}(\pa^{\alpha}_{\beta_{2}}f; \beta_0, \beta_1) - \mathcal{L}^{s,\gamma}(W_{q}
		\pa^{\alpha}_{\beta_{2}}f; \beta_0, \beta_1),
		W_{q}\pa^{\alpha}_{\beta}f\right)|  \\&\lesssim_{q}& 
		s^{-1/2}  \|\pa^{\alpha}_{\beta_{2}}f\|_{L^{2}_{x}L^{2}_{q+\gamma/2}} \|\pa^{\alpha}_{\beta}f\|_{L^{2}_{x}L^{2}_{s,q+\gamma/2}} + C_{s, \gamma}  
		\|\pa^{\alpha}_{\beta_{2}}f\|_{H^{s}_{\gamma/2}} \|\pa^{\alpha}_{\beta}f\|_{L^{2}_{x}L^{2}_{s,q+\gamma/2}}
		\\&\lesssim& \delta \|\pa^{\alpha}_{\beta}f\|_{L^{2}_{x}L^{2}_{s,q+\gamma/2}}^{2} + \delta^{-1} C_{q} C_{s,\gamma}^2 (\|\pa^{\alpha}_{\beta_{2}}f\|_{L^{2}_{x}L^{2}_{q+\gamma/2}}^2+\|\pa^{\alpha}_{\beta_{2}}f\|_{H^{s}_{\gamma/2}}^{2})
		\\&\leq&
		\delta \|\pa^{\alpha}_{\beta}f_2\|_{L^{2}_{x}L^{2}_{s,q+\gamma/2}}^{2} + \delta^{-1} C_{q} C_{s,\gamma}^2 (\|\pa^{\alpha}_{\beta_{2}}f_2\|_{L^{2}_{x}L^{2}_{q+\gamma/2}}^2+\|\pa^{\alpha}_{\beta_{2}}f_2\|_{H^{s}_{\gamma/2}}^{2}) + \delta^{-1} C_{q,|\beta|} C_{s,\gamma}^2 |\partial^{\alpha}\mathrm{M}|^{2}_{L^{2}_{x}}.
		\eeno
	This completes the proof of the lemma.
	\end{proof}
	
	The following lemma is about  the commutator $[\partial_{\beta}, \mathcal{L}^{s,\gamma}]$.
	
	\begin{lem} \label{commutator-L-with-beta}
		Let $|\alpha|+|\beta| \leq N, |\beta| \geq 1$, $q \geq 0$,  then
		\beno
		|(W_{q}[\partial_{\beta}, \mathcal{L}^{s,\gamma}]\pa^{\alpha}f ,W_{q}\pa^{\alpha}_{\beta}f)| &\leq&
		\delta \|\pa^{\alpha}_{\beta}f_2\|_{L^{2}_{x}L^{2}_{s,q+\gamma/2}}^{2} + \delta^{-1} C_{q,N} C_{s,\gamma}^2 |\partial^{\alpha}\mathrm{M}|^{2}_{L^{2}_{x}}
		\\&&	+ \delta^{-1} C_{q,N} C_{s,\gamma}^2 \sum_{\beta_{2}<\beta} (\|\pa^{\alpha}_{\beta_{2}}f_2\|_{L^{2}_{x}L^{2}_{q+\gamma/2}}^2+ \mathrm{1}_{q>0}
		\|\pa^{\alpha}_{\beta_{2}}f_2\|_{H^{s}_{\gamma/2}}^{2}).
		\eeno
	\end{lem}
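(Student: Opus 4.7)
The proof reduces, via the Leibniz-type expansion \eqref{alpha-beta-on-L}, to the two preceding lemmas. Isolating the summand $(\beta_0,\beta_1,\beta_2)=(0,0,\beta)$, which reproduces $\mathcal{L}^{s,\gamma}\partial^\alpha_\beta f$, one has
\[
[\partial_\beta,\mathcal{L}^{s,\gamma}]\partial^\alpha f \;=\; \sum_{\substack{\beta_0+\beta_1+\beta_2=\beta\\ \beta_2<\beta}} C^{\beta_0,\beta_1,\beta_2}_\beta\,\mathcal{L}^{s,\gamma}(\partial^\alpha_{\beta_2}f;\beta_0,\beta_1),
\]
so the left-hand side of the lemma is a finite sum of inner products of the form $(W_q\mathcal{L}^{s,\gamma}(\partial^\alpha_{\beta_2}f;\beta_0,\beta_1),W_q\partial^\alpha_\beta f)$ with $\beta_2<\beta$, which is precisely the shape contemplated by Lemmas \ref{coercivity-L-alpha-beta} and \ref{commutator-L-alpha-beta}.

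For each such summand I would perform the standard add-and-subtract of $W_q$ inside the operator,
\[
(W_q\mathcal{L}^{s,\gamma}(\partial^\alpha_{\beta_2}f;\beta_0,\beta_1),W_q\partial^\alpha_\beta f) \;=\; I + II,
\]
where $I:=(\mathcal{L}^{s,\gamma}(W_q\partial^\alpha_{\beta_2}f;\beta_0,\beta_1),W_q\partial^\alpha_\beta f)$ is the clean upper bound and $II$ carries the $W_q$-commutator correction. Term $I$ is controlled by Proposition \ref{part-l}, Cauchy--Schwarz and Young, producing $\delta\,\|\partial^\alpha_\beta f\|^2_{L^2_xL^2_{s,q+\gamma/2}}+\delta^{-1}C_{s,\gamma}^2\,\|\partial^\alpha_{\beta_2}f\|^2_{L^2_xL^2_{s,q+\gamma/2}}$. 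Term $II$ falls verbatim under the scope of Lemma \ref{commutator-L-alpha-beta} (applied with multi-indices $(\alpha,\beta_2)$ and the chosen $(\beta_0,\beta_1)$), contributing $\delta\,\|\partial^\alpha_\beta f_2\|^2_{L^2_xL^2_{s,q+\gamma/2}}+\delta^{-1}C_{q,|\beta|}C_{s,\gamma}^2\bigl(\|\partial^\alpha_{\beta_2}f_2\|^2_{L^2_xL^2_{q+\gamma/2}}+\|\partial^\alpha_{\beta_2}f_2\|^2_{H^s_{\gamma/2}}+|\partial^\alpha\mathrm{M}|^2_{L^2_x}\bigr)$. A final macro-micro splitting $f=f_1+f_2$, combined with the Gaussian decay of $f_1=(a+b\cdot v+c|v|^2)\sqrt\mu$, collapses every $f_1$-contribution into the $|\partial^\alpha\mathrm{M}|^2_{L^2_x}$ bucket (since derivatives of $f_1$ in $v$ are pointwise bounded by $|\mathrm{M}|$ times a Schwartz function), leaving only the claimed $f_2$ terms.

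The only non-obvious point is the indicator $\mathrm{1}_{q>0}$ in front of $\|\partial^\alpha_{\beta_2}f_2\|^2_{H^s_{\gamma/2}}$. When $q=0$, $W_q\equiv 1$ commutes trivially with every velocity operator, so $II\equiv 0$ and the $H^s_{\gamma/2}$-contribution, which originates exclusively from the weight commutator inside Lemma \ref{commutator-L-alpha-beta}, disappears. To prevent the remaining term $I$ from regenerating a fractional regularity on $\partial^\alpha_{\beta_2}f$ in this case, I would invoke the fact that $(\beta_0,\beta_1)\neq(0,0)$ forces $\mathcal{L}^{s,\gamma}(\cdot;\beta_0,\beta_1)$ to carry at least one extra Schwartz factor $\partial_{\beta_0}\mu^{1/2}$ or $\partial_{\beta_1}\mu^{1/2}$; the resulting Gaussian decay in $v$ upgrades the symmetric bound of Proposition \ref{part-l} to an asymmetric $L^2_{\gamma/2}\times L^2_{s,\gamma/2}$ bilinear estimate, in the spirit of Propositions \ref{less-eta-part-l2} and \ref{geq-eta-part-l2}. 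Verifying this refined estimate (routine but the only step not immediately contained in the preceding results) is the main technical point of the proof.
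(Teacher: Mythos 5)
Your decomposition matches the paper's almost exactly: both use \eqref{alpha-beta-on-L} to write $[\partial_\beta,\mathcal{L}^{s,\gamma}]\pa^\alpha f$ as a sum over $\beta_2<\beta$ of $\mathcal{L}^{s,\gamma}(\pa^\alpha_{\beta_2}f;\beta_0,\beta_1)$, then split off the $W_q$-commutator ($II$) so that the remaining term ($I$) is a direct application of Proposition~\ref{part-l} with Cauchy--Schwarz/Young and the macro-micro decomposition; $II$ is handled by Lemma~\ref{commutator-L-alpha-beta}, and for $q=0$ it vanishes. Up to that point you have reproduced the paper's argument.

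Your last paragraph, however, overshoots, and in a direction that would not actually close. You worry that $I$ will ``regenerate a fractional regularity on $\pa^\alpha_{\beta_2}f$'' in the $q=0$ case and propose to cure this with an asymmetric estimate extracted from the extra Gaussian factor coming from $(\beta_0,\beta_1)\neq(0,0)$. This does not work for the ``1''-part: $\mathcal{L}^{s,\gamma}_1(g;\beta_0,\beta_1)=-\Gamma^{s,\gamma}(\pa_{\beta_1}\mu^{1/2},g;\beta_0)$ is structurally identical to $\mathcal{L}^{s,\gamma}_1 g=-\Gamma^{s,\gamma}(\mu^{1/2},g)$; a Gaussian is present whether or not $\beta_0,\beta_1$ vanish, and the corresponding bounds (Propositions~\ref{less-eta-part-l1} and~\ref{geq-eta-part-l1}) nevertheless place the full dissipation norm $|\cdot|_{s,\gamma/2}$ on the $g$-slot, because $g$ sits at $v,v'$ while the Gaussian sits at $v_*,v'_*$. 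The asymmetric $\mu$-weighted bound you invoke (in the spirit of Propositions~\ref{less-eta-part-l2} and~\ref{geq-eta-part-l2}) exists only for the ``2''-part, where $g$ is adjacent to the Gaussian. So there is no route to a bare $L^2_{\gamma/2}$ on $\pa^\alpha_{\beta_2}f$ from $I$.

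In fact your instinct is pointing at a genuine imprecision in the paper's own statement, not something you should try to rescue: the paper's proof of the $I$-term yields $\|\pa^\alpha_{\beta_2}f_2\|^2_{L^2_xL^2_{s,q+\gamma/2}}$ (irrespective of $q$), which is strictly stronger than the $\|\pa^\alpha_{\beta_2}f_2\|^2_{L^2_xL^2_{q+\gamma/2}}+\mathrm{1}_{q>0}\|\pa^\alpha_{\beta_2}f_2\|^2_{H^s_{\gamma/2}}$ appearing in the lemma. This is benign downstream: in \eqref{weight-term-2-no} (the $q=0$ invocation) the $\beta_2$-term is anyway written with the full $\|\pa^\alpha_{\beta_2}f_2\|^2_{L^2_xL^2_{s,\gamma/2}}$, and in the induction that norm is absorbed by $\mathcal{Y}^k(f)$. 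So the safe conclusion is: keep your steps through the add-and-subtract and stop there; state the bound with $\|\pa^\alpha_{\beta_2}f_2\|^2_{L^2_xL^2_{s,q+\gamma/2}}$ on the $\beta_2$-sum, and drop the ``refined asymmetric estimate'' step entirely -- it is the one part of your proposal that is not routine, and it is also the one part that is wrong.
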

	\begin{proof}
		By recalling
		$ \mathcal{L}^{s,\gamma}g = -\Gamma^{s,\gamma}(\mu^{1/2},g) - \Gamma^{s,\gamma}(g, \mu^{1/2}),$ \eqref{Gamma-beta}, \eqref{alpha-beta-on-Gamma} and \eqref{beta-version-L-s-gamma},
		we have
		\ben  \pa_{\beta}\mathcal{L}^{s,\gamma}g &=& \mathcal{L}^{s,\gamma}\pa_{\beta}g
		-\sum_{\beta_{0}+\beta_{1}+\beta_{2}= \beta, \beta_{2} < \beta} C^{\beta_{0},\beta_{1},\beta_{2}}_{\beta}
		[\Gamma^{s,\gamma}(\pa_{\beta_{1}}\mu^{1/2}, \pa_{\beta_{2}}g;\beta_{0}) + \Gamma^{s,\gamma}(\pa_{\beta_{1}}g, \pa_{\beta_{2}}\mu^{1/2};\beta_{0})]
		\nonumber \\ \label{alpha-beta-Lep}
		&=& \mathcal{L}^{s,\gamma}\pa_{\beta}g
		-\sum_{\beta_{0}+\beta_{1}+\beta_{2}= \beta, \beta_{2} < \beta} C^{\beta_{0},\beta_{1},\beta_{2}}_{\beta}
		\mathcal{L}^{s,\gamma}(\partial_{\beta_{2}}g;\beta_{0},\beta_{1}). 
		\een
		Note that $[\partial_{\beta}, \mathcal{L}^{s,\gamma}] h= \sum_{\beta_{2}<\beta} C^{\beta_{0},\beta_{1},\beta_{2}}_{\beta}\mathcal{L}^{s,\gamma}(\partial_{\beta_{2}} h; \beta_{0},\beta_{1})$. Thus
		\beno W_{q}[\partial_{\beta}, \mathcal{L}^{s,\gamma}]\pa^{\alpha}f &=& W_{q}\sum_{\beta_{2}<\beta} C^{\beta_{0},\beta_{1},\beta_{2}}_{\beta}\mathcal{L}^{s,\gamma}(\partial^{\alpha}_{\beta_{2}}f ;\beta_{0},\beta_{1}) 
		\\&=&\sum_{\beta_{2}<\beta} C^{\beta_{0},\beta_{1},\beta_{2}}_{\beta} \mathcal{L}^{s,\gamma}(W_{q}\partial^{\alpha}_{\beta_{2}}f; \beta_{0},\beta_{1})  +
		\sum_{\beta_{2}<\beta}C^{\beta_{0},\beta_{1},\beta_{2}}_{\beta}
		[W_{q},\mathcal{L}^{s,\gamma}(\cdot ;\beta_{0},\beta_{1})]\partial^{\alpha}_{\beta_{2}}f. \eeno
		By upper bound estimate in Prop. \ref{part-l}, we get
		\beno |\mathcal{L}^{s,\gamma}(W_{q}\partial^{\alpha}_{\beta_{2}}f; \beta_{0},\beta_{1}) ,W_{q}\pa^{\alpha}_{\beta}f)|
		\lesssim C_{s,\gamma}
		\|\partial^{\alpha}_{\beta_{2}}f\|_{L^{2}_{x}L^{2}_{s,q+\gamma/2}}\|\pa^{\alpha}_{\beta}f\|_{L^{2}_{x}L^{2}_{s,q+\gamma/2}}
	\\	\leq
		\delta  \|\pa^{\alpha}_{\beta}f_{2}\|_{L^{2}_{x}L^{2}_{s,q+\gamma/2}}^{2} + \delta^{-1} C_{s,\gamma}^2 C_{q,N}\|\pa^{\alpha}_{\beta_{2}}f_{2}\|_{L^{2}_{x}L^{2}_{s,q+\gamma/2}}^{2}
		+ \delta^{-1} C_{s,\gamma}^2 C_{q,N}|\partial^{\alpha}\mathrm{M}|_{L^{2}_{x}}^{2},
		\eeno
		where we have used $f = f_{1} + f_{2}$ and the definition of $a,b,c$. We apply  Lemma \ref{commutator-L-alpha-beta} for  $[W_{q},\mathcal{L}^{s,\gamma}(\cdot ;\beta_{0},\beta_{1})]$. Note that
		if $q=0$, the commutator $[W_{q},\mathcal{L}^{s,\gamma}(\cdot ;\beta_{0},\beta_{1})]=0$.
		Taking sum over $\beta_{2}<\beta$ completes the proof of the lemma.
	\end{proof}

	For any non-negative integers $n,m$, recall
	\beno \|f\|^{2}_{H^{n}_{x}\dot{H}^{m}_{l}} = \sum_{|\alpha|\leq n,|\beta| = m}\|\partial^{\alpha}_{\beta}f\|^{2}_{L^{2}_{x}L^{2}_{l}}, \quad
	\|f\|^{2}_{H^{n}_{x}\dot{H}^{m}_{s,l}} = \sum_{|\alpha|\leq n,|\beta| = m}\|\partial^{\alpha}_{\beta}f\|^{2}_{L^{2}_{x}L^{2}_{s,l}}. \eeno
	Let $N \geq 4, l\geq -N(\gamma+2s)$. For some generic constants $M$, $ L_{j}$ and $ K_{j}$ with  $0 \leq j \leq N$ (which may depend on $N, l, s, \gamma$ and will be determined later), we define
	\ben \label{def-energy-combination}
	\Xi_{N,l}^{s,\gamma}(f) &=& M\mathcal{I}^{N}(f)+ \sum_{j=0}^{N} L_{j}\|f\|^{2}_{H^{N-j}_{x}\dot{H}^{j}}+ \sum_{j=0}^{N} K_{j}\|f\|^{2}_{H^{N-j}_{x}\dot{H}^{j}_{l+j(\gamma+2s)}},
	\\ \label{def-dissipation-combination}
	\tilde{\mathcal{D}}^{s,\gamma}_{N,l}(f)&=& c_{0}M|\mathrm{M}|^{2}_{H^{N}_{x}} + \lambda_{s}
	\sum_{j=0}^{N} L_{j}\|f_2\|^{2}_{H^{N-j}_{x}\dot{H}^{j}_{s,\gamma/2}}
	+\lambda_{s}\sum_{j=0}^{N}K_{j}\|f_2\|^{2}_{H^{N-j}_{x}\dot{H}^{j}_{s,l+j(\gamma+2s)+\gamma/2}}. \een

	We are now ready to prove the a priori estimate of \eqref{lBE-general}.
	
	\begin{prop}\label{essential-estimate-of-micro-macro} Let  $N \geq 4, l\geq -N(\gamma+2s)$. Suppose $f$ is a solution to \eqref{lBE-general}, then 
		\ben \label{essential-micro-macro-result-2} \frac{\mathrm{d}}{\mathrm{d}t}\Xi_{N,l}^{s,\gamma}(f) +  \frac{1}{4} \tilde{\mathcal{D}}^{s,\gamma}_{N,l}(f) &\leq& M C_2 \mathrm{NL}^{N}(g)
		+ \sum_{j=0}^{N}2L_{j}\sum_{|\alpha|\le N-j,|\beta|=j}(\pa^{\alpha}_{\beta}g, \pa^{\alpha}_{\beta}f)  \\&&+\sum_{j=0}^{N}2K_{j}\sum_{|\alpha|\le N-j,|\beta|=j}(W_{l+j(\gamma+2s)}\pa^{\alpha}_{\beta}g, W_{l+j(\gamma+2s)}\pa^{\alpha}_{\beta}f).
		\nonumber \een
		The constants in \eqref{def-energy-combination} and
		\eqref{def-dissipation-combination} satisfy 
		\ben \label{c-relation-1}
		\max\{M, \{L_{j}\}_{0 \leq j \leq N}, \{K_{j}\}_{0 \leq j \leq N}\} = L_{0} = Z_{s,\gamma,N,l} := X_{s,\gamma} Y_{s,\gamma,l} (U_{s,\gamma,N,l} W_{s,\gamma,N,l})^{N},
		\\
		\label{c-relation-2}
		\min \{c_0 M, \{\lambda_{s} L_{j}\}_{0 \leq j \leq N}, \{\lambda_{s} K_{j}\}_{0 \leq j \leq N}\} = \lambda_{s},
		\een
		where
		\ben 
		\label{def-M1}
		X_{s,\gamma} =  \lambda_{s}^{-1} C_{s,\gamma}^2 C_{2},
		\\ \label{def-M2}
		Y_{s,\gamma,l} = 8^{l/s} \lambda_{s}^{-l/s} \left(C_{l} (\gamma+2s+3)^{-1} \right)^{1+l/s},
		\\
		\label{def-constant-W}
		W_{s,\gamma,N,l} =  \lambda_{s}^{-2} C_{s,\gamma}^2 C_{N},
		\\
		\label{def-constant-U}
		U_{s,\gamma,N,l} =    \max\{\lambda_{s}^{-2}C_{s,\gamma}^2 C_{N,l}, 8^{l/s} (C_{l}(\gamma+2s+3)^{-1}\lambda_{s}^{-1})^{1+l/s}
		\}.
		\een
		Here,
		$C_{N}, C_{l}, C_{N,l}$ are some large constants  depending only on the corresponding indices. Recalling the constants $\lambda_{s}$ from
		\eqref{lower-bound-on-lambda-explicit} and
		$C_{s, \gamma}$ from \eqref{def-C-s-gamma}, it is straightforward to check that  for any fixed $N,l$,
		there is a function $(x_1, x_2) \in (0,1) \times (0, 3] \to 
		Z_{N,l}(x_1, x_2) \in (0, \infty)$ satisfying
		\eqref{dependence-s-gamma} and
		\eqref{property-of-Z-N-l}.
	\end{prop}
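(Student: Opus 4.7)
The plan is a Guo-type weighted energy estimate built from three ingredients already in hand: the microscopic coercivity of Lemma \ref{coercivity-L-alpha-beta}, the commutator estimates of Lemmas \ref{commutator-L-alpha-beta} and \ref{commutator-L-with-beta}, and the macroscopic dissipation of Lemma \ref{estimate-for-highorder-abc}. For each level $j=0,\dots,N$ and each multi-index $(\alpha,\beta)$ with $|\alpha|+|\beta|\le N$ and $|\beta|=j$, I would apply $\partial^\alpha_\beta$ to \eqref{lBE-general} and form two energy identities: one by testing against $\partial^\alpha_\beta f$ in $L^2(\TT^3\times\R^3)$, yielding the $L_j$ block; the other by testing against $W_{2(l+j(\gamma+2s))}\partial^\alpha_\beta f$, yielding the $K_j$ block. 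Summation in $\alpha,\beta$ reproduces the time derivatives of $\|f\|^2_{H^{N-j}_x\dot H^j}$ and $\|f\|^2_{H^{N-j}_x\dot H^j_{l+j(\gamma+2s)}}$ appearing in $\Xi_{N,l}^{s,\gamma}$, and generates on the right precisely the source pairings listed in \eqref{essential-micro-macro-result-2}, modulo dissipation and error terms. The increasing weight $q=l+j(\gamma+2s)$ per $v$-derivative is the standard Guo choice that matches the $L^2_{s,\gamma/2}$ gain produced by the coercivity.

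The transport operator $v\cdot\nabla_x$ is skew-symmetric with polynomial weight in $v$ only, so it contributes zero at $j=0$; for $j\ge 1$ the commutator $[\partial_\beta,v\cdot\nabla_x]\partial^\alpha f$ produces terms $\partial^{\alpha+e_i}_{\beta-e_i}f$ that trade one $v$-derivative for one $x$-derivative, and these are absorbed into the $(j{-}1)$-level dissipation by Cauchy--Schwarz and Young's inequality. The collision term is decomposed as
\begin{align*}
(W_q\partial^\alpha_\beta\mathcal{L}^{s,\gamma}f, W_q\partial^\alpha_\beta f) &= (\mathcal{L}^{s,\gamma}(W_q\partial^\alpha_\beta f), W_q\partial^\alpha_\beta f) \\
&\quad + ([W_q,\mathcal{L}^{s,\gamma}]\partial^\alpha_\beta f, W_q\partial^\alpha_\beta f) \\
&\quad + (W_q[\partial_\beta,\mathcal{L}^{s,\gamma}]\partial^\alpha f, W_q\partial^\alpha_\beta f),
\end{align*}
with $q=l+j(\gamma+2s)$ or $q=0$. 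Lemma \ref{coercivity-L-alpha-beta} applied to the first term delivers the coercive dissipation $(7/8)\lambda_s\|W_q\partial^\alpha_\beta f_2\|^2_{L^2_xL^2_{s,\gamma/2}}$ modulo a $\mu^{1/8}$-weighted residue and $|\partial^\alpha\mathrm{M}|^2_{L^2_x}$. Lemmas \ref{commutator-L-alpha-beta} and \ref{commutator-L-with-beta} bound the two commutators by an absorbable $\delta\|\partial^\alpha_\beta f_2\|^2$ plus strictly lower-$\beta$ weighted norms plus a macroscopic error $C_{s,\gamma}^2|\partial^\alpha\mathrm{M}|^2_{L^2_x}$. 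Adding $M$ times \eqref{solution-property-part2} injects $c_0 M|\mathrm{M}|^2_{H^N_x}$, which absorbs every $|\partial^\alpha\mathrm{M}|^2_{L^2_x}$ residue once $M$ is of order $X_{s,\gamma}=\lambda_s^{-1}C_{s,\gamma}^2 C_2$; the $\mu^{1/8}$-weighted remainders are dominated by the polynomial weights in the higher-$j$ blocks.

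The crux is then the selection of $(M,\{L_j\},\{K_j\})$. Closing the cascade requires inequalities of the form $\lambda_s L_j \ge \delta^{-1}C_{s,\gamma}^2 C_{N,l} L_{j-1}$ and analogously for $K_j$, which is solved by a decreasing geometric sequence with ratio of order $W_{s,\gamma,N,l}=\lambda_s^{-2}C_{s,\gamma}^2 C_N$; iterating $N$ times produces the exponent $N$ appearing in \eqref{c-relation-1}. The weight parameter $l$ enters through interpolation of polynomial weights used in Lemma \ref{commutator-L-with-beta} and contributes the factor $Y_{s,\gamma,l}$ of \eqref{def-M2}. The minimum identity \eqref{c-relation-2} holds automatically once $M\ge \lambda_s/c_0$ and all $L_j,K_j\ge 1$, which can be arranged by construction. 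I expect the main technical obstacle to be precisely this bookkeeping: tracking the exact power-law dependence on $s$ and $\gamma+2s+3$ through the $N$ cascading inequalities so that the assembled constant factors as $Z_{s,\gamma,N,l}=X_{s,\gamma}Y_{s,\gamma,l}(U_{s,\gamma,N,l}W_{s,\gamma,N,l})^N$, and so that the resulting function $Z_{N,l}(s,\gamma+2s+3)$ is non-increasing in each argument and blows up as either argument tends to zero, in accordance with \eqref{property-of-Z-N-l}.
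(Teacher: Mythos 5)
Your proposal matches the paper's proof in all essential respects: the same testing against $\partial^\alpha_\beta f$ and $W_q\partial^\alpha_\beta f$, the same three-term decomposition of $W_q\partial^\alpha_\beta\mathcal{L}^{s,\gamma}f$, the same reliance on Lemmas \ref{coercivity-L-alpha-beta}, \ref{commutator-L-alpha-beta}, \ref{commutator-L-with-beta} and \ref{estimate-for-highorder-abc}, and the same cascading induction over $j$ with geometrically scaled constants $U_kW_k$ that produce the exponent $N$ in $Z_{s,\gamma,N,l}$. The only bookkeeping detail you leave implicit is the paper's controlled decay factor $2^{-1-i/N}$ in the dissipation prefactor across the $N$ induction steps, which guarantees the final constant never drops below $1/4$, but this emerges naturally from the choice of $W_k$ and $U_k$ you describe.
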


	\begin{proof} We divide the proof into three steps to construct the energy functional $\Xi_{N,l}^{s,\gamma}(f)$ in \eqref{def-energy-combination}. 

		{\it Step 1: Propagation of $\|f\|^{2}_{H^{N}_{x}L^{2}}$}.
		By applying $\partial^{\alpha}$ to equation \eqref{lBE-general}, taking inner product with $\partial^{\alpha}f$, taking sum over $|\alpha|\leq N$, we have
		\ben \label{inner-product-with-pure-x}
		\frac{1}{2}\frac{\mathrm{d}}{\mathrm{d}t}\|f\|^{2}_{H^{N}_{x}L^{2}}  + \sum_{|\alpha| \leq N} (\mathcal{L}^{s,\gamma}\pa^{\alpha}f, \pa^{\alpha}f) = \sum_{|\alpha| \leq N}(\pa^{\alpha}g, \pa^{\alpha}f). \een
		By Theorem \ref{micro-dissipation} and using 
		$\partial^{\alpha}f_{2} = (\partial^{\alpha}f)_{2}$, we have $(\mathcal{L}^{s, \gamma} \partial^{\alpha}f, \partial^{\alpha}f) \geq \lambda_{s}\|\partial^{\alpha}f_{2}\|^{2}_{L^{2}_{x}L^2_{s,\gamma/2}}$, which yields
		\ben \label{solution-property-part-g}\frac{1}{2}\frac{\mathrm{d}}{\mathrm{d}t}\|f\|^{2}_{H^{N}_{x}L^{2}} + \lambda_{s} \|f_{2}\|^{2}_{H^{N}_{x}L^2_{s,\gamma/2}} \leq 
		\sum_{|\alpha| \leq N}
		(\pa^{\alpha}g, \pa^{\alpha}f).\een
		Multiplying \eqref{solution-property-part-g} by a large constant $2M_{1}$ and adding it to \eqref{solution-property-part2},
		we get
		\ben \label{essential-micro-macro-result} \frac{\mathrm{d}}{\mathrm{d}t}(M_{1}\|f\|^{2}_{H^{N}_{x}L^{2}}+\mathcal{I}^{N}(f))+ (c_{0}|\mathrm{M}|^{2}_{H^{N}_{x}}+
		M_{1}\lambda_{s}
		\|f_{2}\|^{2}_{H^{N}_{x}L^2_{s,\gamma/2}}) \leq 2M_{1}\sum_{|\alpha| \leq N}
		(\pa^{\alpha}g
		, \pa^{\alpha}f)+ C_2 \mathrm{NL}^{N}(g).  \een
		Here $M_{1}$ is large enough such that $M_{1} \geq 2C_{1}$ and $M_{1} \lambda_{s} \geq C_2 C_{s,\gamma}^2$ to insure from \eqref{temporal-bounded-by-energy} that
		\beno
		\f{1}{2}M_{1} \|f\|^{2}_{H^{N}_{x}L^{2}}
		\leq 		M_{1}\|f\|^{2}_{H^{N}_{x}L^{2}}+\mathcal{I}^{N}(f) \leq \f{3}{2}M_{1} \|f\|^{2}_{H^{N}_{x}L^{2}},
		\\
		M_{1}\lambda_{s}\|f_{2}\|^{2}_{H^{N}_{x}L^2_{s,\gamma/2}} \geq C_2 C_{s,\gamma}^2
		\|\mu^{1/8} f_{2}\|^{2}_{H^{N}_{x}L^2}.
		\eeno
		Note that 
 the term $C_2 C_{s,\gamma}^2
		\|\mu^{1/8} f_{2}\|^{2}_{H^{N}_{x}L^2}$ in \eqref{solution-property-part2} is 
		absorbed by the dissipation of the microscopic component  $ \|f_{2}\|^{2}_{H^{N}_{x}L^2_{s,\gamma/2}}$ in
		\eqref{solution-property-part-g}.
		We may assume $\lambda_{s} \leq 1$ and $C_2 \gg C_1$. Then we can take $M_1 = X_{s,\gamma}$ defined in
		\eqref{def-M1}.

		{\it Step 2: Propagation of $\|f\|^{2}_{H^{N}_{x}L^{2}_{l}}$}.
		By applying $W_{l}\partial^{\alpha}$ to equation \eqref{lBE-general}, taking inner product with $W_{l}\partial^{\alpha}f$, taking sum over $|\alpha|\leq N$, we have
		\ben \label{inner-product-with-pure-x-weight}
		\frac{1}{2}\frac{\mathrm{d}}{\mathrm{d}t}\|f\|^{2}_{H^{N}_{x}L^{2}_{l}}  + \sum_{|\alpha| \leq N} (W_{l}\mathcal{L}^{s,\gamma}\pa^{\alpha}f,W_{l}\pa^{\alpha}f) = \sum_{|\alpha| \leq N}(W_{l}\pa^{\alpha}g,W_{l}\pa^{\alpha}f). \een
		Using commutator to transfer weight gives
		\beno W_{l}\mathcal{L}^{s,\gamma}\pa^{\alpha}f = \mathcal{L}^{s,\gamma}W_{l}\pa^{\alpha}f + [W_{l}, \mathcal{L}^{s,\gamma}]\pa^{\alpha}f.
		\eeno
		By Lemma \ref{coercivity-L-alpha-beta}, we get
		\beno
		(\mathcal{L}^{s, \gamma} W_{l}\pa^{\alpha}f, W_{l}\pa^{\alpha}f) \geq
		(7/8)\lambda_{s} \|\pa^{\alpha}f_{2}\|_{L^{2}_{x}L^{2}_{s,l+\gamma/2}}^{2}
		-  C_{l}( \|\partial^{\alpha}f_{2}\|^{2}_{L^{2}_{x}L^2_{\gamma/2}}
		+|\partial^{\alpha}\mathrm{M}|^{2}_{L^{2}_{x}}).
		\eeno
		By	\eqref{sigma-Wl-difference-leq-1} and \eqref{sigma-Wl-difference-geq-1},
		we have 
		\ben \label{special-commutator-L}
		|	\langle [W_{l}, \mathcal{L}^{s,\gamma}]f,W_{l}f \rangle |
		\lesssim_{l} (\gamma+5)^{-1} |f|^{2}_{L^{2}_{l+\gamma/2}}.
		\een
		Thus, 
		\beno
		|([W_{l}, \mathcal{L}^{s,\gamma}]\pa^{\alpha}f,W_{l}\pa^{\alpha}f)| \lesssim_{l}  (\gamma+5)^{-1}
		\|\pa^{\alpha}f\|^{2}_{L^{2}_{x}L^{2}_{l+\gamma/2}} \leq C_{l} (\gamma+5)^{-1} 	\|\pa^{\alpha}f_2\|^{2}_{L^{2}_{x}L^{2}_{l+\gamma/2}} + C_{l} (\gamma+5)^{-1} 	|\partial^{\alpha}\mathrm{M}|^{2}_{L^{2}_{x}}
		.
		\eeno
		Since $|h|_{L^{2}_{q}}^2 \leq \delta |h|_{L^{2}_{q+s}}^2 + \delta^{-q/s} |h|_{L^{2}}^2$ for any $1>\delta>0$, we have
		\ben \label{reduce-to-gamma/2-2}
		\|\pa^{\alpha}f_{2}\|^{2}_{L^{2}_{x}L^{2}_{l+\gamma/2}} \leq \delta \|\pa^{\alpha}f_{2}\|^{2}_{L^{2}_{x}L^{2}_{s,l+\gamma/2}}
		+ \delta^{-q/s} \|\pa^{\alpha}f_{2}\|^{2}_{L^{2}_{x}L^{2}_{s,\gamma/2}}.\een
		By taking $\delta = \delta_{l, s, \gamma}$ where
		$\delta_{l, s, \gamma} C_{l} (\gamma+5)^{-1}
		= \lambda_{s}/8$, we get
		\ben \label{weighted-pure-x-2}
		\frac{\mathrm{d}}{\mathrm{d}t}\|f\|^{2}_{H^{N}_{x}L^{2}_{l}}  +  \f32 \lambda_{s}
		\|f_{2}\|^{2}_{H^{N}_{x}L^{2}_{s,l+\gamma/2}} \leq C_{l,s,\gamma}\|f_{2}\|^{2}_{H^{N}_{x}L^{2}_{s,\gamma/2}}+
		C_{l,\gamma}
		|\mathrm{M}|^{2}_{H^{N}_{x}}
		+2\sum_{|\alpha| \leq N}(W_{l}\pa^{\alpha}g
		,W_{l}\pa^{\alpha}f),  \een
for some constants $C_{l,\gamma}$ and $C_{l,s,\gamma}$
satisfying
		\ben \label{def-C-l-s-gamma}
		C_{l,\gamma} \leq C_{l} (\gamma+2s+3)^{-1}, \quad 
		C_{l,s,\gamma} \leq \left(C_{l} (\gamma+2s+3)^{-1} \right)^{1+l/s} (\lambda_{s}/8)^{-l/s}.
		\een

		We choose a constant $M_{2}$ large enough such that 
		\beno
		c_{0}M_{2}/2 \geq C_{l,\gamma},  \quad M_{2}M_{1}\lambda_{s}/2 \geq C_{l,s,\gamma}.
		\eeno 
		Recalling $M_1 = X_{s,\gamma}$ defined in
		\eqref{def-M1}, for simplicity, we can take $M_2 = Y_{s,\gamma,l}$ defined in
		\eqref{def-M2}. 
		Then the combination \eqref{essential-micro-macro-result}$\times M_{2}+$\eqref{weighted-pure-x-2} yields
		\ben \label{essential-micro-macro-result-3} &&\frac{\mathrm{d}}{\mathrm{d}t}(M_{2}\mathcal{I}^{N}(f)+M_{1}M_{2}\|f\|^{2}_{H^{N}_{x}L^{2}}+\|f\|^{2}_{H^{N}_{x}L^{2}_{l}})
		\\&&+ \frac{1}{2}(M_{2}c_{0}|\mathrm{M}|^{2}_{H^{N}_{x}}+M_{2}M_{1}\lambda_{s}\|f_{2}\|^{2}_{H^{N}_{x}L^2_{s,\gamma/2}}
		+ \lambda_{s}\|f_{2}\|^{2}_{H^{N}_{x}L^2_{s,l+\gamma/2}})
		\nonumber \\&\leq&  M_{2} C_2 \mathrm{NL}^{N}(g)
		+2M_{2}M_{1}\sum_{|\alpha| \leq N}
		(\pa^{\alpha}g
		, \pa^{\alpha}f)
		+2\sum_{|\alpha| \leq N}(W_{l}\pa^{\alpha}g
		,W_{l}\pa^{\alpha}f).
		\nonumber \een

		{\it Step 3: Propagation of $\|f\|^{2}_{H^{N-j}_{x}\dot{H}^{j}}$ and $\|f\|^{2}_{H^{N-j}_{x}\dot{H}^{j}_{l+j(\gamma+2s)}}$ for $j \geq 1$}. 	For notation convenience, set
		\beno \mathcal{X}^{i}(f) &:=& M^{i}\mathcal{I}^{N}(f)+ \sum_{0 \leq j \leq i}L^{i}_{j}\|f\|^{2}_{H^{N-j}_{x}\dot{H}^{j}}
		+\sum_{0 \leq j \leq i}K^{i}_{j}\|f\|^{2}_{H^{N-j}_{x}\dot{H}^{j}_{l+j(\gamma+2s)}},
		\\ \mathcal{Y}^{i}(f) &:=&
		M^{i}c_{0}|\mathrm{M}|^{2}_{H^{N}_{x}} +
		\lambda_{s}\sum_{j=0}^{i}L^{i}_{j}\|f_{2}\|^{2}_{H^{N-j}_{x}\dot{H}^{j}_{s,\gamma/2}}
		+\lambda_{s}\sum_{j=0}^{i}K^{i}_{j}\|f_{2}\|^{2}_{H^{N-j}_{x}\dot{H}^{j}_{s,l+j(\gamma+2s)+\gamma/2}},\\
		\mathcal{Z}^{i}(g, f) &:=& M^{i} C_2 \mathrm{NL}^{N}(g)
		+ \sum_{j=0}^{i}2L^{i}_{j}\sum_{|\alpha|\le N-j,|\beta|=j}(\pa^{\alpha}_{\beta}g, \pa^{\alpha}_{\beta}f) \nonumber \\&&+\sum_{j=0}^{i}2K^{i}_{j}\sum_{|\alpha|\le N-j,|\beta|=j}(W_{l+j(\gamma+2s)}\pa^{\alpha}_{\beta}g, W_{l+j(\gamma+2s)}\pa^{\alpha}_{\beta}f).
		\eeno
		We will use mathematical induction to prove that for any $0\leq i \leq N$, there are some constants $M^{i}, L^{i}_{j}, K^{i}_{j} \geq 1, 0 \leq j \leq i$ satisfying
		\beno
		M^{i}c_0 \geq 1, L^{i}_{j} \geq L^{i}_{j+1},
		K^{i}_{j} \geq K^{i}_{j+1},
		L^{i}_{j} \geq K^{i}_{j},
		\eeno
		such that
		\ben \label{essential-micro-macro-result-4} \frac{\mathrm{d}}{\mathrm{d}t}\mathcal{X}^{i}(f)+  2^{-1-i/N} \mathcal{Y}^{i}(f) \leq 	\mathcal{Z}^{i}(g, f). \een

		It is easy to check that \eqref{essential-micro-macro-result-4} is valid for $i=0$ thanks to \eqref{essential-micro-macro-result-3}. More precisely, 
		$M^{0} = M_2, L^{0}_{0} = M_1 M_2, K^{0}_{0}=1$.

		We obtain \eqref{essential-micro-macro-result-2} by taking $i=N$ in \eqref{essential-micro-macro-result-4}. 
		
		Assume \eqref{essential-micro-macro-result-4} is valid for $i = k$ for some $0 \leq k \leq N-1$. We now prove \eqref{essential-micro-macro-result-4} is also valid for $i=k+1$ by first considering  $\|f\|^{2}_{H^{N-j}_{x}\dot{H}^{j}}$ and then 
		$\|f\|^{2}_{H^{N-j}_{x}\dot{H}^{j}_{l+j(\gamma+2s)}}$.
		
		Let $\alpha$ and $\beta$ be multi-indices such that $|\alpha|\leq N-(k+1)$ and $|\beta|= k+1 \geq 1$. Applying $\pa^{\alpha}_{\beta}$ to both sides of \eqref{lBE-general} gives 
		\ben \label{weight-LBE-3} \partial_{t}\pa^{\alpha}_{\beta}f + v\cdot \nabla_{x} \pa^{\alpha}_{\beta}f +\sum_{\beta_{1}\leq \beta,|\beta_{1}|=1} \pa^{\alpha+\beta_{1}}_{\beta-\beta_{1}}f + \pa^{\alpha}_{\beta}\mathcal{L}^{s,\gamma}f = \pa^{\alpha}_{\beta}g. \een
		Taking inner product with $\pa^{\alpha}_{\beta} f$ over $(x,v)$, one has
		\ben \label{weight-mixed-derivative-no} \frac{1}{2}\frac{\mathrm{d}}{\mathrm{d}t}\|\pa^{\alpha}_{\beta}f \|^{2}_{L^{2}_{q}}  +  \sum_{\beta_{1}\leq \beta,|\beta_{1}|=1}(\pa^{\alpha+\beta_{1}}_{\beta-\beta_{1}}f,\pa^{\alpha}_{\beta}f) + (\pa^{\alpha}_{\beta}\mathcal{L}^{s,\gamma}f,\pa^{\alpha}_{\beta}f) = (\pa^{\alpha}_{\beta}g,\pa^{\alpha}_{\beta}f).  \een
		
		{\it Estimate of $(\pa^{\alpha+\beta_{1}}_{\beta-\beta_{1}}f,\pa^{\alpha}_{\beta}f)$.} By  Cauchy-Schwarz inequality and using $f=f_{1}+f_{2}$, we get
		\ben \label{weight-term-1-no}
		|(\pa^{\alpha+\beta_{1}}_{\beta-\beta_{1}}f, \pa^{\alpha}_{\beta}f)| &\leq& \|\pa^{\alpha+\beta_{1}}_{\beta-\beta_{1}}f\|_{L^2_{x}L^{2}_{-s-\gamma/2}}\|\pa^{\alpha}_{\beta}f\|_{L^2_{x}L^{2}_{s+\gamma/2}}
		\\ \nonumber  &\leq& \|\pa^{\alpha+\beta_{1}}_{\beta-\beta_{1}}f\|_{L^2_{x}L^{2}_{s,-(\gamma+2s)+\gamma/2}}\|\pa^{\alpha}_{\beta}f\|_{L^2_{x}L^{2}_{s,\gamma/2}}
		\\\nonumber  &\leq&
		\delta \|\pa^{\alpha}_{\beta}f_2\|_{L^2_{x}L^{2}_{s,\gamma/2}}^2 + C_{\delta}
		\|\pa^{\alpha+\beta_{1}}_{\beta-\beta_{1}}f_2\|_{L^2_{x}L^{2}_{s,l+|\beta-\beta_{1}|(\gamma+2s)+\gamma/2}}
		+ C_{\delta} C_{|\beta|} |\mathrm{M}|^{2}_{H^{N-k}_{x}}. \een
		Here $C_{\delta} \lesssim \delta^{-1}$.

	{\it Estimate of $(\pa^{\alpha}_{\beta}\mathcal{L}^{s,\gamma}f,\pa^{\alpha}_{\beta}f) $.}
		Using $	\pa^{\alpha}_{\beta}\mathcal{L}^{s,\gamma}f = \mathcal{L}^{s, \gamma}\pa^{\alpha}_{\beta}f +
		[\partial_{\beta}, \mathcal{L}^{s,\gamma}]\pa^{\alpha}f$, 
		by Lemma \ref{coercivity-L-alpha-beta} and Lemma \ref{commutator-L-with-beta} and
		Lemma \ref{commutator-L-with-beta} with  $\delta = \lambda_{s}/8$,
		we have
		\ben \label{weight-term-2-no}
	(\pa^{\alpha}_{\beta}\mathcal{L}^{s,\gamma}f, \pa^{\alpha}_{\beta}f)
	\geq
		(3/4)\lambda_{s} \| \pa^{\alpha}_{\beta} f_2\|^2_{L^{2}_{x}L^2_{s,\gamma/2}}
		-  \lambda_{s}^{-1} C_{s,\gamma}^2 C_{N}
		|\partial^{\alpha}\mathrm{M}|_{L^{2}_{x}}^{2}
		- \lambda_{s}^{-1} C_{s,\gamma}^2 C_{N} \sum_{\beta_{2}<\beta} \|\pa^{\alpha}_{\beta_{2}}f_{2}\|_{L^{2}_{x}L^{2}_{s,\gamma/2}}^{2} .
		\een
		By plugging \eqref{weight-term-1-no} and \eqref{weight-term-2-no} into \eqref{weight-mixed-derivative}, taking $\delta = \lambda_{s}/4N$ and 
		taking sum over $|\alpha|\leq N-(k+1), |\beta| = k+1$,
		we have
		\ben \label{essential-micro-macro-result-pure-no}&&\frac{\mathrm{d}}{\mathrm{d}t}\|f\|^{2}_{H^{N-k-1}_{x}\dot{H}^{k+1}}+ \lambda_{s} \|f_{2}\|^{2}_{H^{N-k-1}_{x}\dot{H}^{k+1}_{s,\gamma/2}}  \\&\leq& 2\sum_{|\alpha|\leq N-k-1,|\beta|=k+1}(\pa^{\alpha}_{\beta}g, \pa^{\alpha}_{\beta}f)
		+\lambda_{s}^{-1} C_{s,\gamma}^2 C_{N} \|f_{2}\|^{2}_{H^{N-k}_{x}
			\dot{H}^{k}_{s,l+k(\gamma+2s)+\gamma/2}}
		\nonumber \\&&+\lambda_{s}^{-1} C_{s,\gamma}^2 C_{N} \|f_{2}\|^{2}_{H^{N-k}_{x}
			H^{k}_{s,\gamma/2}}
		+ \lambda_{s}^{-1} C_{s,\gamma}^2 C_{N}
		|\mathrm{M}|_{H^{N-k}_{x}}^{2}. \nonumber\een

		By the induction assumption, \eqref{essential-micro-macro-result-4} is true when $i=k$, that is,	
		\ben \label{essential-micro-macro-result-k} \frac{\mathrm{d}}{\mathrm{d}t}\mathcal{X}^{k}(f)+  2^{-1-k/N} \mathcal{Y}^{k}(f) \leq 	\mathcal{Z}^{k}(g, f). \een		
		Note that $\mathcal{Y}^{k}(f)$ contains all the norms on the right hand side of
		\eqref{essential-micro-macro-result-pure-no}.

		We choose a constant $W_{k}$ large enough such that
		\beno
		W_{k} 2^{-1-\f{k}{N}-\f{1}{2N}} (2^{\f{1}{2N}} -1) \lambda_{s} \geq \lambda_{s}^{-1} C_{s,\gamma}^2 C_{N}.
		\eeno
		Note that this also gives
		\beno
		W_{k} 2^{-1-\f{k}{N}-\f{1}{2N}} (2^{\f{1}{2N}} -1)
		M^{k} c_{0} \geq \lambda_{s}^{-1} C_{s,\gamma}^2 C_{N}.
		\eeno
		Take 
		\ben \label{def-Wk}
		W_{k} =  \lambda_{s}^{-2} C_{s,\gamma}^2 C_{N}.
		\een
		Then 
		\eqref{essential-micro-macro-result-k}$\times W_{k}+$\eqref{essential-micro-macro-result-pure-no} yields
		\ben \label{k-plus1-with-k}&&\frac{\mathrm{d}}{\mathrm{d}t} (W_{k}\mathcal{X}^{k}(f) +
		\|f\|^{2}_{H^{N-k-1}_{x}\dot{H}^{k+1}})+ 2^{-1-k/N-1/2N} W_{k} \mathcal{Y}^{k}(f) + \lambda_{s} \|f_{2}\|^{2}_{H^{N-k-1}_{x}\dot{H}^{k+1}_{s,\gamma/2}}  
		\\ \nonumber &\leq& 2\sum_{|\alpha|\leq N-k-1,|\beta|=k+1}(\pa^{\alpha}_{\beta}g, \pa^{\alpha}_{\beta}f) + W_{k}\mathcal{Z}^{k}(g, f). \een

		Let $\alpha$ and $\beta$ be multi-indices such that $|\alpha|\leq N-(k+1)$ and $|\beta|= k+1 \geq 1$. Let $q=l+(k+1)(\gamma+2s)$. Applying $W_{q}\pa^{\alpha}_{\beta}$ to both sides of \eqref{lBE-general} gives
		\ben \label{weight-q-LBE-3} \partial_{t}W_{q}\pa^{\alpha}_{\beta}f + v\cdot \nabla_{x} W_{q}\pa^{\alpha}_{\beta}f +\sum_{\beta_{1}\leq \beta,|\beta_{1}|=1} W_{q}\pa^{\alpha+\beta_{1}}_{\beta-\beta_{1}}f + W_{q}\pa^{\alpha}_{\beta}\mathcal{L}^{s,\gamma}f = W_{q}\pa^{\alpha}_{\beta}g. \een
		Taking inner product with $W_{q}\pa^{\alpha}_{\beta} f$ over $(x,v)$ yields
		\ben \label{weight-mixed-derivative} \frac{1}{2}\frac{\mathrm{d}}{\mathrm{d}t}\|\pa^{\alpha}_{\beta}f \|^{2}_{L^{2}_{q}}  +  \sum_{\beta_{1}\leq \beta,|\beta_{1}|=1}(W_{q}\pa^{\alpha+\beta_{1}}_{\beta-\beta_{1}}f,W_{q}\pa^{\alpha}_{\beta}f) + (W_{q}\pa^{\alpha}_{\beta}\mathcal{L}^{s,\gamma}f,W_{q}\pa^{\alpha}_{\beta}f) = (W_{q}\pa^{\alpha}_{\beta}g,W_{q}\pa^{\alpha}_{\beta}f).  \een

		{\it Estimate of $(W_{q}\pa^{\alpha+\beta_{1}}_{\beta-\beta_{1}}f,W_{q}\pa^{\alpha}_{\beta}f)$.} 		
		Similar to \eqref{weight-term-1-no}, we have
		\ben \label{weight-term-1}
		|(W_{q}\pa^{\alpha+\beta_{1}}_{\beta-\beta_{1}}f, W_{q}\pa^{\alpha}_{\beta}f)| \leq 
		\delta \|\pa^{\alpha}_{\beta}f_2\|_{L^2_{x}L^{2}_{s,\gamma/2}}^2 + C_{\delta}
		\|\pa^{\alpha+\beta_{1}}_{\beta-\beta_{1}}f_2\|_{L^2_{x}L^{2}_{s,l+k(\gamma+2s)+\gamma/2}}
		+ C_{\delta} C_{l, |\beta|} |\mathrm{M}|^{2}_{H^{N-k}_{x}}. \een

		{\it Estimate of $(W_{q}\pa^{\alpha}_{\beta}\mathcal{L}^{s,\gamma}f,W_{q}\pa^{\alpha}_{\beta}f) $.}
	Observe that 
		\beno
		W_{q}\pa^{\alpha}_{\beta}\mathcal{L}^{s,\gamma}f = \mathcal{L}^{s, \gamma}W_{q}\pa^{\alpha}_{\beta}f + [W_{q}, \mathcal{L}^{s,\gamma}]\pa^{\alpha}_{\beta}f+
		W_{q}[\partial_{\beta}, \mathcal{L}^{s,\gamma}]\pa^{\alpha}f.
		\eeno
		By Lemma \ref{coercivity-L-alpha-beta},  \eqref{special-commutator-L} and Lemma \ref{commutator-L-with-beta}, taking $\delta = \lambda_{s}/8$ in Lemma \ref{commutator-L-with-beta},
		we have
		\beno
		&&(\mathcal{L}^{s, \gamma} W_{q}\pa^{\alpha}_{\beta}f+[W_{q}, \mathcal{L}^{s,\gamma}]\pa^{\alpha}_{\beta}f+
		W_{q}[\partial_{\beta}, \mathcal{L}^{s,\gamma}]\pa^{\alpha}f, W_{q}\pa^{\alpha}_{\beta}f)
		\\&\geq&
		\f{3}{4} \lambda_{s} \| W_{q}\pa^{\alpha}_{\beta} f_2\|^2_{L^{2}_{x}L^2_{s,\gamma/2}}
		+ \lambda_{s}^{-1} C_{q,N} C_{s,\gamma}^2 |\partial^{\alpha}\mathrm{M}|^{2}_{L^{2}_{x}}
		\\&&	+ \lambda_{s}^{-1} C_{q,N} C_{s,\gamma}^2 \sum_{\beta_{2}<\beta} (\|\pa^{\alpha}_{\beta_{2}}f_2\|_{L^{2}_{x}L^{2}_{q+\gamma/2}}^2+ 
		\|\pa^{\alpha}_{\beta_{2}}f_2\|_{H^{s}_{\gamma/2}}^{2}) + C_{q}(\gamma+5)^{-1}
		\|\pa^{\alpha}_{\beta}f\|^{2}_{L^{2}_{x}L^{2}_{q+\gamma/2}}.
		\eeno
		By using
		the decomposition $f=f_1 +f_2$ and
		\eqref{reduce-to-gamma/2-2} for  the last term, since $q \leq l$,
		we get
		\ben \label{weight-term-2}
		&&(W_{q}\pa^{\alpha}_{\beta}\mathcal{L}^{s,\gamma}f, W_{q}\pa^{\alpha}_{\beta}f)
		\\ \nonumber &\geq&
		\f{5}{8} \lambda_{s} \| W_{q}\pa^{\alpha}_{\beta} f_2\|^2_{L^{2}_{x}L^2_{s,\gamma/2}}
		+ \lambda_{s}^{-1} C_{N,l} C_{s,\gamma}^2 |\partial^{\alpha}\mathrm{M}|^{2}_{L^{2}_{x}}
		\\ \nonumber &&	+ \lambda_{s}^{-1} C_{N,l} C_{s,\gamma}^2 \sum_{\beta_{2}<\beta} (\|\pa^{\alpha}_{\beta_{2}}f_2\|_{L^{2}_{x}L^{2}_{q+\gamma/2}}^2+ 
		\|\pa^{\alpha}_{\beta_{2}}f_2\|_{H^{s}_{\gamma/2}}^{2}) + C_{l,s,\gamma}
		\|\pa^{\alpha}_{\beta}f_2\|^{2}_{L^{2}_{x}L^{2}_{s,\gamma/2}}.
		\een
		
		Plugging \eqref{weight-term-1} and \eqref{weight-term-2} into \eqref{weight-mixed-derivative}
		and 
		taking sum over $|\alpha|\leq N-(k+1), |\beta| = k+1$ give
		\ben \label{essential-micro-macro-result-pure}
		&&\frac{\mathrm{d}}{\mathrm{d}t}\|f\|^{2}_{H^{N-k-1}_{x}\dot{H}^{k+1}_{q}}+ \lambda_{s} \|f_{2}\|^{2}_{H^{N-k-1}_{x}\dot{H}^{k+1}_{s,q+\gamma/2}}  \\&\leq& 2\sum_{|\alpha|\leq N-k-1,|\beta|=k+1}(W_{q}\pa^{\alpha}_{\beta}g, W_{q}\pa^{\alpha}_{\beta}f)
		+ \lambda_{s}^{-1}
		C_{N}\|f_{2}\|^{2}_{H^{N-k}_{x}
			\dot{H}^{k}_{s,q+k(\gamma+2s)+\gamma/2}}
		\nonumber \\&&+ \lambda_{s}^{-1} C_{N,l} C_{s,\gamma}^2 \|f_{2}\|^{2}_{H^{N-k-1}_{x}H^{k}_{s,l+k(\gamma+2s)+\gamma/2}}
		+ \lambda_{s}^{-1} C_{N,l} C_{s,\gamma}^2 |\partial^{\alpha}\mathrm{M}|^{2}_{L^{2}_{x}} + C_{l,s,\gamma}
		\|f_2\|^{2}_{H^{N-k-1}_{x}\dot{H}^{k+1}_{s,\gamma/2}}
		. \nonumber\een
		Note that $2^{-1-k/N-1/2N} \mathcal{Y}^{k}(f) + \lambda_{s} \|f_{2}\|^{2}_{H^{N-k-1}_{x}\dot{H}^{k+1}_{s,\gamma/2}}$ in \eqref{k-plus1-with-k} contains all the norms  on the right hand side of
		\eqref{essential-micro-macro-result-pure}.
		We choose a constant $U_{k}$ large enough such that
		\beno
		U_{k} 2^{-1-\f{k}{N}-\f{1}{N}} (2^{\f{1}{2N}} -1) \lambda_{s} \geq \lambda_{s}^{-1} C_{s,\gamma}^2 C_{N,l},
		\quad U_{k} \lambda_{s} /2 \geq  C_{l,s,\gamma}.
		\eeno
		By recalling \eqref{def-C-l-s-gamma},
		we choose 
		\ben \label{def-Uk}
		U_{k} =   \max\{\lambda_{s}^{-2}C_{s,\gamma}^2 C_{N,l}, 8^{l/s} (C_{l}(\gamma+5)^{-1}\lambda_{s}^{-1})^{1+l/s}
		\}
		.
		\een		Then 
		\eqref{k-plus1-with-k}$\times U_{k}+$\eqref{essential-micro-macro-result-pure} yields	
		\beno
		&&\frac{\mathrm{d}}{\mathrm{d}t} (U_{k}W_{k}\mathcal{X}^{k}(f) + U_{k}
		\|f\|^{2}_{H^{N-k-1}_{x}\dot{H}^{k+1}} + \|f\|^{2}_{H^{N-k-1}_{x}\dot{H}^{k+1}_{q}})
		\\&&+  2^{-1-k/N-1/N} U_{k} W_{k} \mathcal{Y}^{k}(f) + 2^{-1} U_{k} \lambda_{s} \|f_{2}\|^{2}_{H^{N-k-1}_{x}\dot{H}^{k+1}_{s,\gamma/2}}  + \lambda_{s} \|f_{2}\|^{2}_{H^{N-k-1}_{x}\dot{H}^{k+1}_{s,q+\gamma/2}} 
		\\ &\leq& 2\sum_{|\alpha|\leq N-k-1,|\beta|=k+1}(W_{q}\pa^{\alpha}_{\beta}g, W_{q}\pa^{\alpha}_{\beta}f)+
		2U_{k}\sum_{|\alpha|\leq N-k-1,|\beta|=k+1}(\pa^{\alpha}_{\beta}g, \pa^{\alpha}_{\beta}f) + U_{k} W_{k}\mathcal{Z}^{k}(g, f).\eeno
		Hence  \eqref{essential-micro-macro-result-4} holds for $i=k+1$. 
		Precisely, we can set $M^{k+1} = U_{k}W_{k}M^{k}, L^{k+1}_{j}=U_{k}W_{k} L^{k}_{j}, K^{k+1}_{j}=U_{k}W_{k} K^{k}_{j}$ for $0 \leq j\leq k$ and
		$L^{k+1}_{k+1}=U_{k}, K^{k+1}_{k+1}=1$. Note that  $L^{N}_{0} = L^{0}_{0}\prod_{j=0}^{N-1} U_{j}W_{j} = M_1M_2\prod_{j=0}^{N-1} U_{j}W_{j}$.
		By taking  $i=N$ in \eqref{essential-micro-macro-result-4} and
		$M = M^N, L_{j} = L^{N}_{j}, K_{j} = K^{N}_{j}$ for $0 \leq j \leq N$, we get \eqref{essential-micro-macro-result-2}.
		It is straightforward to check the constants satisfy \eqref{c-relation-1}-\eqref{def-constant-U}.
		And this completes the proof of the proposition.
	\end{proof}

	\subsection{Global well-posedenss} \label{well-posedenss}

 We first derive the following a priori estimate for solutions to the Cauchy problem \eqref{Cauchy-linearizedBE}.
	\begin{thm}\label{a-priori-estimate-LBE} Let $N \geq 4, l\geq -N(\gamma+2s)$.
		If $f^{s,\gamma}$ is a solution of the Cauchy problem \eqref{Cauchy-linearizedBE}  satisfying
		$\sup_{0 \leq t \leq T} \| f^{s,\gamma}(t)\|_{H^{N}_{x,v}} \le \eta_{s,\gamma,N,l} := C_{N,l}^{-1}Z_{s,\gamma,N,l}^{-1} \lambda_{s} C_{s,\gamma}^{-1}$, then for any $t\in[0,T]$, the solution $f^{s,\gamma}$  satisfies	
		\ben \label{uniform-estimate-propagation-N-geq-5-l-big} \mathcal{E}_{N,l}^{s,\gamma}(f^{s,\gamma}(t)) + \frac{1}{8} \lambda_{s} \int_{0}^{t}  (|\mathrm{M}(\tau)|^{2}_{H^{N}_{x}} + \mathcal{D}^{s,\gamma}_{N,l}(f^{s,\gamma}_2)(\tau)) \mathrm{d}\tau \leq Z_{s,\gamma,N,l} \mathcal{E}_{N,l}^{s,\gamma}(f_{0}).
		\een	
	\end{thm}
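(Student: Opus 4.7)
The plan is to apply Proposition \ref{essential-estimate-of-micro-macro} with the source term $g = \Gamma^{s,\gamma}(f^{s,\gamma},f^{s,\gamma})$, since $f^{s,\gamma}$ solves the linearized Boltzmann equation \eqref{Cauchy-linearizedBE}. This yields
\begin{equation*}
\frac{d}{dt}\Xi_{N,l}^{s,\gamma}(f^{s,\gamma}) + \tfrac14 \tilde{\mathcal D}^{s,\gamma}_{N,l}(f^{s,\gamma}) \leq M C_2 \mathrm{NL}^N(\Gamma^{s,\gamma}(f,f)) + \mathcal{R}_1 + \mathcal{R}_2,
\end{equation*}
where $\mathcal{R}_1$ and $\mathcal{R}_2$ collect the inner products $(\pa^\alpha_\beta \Gamma^{s,\gamma}(f,f),\pa^\alpha_\beta f)$ and $(W_{l+j(\gamma+2s)}\pa^\alpha_\beta\Gamma^{s,\gamma}(f,f), W_{l+j(\gamma+2s)}\pa^\alpha_\beta f)$ weighted by $L_j$ and $K_j$. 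The key point is that $\Xi_{N,l}^{s,\gamma}(f^{s,\gamma})$ is equivalent to $\mathcal{E}_{N,l}^{s,\gamma}(f^{s,\gamma})$ up to the constants appearing in \eqref{c-relation-1}, and $\tilde{\mathcal D}^{s,\gamma}_{N,l}$ dominates $\lambda_s(|\mathrm{M}|^2_{H^N_x} + \mathcal{D}^{s,\gamma}_{N,l}(f_2))$ up to a generic constant.

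The main task is to show that each trilinear term on the right is bounded by $C\sqrt{\mathcal{E}_{N,l}^{s,\gamma}(f)}\, \tilde{\mathcal D}^{s,\gamma}_{N,l}(f)$ with a constant $C$ of size $C_{N,l}Z_{s,\gamma,N,l}C_{s,\gamma}\lambda_s^{-1}$. For this I will use the binomial expansion \eqref{alpha-beta-on-Gamma} to distribute $\pa^\alpha_\beta$, then apply the weighted upper bound Corollary \ref{Gamma-full-up-bound-with-weight} (together with the commutator estimate in Theorem \ref{commutator-Gamma-geq-eta} when passing the weight $W_{l+j(\gamma+2s)}$ across $\Gamma^{s,\gamma}$). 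The factor $C_{\delta,s,\gamma}$ from the singular-region estimate carries the $(\gamma+2s+3)^{-1}$ dependence, while the $s^{-1}|g|_{L^2}|h|_{s,l+\gamma/2}|f|_{s,l+\gamma/2}$ part matches the dissipation norm. The key mechanism is a Sobolev embedding (e.g.\ $H^{N_1}_x \hookrightarrow L^\infty_x$ for $N_1\geq 2$, which holds since $N\ge 4$) that allows one of the three factors to be taken in $L^\infty_x$, yielding a bound of the form $\|f\|_{H^N_{x,v}} \cdot \mathcal{D}^{s,\gamma}_{N,l}(f)$. The $\mathrm{NL}^N$ term is handled by Proposition \ref{Gamma-g-h-mu} in similar fashion, producing $\|f\|_{H^N_{x,v}} \mathcal{D}^{s,\gamma}_{N,l}(f)$ control.

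Combining these bounds, under the a priori smallness assumption $\sup_{[0,T]}\|f^{s,\gamma}\|_{H^N_{x,v}}\leq \eta_{s,\gamma,N,l}$ with $\eta_{s,\gamma,N,l} = C_{N,l}^{-1} Z_{s,\gamma,N,l}^{-1} \lambda_s C_{s,\gamma}^{-1}$, the total trilinear contribution is absorbed by $\tfrac18 \tilde{\mathcal D}^{s,\gamma}_{N,l}(f^{s,\gamma})$, giving
\begin{equation*}
\frac{d}{dt}\Xi_{N,l}^{s,\gamma}(f^{s,\gamma}) + \tfrac18 \tilde{\mathcal D}^{s,\gamma}_{N,l}(f^{s,\gamma}) \leq 0.
\end{equation*}
Integrating in $t$ and using the equivalence of $\Xi_{N,l}^{s,\gamma}$ and $\mathcal{E}_{N,l}^{s,\gamma}$ (with ratio governed by $Z_{s,\gamma,N,l}$ via \eqref{c-relation-1}), together with $\tilde{\mathcal D}^{s,\gamma}_{N,l}\geq \lambda_s(|\mathrm{M}|^2_{H^N_x} + \mathcal{D}^{s,\gamma}_{N,l}(f_2))$ from \eqref{c-relation-2}, yields \eqref{uniform-estimate-propagation-N-geq-5-l-big}.

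The main obstacle is the careful bookkeeping of the $(s,\gamma)$-dependent constants throughout the trilinear estimates: one must verify that every occurrence of $C_{s,\gamma}$, $\lambda_s^{-1}$, and the weight-exchange factor $\delta^{-l/s}$ combines into the single constant $Z_{s,\gamma,N,l}$ declared in \eqref{c-relation-1}, while the smallness threshold $\eta_{s,\gamma,N,l}$ must be exactly the reciprocal factor needed to absorb the worst term. A secondary subtlety is that when $|\beta|$ is large the weight $l + j(\gamma+2s)$ can shrink below zero (recall $\gamma+2s\leq 0$), so the hypothesis $l\geq -N(\gamma+2s)$ is essential to keep all weights nonnegative and to allow distributing weights among factors via the commutator estimates.
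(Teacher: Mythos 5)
Your proposal follows essentially the same route as the paper: apply Proposition \ref{essential-estimate-of-micro-macro} with $g=\Gamma^{s,\gamma}(f^{s,\gamma},f^{s,\gamma})$, bound the resulting trilinear terms via the binomial expansion together with Theorem \ref{Gamma-full-up-bound}, Corollary \ref{Gamma-full-up-bound-with-weight}, Theorem \ref{commutator-Gamma-geq-eta} and Proposition \ref{Gamma-g-h-mu}, absorb them into the dissipation under the $H^N_{x,v}$ smallness hypothesis, and integrate using the equivalence $\mathcal{E}_{N,l}^{s,\gamma}\leq\Xi_{N,l}^{s,\gamma}\leq Z_{s,\gamma,N,l}\mathcal{E}_{N,l}^{s,\gamma}$.

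One small imprecision: you state the trilinear terms are bounded by $C\sqrt{\mathcal{E}_{N,l}^{s,\gamma}(f)}\,\tilde{\mathcal D}^{s,\gamma}_{N,l}(f)$, but the paper's estimates \eqref{N-equals-4-x-v-weight-15}--\eqref{only-x-with-mu-type-N-equals-4} produce the factor $\|f\|_{H^N_{x,v}}$ (the \emph{unweighted} mixed Sobolev norm), not $\sqrt{\mathcal{E}_{N,l}^{s,\gamma}(f)}$. This distinction matters: the smallness hypothesis is on $\|f\|_{H^N_{x,v}}$, and $\|f\|_{H^N_{x,v}}\lesssim\sqrt{\mathcal{E}_{N,l}^{s,\gamma}(f)}$ but not conversely, so writing the bound in terms of $\sqrt{\mathcal{E}}$ would not be directly absorbable by the stated hypothesis. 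Also, the paper's product estimate in $x$ is the interpolated form $\int|ghf|\,\mathrm{d}x\lesssim|g|_{H^{a_1}_x}|h|_{H^{a_2}_x}|f|_{L^2_x}$ with $a_1+a_2=2$, of which $H^2_x\hookrightarrow L^\infty_x$ is only the endpoint case; the intermediate choices (see the paper's Table \ref{parameter-2}) are needed precisely when $N=4$ to distribute $x$- and $v$-derivatives so that the ``passive'' factor stays in $H^N_{x,v}$ while the others land in the dissipation. Neither point changes the structure of your argument.
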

	
	\begin{proof}[Proof of Theorem \ref{a-priori-estimate-LBE}]
		We apply Proposition \ref{essential-estimate-of-micro-macro} by taking $g = \Gamma^{s,\gamma}(f^{s,\gamma}
		,f^{s,\gamma})$ to have 
		\ben \label{g=gamma-f-f}
		\frac{\mathrm{d}}{\mathrm{d}t}\Xi^{N,l}(f^{s,\gamma}) +  \frac{1}{4} \tilde{\mathcal{D}}^{s,\gamma}_{N,l}(f^{s,\gamma}) &\leq& \sum_{j=0}^{N}2K_{j} \mathcal{A}^{N,j,l}_{s,\gamma}(f^{s,\gamma}
		,f^{s,\gamma},f^{s,\gamma}) \nonumber
		\\&&+
		\sum_{j=0}^{N}2L_{j} \mathcal{B}^{N,j,l}_{s,\gamma}(f^{s,\gamma}
		,f^{s,\gamma},f^{s,\gamma})+M C_2 \mathcal{C}^{N}_{s,\gamma}(
		f^{s,\gamma},f^{s,\gamma}), \een
		where
		\ben \label{A-N-j-l}
		\mathcal{A}^{N,j,l}_{s,\gamma}(g,h,f) := \sum_{|\alpha|\leq N-j,|\beta|=j}(W_{l+j(\gamma+2s)}\pa^{\alpha}_{\beta}\Gamma^{s,\gamma}(g,h), W_{l+j(\gamma+2s)}\pa^{\alpha}_{\beta}f),
		\\ \label{B-N-j-0}
		\mathcal{B}^{N,j}_{s,\gamma}(g,h,f) := \sum_{|\alpha|\leq N-j,|\beta|=j}(\pa^{\alpha}_{\beta}\Gamma^{s,\gamma}(g,h), \pa^{\alpha}_{\beta}f),
		\\ \label{C-N-epsilon-0}
		\mathcal{C}^{N}_{s,\gamma}(g,h) := \sum_{|\alpha| \leq N-1}\sum_{j=1}^{13}
		\int_{\mathbb{T}^{3}}|\langle  \pa^{\alpha}\Gamma^{s,\gamma}(g,h), \mu^{\f12} P_j\rangle|^{2} \mathrm{d}x.
		\een

		We will estimate $\mathcal{A}^{N,j,l}_{s,\gamma}, \mathcal{B}^{N,j}_{s,\gamma}, \mathcal{C}^{N}_{s,\gamma}$ in the following. 	Set
		\beno \|f\|^{2}_{H^{m}_{x,v}} := \sum_{|\alpha|+|\beta|\leq m}\|\pa^{\alpha}_{\beta}f\|^{2}_{L^{2}},  \,\, \|f\|^{2}_{D^{m}_{s,\gamma}} := \sum_{|\alpha|+|\beta|\leq m} \|\pa_{\beta}^{\alpha}f\|_{L^{2}_x
			L^{2}_{s,\gamma/2}}^2. \eeno
		Recall from \eqref{energy-functional} that the energy functional $\mathcal{E}_{N,l}^{s,\gamma}(f) =\sum_{j=0}^{N}\|f\|^{2}_{H^{N-j}_{x}
			\dot{H}^{j}_{l+j(\gamma+2s)}}$. Define the dissipation functional $\mathcal{D}_{N,l}^{s,\gamma}(f) =\sum_{j=0}^{N}\|f\|^{2}_{H^{N-j}_{x}
			\dot{H}^{j}_{s,l+j(\gamma+2s)+\gamma/2}}$.
		We claim
		\ben  \label{N-equals-4-x-v-weight-15}
		|\mathcal{A}^{N,j,l}_{s,\gamma}(g,h,f)| \lesssim_{N,l}
		C_{s,\gamma} (
		\|g\|_{H^{N}_{x,v}}
		\left(\mathcal{D}_{N,l}^{s,\gamma}(h)\right)^{\f12} +
		\|g\|_{D^{N}_{s,\gamma}} \|h\|_{H^{N}_{x,v}}) \|f\|_{H^{N-j}_{x}
			\dot{H}^{j}_{s,l+j(\gamma+2s)+\gamma/2}},
		\\ \label{N-geq-5-x-v-no-weight}
		|\mathcal{B}^{N,j}_{s,\gamma}(g,h,f)| \lesssim_{N}
		C_{s,\gamma} (\|g\|_{H^{N}_{x,v}} \|h\|_{D^{N}_{s,\gamma}} +
		\|g\|_{D^{N}_{s,\gamma}} \|h\|_{H^{N}_{x,v}}) \|f\|_{H^{N-j}_{x}
			\dot{H}^{j}_{s,\gamma/2}},
		\\ \label{only-x-with-mu-type-N-equals-4}
		\mathcal{C}^{N}_{s,\gamma}(g,f) \lesssim_{N} C_{s,\gamma}^2 \|g\|_{H^{N}_{x}L^2}^2 \|h\|_{H^{N}_{x}L^{2}_{s,\gamma/2}}^2.
		\een
		With the above  nonlinear estimates, by recalling \eqref{def-dissipation-combination}, \eqref{c-relation-1} and \eqref{c-relation-2}, if 
		\ben \label{assumption-1-on-Hnxv}
		C_{s,\gamma} 
		\sup_{0 \leq t \leq T} \| (f^{s,\gamma}(t))\|_{H^{N}_{x,v}} \leq 1,
		\een
		then 
		\ben \label{g=gamma-f-f-non-linear}
		&& \frac{\mathrm{d}}{\mathrm{d}t}\Xi^{N,l}(f^{s,\gamma}) +  \frac{1}{4} \lambda_{s} (|\mathrm{M}|^{2}_{H^{N}_{x}} + \mathcal{D}^{s,\gamma}_{N,l}(f^{s,\gamma}_2))
		\\ \nonumber  &\leq& C_{N,l}Z_{s,\gamma,N,l} (C_{s,\gamma} 
		\|f^{s,\gamma}\|_{H^{N}_{x,v}} + C_{s,\gamma}^2 
		\|f^{s,\gamma}\|_{H^{N}_{x,v}}^2) \mathcal{D}^{s,\gamma}_{N,l}(f^{s,\gamma})
		\\ \nonumber  &\leq& C_{N,l}Z_{s,\gamma,N,l} \lambda_{s}^{-1} C_{s,\gamma} 
		\|f^{s,\gamma}\|_{H^{N}_{x,v}}  \frac{1}{8} \lambda_{s} (|\mathrm{M}|^{2}_{H^{N}_{x}} + \mathcal{D}^{s,\gamma}_{N,l}(f^{s,\gamma}_2)), 
		\een
		where  we have used  $\mathcal{D}^{s,\gamma}_{N,l}(f) 
		\lesssim_{N,l} (|\mathrm{M}|^{2}_{H^{N}_{x}} + \mathcal{D}^{s,\gamma}_{N,l}(f_2))$
		in the last inequality.
		Now under the assumption
		\ben \label{assumption-2-on-Hnxv}
		C_{N,l}Z_{s,\gamma,N,l} \lambda_{s}^{-1} C_{s,\gamma} 
		\sup_{0 \leq t \leq T} \| f^{s,\gamma}(t)\|_{H^{N}_{x,v}} \leq 1,
		\een
		we have
		\ben \label{final-energy-inequality}
		\frac{\mathrm{d}}{\mathrm{d}t}\Xi^{N,l}(f^{s,\gamma}) +  \frac{1}{8} \lambda_{s} (|\mathrm{M}|^{2}_{H^{N}_{x}} + \mathcal{D}^{s,\gamma}_{N,l}(f^{s,\gamma}_2)) \leq 0, 
		\een
		which gives
		\ben \label{final-energy-inequality-integral}
		\Xi^{N,l}(f^{s,\gamma}(t)) +  \frac{1}{8} \lambda_{s} \int_{0}^{t}  (|\mathrm{M}(\tau)|^{2}_{H^{N}_{x}} + \mathcal{D}^{s,\gamma}_{N,l}(f^{s,\gamma}_2)(\tau)) \mathrm{d}\tau
		\leq \Xi^{N,l}(f_0).
		\een
		Recalling \eqref{c-relation-1}, we have
		\ben \label{energy-equivalence}
		\mathcal{E}_{N,l}^{s,\gamma}(f) \leq \Xi_{N,l}^{s,\gamma}(f) \leq Z_{s,\gamma,N,l} \mathcal{E}_{N,l}^{s,\gamma}(f).
		\een
		Therefore, we obtain \eqref{uniform-estimate-propagation-N-geq-5-l-big}.
		Note that \eqref{assumption-2-on-Hnxv} implies \eqref{assumption-1-on-Hnxv}. 
		
		Now it remains to prove \eqref{N-equals-4-x-v-weight-15}, \eqref{N-geq-5-x-v-no-weight} and  \eqref{only-x-with-mu-type-N-equals-4}. 
		We first consider 	$\mathcal{B}^{N,j}_{s,\gamma}(g,h,f)$ defined in \eqref{B-N-j-0}.
		By the binomial expansion \eqref{alpha-beta-on-Gamma}, we have
		\beno
		\pa^{\alpha}_{\beta}\Gamma^{s,\gamma}(g,h) = 
		\sum C(\alpha_{1},\alpha_{2},\beta_{0},\beta_{1},\beta_{2}) \Gamma^{s,\gamma}(\partial^{\alpha_{1}}_{\beta_{1}}g, \partial^{\alpha_{2}}_{\beta_{2}}h;\beta_{0}),
		\eeno
		where the sum is over $\alpha_{1}+\alpha_{2}=\alpha, \beta_{0}+\beta_{1}+\beta_{2} = \beta$.

		By taking $\delta=\f12$ in
		Theorem \ref{Gamma-full-up-bound}, for
		$(b_{1}, b_{2})=(2, s)$ or $(s, 2)$, we have	
		\beno
		|\langle \Gamma^{s,\gamma}(\partial^{\alpha_{1}}_{\beta_{1}}g, \partial^{\alpha_{2}}_{\beta_{2}}h;\beta_{0}), \pa^{\alpha}_{\beta}f \rangle| \lesssim C_{ s,\gamma}|\partial^{\alpha_{1}}_{\beta_{1}}g|_{H^{b_{1}}_{\gamma/2}}|\partial^{\alpha_{2}}_{\beta_{2}}h|_{H^{b_{2}}
			_{\gamma/2}}
		|\pa^{\alpha}_{\beta}f|_{H^{s}_{\gamma/2}}+ s^{-1} |\partial^{\alpha_{1}}_{\beta_{1}}g|_{L^{2}}|\partial^{\alpha_{2}}_{\beta_{2}}h|_{s,\gamma/2}|\pa^{\alpha}_{\beta}f|_{s,\gamma/2}.
		\eeno	
		Using the fact that $\int |g h f| \mathrm{d}x \lesssim |g|_{H^{a_1}} |h|_{H^{a_2}} |f|_{L^2}$	for $a_1 + a_2 =2, a_1,a_2 \geq 0$, we have
		\beno
		|\left(\Gamma^{s,\gamma}(\partial^{\alpha_{1}}_{\beta_{1}}g, \partial^{\alpha_{2}}_{\beta_{2}}h;\beta_{0}), \pa^{\alpha}_{\beta}f \right)| \lesssim C_{ s,\gamma}\|\partial^{\alpha_{1}}_{\beta_{1}}g
		\|_{H^{a_1}_{x}H^{b_{1}}_{\gamma/2}}\|\partial^{\alpha_{2}}_{\beta_{2}}h\|_{H^{a_2}_{x}H^{b_{2}}
			_{\gamma/2}}
		\|\pa^{\alpha}_{\beta}f\|_{L^{2}_{x}L^{2}_{s,\gamma/2}}
		\\+ C_{ s,\gamma} \|\partial^{\alpha_{1}}_{\beta_{1}}g\|
		_{H^{a_1}_{x}L^{2}}\|\partial^{\alpha_{2}}_{\beta_{2}}h\|
		_{H^{a_2}_{x}L^{2}_{s,\gamma/2}}\|\pa^{\alpha}_{\beta}f\|_{L^{2}_{x}L^{2}_{s,\gamma/2}}.
		\eeno	
		By suitably choosing $a_1, a_2$, 
		the second term in the above inequality  is directly bounded by 
		$C_{ s,\gamma} \|g\|_{H^{N}_{x,v}} \|h\|_{D^{N}_{s,\gamma}} \|\pa^{\alpha}_{\beta}f\|_{L^{2}_{x}L^{2}_{s,\gamma/2}}$. Next we will give
		the choices of  $a_1, a_2, b_1, b_2$ for the first  term. 
		
		In the following, we choose $a_{1}, a_{2} \in \{0,1,2\}$ with $a_{1}+a_{2}=2$ and $b_{1}, b_{2} \in \{s,2\}$ with $b_{1}+b_{2}=2+s$.
		For $N \geq 4$ and multi-indices $\alpha, \beta$ with $|\alpha|+|\beta| \leq N$, we consider all the combinations of $\alpha_{1},\alpha_{2},\beta_{1},\beta_{2}$ such that $\alpha_{1}+\alpha_{2}=\alpha, \beta_{1}+\beta_{2} \leq \beta$ in Table \ref{parameter-2} for the choices of $a_{1}, a_{2}, b_{1}, b_{2}$.
		
		\begin{table}[!htbp]
			\centering
			\caption{Parameter choices}\label{parameter-2}
			\begin{tabular}{ccccc}
				\hline
				$|\alpha_{1}|+|\beta_{1}|$  &$|\alpha_{2}|+|\beta_{2}|$ & $(a_{1}, a_{2}, b_{1}, b_{2})$ & $|\alpha_{1}|+a_{1}+|\beta_{1}|+b_{1}$ &
				$|\alpha_{2}|+a_{2}+|\beta_{2}|+b_{2}$   \\
				\hline
				0& $ \leq |\alpha|+|\beta|$& (2,0,2,s) & 4 &  $\leq |\alpha|+|\beta|+s$ \\
				1& $\leq |\alpha|+|\beta| -1$& (1,1,2,s) & 4 & $\leq |\alpha|+|\beta|+s$\\
				2& $\leq |\alpha|+|\beta| -2$& (0,2,2,s) & 4 & $\leq |\alpha|+|\beta|+s$\\
				3& $\leq |\alpha|+|\beta| -3$& (1,1,s,2) & 4 + s & $\leq |\alpha|+|\beta|$\\
				$|\alpha_{1}|+|\beta_{1}|\geq4$& $\leq |\alpha|+|\beta| -4$& (0,2,s,2) & N + s & $\leq |\alpha|+|\beta|$\\
				\hline
			\end{tabular}
		\end{table}
		With this, the part containing $s$ is bounded by dissipation functional $D^{N}_{s,\gamma}$, and the other part
		is bounded by energy functional $H^{N}_{x,v}$. As a result,
		\beno
		|\left(\Gamma^{s,\gamma}(\partial^{\alpha_{1}}_{\beta_{1}}g, \partial^{\alpha_{2}}_{\beta_{2}}h;\beta_{0}), \pa^{\alpha}_{\beta}f \right)| \lesssim C_{ s,\gamma} \|g\|_{H^{N}_{x,v}} \|h\|_{D^{N}_{s,\gamma}} \|\pa^{\alpha}_{\beta}f\|_{L^{2}_{x}L^{2}_{s,\gamma/2}}
		+C_{ s,\gamma} \|g\|_{D^{N}_{s,\gamma}} \|h\|_{H^{N}_{x,v}} \|\pa^{\alpha}_{\beta}f\|_{L^{2}_{x}L^{2}_{s,\gamma/2}}.
		\eeno			
		Taking sum yields	\eqref{N-geq-5-x-v-no-weight}.
		
		Similarly, we can use Corollary  \ref{Gamma-full-up-bound-with-weight} to derive	
		\eqref{N-equals-4-x-v-weight-15} and  use Prop.
		\ref{Gamma-g-h-mu}  to derive	
		\eqref{only-x-with-mu-type-N-equals-4}. This completes the proof of the Theorem.
	\end{proof}

	\begin{proof}[Proof of Theorem \ref{asymptotic-result}(Global well-posedness)]  Local well-posedness of the Cauchy problem \eqref{Cauchy-linearizedBE} and non-negativity of $\mu+\mu^{\f12}f$ can be proved by standard iteration. From this together with Theorem \ref{a-priori-estimate-LBE}, 	by taking $\delta_{s,\gamma,N,l} = \f12
		\eta_{s,\gamma,N,l}^2$,
		the standard continuity argument yields the global well-posedness result \eqref{uniform-controlled-by-initial} for the Boltzmann equation.
		Recalling the constants $\lambda_{s}$ from
		\eqref{lower-bound-on-lambda-explicit}, 
		$C_{s, \gamma}$ from \eqref{def-C-s-gamma}, 
		$Z_{s,\gamma,N,l}$ from \eqref{c-relation-1} and the constant $\eta_{s,\gamma,N,l}$ from
		Theorem \ref{a-priori-estimate-LBE},
		it is straightforward to check  that for any fixed $N,l$,
		there is a function $(x_1, x_2) \in (0,1) \times (0, 3] \to 
		\delta_{N,l}(x_1, x_2) \in (0, \infty)$ satisfying
		\eqref{dependence-s-gamma} and
		\eqref{property-of-delta-N-l}. Moreover, since all the estimates are uniform for $s \rightarrow i^{-1}$,  the global well-posedness result \eqref{uniform-controlled-by-initial-L}
		for the Landau equation follows by a similar argument.
	\end{proof}
	
	\subsection{Asymptotic formula for the limit} 
	\label{asymptotic}
	We prove \eqref{error-function-uniform-estimate} in this subsection.    Let $f^{s,\gamma}$ and $f^{\gamma}$ be the solutions to  \eqref{Cauchy-linearizedBE} and \eqref{Cauchy-linearizedLE} respectively with the initial data $f_0$. Set $F^{s,\gamma}_{R} :=  (1-s)^{-1} (f^{s,\gamma}-f^{\gamma})$, then it solves
	\ben \label{error-equation} && \partial_{t}F^{s,\gamma}_{R} + v \cdot \nabla_{x} F^{s,\gamma}_{R} + \mathcal{L}^{\gamma}_{L}F^{s,\gamma}_{R} 
	\\ \nonumber &=& (1-s)^{-1}[(\mathcal{L}^{\gamma}_{L}-\mathcal{L}^{s,\gamma}_{B})f^{s,\gamma}+(\Gamma^{s,\gamma}_{B}-\Gamma^{\gamma}_{L})(f^{s,\gamma},f^{\gamma})]
	+\Gamma^{s,\gamma}_{B}(f^{s,\gamma},F^{s,\gamma}_{R})
	+\Gamma^{\gamma}_{L}(F^{s,\gamma}_{R},f^{\gamma}). \een

	We will apply Proposition  \ref{essential-estimate-of-micro-macro} to the above equation for $F^{s,\gamma}_{R}$. 
	For brevity, we set
	\ben \label{three-terms-G1}
	G_{1} = (1-s)^{-1}[(\mathcal{L}^{\gamma}_{L}-\mathcal{L}^{s,\gamma}_{B})f^{s,\gamma}+(\Gamma^{s,\gamma}_{B}-\Gamma^{\gamma}_{L})(f^{s,\gamma},f^{\gamma})],
	\\ \label{three-terms-G1-G3} G_{2}=\Gamma^{s,\gamma}_{B}(f^{s,\gamma},F^{s,\gamma}_{R}), \quad G_{3}= \Gamma^{\gamma}_{L}(F^{s,\gamma}_{R},f^{\gamma}).
	\een
	By applying Proposition  \ref{essential-estimate-of-micro-macro} with $s=1, g = G_{1}+G_{2}+G_{3}$, since
	$|\langle \pa^{\alpha}g, \mu^{\f12}P_j\rangle|^{2} \leq 3 \sum_{i=1}^{3}
	|\langle \pa^{\alpha}G_{i}, \mu^{\f12}P_j\rangle|^{2} $, we have
	\ben \label{essential-micro-macro-error-function-2} \frac{\mathrm{d}}{\mathrm{d}t}\Xi_{N,l}^{1,\gamma}(F^{s,\gamma}_{R}) +  \frac{1}{4} \tilde{\mathcal{D}}^{1,\gamma}_{N,l}(F^{s,\gamma}_{R}) &\leq& \sum_{i=1}^{3} \sum_{j=0}^{N}2K_{j}\sum_{|\alpha|\le N-j,|\beta|=j}(W_{l+j(\gamma+2)}\pa^{\alpha}_{\beta}G_i, W_{l+j(\gamma+2)}\pa^{\alpha}_{\beta}F^{s,\gamma}_{R})
	\\&&
	+ \sum_{i=1}^{3} \sum_{j=0}^{N}2L_{j}\sum_{|\alpha|\le N-j,|\beta|=j}(\pa^{\alpha}_{\beta}G_i, \pa^{\alpha}_{\beta}F^{s,\gamma}_{R})  + 3 M C_2 \sum_{i=1}^{3} \mathrm{NL}^{N}(G_i).
	\nonumber \een

	Let us first estimate the terms containing $G_1$.
	Recalling \eqref{three-terms-G1} and
	\eqref{essential-micro-macro-error-function-2}, we need to estimate the following quantities
	\ben \label{I-1-i}
	\mathcal{I}_{1,i} := \sum_{|\alpha|\le N-j,|\beta|=j}(W_{l+j(\gamma+2)}\pa^{\alpha}_{\beta}
	G_{1,i}, W_{l+j(\gamma+2)}\pa^{\alpha}_{\beta}F^{s,\gamma}_{R}),
	\\ \label{I-2-3-i}
	\mathcal{I}_{2,i} :=
	\sum_{|\alpha|\le N-j,|\beta|=j}(\pa^{\alpha}_{\beta}G_{1,i}, \pa^{\alpha}_{\beta}F^{s,\gamma}_{R}),
	\quad \mathcal{I}_{3,i} :=
	\mathrm{NL}^{N}(G_{1,i}).
	\een
Here,  for $i=1,2$,
	\ben \label{G-1-1-1-2}
	G_{1,1} = (1-s)^{-1}(\mathcal{L}^{\gamma}_{L}-\mathcal{L}^{s,\gamma}_{B})f^{s,\gamma}, \quad
	G_{1,2} =
	(1-s)^{-1}(\Gamma^{s,\gamma}_{B}-\Gamma^{\gamma}_{L})(f^{s,\gamma},f^{\gamma}).
	\een
	These terms contain operator difference. 
We first establish 
	$Q^{s,\gamma}_{B} \to Q_{L}^{\gamma}, \Gamma^{s,\gamma}_{B} \to \Gamma_{L}^{\gamma}, \mathcal{L}^{s,\gamma}_{B} \to \mathcal{L}_{L}^{\gamma}$ as $s \to 1^{-}$. The results can be given in  weighted $L^2$-norm by using the estimates obtained in \cite{desvillettes1992asymptotics} and \cite{he2014well}.

	\begin{prop} \label{limit-Botltzmann-to-Landau} Let $-5 < \gamma \leq 0 $. Fix $l \geq 0$. 
		Let $a_1, a_2, b_1, b_2 \in \R$ satisfying
		$a_1 + a_2 = \gamma+ 6$ and $b_1+b_2 = \gamma+2$.
		If $-9/2 < \gamma \leq 0$,  then
		\ben \label{l2-result-case1}
		\left| \langle Q^{s,\gamma}_{B}(g,h) - Q_{L}^{\gamma}(g,h), W_{l}\varphi\rangle\right|\lesssim_{l,a_1,a_2,b_1,b_2}
		(1-s)
		|g|_{H^{2}_{|\gamma+2|+2}} |h|_{H^{2}_{l+b_1}}
		|\psi|_{L^{2}_{b_2}}
		\\ \nonumber + 
		(1-s) (\gamma+\f92)^{-1} |g|_{H^{3}_{l+|a_1|+|a_2|+2}} |h|_{H^{3}_{l+a_1}} |\psi|_{L^{2}_{a_2}}.
		\een
		If $-5 < \gamma \leq 0$, then 
		\ben \label{l2-result-case2}
		\left| \langle Q^{s,\gamma}_{B}(g,h) - Q_{L}^{\gamma}(g,h), W_{l}\varphi\rangle\right| \lesssim_{l,a_1,a_2,b_1,b_2}  (1-s) (\gamma+5)^{-1} 	|g|_{H^{2}_{|\gamma+2|+2}} |h|_{H^{2}_{l+b_1}}
		|\psi|_{L^{2}_{b_2}}
		\\\nonumber +(1-s) |g|_{H^{3+s_1}_{l+|a_1|+|a_2|+2}} |h|_{H^{3+s_2}_{l+a_1}} |\psi|_{L^{2}_{a_2}},	\een
		where $s_1, s_2 \geq 0$ satisfying
		$s_1 + s_2 = 1$.
	\end{prop}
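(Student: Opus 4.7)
The plan is to isolate the Landau operator as the leading-order term in a second-order Taylor expansion of the collision integrand and show that the remainder scales like $1-s$. Starting from
\[
g'_*h'-g_*h = g_*(h'-h)+h(g'_*-g_*)+(g'_*-g_*)(h'-h),
\]
I expand $h(v')$ around $v$ and $g(v_*')$ around $v_*$ via \eqref{Taylor1}. By the symmetry identity \eqref{cancell1} the linear part of each expansion collapses onto a $\sin^{2}(\theta/2)$-weighted directional term, while the quadratic part, after computing $\int b^{s}(\theta)\,(v'-v)\otimes(v'-v)\,\mathrm d\sigma$ via \eqref{mean-momentum-transfer} and spherical symmetry in $\sigma$, produces a multiple of $|v-v_*|^{2}\Pi(v-v_*)$ whose prefactor equals $\Lambda\cdot 2^{s-1}$. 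Reassembling in divergence form yields the identity $Q^{s,\gamma}_B(g,h) = 2^{s-1}Q_L^{\gamma}(g,h) + R^{s,\gamma}(g,h)$; since $|2^{s-1}-1|\le (\log 2)(1-s)$, the leading piece is already $Q_L^{\gamma}(g,h)$ modulo a Landau-type error of size $1-s$.

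The residue $R^{s,\gamma}$ collects the third-order Taylor remainders. Here the crucial angular computation
\[
\int_{\mathbb S^{2}} b^{s}(\theta)\sin^{3}(\theta/2)\,\mathrm d\sigma \;\lesssim\; (1-s)\int_{0}^{\sqrt{2}/2} t^{1-2s}\,\mathrm dt \;=\; O(1-s)
\]
supplies the grazing factor. Using Lemma \ref{usual-change} to transfer the third derivatives evaluated at $v(\kappa),v_*(\iota)$ back to $v,v_*$ (at cost $\psi_{\kappa+\iota}^{3}(\theta)\in[1,2\sqrt 2]$), pairing with $W_{l}\varphi$, applying Cauchy--Schwarz in $(v,v_*,\sigma)$, and redistributing the polynomial weight through $\langle v-v_*\rangle^{l}\le \langle v\rangle^{|l|}\langle v_*\rangle^{|l|}$, the task reduces to controlling convolution integrals of the form $\int|v-v_*|^{\gamma+3}|\nabla^{3}h|\,|g|\,|\varphi|\,\mathrm dv\,\mathrm dv_*$ and a symmetric one with $\nabla^{3}g$. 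The split of the total weight as $a_{1}+a_{2}=\gamma+6$ versus $b_{1}+b_{2}=\gamma+2$ in \eqref{l2-result-case1}--\eqref{l2-result-case2} reflects the two distinct singular scales $|v-v_*|^{\gamma+3}$ and $|v-v_*|^{\gamma+1}$ produced by the two differentiation orders in the expansion.

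The main obstacle is the integrability threshold at $\gamma=-9/2$. When $-9/2<\gamma\le 0$, the singularity $|z|^{\gamma+3}$ is locally in $L^{1}$ with norm bounded by $(\gamma+9/2)^{-1}$, so a direct Cauchy--Schwarz yields \eqref{l2-result-case1} with three full derivatives on $g$ and $h$ and the prefactor $(\gamma+9/2)^{-1}$. Below $\gamma=-9/2$ this integral diverges, and the third-order Taylor expansion cannot be used in full. One then truncates at the second order for one of the differences while retaining the third order for the other, absorbing the remaining endpoint singularity through the non-isotropic singular-integral machinery of Section \ref{Upper-Bound-Estimate-near} (in the spirit of Lemmas \ref{crosstermsimilar} and \ref{spherical-part}, which trade angular regularity for a gain $|v-v_*|^{2s}$). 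This substitution distributes the missing derivative as fractional indices $s_{1}+s_{2}=1$ onto $g$ and $h$, producing the norms $H^{3+s_{1}}$, $H^{3+s_{2}}$ in \eqref{l2-result-case2} and extending the admissible range to $\gamma>-5$, the natural threshold set by the Landau operator itself.
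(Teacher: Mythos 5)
Your framework for the leading-order extraction is exactly the paper's: a second-order Taylor expansion in the collision variables isolates $Q^{s,\gamma}_B(g,h)=2^{s-1}Q_L^\gamma(g,h)+\text{(an $O(1-s)$ Landau-type correction)}+R^{s,\gamma}$, the prefactor $|2^{s-1}-1|\le 1-s$ and the angular integral $\int b^{s}(\theta)\sin^{3}(\theta/2)\,\mathrm d\sigma\lesssim 1-s$ supply the grazing factor, and Lemma~\ref{usual-change} transfers derivatives evaluated at $v(\kappa)$, $v_*(\iota)$ back to $v$, $v_*$. The kernel-splitting into $|v-v_*|^{\gamma+2}$ for the correction term and $|v-v_*|^{\gamma+3}$ for the third-order remainder, and the direct Cauchy--Schwarz argument for $-9/2<\gamma\le 0$ giving \eqref{l2-result-case1}, also match the paper.

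Where your argument breaks is the step you compress into one sentence for the range $-5<\gamma\le -9/2$. You propose truncating the Taylor expansion at a lower order for one of the two differences and then absorbing the leftover singularity by the anisotropic singular-integral machinery of Section~\ref{Upper-Bound-Estimate-near} (Lemmas~\ref{crosstermsimilar}, \ref{spherical-part}), ``trading angular regularity for a gain $|v-v_*|^{2s}$.'' Neither step works here. First, the Landau operator only emerges from the \emph{full} second-order expansion of both $g'_*$ and $h'$: truncating one side earlier destroys the algebraic cancellation that produces $2^{s-1}Q_L^\gamma$ and the divergence-form correction, so you would no longer be comparing against $Q_L^\gamma$. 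Second, the mechanism that gains $|v-v_*|^{2s}$ lives inside the still-singular $\sigma$-integral of the Boltzmann kernel; in the remainder $E_3$ the angular singularity has already been integrated out by $\int b^{s}\sin^{3}(\theta/2)\,\mathrm d\sigma\lesssim 1-s$, so Lemmas~\ref{crosstermsimilar} and \ref{spherical-part} (which presuppose the angular kernel and the translation geometry of the collision) have nothing left to act on, and there is no residual gain of $|v-v_*|^{2s}$ available.

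The paper handles $\gamma\le -9/2$ by a purely elementary convolution argument, not by returning to the Boltzmann singular-integral toolkit. After Cauchy--Schwarz in $(v,v_*,\sigma)$ one is left with integrals of the form $\int\mathbf 1_{|v-v_*|\le 1}|v-v_*|^{\gamma+3}|\tilde g(v_*)\tilde h^2(v)|\,\mathrm dv\,\mathrm dv_*$ (and a symmetric one with $\tilde\psi$). The paper redistributes the exponent $\gamma+3$ between the two factors (as $\tfrac45(\gamma+3)$ and $\tfrac65(\gamma+3)$), observes that $k(z)=\mathbf 1_{|z|\le 1}|z|^{4(\gamma+3)/5}\in L^{3/2}$ as long as $\gamma>-11/2$, and then applies Young's convolution inequality together with the Sobolev embeddings $H^{s_j}\hookrightarrow L^{p}$ with $1/p=1/2-s_j/3$, $s_1+s_2=1$. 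That is the actual origin of the fractional indices $H^{3+s_1}$, $H^{3+s_2}$ in \eqref{l2-result-case2}: they pay for Lebesgue integrability via Sobolev embedding, not for a gain of relative-velocity powers from the collision kernel. This Young/Sobolev step is the one missing idea in your proposal; as written, your route for the supercritical range would not close.
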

For completeness, the proof of Proposition \ref{limit-Botltzmann-to-Landau} will be given in the Appendix.
Here, we  only concern about the dependence on the two physical parameters $\gamma, s$ and do not pursue the precise  dependence on $l, a_1, a_2, b_1, b_2$. Roughly speaking, the dependence on $l, a_1, a_2, b_1, b_2$ is of the form $c^{l}, c^{|a_1|}, c^{|a_2|}, c^{|b_1|}, c^{|b_2|}$ for some generic constant $c>1$.

	We can also get similar results for the non-linear terms $\Gamma^{s,\gamma}_{B}(g,h)$ and $\Gamma^{\gamma}_{L}(g,h)$ by slightly revising the proof of  Proposition \ref{limit-Botltzmann-to-Landau}.
	In this situation, there is no weight on $g$.
	\begin{prop} \label{Gamma-limit-Botltzmann-to-Landau} Let $-5 < \gamma \leq 0 $. Fix $l \geq 0$. 
		Let $a_1, a_2, b_1, b_2 \in \R$ satisfying
		$a_1 + a_2 = \gamma+ 6$ and $b_1+b_2 = \gamma+2$.
		If $-9/2 < \gamma \leq 0$,  then
		\ben \label{Gamma-l2-result-case1}
		\left| \langle \Gamma^{s,\gamma}_{B}(g,h) - \Gamma^{\gamma}_{L}(g,h), W_{l}\varphi\rangle\right|
		\lesssim_{l,a_1,a_2,b_1,b_2}
		(1-s)
		|g|_{H^{2}} |h|_{H^{2}_{l+b_1}}
		|\psi|_{L^{2}_{b_2}}
		\\ \nonumber + 
		(1-s) (\gamma+\f92)^{-1} |g|_{H^{3}} |h|_{H^{3}_{l+a_1}} |\psi|_{L^{2}_{a_2}}.
		\een
		If $-5 < \gamma \leq 0$, then 
		\ben \label{Gamma-l2-result-case2}
		\left| \langle \Gamma^{s,\gamma}_{B}(g,h) - \Gamma^{\gamma}_{L}(g,h), W_{l}\varphi\rangle\right|\lesssim_{l,a_1,a_2,b_1,b_2}  (1-s) (\gamma+5)^{-1} 	|g|_{H^{2}} |h|_{H^{2}_{l+b_1}}
		|\psi|_{L^{2}_{b_2}}
		\\\nonumber +(1-s)  |g|_{H^{3+s_1}} |h|_{H^{3+s_2}_{l+a_1}} |\psi|_{L^{2}_{a_2}},
		\een
		where $s_1, s_2 \geq 0$ satisfying
		$s_1 + s_2 = 1$. 
	\end{prop}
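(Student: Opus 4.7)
The plan is to follow the argument used for Proposition \ref{limit-Botltzmann-to-Landau}, adapting it to the $\Gamma$-structure. The identity
\[
\Gamma^{s,\gamma}_B(g,h)-\Gamma^{\gamma}_L(g,h)=\mu^{-1/2}\bigl[Q^{s,\gamma}_B(\mu^{1/2}g,\mu^{1/2}h)-Q^{\gamma}_L(\mu^{1/2}g,\mu^{1/2}h)\bigr]
\]
cannot be exploited simply by transferring $\mu^{-1/2}$ to the test function $W_l\varphi$, since $\mu^{-1/2}\varphi$ is exponentially growing. Instead, I would unfold the proof of Proposition \ref{limit-Botltzmann-to-Landau} one level and insert the factor $\mu^{1/2}_*$ that is built into the definition of $\Gamma$. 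Concretely, writing
\[
\Gamma^{s,\gamma}_B(g,h)-\Gamma^{\gamma}_L(g,h)=\int B^{s,\gamma}(v-v_*,\sigma)\,\mu^{1/2}_*\,(g'_*h'-g_*h)\,d\sigma\,dv_*-\Gamma^{\gamma}_L(g,h),
\]
every step of the $Q$-level proof (dyadic localization, second-order Taylor expansion in $\theta$ that produces the factor $1-s$ from the angular function $b^s$, the cancellation lemma for the regular region, and the Landau-side symmetric expansion) carries over verbatim, but with an additional Gaussian factor $\mu^{1/2}_*$ appearing inside every $v_*$-integral.

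The role of $\mu^{1/2}_*$ is exactly to absorb the polynomial weight on $g$ that is present in Proposition \ref{limit-Botltzmann-to-Landau}. Indeed, wherever the $Q$-proof produces a bound of the form $|g|_{H^{m}_{|\gamma+2|+2}}$ or $|g|_{H^{m}_{l+|a_1|+|a_2|+2}}$ via estimates like $\int |g(v_*)|\langle v_*\rangle^{k}\,dv_*$, the extra $\mu^{1/2}_*$ gives
\[
\int |g(v_*)|\,\mu^{1/2}(v_*)\,\langle v_*\rangle^{k}\,dv_*\lesssim_{k}|g|_{L^2},
\]
independent of $k$. The same argument applied at the $H^{m}$-level removes all polynomial weight on $g$, so $|g|_{H^2}$ and $|g|_{H^{3+s_1}}$ replace the corresponding weighted Sobolev norms. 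The weight structure on $h$ and $\varphi$ is preserved, since $\mu^{1/2}_*$ acts only on the $v_*$ variable and does not affect the post-collisional $v'$, $h'$, or the test function.

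The split into the two regimes $-9/2<\gamma\leq 0$ (where one can use a third-order derivative on $g$ at the expense of the factor $(\gamma+9/2)^{-1}$) and $-5<\gamma\leq 0$ (where the factor $(\gamma+5)^{-1}$ is paid, together with the regularity trade $s_1+s_2=1$) is dictated solely by the integrability of $|v-v_*|^{\gamma+2}$ near the origin, which is an intrinsic feature of $B^{s,\gamma}$ and $a^{\gamma}$ and is unaffected by $\mu^{1/2}_*$. Thus the two cases produce bounds with exactly the same coefficients as in Proposition \ref{limit-Botltzmann-to-Landau}.

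The main technical obstacle is bookkeeping: I must check that in every term of the expansion—singular region, regular region, radial and spherical parts, Landau-side gradient terms—the Gaussian $\mu^{1/2}_*$ can be factored out of the $v_*$-integral before the corresponding weighted Sobolev norm of $g$ is invoked. In the singular region where $|v-v_*|\to 0$, one has $\mu^{1/2}_*\sim \mu^{1/2}(v)$, so the weight is even shared with the $v$-side and poses no difficulty; in the regular region the integration against $\mu^{1/2}_*$ is uniformly finite. No new analytic estimate is needed beyond those already established in Proposition \ref{limit-Botltzmann-to-Landau}.
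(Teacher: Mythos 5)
Your proposal matches the paper's intended argument, which is stated there only as a one-line remark (``In this situation, there is no weight on $g$''): you correctly identify that the Gaussian $\mu^{1/2}_*$ built into $\Gamma^{s,\gamma}_B(g,h)=\int B^{s,\gamma}\mu^{1/2}_*(g'_*h'-g_*h)\,d\sigma\,dv_*$ (and likewise for $\Gamma^{\gamma}_L$) is exactly what absorbs the polynomial weight on $g$ in Proposition~\ref{limit-Botltzmann-to-Landau}, and that the operator cannot be handled by transferring $\mu^{-1/2}$ onto the test function.

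Two small inaccuracies in the bookkeeping are worth flagging. First, the Appendix proof of Proposition~\ref{limit-Botltzmann-to-Landau} is organized around the Desvillettes decomposition into $E_1,E_2,E_3$ and the split $\mathcal{I}_1+\mathcal{I}_2$ according to $|v-v_*|\lessgtr 1$; the ``radial and spherical parts'' you mention belong to the upper-bound machinery of Section~\ref{Upper-Bound-Estimate-away}, not to this proof. Second, after the Taylor expansion in $E_3$ the function $g$ is evaluated at $v_*(\iota)$ while the Gaussian sits at $v_*$, so the absorption is not quite the literal pointwise bound $\mu^{1/2}(v_*)\langle v_*\rangle^k\lesssim 1$. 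One needs the elementary transfer $W_k(v_*(\iota))\lesssim W_k(v_*)\,W_k(v-v_*)$ (from $|v_*(\iota)-v_*|\leq|v-v_*|$), after which $\mu^{1/2}(v_*)W_k(v_*)\lesssim\mu^{1/4}(v_*)$ and the leftover $W_k(v-v_*)$ is harmless on $|v-v_*|\leq 1$ and controlled by the Gaussian tail on $|v-v_*|\geq 1$; this is essentially the content of your ``singular/regular region'' sentence, but it deserves to be made explicit since the naive pointwise absorption is at $v_*$, not $v_*(\iota)$. With that made precise, your argument reproduces the stated bounds with the same regularity split $s_1+s_2=1$ and the same factors $(\gamma+\tfrac92)^{-1}$, $(\gamma+5)^{-1}$ coming from the integrability of $|v-v_*|^{\gamma+2}$ and $|v-v_*|^{\gamma+3}$, none of which is affected by the extra Gaussian.
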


	Recalling $\mathcal{L}^{s,\gamma}_{B}f = - \Gamma^{s,\gamma}_{B}(\mu^{\f12}, f) - \Gamma^{s,\gamma}_{B}(f, \mu^{\f12}), \mathcal{L}^{\gamma}_{L}f = - \Gamma^{\gamma}_{L}(\mu^{\f12}, f) - \Gamma^{\gamma}_{L}(f, \mu^{\f12})$,
	as an application of 
	Proposition \ref{Gamma-limit-Botltzmann-to-Landau}, 
	we can put the higher regularity on $\mu^{\f12}$ as stated in the following proposition.
	
	\begin{prop} \label{linear-limit-Botltzmann-to-Landau} Let $-5 < \gamma \leq 0 $. Fix $l \geq 0$. 
		Let $a_1, a_2 \in \R$ satisfying
		$a_1 + a_2 = \gamma+ 6$.
		Then 
		\ben \label{linear-l2-result-case2}
		\left| \langle \mathcal{L}^{s,\gamma}_{B}f - \mathcal{L}^{\gamma}_{L}f, W_{l}\varphi\rangle\right| \lesssim_{l,a_1,a_2}  (1-s) (\gamma+5)^{-1}  |f|_{H^{3}_{(l+a_1)^{+}}} |\psi|_{L^{2}_{a_2}}.
		\een
	\end{prop}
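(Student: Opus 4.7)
The plan is to reduce the bound on the linearized-operator difference to the bilinear Proposition \ref{Gamma-limit-Botltzmann-to-Landau}, exploiting the fact that the Gaussian factor $\mu^{1/2}$ carries arbitrary moments and derivatives. Starting from the identities $\mathcal{L}^{s,\gamma}_B f = -\Gamma^{s,\gamma}_B(\mu^{1/2},f) - \Gamma^{s,\gamma}_B(f,\mu^{1/2})$ and $\mathcal{L}^{\gamma}_L f = -\Gamma^{\gamma}_L(\mu^{1/2},f) - \Gamma^{\gamma}_L(f,\mu^{1/2})$, I would write
\begin{equation*}
\langle \mathcal{L}^{s,\gamma}_B f - \mathcal{L}^{\gamma}_L f,\, W_l\varphi\rangle
= -\langle (\Gamma^{s,\gamma}_B - \Gamma^{\gamma}_L)(\mu^{1/2},f),\, W_l\varphi\rangle
- \langle (\Gamma^{s,\gamma}_B - \Gamma^{\gamma}_L)(f,\mu^{1/2}),\, W_l\varphi\rangle,
\end{equation*}
and estimate each piece separately.

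For the first piece I would apply \eqref{Gamma-l2-result-case2} with $g=\mu^{1/2}$, $h=f$, splitting $s_1=1$, $s_2=0$ so that $f$ sits in $H^{3}_{l+a_1}$. The resulting bound is
\begin{equation*}
(1-s)(\gamma+5)^{-1} |\mu^{1/2}|_{H^{2}} |f|_{H^{2}_{l+b_1}} |\varphi|_{L^{2}_{b_2}}
+ (1-s) |\mu^{1/2}|_{H^{4}} |f|_{H^{3}_{l+a_1}} |\varphi|_{L^{2}_{a_2}},
\end{equation*}
and since $\mu^{1/2}$ lies in every $H^k_q$, the Gaussian factors become finite constants depending only on the parameters. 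I would choose $b_1,b_2$ compatible with $a_1,a_2$ (for instance $b_2=a_2$, $b_1=a_1-4$) so that the test-function weight is uniformly $|\varphi|_{L^2_{a_2}}$, and absorb the pure $(1-s)$ term into $(1-s)(\gamma+5)^{-1}$ using $1\le 5(\gamma+5)^{-1}$ on $\gamma\le 0$.

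For the second piece I would apply \eqref{Gamma-l2-result-case2} with $g=f$, $h=\mu^{1/2}$, this time choosing $s_1=0$, $s_2=1$, which places the derivatives and the weight $l+a_1$ on the Gaussian and leaves only $|f|_{H^{3}}$. The same absorption argument yields a bound of the form $(1-s)(\gamma+5)^{-1}\,C_{l,a_1,a_2}\,|f|_{H^{3}}|\varphi|_{L^{2}_{a_2}}$, and by monotonicity of weighted norms this is dominated by the stated right-hand side with $(l+a_1)^+$.

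The only subtle points are the bookkeeping of the weight indices — in particular, the $(\cdot)^{+}$ truncation in the final bound is what harmonises the two cases $l+a_1\ge 0$ and $l+a_1<0$, since $H^3_{q}\hookrightarrow H^3_{q'}$ for $q\ge q'$ — and the compatibility of the auxiliary parameters $b_1,b_2$ with the chosen $a_1,a_2$. Once the two bilinear bounds are in hand, summing them and grouping constants gives the statement, so I do not anticipate a genuine obstacle beyond this weight bookkeeping; the real work has been done in Proposition \ref{Gamma-limit-Botltzmann-to-Landau}.
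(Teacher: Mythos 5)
Your proposal is correct and matches the paper's own (very brief) argument: the paper simply observes that the decomposition $\mathcal{L}^{s,\gamma}_B f = -\Gamma^{s,\gamma}_B(\mu^{1/2},f) - \Gamma^{s,\gamma}_B(f,\mu^{1/2})$ (and likewise for $\mathcal{L}^\gamma_L$) reduces the estimate to Proposition \ref{Gamma-limit-Botltzmann-to-Landau} with the extra regularity placed on $\mu^{1/2}$, which is precisely your choice of $(s_1,s_2)=(1,0)$ in the first piece and $(0,1)$ in the second. Your weight bookkeeping (the choice $b_2=a_2$, $b_1=a_1-4$, the absorption $1\le 5(\gamma+5)^{-1}$, and the observation that $(l+a_1)^+$ dominates all the weights that arise) is sound and supplies the details the paper elides.
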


	By \eqref{Gamma-l2-result-case2} and \eqref{linear-l2-result-case2},
	\ben \label{Gamma-l2-result-used}
	(1-s)^{-1}	\left| \langle \Gamma^{s,\gamma}_{B}(g,h) - \Gamma^{\gamma}_{L}(g,h), W_{l}\varphi\rangle\right|\lesssim_{l}  C_{\gamma}  |g|_{H^{3}} |h|_{H^{4}_{l+5+\gamma/2}} |\psi|_{L^{2}_{1+\gamma/2}},
\\ \label{linear-l2-result-case2-used}
	(1-s)^{-1}	\left| \langle \mathcal{L}^{s,\gamma}_{B}f - \mathcal{L}^{\gamma}_{L}f, W_{l}\varphi\rangle\right| \lesssim_{l}   C_{\gamma}  |f|_{H^{3}_{l+5+\gamma/2}} |\psi|_{L^{2}_{1+\gamma/2}}.
	\een
	
Recall \eqref{I-1-i} and	\eqref{G-1-1-1-2} for $\mathcal{I}_{1,1}$ and  $\mathcal{I}_{1,2}$.
	We now estimate  these two terms in details. By
	\eqref{linear-l2-result-case2-used}, we have
	\beno
	|\mathcal{I}_{1,1}| \lesssim_{N,l} C_{\gamma}
	\sum_{|\alpha|\le N-j,|\beta|=j} \sum_{\beta_1 \leq \beta} \|\pa^{\alpha}_{\beta_1}f^{s,\gamma}\|_{L^{2}_{x}H^{3}_{l+j(\gamma+2)+5+\gamma/2}} \|W_{l+j(\gamma+2)}\pa^{\alpha}_{\beta}
	F^{s,\gamma}_{R}\|_{L^{2}_{x}L^{2}_{1+\gamma/2}}
	\\ \leq C_{N,l} C_{\gamma} (\mathcal{D}^{s,\gamma}_{N+3,l+5-3(\gamma+2s)+N(2-2s)}(f^{s,\gamma}))^{1/2} \|F^{s,\gamma}_{R}\|_{H^{N-j}_{x}
		\dot{H}^{j}_{1,l+j(\gamma+2)+\gamma/2}},
	\eeno
	where we have used for any $0 \leq k \leq N+3$,
	\beno
	\mathcal{D}^{s,\gamma}_{N+3,
		l+5-3(\gamma+2s)+N(2-2s)}(f^{s,\gamma}) \geq 
	\|f^{s,\gamma}\|_{H^{N+3-k}_{x}
		\dot{H}^{k}_{s, l+5-3(\gamma+2s)+N(2-2s) + k(\gamma+2s)
			+\gamma/2}}^2.
	\eeno
	In particular, taking $k = j+3$ gives
	\beno
	\mathcal{D}^{s,\gamma}_{N+3,
		l+5-3(\gamma+2s)+N(2-2s)}(f^{s,\gamma}) 
	\geq 
	\|f^{s,\gamma}\|_{H^{N-j}_{x}
		\dot{H}^{j+3}_{s, l+5
			+j(\gamma+2)+\gamma/2}}^2.
	\eeno
	
	By
	\eqref{Gamma-l2-result-used}, we have
	\beno
	|\mathcal{I}_{1,2}| \lesssim_{N,l} C_{\gamma}
	\sum_{|\alpha|\le N-j,|\beta|=j} \sum_{\alpha_1+\alpha_2 = \alpha, \beta_1 \leq \beta}
	\int  |\pa^{\alpha_1}_{\beta_1}f^{s,\gamma}|_{H^{3}}
	|\pa^{\alpha_2}_{\beta_2}f^{\gamma}|_{H^{4}_{l+j(\gamma+2)+5+\gamma/2}} |W_{l+j(\gamma+2)}\pa^{\alpha}_{\beta}
	F^{s,\gamma}_{R}|_{L^{2}_{1+\gamma/2}} \mathrm{d}x
	\\ \leq C_{N,l} C_{\gamma} \|f^{s,\gamma}
	\|_{H^{N+3}_{x,v}} 
	(\sum_{|\alpha| + |\beta| \leq N+3, |\beta| \leq j+3}
	\|\pa^{\alpha}_{\beta}f^{\gamma}\|_{L^2_{x}L^{2}_{1,l+j(\gamma+2)+5+\gamma/2}}^{2})^{1/2}
	\|F^{s,\gamma}_{R}\|_{H^{N-j}_{x}
		\dot{H}^{j}_{1,l+j(\gamma+2)+\gamma/2}}
	\\ \\ \leq C_{N,l} C_{\gamma} \|f^{s,\gamma}
	\|_{H^{N+3}_{x,v}} 
	(\mathcal{D}^{1,\gamma}_{N+3,
		l+5-3(\gamma+2)}(f^{\gamma}))^{1/2}
	\|F^{s,\gamma}_{R}\|_{H^{N-j}_{x}
		\dot{H}^{j}_{1,l+j(\gamma+2)+\gamma/2}}.
	\eeno
Recalling	\eqref{I-2-3-i} and
	\eqref{G-1-1-1-2} for $\mathcal{I}_{2,1}$ and  $\mathcal{I}_{2,2}$,
	it is obvious that these two terms are also bounded by the  upper bounds of $\mathcal{I}_{1,1}$ and $\mathcal{I}_{1,2}$. Similarly, by \eqref{linear-l2-result-case2-used}, we have
	\beno
	|\mathcal{I}_{3,1}| \leq C_{N,l} C_{\gamma}^2 \mathcal{D}^{s,\gamma}_{N+3,l+5-3(\gamma+2s)+N(2-2s)}(f^{s,\gamma}),
	\\
	|\mathcal{I}_{3,2}| \leq C_{N,l} C_{\gamma}^2 \|f^{s,\gamma}
	\|_{H^{N+3}_{x,v}}^2 
	\mathcal{D}^{1,\gamma}_{N+3,
		l+5-3(\gamma+2)}(f^{\gamma}).
	\eeno

	Let us next estimate the terms containing $G_2$. Recall \eqref{A-N-j-l}, \eqref{B-N-j-0} and \eqref{C-N-epsilon-0} that
	\beno
	\sum_{|\alpha|\le N-j,|\beta|=j}(W_{l+j(\gamma+2)}\pa^{\alpha}_{\beta}G_2, W_{l+j(\gamma+2)}\pa^{\alpha}_{\beta}F^{s,\gamma}_{R})
	= \mathcal{A}^{N,j,l+2j-2sj}_{s,\gamma}(f^{s,\gamma},F^{s,\gamma}_{R},F^{s,\gamma}_{R}),
	\\
	\sum_{|\alpha|\le N-j,|\beta|=j}(\pa^{\alpha}_{\beta}G_2, \pa^{\alpha}_{\beta}F^{s,\gamma}_{R}) = \mathcal{B}^{N,j}_{s,\gamma}(f^{s,\gamma},F^{s,\gamma}_{R},F^{s,\gamma}_{R}),
	\\
	\mathrm{NL}^{N}(G_2) = \mathcal{C}^{N}_{s,\gamma}(f^{s,\gamma},F^{s,\gamma}_{R}).
	\eeno

	As for $-5<\gamma \leq -2, \f{1}{4} \leq \f{1}{2}(1 - \f{\gamma+3}{2}) \leq  s \leq 1$, it holds that
	\beno 
	C_{s,\gamma} = s^{-1} (\gamma+2s+3)^{-1} \leq 8 (\gamma+5)^{-1}.
	\eeno
	Therefore we can replace $C_{s,\gamma}$ with $C_{\gamma} := (\gamma+5)^{-1}$ in the rest of this section.
	
	Recalling \eqref{N-equals-4-x-v-weight-15}, 
	we have
	\ben  \label{N-equals-4-x-v-weight-15-G2}
&&|\mathcal{A}^{N,j,l+2j-2sj}_{s,\gamma}(f^{s,\gamma},F^{s,\gamma}_{R},F^{s,\gamma}_{R})| 
\\ \nonumber &\lesssim_{N,l}&	C_{\gamma} (
	\|f^{s,\gamma}\|_{H^{N}_{x,v}}
	\left(\mathcal{D}_{N,l}^{1,\gamma}(F^{s,\gamma}_{R})\right)^{\f12} +
	\|f^{s,\gamma}\|_{D^{N}_{s,\gamma}} \|F^{s,\gamma}_{R}\|_{H^{N}_{x,v}}) \|F^{s,\gamma}_{R}\|_{H^{N-j}_{x}
		\dot{H}^{j}_{s,l+j(\gamma+2)+\gamma/2}}.
	\een
By	\eqref{N-geq-5-x-v-no-weight} and  \eqref{only-x-with-mu-type-N-equals-4}, we have
\ben
 \label{N-geq-5-x-v-no-weight-G2}
|\mathcal{B}^{N,j}_{s,\gamma}(f^{s,\gamma},
	F^{s,\gamma}_{R},F^{s,\gamma}_{R})| \lesssim_{N}
	C_{\gamma} (\|f^{s,\gamma}\|_{H^{N}_{x,v}} \|F^{s,\gamma}_{R}\|_{D^{N}_{s,\gamma}} +
	\|f^{s,\gamma}\|_{D^{N}_{s,\gamma}} \|F^{s,\gamma}_{R}\|_{H^{N}_{x,v}}) \|F^{s,\gamma}_{R}\|_{H^{N-j}_{x}
		\dot{H}^{j}_{s,\gamma/2}},
	\\ \label{only-x-with-mu-type-N-equals-4-G2}
\mathcal{C}^{N}_{s,\gamma}(f^{s,\gamma},
	F^{s,\gamma}_{R}) \lesssim_{N} C_{\gamma}^2 \|f^{s,\gamma}\|_{H^{N}_{x}L^2}^2 \|F^{s,\gamma}_{R}\|_{H^{N}_{x}L^{2}_{s,\gamma/2}}^2.
	\een
	Note that \eqref{N-geq-5-x-v-no-weight-G2} and \eqref{only-x-with-mu-type-N-equals-4-G2}  follow exactly  from \eqref{N-geq-5-x-v-no-weight} and  \eqref{only-x-with-mu-type-N-equals-4}. The estimate \eqref{N-equals-4-x-v-weight-15-G2} takes account of the additional weight $2j -2sj$  over \eqref{N-equals-4-x-v-weight-15-G2} and is controlled by the dissipation norm of the linearized Landau operator.
	
	Let us estimate the terms containing $G_3$. By taking $s=1$ in \eqref{A-N-j-l}, \eqref{B-N-j-0} and \eqref{C-N-epsilon-0}, and replacing $\Gamma^{1,\gamma}$ by $\Gamma^{\gamma}_{L}$, we can  define $\mathcal{A}^{N,j,l}_{1,\gamma}(g,h,f), \mathcal{B}^{N,j}_{1,\gamma}(g,h,f), \mathcal{C}^{N}_{1,\gamma}(g,h)$ similarly. Then
	\beno
	\sum_{|\alpha|\le N-j,|\beta|=j}(W_{l+j(\gamma+2)}\pa^{\alpha}_{\beta}G_3, W_{l+j(\gamma+2)}\pa^{\alpha}_{\beta}F^{s,\gamma}_{R})
	= \mathcal{A}^{N,j,l}_{1,\gamma}(F^{s,\gamma}_{R}, f^{\gamma}, F^{s,\gamma}_{R}),
	\\
	\sum_{|\alpha|\le N-j,|\beta|=j}(\pa^{\alpha}_{\beta}G_3, \pa^{\alpha}_{\beta}F^{s,\gamma}_{R}) = \mathcal{B}^{N,j}_{1,\gamma}(
	F^{s,\gamma}_{R}, f^{\gamma},F^{s,\gamma}_{R}),
	\quad 
	\mathrm{NL}^{N}(G_3) = \mathcal{C}^{N}_{1,\gamma}(F^{s,\gamma}_{R}, f^{\gamma}).
	\eeno
	Note that these quantities contain the nonlinear term $\Gamma^{\gamma}_{L}$ of the Landau operator.
	By taking $s=1$ in the estimates of the nonlinear term $\Gamma^{s,\gamma}_{B}$ in previous sections, we can obtain estimates for $\Gamma^{\gamma}_{L}$. As a result, similarly to \eqref{N-equals-4-x-v-weight-15-G2}, \eqref{N-geq-5-x-v-no-weight-G2} and \eqref{only-x-with-mu-type-N-equals-4-G2}, we have
	\beno 
	|\mathcal{A}^{N,j,l}_{1,\gamma}(F^{s,\gamma}_{R}, f^{\gamma}, F^{s,\gamma}_{R})| \lesssim_{N,l}
	C_{\gamma} (
	\|F^{s,\gamma}_{R}\|_{H^{N}_{x,v}}
	\left(\mathcal{D}_{N,l}^{1,\gamma}(f^{\gamma})\right)^{\f12} +
	\|F^{s,\gamma}_{R}\|_{D^{N}_{1,\gamma}} \|f^{\gamma}\|_{H^{N}_{x,v}}) \|F^{s,\gamma}_{R}\|_{H^{N-j}_{x}
		\dot{H}^{j}_{1,l+j(\gamma+2)+\gamma/2}},
	\\ 
	|\mathcal{B}^{N,j}_{1,\gamma}(F^{s,\gamma}_{R}, f^{\gamma},F^{s,\gamma}_{R})| \lesssim_{N}
	C_{\gamma} (\|F^{s,\gamma}_{R}\|_{H^{N}_{x,v}} \|f^{\gamma}\|_{D^{N}_{1,\gamma}} +
	\|F^{s,\gamma}_{R}\|_{D^{N}_{1,\gamma}} \|f^{\gamma}\|_{H^{N}_{x,v}}) \|F^{s,\gamma}_{R}\|_{H^{N-j}_{x}
		\dot{H}^{j}_{1,\gamma/2}},
	\\ 
	\mathcal{C}^{N}_{1,\gamma}(F^{s,\gamma}_{R}, f^{\gamma}) \lesssim_{N} C_{\gamma}^2 \|F^{s,\gamma}_{R}\|_{H^{N}_{x}L^2}^2 \|f^{\gamma}\|_{H^{N}_{x}L^{2}_{1,\gamma/2}}^2.
	\eeno

	Plugging the above nonlinear estimates into \eqref{essential-micro-macro-error-function-2}, recalling 	\eqref{c-relation-1} and \eqref{c-relation-2}, we get
	\ben \label{g=gamma-f-f-non-linear-error}
	&& \frac{\mathrm{d}}{\mathrm{d}t}\Xi_{N,l}^{1,\gamma}(F^{s,\gamma}_{R}) +  \frac{1}{4} \lambda_{1} \left(|\mathrm{M}|^{2}_{H^{N}_{x}} + \mathcal{D}^{1,\gamma}_{N,l}((F^{s,\gamma}_{R})_2)\right)
	\\ \nonumber  &\leq& C_{N,l}Z_{1,\gamma,N,l} \bigg\{ 
	(C_{\gamma} \|f^{s,\gamma}\|_{H^{N}_{x,v}} +	C_{\gamma} \|f^{\gamma}\|_{H^{N}_{x,v}} + C_{\gamma}^2 \|f^{s,\gamma}\|_{H^{N}_{x}L^2}^2)
	\mathcal{D}_{N,l}^{1,\gamma}(F^{s,\gamma}_{R})
	\\ \nonumber && \quad \quad \quad \quad + \|f^{s,\gamma}\|_{D^{N}_{s,\gamma}} \|F^{s,\gamma}_{R}\|_{H^{N}_{x,v}} \left(\mathcal{D}_{N,l}^{1,\gamma}(F^{s,\gamma}_{R})\right)^{\f12}
	\bigg\}
	\\ \nonumber  &&+ C_{N,l}Z_{1,\gamma,N,l} \bigg\{ 
	C_{\gamma}
	\|F^{s,\gamma}_{R}\|_{H^{N}_{x,v}}
	\left(\mathcal{D}_{N,l}^{1,\gamma}(f^{\gamma})\right)^{\f12} \left(\mathcal{D}_{N,l}^{1,\gamma}(F^{s,\gamma}_{R})\right)^{\f12} + C_{\gamma}^2
	\|F^{s,\gamma}_{R}\|_{H^{N}_{x,v}}^2
	\mathcal{D}_{N,l}^{1,\gamma}(f^{\gamma})
	\bigg\}
	\\ \nonumber  &&+ C_{N,l}Z_{1,\gamma,N,l} \bigg\{ C_{\gamma} (\mathcal{D}^{s,\gamma}_{N+3,l+5-3(\gamma+2s)+N(2-2s)}(f^{s,\gamma}))^{1/2}  
	\\ \nonumber && \quad \quad \quad \quad  +
	C_{\gamma} \|f^{s,\gamma}
	\|_{H^{N+3}_{x,v}} 
	(\mathcal{D}^{1,\gamma}_{N+3,
		l+5-3(\gamma+2)}(f^{\gamma}))^{1/2}
	\bigg\} \left(\mathcal{D}_{N,l}^{1,\gamma}(F^{s,\gamma}_{R})\right)^{\f12}
	\\ \nonumber  &&+ C_{N,l}Z_{1,\gamma,N,l} \bigg\{ C_{\gamma}^2 \mathcal{D}^{s,\gamma}_{N+3,l+5-3(\gamma+2s)+N(2-2s)}(f^{s,\gamma})  +
	C_{\gamma}^2 \|f^{s,\gamma}
	\|_{H^{N+3}_{x,v}}^2 
	\mathcal{D}^{1,\gamma}_{N+3,
		l+5-3(\gamma+2)}(f^{\gamma})
	\bigg\}.
	\een
Recalling \eqref{lower-bound-on-lambda-explicit}, $\lambda_{1}$ is a generic constant
	for any $-5 \leq \gamma \leq 0$. By using
	\beno
	\mathcal{D}_{N,l}^{1,\gamma}(F^{s,\gamma}_{R}) \lesssim_{N,l} |\mathrm{M}|^{2}_{H^{N}_{x}} + \mathcal{D}^{1,\gamma}_{N,l}((F^{s,\gamma}_{R})_2),
	\eeno
	we have
	\ben \label{g=gamma-f-f-non-linear-error-2}
	&& \frac{\mathrm{d}}{\mathrm{d}t}\Xi_{N,l}^{1,\gamma}(F^{s,\gamma}_{R}) +  \frac{1}{4} \lambda_{1} (|\mathrm{M}|^{2}_{H^{N}_{x}} + \mathcal{D}^{1,\gamma}_{N,l}((F^{s,\gamma}_{R})_2))
	\\ \nonumber  &\leq& C_{N,l}Z_{1,\gamma,N,l}  
	(C_{\gamma} \|f^{s,\gamma}\|_{H^{N}_{x,v}} +	C_{\gamma} \|f^{\gamma}\|_{H^{N}_{x,v}} + C_{\gamma}^2 \|f^{s,\gamma}\|_{H^{N}_{x}L^2}^2)
	\mathcal{D}_{N,l}^{1,\gamma}(F^{s,\gamma}_{R})
	\\ \nonumber  &&+ C_{N,l}Z_{1,\gamma,N,l}^2  \bigg\{ C_{\gamma}^2
	\|F^{s,\gamma}_{R}\|_{H^{N}_{x,v}}^2
	\mathcal{D}_{N,l}^{s,\gamma}(f^{s,\gamma}) +
	C_{\gamma}^2
	\|F^{s,\gamma}_{R}\|_{H^{N}_{x,v}}^2
	\mathcal{D}_{N,l}^{1,\gamma}(f^{\gamma})
	\bigg\}
	\\ \nonumber  &&+ C_{N,l}Z_{1,\gamma,N,l}^2  \bigg\{ C_{\gamma}^2 \mathcal{D}^{s,\gamma}_{N+3,l+5-3(\gamma+2s)+N(2-2s)}(f^{s,\gamma})  +
	C_{\gamma}^2 \|f^{s,\gamma}
	\|_{H^{N+3}_{x,v}}^2 
	\mathcal{D}^{1,\gamma}_{N+3,
		l+5-3(\gamma+2)}(f^{\gamma})
	\bigg\} .
	\een
	By the assumption \eqref{initial-condition-smallness-for-asy} and Theorem \ref{a-priori-estimate-LBE},
	the solutions $f^{s,\gamma}$ and $f^{\gamma}$ satisfy
	\ben \label{small-f-s-gamma} \mathcal{E}_{N+3,l_*}^{s,\gamma}(f^{s,\gamma}(t)) + \frac{1}{8} \lambda_{s} \int_{0}^{t}  (|\mathrm{M}(\tau)|^{2}_{H^{N+3}_{x}} + \mathcal{D}^{s,\gamma}_{N+3,l_*}(f^{s,\gamma}_2)(\tau)) \mathrm{d}\tau \leq Z_{s,\gamma,N,l} \mathcal{E}_{N+3,l_*}^{s,\gamma}(f_{0}),
	\\ \label{small-f-gamma} \mathcal{E}_{N+3,l_*}^{1,\gamma}(f^{\gamma}(t)) + \frac{1}{8} \lambda_{1} \int_{0}^{t}  (|\mathrm{M}(\tau)|^{2}_{H^{N+3}_{x}} + \mathcal{D}^{s,\gamma}_{N+3,l_*}(f^{\gamma}_2)(\tau)) \mathrm{d}\tau \leq Z_{1,\gamma,N,l} \mathcal{E}_{N+3,l_*}^{1,\gamma}(f_{0}),
	\een
	where $l_* = l+2N-3\gamma+5 \geq l+5-3(\gamma+2s)+N(2-2s)$. By the smallness of the energy functional, the term containing $\mathcal{D}^{1,\gamma}_{N,l}(F^{s,\gamma}_{R})$ is absorbed by the left hand side. Then  
	the initial 
	condition $F^{s,\gamma}_{R}(0)=0$ implies
	\beno
	\sup_{t \geq 0}\Xi_{N,l}^{1,\gamma}(F^{s,\gamma}_{R}(t)) &\leq& 
	\exp \left(C_{N,l}Z_{s,\gamma,N,l}^3 C_{\gamma}^2 \mathcal{E}_{N+3,l_*}^{1,\gamma}(f_{0})\right) C_{N,l}Z_{s,\gamma,N,l}^3 C_{\gamma}^2 \mathcal{E}_{N+3,l_*}^{1,\gamma}(f_{0})
\\	&\leq&
	\exp \left(C_{N,l}Z_{s,\gamma,N,l}^3 C_{\gamma}^2 \mathcal{E}_{N+3,l_*}^{1,\gamma}(f_{0})\right).
	\eeno
	Recalling $F^{s,\gamma}_{R} :=  (1-s)^{-1} (f^{s,\gamma}-f^{\gamma})$, \eqref{property-of-Z-N-l} and \eqref{energy-equivalence}, we get \eqref{error-function-uniform-estimate}. This completes the proof of Theorem \ref{asymptotic-result}.

	\section{Appendix} \label{Operator-difference}

	We now prove Proposition \ref{limit-Botltzmann-to-Landau}.
	\begin{proof}[Proof of Proposition \ref{limit-Botltzmann-to-Landau}] The proof is based on \cite{desvillettes1992asymptotics} and \cite{he2014well}. Recall the Boltzmann operator  $Q^{s,\gamma}_{B}$ in \eqref{Boltzmann-operator} and the kernel $B^{s,\gamma}$ in \eqref{kernel-studied}. Following  the proof in based on \cite{desvillettes1992asymptotics} and \cite{he2014well}, we derive that
		\ben Q^{s,\gamma}_{B}(g, h) &=& \int_{\mathbb{R}^{3}}\left(\nabla_{v}-\nabla_{v_{*}}\right) \cdot\left[U^{s,\gamma}_{1}(v-v_{*})\left(\nabla_{v}-\nabla_{v_{*}}\right)\left(g_{*} h\right)\right] \mathrm{d}v_{*} \\ &&+\int_{\mathbb{R}^{3}}\left[U^{s,\gamma}_{2}(v-v_{*}):\left(\nabla_{v}-\nabla_{v_{*}}\right)^{2}\left(g_{*} h\right)\right] \mathrm{d}v_{*} \\ &&+\int_{\mathbb{R}^{3}} \int_{\mathbb{S}^{2}} R_{1}(v, v_{*},\sigma) B^{s,\gamma} \mathrm{d}v_{*} \mathrm{d}\sigma. 
		\een
		where
		\beno
		U^{s,\gamma}_{1} (v-v_{*}) &:=& \frac{1}{4} [\left|v-v_{*}\right|^{2} I_{3} - (v-v_{*}) \otimes (v-v_{*}) ] \int \sin^{2} \frac{\theta}{2} B^{s,\gamma} \mathrm{d}\sigma,
		\\
		U^{s,\gamma}_{2} (v-v_{*}) &:=&  ( \frac{3}{4} (v-v_{*})\otimes(v-v_{*}) -  \frac{1}{4} \left|v-v_{*}\right|^{2} I_{3}) \int  \sin^{4} \frac{\theta}{2} B^{s,\gamma} \mathrm{d}\sigma.
		\eeno
The function  $R_{1}(v, v_{*},\sigma)$ reads
		\ben
		R_{1}(v, v_{*},\sigma) = r_{1}(v, v_{*},\sigma)\left(g\left(v_{*}\right)-\frac{1}{2} A \cdot \nabla g\left(v_{*}\right)+\frac{1}{8} A \otimes A : \nabla^{2} g\left(v_{*}\right)+r_{2}(v, v_{*},\sigma) \right) \\+\frac{1}{8} A \otimes A : \nabla^{2} h\left(v\right)\left(-\frac{1}{2} A \cdot \nabla g\left(v_{*}\right)+\frac{1}{8} A \otimes A : \nabla^{2} g\left(v_{*}\right)+r_{2}(v, v_{*},\sigma)\right) \\+\frac{1}{2} A \cdot \nabla h\left(v\right)\left(\frac{1}{8} A \otimes A : \nabla^{2} g\left(v_{*}\right)+r_{2}(v, v_{*},\sigma)\right)+h\left(v\right) r_{2}(v, v_{*},\sigma),
		\een
		where $A = 2(v^{\prime} - v)$ and
		\beno   r_{1}(v, v_{*},\sigma) =  \f{1}{16}\sum_{1 \leq i,j,k\leq 3} \int_{0}^{1}(1-\kappa)^{2} A_{i} A_{j} A_{k}\pa^{3}_{ijk} h \left(v+\kappa\left(v^{\prime}-v\right)\right)  \mathrm{d}\kappa,
	\\   r_{2}(v, v_{*},\sigma) =  - \f{1}{16}\sum_{1 \leq i,j,k\leq 3} \int_{0}^{1}(1-\iota)^{2} A_{i} A_{j} A_{k}\pa^{3}_{ijk} g \left(v_*+\iota\left(v^{\prime}_*-v_*\right)\right)  \mathrm{d}\iota.
		\eeno
Note that $R_{1}(v, v_{*},\sigma)$ contains  $|A|^{k}$ for $k \geq 3$.

		Recalling \eqref{mean-momentum-transfer} and  \eqref{matrix} with $\Lambda = \pi$, it is
		staightforward to check that 
		\beno
		U^{s,\gamma}_{1} (z) = \left(\frac{1}{4} \int \sin^{2} \frac{\theta}{2} b^{s}(\theta) \mathrm{d}\sigma\right)
		|z|^{\gamma+2} \Pi(z) = 2^{s-1} \pi |z|^{\gamma+2} \Pi(z) = 2^{s-1} a^{\gamma}(z).
		\eeno
			Recall the Landau operator $Q_{L}^{\gamma}$ given by \eqref{oroginal-definition-Laudau-oprator} and  \eqref{matrix} with $\Lambda = \pi$. In another form,
		\beno Q_{L}^{\gamma}(g, h) = \int_{\R^3} \left(\nabla_{v}-\nabla_{v_{*}}\right) \cdot \left[a^{\gamma}\left(v- v_{*}\right)\left(\nabla_{v}-\nabla_{v_{*}}\right)\left(g_{*} h\right)\right] \mathrm{d}v_{*},\eeno 
which gives
		\ben \label{Q-into-three-terms}
	 Q^{s,\gamma}_{B}(g, h)=& 2^{s-1} Q_{L}^{\gamma}(g, h) +\int_{\mathbb{R}^{3}}\left[U^{s,\gamma}_{2}(v-v_{*}):\left(\nabla_{v}-\nabla_{v_{*}}\right)^{2}\left(g_{*} h\right)\right] \mathrm{d}v_{*} \\ \nonumber  &+\int_{\mathbb{R}^{3}} \int_{\mathbb{S}^{2}} R_{1}(v, v_{*},\sigma) B^{s,\gamma} \mathrm{d}v_{*} \mathrm{d}\sigma. 
		\een
		We now have
		\beno
		{Q^{s,\gamma}_{B}(g, h)-Q_{L}^{\gamma}(g, h)}&=& (2^{s-1} - 1)Q_{L}^{\gamma}(g, h)  +\int_{\mathbb{R}^{3}}\left[U^{s,\gamma}_{2}(v-v_{*}):\left(\nabla_{v}-\nabla_{v_{*}}\right)^{2}\left(g_{*} h\right)\right] \mathrm{d}v_{*} \\ &&+\int_{\mathbb{R}^{3}} \int_{\mathbb{S}^{2}} R_{1}(v, v_{*},\sigma) B^{s,\gamma} \mathrm{d}v_{*} \mathrm{d}\sigma := \sum_{i=1}^{3}E_{i}.
		\eeno
		Note that  for $0<s<1$, 
		\ben \label{small-factor}
		|2^{s-1} - 1| \leq 1-s.
		\een
	For showing validity for  $\gamma>-5$, we rewrite the Landau operator
		$Q_{L}^{\gamma}(g, h)$. Recall that
		\beno 
		Q_{L}^{\gamma}(g, h) = \nabla \cdot
		\int_{\R^3}  a^{\gamma}\left(v- v_{*}\right)
		(g_* \nabla h - (\nabla g)_* h)
		\mathrm{d}v_{*} = \nabla \cdot [(a^{\gamma} * g) \nabla h - (a^{\gamma} * \nabla g) h].
		\eeno
		In order not to  have any  derivatives on the kernel function $a^{\gamma}$, we write 
		\beno 
		Q_{L}^{\gamma}(g, h) = (a^{\gamma} * g) : \nabla^2 h - (a^{\gamma} *: \nabla^2 g) h,
		\eeno
		where $A : B := tr(A B)$ for the two matrices $A, B$. More precisely, 
		\beno 
		Q_{L}^{\gamma}(g, h) = \sum_{i, j=1}^{3} (a^{\gamma}_{ij} * g)  \pa^2_{ij} h - \sum_{i, j=1}^{3} (a^{\gamma}_{ij} * \pa^2_{ij} g)   h.
		\eeno
		Note that 
		\ben \label{a-order-gamma-plus-2}
		|a^{\gamma}(v-v_{*})| \lesssim |v-v_{*}|^{\gamma+2}.
		\een
		To estimate 
	$
		\left|\left\langle Q_{L}^{\gamma}(g, h), W_{l} \psi\right\rangle\right| 
	$,
		it suffices to consider the following type of integral
		\ben \label{gamma-plus-2-kept}
		\int |v-v_{*}|^{\gamma+2} |(\pa^{\alpha_1}g)_* \pa^{\alpha_2} h W_{l} \psi| \mathrm{d}v \mathrm{d}v_*,
		\een
		where $(|\alpha_1|, |\alpha_2|) = (2, 0)$ or $(|\alpha_1|, |\alpha_2|) = (0, 2)$. 

		Note that
		\beno
		\int  \sin^{4} \frac{\theta}{2} B^{s,\gamma} \mathrm{d}\sigma = 8 \pi  (1-s) |v-v_{*}|^{\gamma} \int_{0}^{1/\sqrt{2}} t^{3-2s} d t \lesssim (1-s) |v-v_{*}|^{\gamma},
		\eeno
		which gives
		\ben \label{small-factor-U2}
		|U^{s,\gamma}_{2} (v-v_{*})| \lesssim (1-s) |v-v_{*}|^{\gamma+2}.
		\een
		This shows that 
		in order to estimate 
$\left|\left\langle E_2, W_{l} \psi \right\rangle\right| 
	$,
		it suffices to consider the integral \eqref{gamma-plus-2-kept} for $|\alpha_1| + |\alpha_2| = 2$.
		In general, we consider
		\beno
		\int |v-v_{*}|^{\gamma+2} |g_* h \psi| \mathrm{d}v \mathrm{d}v_*, 
		\eeno
		for $\gamma>-5$. Note that the integral has singularity as $\gamma \to (-5)^{+}$.
		It is obvious that $|v-v_{*}| \leq W(v) W(v_*)$.
		If $\gamma+2 \geq 0$, then
		\beno
		|v-v_{*}|^{\gamma+2} \leq W_{\gamma+2}(v) W_{\gamma+2}(v_*),
		\eeno
		which gives
		\beno
		\int |v-v_{*}|^{\gamma+2} |g_* h \psi| \mathrm{d}v \mathrm{d}v_* \leq 
		|g|_{L^{1}_{\gamma+2}} |h|_{L^{2}_{b_1}}
		|\psi|_{L^{2}_{b_2}},
		\eeno
		where $b_1, b_2 \in \R$ satisfying
		$b_1 + b_2 = \gamma +2$.
		If $\gamma+2 < 0$, 
		then
		\beno
		|v-v_{*}|^{\gamma+2} \lesssim \mathrm{1}_{|v-v_*| \leq 1} |v-v_{*}|^{\gamma+2} W_{\gamma+2}(v) W_{|\gamma+2|}(v_*) + \mathrm{1}_{|v-v_*| \geq 1} W_{\gamma+2}(v) W_{|\gamma+2|}(v_*),
		\eeno
		which gives
		\beno
		\int |v-v_{*}|^{\gamma+2} |g_* h \psi| \mathrm{d}v \mathrm{d}v_* \lesssim
		\f{1}{\gamma+5} |g|_{L^{p}_{|\gamma+2|}} |h|_{L^{q}_{b_1}}
		|\psi|_{L^{2}_{b_2}} + 
		|g|_{L^{1}_{|\gamma+2|}} |h|_{L^{2}_{b_1}}
		|\psi|_{L^{2}_{b_2}},
		\eeno
		where $2 \leq p, q \leq \infty$ satisfying
		$1/p + 1/q = 1/2$. 
		Here we have used $\int \mathrm{1}_{|v-v_*| \leq 1} |v-v_{*}|^{\gamma+2} \mathrm{d}v_* \lesssim \f{1}{\gamma+5}$.
		In summary, by using the basic inequality $|g|_{L^{1}} \lesssim |g|_{L^{2}_2}$ and the embedding 
		$H^{2} \hookrightarrow L^{\infty}$ and 
		$H^{s} \hookrightarrow L^{p}$ where $1/p = 1/2 - s/3$,
		for $-5<\gamma \leq 0$,
		we have
		\ben \label{gamma-plus-2-case}
		\int |v-v_{*}|^{\gamma+2} |g_* h \psi| \mathrm{d}v \mathrm{d}v_* \lesssim \f{1}{\gamma+5}
		|g|_{H^{s_1}_{|\gamma+2|+2}} |h|_{H^{s_2}_{b_1}}
		|\psi|_{L^{2}_{b_2}},
		\een
		where $0 \leq s_1, s_2 \leq 2$ satisfying
		$s_1 + s_2 = 2$.

		By applying \eqref{gamma-plus-2-case} for estimation on  \eqref{gamma-plus-2-kept}, and
		by  recalling \eqref{small-factor} and \eqref{small-factor-U2},
		we obtain 
		\ben \label{E-1-and-2-result}
		\left|\left\langle (2^{s-1} - 1)Q_{L}^{\gamma}(g, h), W_{l} \psi\right\rangle\right| + \left|\left\langle E_2, W_{l} \psi \right\rangle\right| \lesssim \f{1-s}
		{\gamma+5}
		|g|_{H^{2}_{|\gamma+2|+2}} |h|_{H^{2}_{l+b_1}}
		|\psi|_{L^{2}_{b_2}},
		\een
		where $b_1+b_2 = \gamma+2$.

		We now turn to estimate $E_{3}$.
		By the fact $|A| \lesssim \sin \frac{\theta}{2} |v-v_{*}|$,
		one has $\max \{|A|^{3},|A|^{4},|A|^{5}, |A|^{6}\} \lesssim \sin^{3} \frac{\theta}{2} |v-v_{*}|^{3} W_{3}(v-v_*)$. Plugging this into the definition of $R_{1}(v, v_{*},\sigma)$, one has
		\begin{equation}
			\begin{array}{l}{|R_{1}(v, v_{*},\sigma)| \lesssim  \sin^{3} \frac{\theta}{2} |v-v_{*}|^{3} W_{3}(v-v_*) \sum_{i=1}^{4}R_{1,i}(v, v_{*},\sigma)}, \\ {R_{1,1}=
					\sum_{i=0}^{2} \sum_{j= 3-i}^{2}|\nabla^{i}g(v_{*})||\nabla^{j}h(v)|},
				\\   {R_{1,2}= \sum_{i=0}^{2} |\nabla^{i}g(v_{*})| \int_{0}^{1}(1-\kappa)^{2}\left|\nabla^{3} h \left(v(\kappa)\right) \right| \mathrm{d}\kappa},
				\\   {R_{1,3}=\sum_{i=0}^{2} |\nabla^{i}h(v)| \int_{0}^{1}(1-\iota)^{2}\left|\nabla^{3} g \left(v_{*}(\iota)\right) \right| \mathrm{d}\iota},
				\\   {R_{1,4}=\int_{0}^{1}(1-\iota)^{2}\left|\nabla^{3} g \left(v_{*}(\iota)\right) \right| \mathrm{d}\iota \int_{0}^{1}(1-\kappa)^{2}\left|\nabla^{3} h \left(v(\kappa)\right)\right| \mathrm{d}\kappa}.
			\end{array}
		\end{equation}
		Then we have
		$\left|\left\langle E_{3},W_{l} \psi\right\rangle\right| \lesssim \sum_{i=1}^{4}J_{i}$,
		where \beno J_{i} =
		\int B^{s,\gamma}  R_{1,i} (v, v_{*},\sigma) \sin^{3} \frac{\theta}{2} |v-v_{*}|^{3}
		W_{3}(v-v_*) W_{l}(v) |\psi(v)| \mathrm{d}V. \eeno
		In general, for $0 \leq \iota, \kappa \leq 1$,
		we consider 
		\beno \mathcal{I}(g,h) =
		\int B^{s,\gamma} \sin^{3} \frac{\theta}{2} |v-v_{*}|^{3}
		W_{3}(v-v_*) W_{l}(v) 
		|g(v_{*}(\iota))  h(v(\kappa))
		\psi(v)| \mathrm{d}V. \eeno
If $\gamma+3 \geq 0$, then 
		\ben \label{gamma-plus-3-ge-0}
		|v-v_{*}|^{\gamma+3} W_{3}(v-v_{*}) W_{l}(v) \lesssim_{l,a_1,a_2}  W_{l+|a_1| + |a_2|}(v_{*}(\iota)) W_{l+a_1}(v(\kappa)) W_{a_2}(v).
		\een
		If $\gamma+3 < 0$, we have
		\ben \label{gamma-plus-3-le-0}
		|v-v_{*}|^{\gamma+3} W_{3}(v-v_{*}) W_{l}(v) \lesssim_{l,a_1,a_2} \mathrm{1}_{|v-v_*| \leq 1} |v-v_{*}|^{\gamma+3} W_{l+|a_1| + |a_2|}(v_{*}(\iota)) W_{l+a_1}(v(\kappa)) W_{a_2}(v) 
		\\ \nonumber + \mathrm{1}_{|v-v_*| \geq 1} W_{l+|a_1| + |a_2|}(v_{*}(\iota)) W_{l+a_1}(v(\kappa)) W_{a_2}(v).
		\een
	By the above estimates,
		we have
		\beno |\mathcal{I}(g,h)| \lesssim_{l, a_1, a_2}  \int b^{s}(\theta) \sin^{3} \frac{\theta}{2} 
		|\tilde{g}(v_{*}(\iota))  \tilde{h}(v(\kappa))
		\tilde{\psi}(v)| \mathrm{d}V
		\\+
		\mathrm{1}_{\gamma+3<0} \int \mathrm{1}_{|v-v_*| \leq 1} |v-v_{*}|^{\gamma+3} b^{s}(\theta) \sin^{3} \frac{\theta}{2} 
		|\tilde{g}(v_{*}(\iota))  \tilde{h}(v(\kappa))
		\tilde{\psi}(v)| \mathrm{d}V := \mathcal{I}_1(g,h) + \mathcal{I}_2(g,h),
		\eeno
		where $\tilde{g} = W_{l+|a_1| + |a_2|} g, \tilde{h} = W_{l+a_1} h, \tilde{\psi} = W_{a_2} \psi$.

		We now consider the functional $\mathcal{I}_1(g,h)$ where there is no singularity. By  Cauchy-Schwarz inequality, applying
		the change of variable \eqref{change-of-variable-2} and 
		using the fact $1 \leq \psi_{a}(\theta) \leq \sqrt{2}$, 
		we have
		\beno \mathcal{I}_1(g,h) &\lesssim& \left(
		\int b^{s}(\theta) \sin^{3} \frac{\theta}{2} 
		|\tilde{g}(v_{*}(\iota))  \tilde{h}^2(v(\kappa))| \mathrm{d}V \right)^{1/2}
		\\ &&\times \left(
		\int b^{s}(\theta) \sin^{3} \frac{\theta}{2} (\psi_{\kappa+\iota}(\theta))^{3}
		|\tilde{g}(v_{*}(\iota)) 
		\tilde{\psi}^2(v)| \mathrm{d}V \right)^{1/2}
		\\ &=& \left(
		\int b^{s}(\theta) \sin^{3} \frac{\theta}{2} (\psi_{\kappa+\iota}(\theta))^{3}
		|\tilde{g}(v_{*})  \tilde{h}^2(v)| \mathrm{d}V \right)^{1/2}
		\\ && \times \left(
		\int b^{s}(\theta) \sin^{3} \frac{\theta}{2} (\psi_{\iota}(\theta))^{3}
		|\tilde{g}(v_{*}) 
		\tilde{\psi}^2(v)| \mathrm{d}V \right)^{1/2}
		\\ &\lesssim& \left(
		\int b^{s}(\theta) \sin^{3} \frac{\theta}{2}
		|\tilde{g}(v_{*})  \tilde{h}^2(v)| \mathrm{d}V \right)^{1/2}
	 \left(
		\int b^{s}(\theta) \sin^{3} \frac{\theta}{2} 
		|\tilde{g}(v_{*}) 
		\tilde{\psi}^2(v)| \mathrm{d}V \right)^{1/2}. \eeno
		Note that
		\ben \label{order-3-small-factor}
		\int  \sin^{3} \frac{\theta}{2} b^{s}(\theta)  \mathrm{d}\sigma = 8 \pi  (1-s)  \int_{0}^{1/\sqrt{2}} t^{2-2s} \mathrm{d}t \lesssim (1-s), 
		\een
		which gives
		\beno \mathcal{I}_1(g,h) \lesssim (1-s)
		\left(
		\int
		|\tilde{g}(v_{*})  \tilde{h}^2(v)| \mathrm{d}v \mathrm{d}v_{*}  \right)^{1/2}
		\left(
		\int
		|\tilde{g}(v_{*}) 
		\tilde{\psi}^2(v)| \mathrm{d}v \mathrm{d}v_{*}  \right)^{1/2}
		\lesssim (1-s) |\tilde{g}|_{L^{1}} |\tilde{h}|_{L^{2}} |\tilde{\psi}|_{L^{2}}. \eeno
		
		We now consider the functional $\mathcal{I}_2(g,h)$ where there is singularity as $|v-v_*| \to 0$.  By Cauchy-Schwarz inequality, applying
		the change of variable \eqref{change-of-variable-2},
		using the fact $1 \leq \psi_{\kappa}(\theta) \leq \sqrt{2}$ and  \eqref{order-3-small-factor},
		we have
		\beno \mathcal{I}_2(g,h) \lesssim 
		(1-s)\left(
		\int \mathrm{1}_{|v-v_*| \leq 1} |v-v_{*}|^{\gamma+3}
		|\tilde{g}(v_{*})  \tilde{h}^2(v)| \mathrm{d}v \mathrm{d}v_{*}  \right)^{1/2}
		\\ \times \left(
		\int \mathrm{1}_{|v-v_*| \leq 1} |v-v_{*}|^{\gamma+3}
		|\tilde{g}(v_{*}) 
		\tilde{\psi}^2(v)| \mathrm{d}v \mathrm{d}v_{*} \right)^{1/2}
		. \eeno
		If $\gamma>-9/2$, using $\int \mathrm{1}_{|v-v_*| \leq 1} |v-v_{*}|^{\gamma+3} |g_*| \mathrm{d}v_* \lesssim \f{1}{\gamma+ 9/2} |g|_{L^{2}}$ to have 
		\ben \label{I2-gh-case1}
		\mathcal{I}_2(g,h) \lesssim 
		(1-s) (\gamma+\f92)^{-1}  |\tilde{g}|_{L^{2}} |\tilde{h}|_{L^{2}} |\tilde{\psi}|_{L^{2}}.
		\een
		If $\gamma>-11/2$, then
		\ben \label{I2-gh-case2}
		\mathcal{I}_2(g,h) \lesssim 
		(1-s) (\gamma+\f{11}{2})^{-5/6}  |\tilde{g}|_{L^{p}} |\tilde{h}|_{L^{q}} |\tilde{\psi}|_{L^{2}},
		\een
		where $2 \leq p,q \leq 6$ satisfying $1/p + 1/q = 2/3$.
		Indeed, putting together $\tilde{g}$ and
		$\tilde{h}$, we can get
		\beno \mathcal{I}_2(g,h) \lesssim 
		(1-s)\left(
		\int \mathrm{1}_{|v-v_*| \leq 1} |v-v_{*}|^{\f{4}{5}(\gamma+3)}
		|\tilde{g}^2(v_{*})  \tilde{h}^2(v)| \mathrm{d}v \mathrm{d}v_{*}  \right)^{1/2}
		\\ \times \left(
		\int \mathrm{1}_{|v-v_*| \leq 1} |v-v_{*}|^{\f{6}{5}(\gamma+3)}
		| 
		\tilde{\psi}^2(v)| \mathrm{d}v \mathrm{d}v_{*} \right)^{1/2}. \eeno
		Using $
		\int \mathrm{1}_{|v-v_*| \leq 1} |v-v_{*}|^{\f{6}{5}(\gamma+3)}  \mathrm{d}v_* \lesssim \f{1}{\gamma+ 11/2}$,
		the latter integral is bounded by
		\beno
		\int \mathrm{1}_{|v-v_*| \leq 1} |v-v_{*}|^{\f{6}{5}(\gamma+3)}
		| 
		\tilde{\psi}^2(v)| \mathrm{d}v \mathrm{d}v_{*} \lesssim (\gamma+ 11/2)^{-1} |\tilde{\psi}|_{L^{2}}^2.
		\eeno
		Let $k(z) = 1_{|z| \leq 1} |z|^{\f{4}{5}(\gamma+3)}$, then $|k|_{L^{3/2}} \lesssim (\gamma+ 11/2)^{-2/3}$. Thus,
		the first integral is bounded by 
		\beno
		|(k * \tilde{g}^2) \tilde{h}^2|_{L^{1}}
		\lesssim |(k * \tilde{g}^2)|_{L^{r}}
		|\tilde{h}^2|_{L^{r'}}
		\lesssim |k|_{L^{3/2}}
		|\tilde{g}^2|_{L^{q}}
		|\tilde{h}^2|_{L^{r'}}
		\\ \lesssim |k|_{L^{3/2}}
		|\tilde{g}|_{L^{2q}}^2
		|\tilde{h}|_{L^{2r'}}^2
		\lesssim  (\gamma+ 11/2)^{-2/3} 
		|\tilde{g}|_{L^{2q}}^2
		|\tilde{h}|_{L^{2r'}}^2,
		\eeno
		where $1/r + 1/r' = 1, 1+ 1/r = 2/3 + 1/q$. Then we get $1/2q + 1/2r' = 2/3$. 
		Combining these  two estimates yields \eqref{I2-gh-case2}.

		We conclude that 
		if $-9/2 < \gamma \leq 0$, 
		\beno |\mathcal{I}(g,h)| \lesssim 
		(1-s) (\gamma+\f92)^{-1} |\tilde{g}|_{L^{2}_2} |\tilde{h}|_{L^{2}} |\tilde{\psi}|_{L^{2}};
		\eeno
		if $-11/2 < \gamma \leq 0$, by the Sobolev embedding,
		\beno |\mathcal{I}(g,h)| \lesssim 
		(1-s) (\gamma+\f{11}{2})^{-5/6}  |\tilde{g}|_{H^{s_1}_2} |\tilde{h}|_{H^{s_2}} |\tilde{\psi}|_{L^{2}}.
		\eeno

		Therefore, if $-9/2 < \gamma \leq 0$, 
		\ben \label{E3-case1}
		\left|\left\langle E_{3},W_{l} \psi\right\rangle\right|
		\lesssim 
		(1-s) (\gamma+\f92)^{-1} |g|_{H^{3}_{l+|a_1|+|a_2|+2}} |h|_{H^{3}_{l+a_1}} |\psi|_{L^{2}_{a_2}};
		\een
		if $-11/2 < \gamma \leq 0$, 
		\ben \label{E3-case2}
		\left|\left\langle E_{3},W_{l} \psi\right\rangle\right|
		\lesssim  (1-s) (\gamma+\f{11}{2})^{-5/6} |g|_{H^{3+s_1}_{l+|a_1|+|a_2|+2}} |h|_{H^{3+s_2}_{l+a_1}} |\psi|_{L^{2}_{a_2}}.
		\een
		By combining \eqref{E-1-and-2-result}, \eqref{E3-case1} and \eqref{E3-case2},   the proof of Proposition \ref{limit-Botltzmann-to-Landau} is completed.
	\end{proof}

	{\bf Acknowledgments.}
	The authors would like to thank the support by the Centre for Nonlinear Analysis, The Hong Kong
	Polytechnic University. 
	The research was partially supported by the National Key Research and
	Development Program of China project no. 2021YFA1002100. The research of Tong Yang was
	supported by a fellowship award from the Research Grants Council of the Hong Kong Special
	Administrative Region, China (Project no. SRF2021-1S01). The research of Yu-Long Zhou was
	supported by the NSFC project no. 12001552, the Science and Technology Project in Guangzhou
	no. 202201011144.
	
	\normalem
	
	\bibliographystyle{siam}
	\bibliography{Boltzmann}

\begin{thebibliography}{10}

\bibitem{alexandre2000entropy}
{\sc R.~Alexandre, L.~Desvillettes, C.~Villani, and B.~Wennberg}, {\em Entropy
  dissipation and long-range interactions}, Archive for Rational Mechanics and
  Analysis, 152 (2000), pp.~327--355.

\bibitem{alexandre2012boltzmann}
{\sc R.~Alexandre, Y.~Morimoto, S.~Ukai, C.-J. Xu, and T.~Yang}, {\em {The
  Boltzmann equation without angular cutoff in the whole space: I, Global
  existence for soft potential}}, Journal of Functional Analysis, 262 (2012),
  pp.~915--1010.

\bibitem{alexandre2004landau}
{\sc R.~Alexandre and C.~Villani}, {\em {On the Landau approximation in plasma
  physics}}, Annales de l'Institut Henri Poincare (C) Non Linear Analysis, 21
  (2004), pp.~61--95.

\bibitem{arsen1991connection}
{\sc A.~A. Arsen'ev and O.~E. Buryak}, {\em {On the connection between a
  solution of the Boltzmann equation and a solution of the Landau-Fokker-Planck
  equation}}, Mathematics of the USSR-Sbornik, 69 (1991), p.~465.

\bibitem{caflisch1980boltzmann1}
{\sc R.~E. Caflisch}, {\em {The Boltzmann equation with a soft potential: I.
  Linear, spatially-homogeneous}}, Communications in Mathematical Physics, 74
  (1980), pp.~71--95.

\bibitem{caflisch1980boltzmann2}
\leavevmode\vrule height 2pt depth -1.6pt width 23pt, {\em {The Boltzmann
  equation with a soft potential: II. Nonlinear, spatially-periodic}},
  Communications in Mathematical Physics, 74 (1980), pp.~97--109.

\bibitem{degond1992fokker}
{\sc P.~Degond and B.~Lucquin-Desreux}, {\em {The Fokker-Planck asymptotics of
  the Boltzmann collision operator in the Coulomb case}}, Mathematical Models
  and Methods in Applied Sciences, 2 (1992), pp.~167--182.

\bibitem{desvillettes1992asymptotics}
{\sc L.~Desvillettes}, {\em {On asymptotics of the Boltzmann equation when the
  collisions become grazing}}, Transport Theory and Statistical Physics, 21
  (1992), pp.~259--276.

\bibitem{desvillettes2005trend}
{\sc L.~Desvillettes and C.~Villani}, {\em {On the trend to global equilibrium
  for spatially inhomogeneous kinetic systems: the Boltzmann equation}},
  Inventiones mathematicae, 159 (2005), pp.~245--316.

\bibitem{diperna1989cauchy}
{\sc R.~J. DiPerna and P.-L. Lions}, {\em On the cauchy problem for boltzmann
  equations: global existence and weak stability}, Annals of Mathematics,
  (1989), pp.~321--366.

\bibitem{duan2008cauchy}
{\sc R.~Duan}, {\em {On the Cauchy problem for the Boltzmann equation in the
  whole space: Global existence and uniform stability in
  $L^{2}_{\xi}(H^{N}_{x})$}}, Journal of Differential Equations, 244 (2008),
  pp.~3204--3234.

\bibitem{duan2023solutions}
{\sc R.~Duan, L.-B. He, T.~Yang, and Y.-L. Zhou}, {\em {Solutions to the
  non-cutoff Boltzmann equation in the grazing limit}}, Annales de l'Institut
  Henri Poincar{\'e} C,  (2023).

\bibitem{duan2021global}
{\sc R.~Duan, S.~Liu, S.~Sakamoto, and R.~M. Strain}, {\em {Global mild
  solutions of the Landau and non-cutoff Boltzmann equations}}, Communications
  on Pure and Applied Mathematics, 74 (2021), pp.~932--1020.

\bibitem{goudon1997boltzmann}
{\sc T.~Goudon}, {\em {On Boltzmann equations and Fokker-Planck asymptotics:
  Influence of grazing collisions}}, Journal of statistical physics, 89 (1997),
  p.~751.

\bibitem{gressman2011global}
{\sc P.~Gressman and R.~Strain}, {\em {Global classical solutions of the
  Boltzmann equation without angular cut-off}}, Journal of the American
  Mathematical Society, 24 (2011), pp.~771--847.

\bibitem{guo2002landau}
{\sc Y.~Guo}, {\em {The Landau equation in a periodic box}}, Communications in
  mathematical physics, 231 (2002), pp.~391--434.

\bibitem{guo2003classical}
\leavevmode\vrule height 2pt depth -1.6pt width 23pt, {\em {Classical solutions
  to the Boltzmann equation for molecules with an angular cutoff}}, Archive for
  rational mechanics and analysis, 169 (2003), pp.~305--353.

\bibitem{guo2004boltzmann}
\leavevmode\vrule height 2pt depth -1.6pt width 23pt, {\em The {B}oltzmann
  equation in the whole space}, Indiana Univ. Math. J., 53 (2004),
  pp.~1081--1094.

\bibitem{he2012well}
{\sc L.~He}, {\em {Well-posedness of spatially homogeneous Boltzmann equation
  with full-range interaction}}, Communications in Mathematical Physics, 312
  (2012), pp.~447--476.

\bibitem{he2014asymptotic}
\leavevmode\vrule height 2pt depth -1.6pt width 23pt, {\em {Asymptotic analysis
  of the spatially homogeneous Boltzmann equation: grazing collisions limit}},
  Journal of Statistical Physics, 155 (2014), pp.~151--210.

\bibitem{he2014well}
{\sc L.~He and X.~Yang}, {\em {Well-posedness and asymptotics of grazing
  collisions limit of Boltzmann equation with Coulomb interaction}}, SIAM
  Journal on Mathematical Analysis, 46 (2014), pp.~4104--4165.

\bibitem{he2018sharp}
{\sc L.-B. He}, {\em {Sharp bounds for Boltzmann and Landau collision
  operators}}, Annales Scientifiques de l'{\'E}cole Normale Sup{\'e}rieure, 51
  (2018), pp.~1253--1341.

\bibitem{he2021boltzmann}
{\sc L.-B. He and Y.-L. Zhou}, {\em {Boltzmann equation with cutoff Rutherford
  scattering cross section near Maxwellian}}, Archive for Rational Mechanics
  and Analysis, 242 (2021), pp.~1631--1748.

\bibitem{he2022asymptotic}
\leavevmode\vrule height 2pt depth -1.6pt width 23pt, {\em Asymptotic analysis
  of the linearized boltzmann collision operator from angular cutoff to
  non-cutoff}, Annales de l'Institut Henri Poincar{\'e} C, 39 (2022),
  pp.~1097--1178.

\bibitem{lions1994boltzmann}
{\sc P.-L. Lions}, {\em {On Boltzmann and Landau equations}}, Philosophical
  Transactions of the Royal Society of London. Series A: Physical and
  Engineering Sciences, 346 (1994), pp.~191--204.

\bibitem{mouhot2006explicit}
{\sc C.~Mouhot}, {\em {Explicit coercivity estimates for the linearized
  Boltzmann and Landau operators}}, Communications in Partial Differential
  Equations, 31 (2006), pp.~1321--1348.

\bibitem{strain2006almost}
{\sc R.~M. Strain and Y.~Guo}, {\em {Almost exponential decay near
  Maxwellian}}, Communications in Partial Differential Equations, 31 (2006),
  pp.~417--429.

\bibitem{strain2008exponential}
\leavevmode\vrule height 2pt depth -1.6pt width 23pt, {\em {Exponential decay
  for soft potentials near Maxwellian}}, Archive for Rational Mechanics and
  Analysis, 187 (2008), pp.~287--339.

\bibitem{ukai1974existence}
{\sc S.~Ukai}, {\em {On the existence of global solutions of mixed problem for
  non-linear Boltzmann equation}}, Proceedings of the Japan Academy, 50 (1974),
  pp.~179--184.

\bibitem{ukai1982cauchy}
{\sc S.~Ukai and K.~Asano}, {\em {On the Cauchy problem of the Boltzmann
  equation with a soft potential}}, Publications of the Research Institute for
  Mathematical Sciences, 18 (1982), pp.~477--519.

\bibitem{villani1996cauchy}
{\sc C.~Villani}, {\em {On the Cauchy problem for Landau equation: sequential
  stability, global existence}}, Adv. Differential Equations, 1 (1996),
  pp.~793--816.

\bibitem{villani1998new}
\leavevmode\vrule height 2pt depth -1.6pt width 23pt, {\em {On a new class of
  weak solutions to the spatially homogeneous Boltzmann and Landau equations}},
  Archive for Rational Mechanics and Analysis, 143 (1998), pp.~273--307.

\bibitem{wang1952propagation}
{\sc C.~Wang~Chang and G.~Uhlenbeck}, {\em On the propagation of sound in
  monatomic gases}, Studies in Statistical Mechanics,  (1952), pp.~43--75.

\end{thebibliography}

\end{document}